\newcommand{\noarticle}{%
  \removelastskip%
  \vskip.4\baselineskip%
  \par}
\newcounter{article}
\newcommand{\article}[1][-]{%
  \setcounter{equation}{0}%
  \removelastskip%
  \vskip.4\baselineskip%
  \refstepcounter{article}%
  \noindent%
  {\bf (\thearticle)} %
  \ifx-#1{}\else{\it #1.}\fi \phantomsection}
\def\ifempty#1{\if&#1&}
\def\Thmtpl#1#2#3#4{%
  \removelastskip%
  \vskip.6\baselineskip%
  \refstepcounter{article}%
  \noindent%
  {\bf (\thearticle)} %
  \bgroup{}#2{}#1%
  \ifempty{#4}\else{ (#4)}\fi%
  .\egroup%
  #3%
  \ifempty{#4}\ \fi%
  \phantomsection%
}
\def\endthm{\par\vskip.4\baselineskip}
\def\thmtpl#1#2#3#4{%
  \removelastskip%
  \par\vskip.4\baselineskip%
  \addtocounter{article}{-1}
  \refstepcounter{article}
  \noindent%
  \bgroup#2#1%
  \ifempty{#4}\else{ (#4)}\fi%
  .\egroup%
  #3%
  \ifempty{#4}\ \fi%
  \phantomsection%
}
\def\newtheoremenvironment#1#2#3#4#5{%
  \newenvironment{#1}[1][]{\Thmtpl{#3}{#4}{#5}{##1}}{\endthm}%
  \newenvironment{#2}[1][]{\thmtpl{#3}{#4}{#5}{##1}}{\endthm}%
  }
\renewenvironment{proof}[1][Proof]{
  \removelastskip%
  \par\vskip.4\baselineskip\noindent%
  \topsep=0pt%
  \partopsep=0pt%
  \labelsep=0pt%
  \begin{trivlist}%
  \item[\emph{#1.} ]%
  \pushQED{\qed} 
}{
\popQED%
  \end{trivlist}%
  \vskip.4\baselineskip%
}
\renewcommand{\subsection}[1]{%
  \refstepcounter{subsection}%
  \removelastskip%
  \vskip.4\baselineskip%
  \noindent%
  \thesubsection{}. {\bf #1}%
  \leavevmode%
  \setcounter{article}{0}%
  \renewcommand{\thearticle}{\thesubsection.\arabic{article}}%
  \nopagebreak%
  \addcontentsline{toc}{subsection}{\hskip 4.2ex\thesubsection{}. #1}%
  }
\newcommand{\bA}{\mathbf{A}}
\newcommand{\cA}{\mathcal{A}}
\newcommand{\cB}{\mathcal{B}}
\newcommand{\rB}{\mathrm{B}}
\newcommand{\bC}{\mathbf{C}}
\newcommand{\cC}{\mathcal{C}}
\newcommand{\sC}{\mathscr{C}}
\newcommand{\rD}{\mathrm{D}}
\newcommand{\bE}{\mathbf{E}}
\newcommand{\cF}{\mathcal{F}}
\newcommand{\sF}{\mathscr{F}}
\newcommand{\bG}{\mathbf{G}}
\newcommand{\cG}{\mathcal{G}}
\newcommand{\sG}{\mathscr{G}}
\newcommand{\rH}{\mathrm{H}}
\newcommand{\cK}{\mathcal{K}}
\newcommand{\rK}{\mathrm{K}}
\newcommand{\bM}{\mathbf{M}}
\newcommand{\bO}{\mathbf{O}}
\newcommand{\cO}{\mathcal{O}}
\newcommand{\cP}{\mathcal{P}}
\newcommand{\bQ}{\mathbf{Q}}
\newcommand{\cQ}{\mathcal{Q}}
\newcommand{\rR}{\mathrm{R}}
\newcommand{\bS}{\mathbf{S}}
\newcommand{\cS}{\mathcal{S}}
\newcommand{\fS}{\mathfrak{S}}
\newcommand{\bT}{\mathbf{T}}
\newcommand{\cT}{\mathcal{T}}
\newcommand{\cU}{\mathcal{U}}
\newcommand{\bV}{\mathbf{V}}
\newcommand{\cV}{\mathcal{V}}
\newcommand{\bZ}{\mathbf{Z}}
\newcommand{\rf}{\mathrm{f}}
\newcommand{\fg}{\mathfrak{g}}
\newcommand{\bs}{\mathbf{s}}
\newcommand{\fs}{\mathfrak{s}}
\newcommand{\bt}{\mathbf{t}}
\newcommand{\un}{\ul{n}}
\renewcommand{\phi}{\varphi}
\renewcommand{\emptyset}{\varnothing}
\newcommand{\eps}{\varepsilon}
\newcommand{\ol}[1]{\overline{#1}}
\newcommand{\ul}[1]{\underline{#1}}
\newcommand{\arxiv}[1]{\href{http://arxiv.org/abs/#1}{{\tt arXiv:#1}}}
\def\Ddots{\mathinner{\mkern1mu\raise\p@
\vbox{\kern7\p@\hbox{.}}\mkern2mu
\raise4\p@\hbox{.}\mkern2mu\raise7\p@\hbox{.}\mkern1mu}}
\DeclareMathOperator{\im}{image} 
\DeclareMathOperator{\coker}{coker}
\renewcommand{\hom}{\operatorname{Hom}}
\DeclareMathOperator{\rad}{rad}
\DeclareMathOperator{\ext}{Ext}
\DeclareMathOperator{\End}{End}
\DeclareMathOperator{\Sym}{Sym}
\DeclareMathOperator{\Aut}{Aut}
\DeclareMathOperator{\Tor}{Tor}
\DeclareMathOperator{\colim}{colim}
\DeclareMathOperator{\Spec}{Spec}
\DeclareMathOperator{\res}{res}
\DeclareMathOperator{\sgn}{sgn}
\DeclareMathOperator{\len}{len}
\DeclareMathOperator{\Mod}{Mod}
\DeclareMathOperator{\CoMod}{CoMod}
\newcommand{\GL}{\mathbf{GL}}
\newcommand{\GA}{\mathbf{GA}}
\newcommand{\SL}{\mathbf{SL}}
\newcommand{\Sp}{\mathbf{Sp}}
\newcommand{\SO}{\mathbf{SO}}
\newcommand{\fsl}{\mathfrak{sl}}
\newcommand{\fgl}{\mathfrak{gl}}
\newcommand{\fso}{\mathfrak{so}}
\let\mf\mathfrak
\let\ss\scriptstyle
\let\wt\widetilde
\let\ol\overline
\let\ul\underline
\let\bs\backslash
\let\wh\widehat
\newcommand{\pref}[1]{{\bf (}\ref{#1}{\bf )}}
\newcommand{\stab}{\mathrm{st}}
\newcommand{\Cat}{\mathbf{Cat}}
\newcommand{\TCat}{\mathbf{TenCat}}
\newcommand{\id}{\mathrm{id}}
\newcommand{\op}{\mathrm{op}}
\newcommand{\gfin}{\mathrm{gf}}
\newcommand{\pol}{\mathrm{pol}}
\newcommand{\fin}{\mathrm{f}}
\renewcommand{\Vec}{\mathrm{Vec}}
\renewcommand{\fs}{\mathrm{(fs)}}
\renewcommand{\fg}{\mathrm{fg}}
\newcommand{\db}{(\mathrm{db})}
\newcommand{\ub}{(\mathrm{ub})}
\newcommand{\usb}{(\mathrm{usb})}
\newcommand{\dsb}{(\mathrm{dsb})}
\newcommand{\tors}{\mathrm{tors}}
\newcommand{\dpc}{\mathrm{(dp)}}
\newcommand{\upc}{\mathrm{(up)}}
\newcommand{\ds}{\mathrm{(ds)}}
\newcommand{\us}{\mathrm{(us)}}
\newcommand{\dwb}{\mathrm{(dwb)}}
\newcommand{\uwb}{\mathrm{(uwb)}}
\newcommand{\irr}{\mathrm{irr}}
\newcommand{\hatotimes}{\ \wh{\otimes}\ }
\newcommand{\lw}{{\textstyle \bigwedge}}
\newcommand{\bomega}{\mbox{\boldmath$\omega$}}
\DeclareMathOperator{\Ind}{Ind}
\DeclareMathOperator{\Hom}{Hom}
\DeclareMathOperator{\Rep}{Rep}
\DeclareMathOperator{\Fun}{Fun}
\DeclareMathOperator{\LEx}{LEx}
\DeclareMathOperator{\Ext}{Ext}
\title{Stability patterns in representation theory}
\author{Steven V Sam}
\address{Department of Mathematics, University of California, Berkeley, CA}
\email{\href{mailto:svs@math.berkeley.edu}{svs@math.berkeley.edu}}
\urladdr{\url{http://math.berkeley.edu/~svs/}}
\author{Andrew Snowden}
\address{Department of Mathematics, University of Michigan, Ann Arbor, MI}
\email{\href{mailto:asnowden@umich.edu}{asnowden@umich.edu}}
\urladdr{\url{http://www-personal.umich.edu/~asnowden/}}
\date{May 12, 2015}
\thanks{S.~Sam was supported by an NDSEG fellowship and a Miller research fellowship.  A.~Snowden was supported by NSF fellowship DMS-0902661.}
\subjclass[2010]{%
05E05, 
13A50, 
18D10, 
20G05
}
\begin{document}

\maketitle

\begin{abstract}
We develop a comprehensive theory of the stable representation categories of several sequences of groups, including the classical and symmetric groups, and their relation to the unstable categories.  An important component of this theory is an array of equivalences between the stable representation category and various other categories, each of which has its own flavor (representation theoretic, combinatorial, commutative algebraic, or categorical) and offers a distinct perspective on the stable category.  We use this theory to produce a host of specific results, e.g., the construction of injective resolutions of simple objects, duality between the orthogonal and symplectic theories, a canonical derived auto-equivalence of the general linear theory, etc.
\end{abstract}

\tableofcontents

\section{Introduction}

\subsection{Overview}

\article[Stable representation theory]
Let $(G_d)_{d \ge 1}$ be a sequence of groups.  Suppose we have some notion of compatibility for representations of different $G$'s.  For instance, it could be that the irreducibles of each $G_d$ are parametrized in some common manner, and representations of different $G$'s are considered compatible if they have matching irreducible constituents.  (This definition is too weak in practice, but a good first approximation.)  The {\bf stable representation theory} of $G$ is the theory of compatible sequences $(V_d)_{d \ge 1}$ of representations where we care only about what happens for $d$ large.  There are two natural questions to consider:
\begin{itemize}
\item[A.] What is the structure of the stable representation theory of $G$?  To be more precise, we introduce the {\bf stable representation category} $\Rep^{\stab}(G_{\ast})$: informally, its objects are compatible sequences of representations, with two such sequences considered equivalent if they are the same for $d$ large; formally, it can be described as a Serre quotient.  The present question can then be rephrased as:  what is the structure of $\Rep^{\stab}(G_{\ast})$?  This abstract question naturally suggests many concrete ones:  What are the simple (or projective or injective) objects?  What are the projective (or injective) resolutions of simple objects?  How does the tensor product of two simple objects decompose?  Etc.
\item[B.] How does the stable representation theory relate to the representation theory of each $G_d$?  In cases of interest, we construct a {\bf specialization functor} $\Gamma_d \colon \Rep^{\stab}(G_{\ast}) \to \Rep(G_d)$, and the question can be rephrased as:  what is the structure of $\Gamma_d$?  Again, this leads to concrete questions:  What are the exactness (or tensorial) properties of $\Gamma_d$?  What does $\Gamma_d$ (or its derived functors) do to simple (or projective or injective) objects?  Etc.
\end{itemize}

\article[Example: the polynomial theory of the general linear group]
\label{intro:pol}
A complex representation $\rho$ of $\GL(d)$ is {\bf polynomial} if the entries of $\rho(g)$ are polynomial functions of the matrix entries of $g \in \GL(d)$.  The irreducible polynomial representations of $\GL(d)$ are parametrized by partitions of length at most $d$.  A partition of length at most $d$ is also a partition of length at most $d+1$, and this gives rise (roughly) to the notion of compatibility.

The stable theory of these representations is well-understood (see \S \ref{ss:schurfunctors} for a review).  An object of the stable category can be thought of in several different ways:  as a polynomial representation of $\GL(\infty)$, as a formal sum of representations of symmetric groups, or as a Schur functor.  Furthermore, the stable category is distinguished as the universal tensor category.  Each of these descriptions is valuable, and offers its own perspective on the stable theory.  Using them, one can easily give a complete answer to Question~A: the stable category is semi-simple, and the simple objects are naturally in bijection with partitions.

The specialization functor $\Gamma_d$ from the stable category to $\Rep(\GL(d))$ is most easily seen from the perspective of Schur functors:  it is then simply evaluation on $\bC^d$.  As is well-known, $\bS_{\lambda}(\bC^d)$ is the irreducible of $\GL(d)$ associated to $\lambda$ if $\lambda$ has at most $d$ parts, and 0 otherwise.  Since the stable category is semi-simple, this completely describes $\Gamma_d$, and answers Question~B.

\article[Purpose of this paper]
In this paper, we study the stable representation theory of five families of groups:  the three families of classical groups $\GL(d)$ (here we are interested in all rational representations, not just the polynomial ones), $\bO(d)$ and $\Sp(d)$, the symmetric groups $\fS(d)$ and the (non-reductive) general affine groups $\GA(d)$.  We work exclusively with complex representations.  The representation theory of these groups is so ubiquitous that our objective hardly requires motivation; nonetheless, let us provide some:

\begin{itemize}
\item One might hope to obtain a better understanding of problems at finite level from the stable theory.  In fact, this strategy has already been carried out, for at least one problem, by Koike and Terada \cite{koike}, \cite{koiketerada}: They gave good answers to both questions for classical groups at the level of characters.  Furthermore, they were able to understand the stable decomposition of tensor products of simple modules.  Thus they were able to understand the decomposition of tensor products at finite level (by combining the stable result with their answer to Question~B), solving a basic problem in representation theory.

\item One can reasonably expect the stable categories to relate to other parts of representation theory.  Indeed, this turns out to be the case:  we will see that these categories satisfy elegant universal properties and are closely related to Deligne's interpolation categories \cite{deligne}.

\item Many examples of compatible sequences of representations of $\fS(d)$ occur in nature, e.g., in the study of configuration spaces. More examples are listed by Church, Ellenberg, and Farb in \cite{fimodules} (see also \cite{churchfarb}), where they are called FI-modules. Due to such examples, it is fair to say that compatible sequences are interesting in their own right.
\end{itemize}

\article[Results of this paper]
Our main results give a thorough answer to Question~A, for each of the five families of groups under consideration.  Our ``answer'' consists of a collection of equivalences between $\Rep^{\stab}(G_{\ast})$ and several other categories, analogous to the picture sketched in \pref{intro:pol}.  Each of these categories has its own flavor --- representation theoretic, combinatorial, commutative algebraic, or categorical --- and thus offers a distinct lens through which any given question about $\Rep^{\stab}(G_{\ast})$ can be viewed.  From our perspective, homological questions are typically best attacked in the commutative algebra category, where tools like the Koszul complex and classical invariant theory are available.  (The commutative algebra description of stable categories is, in our opinion, the most novel aspect of this paper, and makes essential use of objects called twisted commutative algebras.  It has no interesting analogue in the polynomial theory of $\GL$.)  We answer all of the concrete questions mentioned in the general discussion of Question~A, as well as many more.

We also give a thorough answer to Question~B.  However, most of the real work on this --- computing the derived functors of the specialization functor on simple objects --- was done elsewhere (\cite{ssw} for the classical groups and \cite{symc1} for the symmetric group).  In this paper, we develop the basic theory of the specialization functor, and explain how the cited results (which use a different language) can be rephrased using it.

We emphasize that there is no single result that can be pointed to as the main result:  the final product of this paper is a theory which describes stable representation theory and its relation to representation theory at finite level.

\article
Unfortunately, and despite the uniformity of our results, each of the five families is treated separately.  This accounts for the bulk of the paper.  It also accounts for its title:  each case fits nearly the same pattern, but we have no unifying theory.  In fact, there are additional families which fit these patterns as well --- such as wreath products of symmetric groups with finite groups and generalizations of the general affine group --- and probably more still to be discovered.  It would therefore be of great interest to find a general theory, if for no other reason than to know exactly how far the phenomena observed here extend.

\subsection{Descriptions of stable representation theory}
\label{intro:cats}

\noarticle
As mentioned, the main results of this paper establish equivalences between stable representation categories and various other categories.  We now describe these categories and some of the constructions which yield the equivalences.

\article[Infinite rank groups]
In the families of groups under consideration, $G_d$ is naturally a subgroup of $G_{d+1}$.  We can therefore form the limit group $G=\bigcup_{d \ge 1} G_d$.  For example, when $G_d=\GL(d)$, the group $G=\GL(\infty)$ consists of invertible infinite matrices which differ from the identity matrix at only finitely many entries.  In any compatible sequence $(V_d)$ of representations, we have a natural inclusion $V_d \subset V_{d+1}$.  The limit $V=\bigcup_{d \ge 1} V_d$ is therefore a representation of $G$.  It is clear that $V$ is a stable invariant of $(V_d)$, that is, it only depends on $V_d$ for $d$ large.  In fact, no information is lost by passing from $(V_d)$ to $V$.  It therefore suffices to study the representations of $G$ which arise from compatible sequences.

Fortunately, this class of representations is easily distinguished.  Let $\bV=\bC^{\infty}=\bigcup_{d \ge 1} \bC^d$ be the standard representation of $\GL(\infty)$ and let $\bV_*=\bigcup_{d \ge 1} \bC^d{}^*$ be its restricted dual.  In each of the five cases under consideration, $G$ is naturally a subgroup of $\GL(\infty)$, and so $\bV$ and $\bV_*$ are representations of $G$.  We say that a representation of $G$ is {\bf algebraic} (resp.\ {\bf polynomial}) if it appears as a constituent of a finite direct sum of tensor powers of $\bV$ and $\bV_*$ (resp.\ tensor powers of $\bV$).  We write $\Rep(G)$ (resp.\ $\Rep^{\pol}(G)$) for the category of algebraic (resp.\ polynomial) representations of $G$.  The algebraic representations are those which come from compatible sequences, but in certain cases we will want to restrict attention to polynomial representations.

We therefore have an equivalence $\Rep^{\stab}(G_{\ast})=\Rep(G)$.  In fact, we never precisely define $\Rep^{\stab}(G_{\ast})$, and so this equivalence is by fiat.  Although $\Rep^{\stab}(G_{\ast})$ is never formally employed, it is the motivation behind everything that we do, and a constant source of intuition.

\begin{remark}
When $\bV$ and $\bV_*$ are isomorphic as representations of $G$, as is the case for the orthogonal, symplectic, and symmetric groups, there is no distinction between algebraic and polynomial representations.  Furthermore, algebraic representations of the general affine group are of a somewhat different nature from the other representations studied in this paper, and are therefore not considered.  Hence, there are six classes of representations under consideration:  polynomial representations of $\GL$ and $\GA$ and algebraic representations of $\GL$, $\bO$, $\Sp$ and $\fS$.  The polynomial representations of $\GL$ have been well-understood for some time, and were briefly discussed in \pref{intro:pol}.
\end{remark}

\article[(Lack of) semi-simplicity]
\label{intro:ss}
Before continuing, we highlight an important feature of the categories $\Rep(G)$:  they are not semi-simple.  (See below for an example.)  This is in stark contrast to the case of $\Rep^{\pol}(\GL)$, and is why these categories  are more complicated than it.  In particular, it implies that character theory does not capture the whole picture.

\begin{example}
Consider the pairing $\bV \otimes \bV_* \to \bC$, which defines a surjection in $\Rep(\GL)$.  We claim that it is not split.  To see this, it is enough to show that $\bV \otimes \bV_*$ has no invariants.  Think of $\bV \otimes \bV_*$ as endomorphisms of $\bV$ which kill all but finitely many basis vectors; the group $\GL(\infty)$ acts on this space by conjugation.  Any endomorphism commuting with $\GL(\infty)$ is a scalar matrix.  Since scalar matrices do not belong to $\bV \otimes \bV_*$, there are no invariants.

In fact, one can see this lack of semi-simplicity in $\Rep^{\stab}(\GL(\ast))$, without passing to the infinite group.  The point is that the decomposition of $\bC^d \otimes \bC^d{}^*$ as a representation of $\GL(d)$ is not stable:  the diagram
\begin{displaymath}
\xymatrix{
\bC \ar[r] \ar@{=}[d] & \bC^d \otimes \bC^d{}^* \ar[d] \\
\bC \ar[r] & \bC^{d+1} \otimes (\bC^{d+1})^* }
\end{displaymath}
does not commute, where the horizontal maps are the inclusions of the trivial isotypic pieces and the right vertical map comes from standard inclusions.  (Note, however, that the obvious diagram with the horizontal arrows reversed does commute.  This is why $\bC$ is a quotient of $\bV \otimes \bV_*$ in $\Rep(\GL)$, but not a subobject.)
\end{example}

\article[Diagram algebras] \label{art:diagram-algebras}
There is an obvious action of the symmetric group $S_n$ on $(\bC^d)^{\otimes n}$ which commutes with the action of $\GL(d)$.  Schur--Weyl duality states that the image of $\bC[S_n]$ in $\End((\bC^d)^{\otimes n})$ is the full centralizer of $\GL(d)$.  Furthermore, it provides a decomposition
\begin{displaymath}
(\bC^d)^{\otimes n} = \bigoplus_{\vert \lambda \vert=n,\ \ell(\lambda) \le d} V_{\lambda} \otimes \bM_{\lambda},
\end{displaymath}
as a representation of $\GL(d) \times S_n$.  Here the sum is over partitions $\lambda$ of $n$ with at most $d$ rows, $V_{\lambda}$ is the irreducible of $\GL(d)$ with highest weight $\lambda$ and $\bM_{\lambda}$ is the irreducible of $S_n$ corresponding to $\lambda$.  This decomposition stabilizes for $d \ge n$, and provides a natural bijection between the irreducible representations of $S_n$ and the irreducible polynomial representations of $\GL(d)$ in which the center acts through the $n$th power character.  Letting $d=\infty$, we obtain a bijection between irreducible representations of symmetric groups and irreducible polynomial representations of $\GL(\infty)$.  In fact, we obtain an equivalence between the category $\Rep(S_{\ast})$ of sequences $(M_n)_{n \ge 0}$, where $M_n$ is a representation of $S_n$, and the category $\Rep^{\pol}(\GL)$.  This provides a combinatorial description of the stable polynomial representation theory of $\GL$.

Our first piece of real progress is to generalize the above picture to the other theories under consideration.  For expository purposes, we restrict ourselves to the orthogonal group here.  The first step in carrying out this generalization is to understand the centralizer of the $\bO(d)$ action on $(\bC^d)^{\otimes n}$.  Fortunately, this is well-known:  it is the Brauer algebra $\cB_n(d)$.  This algebra is generated by three obvious operations:
\begin{itemize}
\item The transposition $\sigma_{i,j}$ of the $i$th and $j$th tensor factors.
\item The contraction $c_{i,j} \colon (\bC^d)^{\otimes n} \to (\bC^d)^{\otimes (n-2)}$, which applies the pairing to the $i$th and $j$th tensor factors.
\item The co-contraction $c_{i,j}^* \colon (\bC^d)^{\otimes (n-2)} \to (\bC^d)^{\otimes n}$, which inserts a copy of the invariant in $(\bC^2)^{\otimes 2}$ in the $i$th and $j$th tensor factors.
\end{itemize}
Of course, contraction and co-contraction are not endomorphisms of $(\bC^d)^{\otimes n}$.  However, one can build non-trivial endomorphisms of $(\bC^d)^{\otimes n}$ using them, e.g., $c^*_{1,3} c^*_{2,4} c_{1,2} c_{3,4}$.  These are the endomorphisms which generate the Brauer algebra.

We would now like to take $d=\infty$.  However, there is a problem:  $\bV^{\otimes 2}$ has no invariant (this is similar to the example in \pref{intro:ss}), and so there is no co-contraction.  Our solution to this problem is to simply discard co-contraction.  We thus consider the algebra $\cB$ generated by transpositions and contractions, which acts on the space $K=\bigoplus_{n \ge 0} \bV^{\otimes n}$ and commutes with the action of $\bO(\infty)$. Given a $\cB$-module $M$, the space $\Hom_{\cB}(M, K)$ is naturally a representation of $\bO(\infty)$.  We show that this construction defines a contravariant equivalence between a certain category of $\cB$-modules and $\Rep(\bO)$.

Actually, we find it more convenient to state the above result in a different language.  Define the {\bf downwards Brauer category}, denote $\db$, as follows:
\begin{itemize}
\item The objects are finite sets.
\item A morphism $L \to L'$ consists of a matching $\Gamma$ on $L$ and a bijection $L \setminus V(\Gamma) \to L'$, where $V(\Gamma)$ denotes the set of vertices of $\Gamma$.
\end{itemize}
The terminology ``downwards'' refers to the fact that morphisms cannot go from smaller sets to larger ones:  if $L \to L'$ is a morphism then $\#L' \le \#L$.  A {\bf representation} of $\db$ is a functor $\db \to \Vec$.  Representations of $\db$ are closely related to $\cB$-modules.  For instance, the $\cB$-module $K$ corresponds to the functor $\cK$ which attaches to a finite set $L$ the space $\cK_L=\bV^{\otimes L}$; a morphism $L \to L'$ in $\db$ induces a morphism $\cK_L \to \cK_{L'}$ via contractions and permutations. The equivalence mentioned above can be rephrased in this new language as:  the category of finite length representations of $\db$ is contravariantly equivalent to $\Rep(\bO)$.  We obtain a covariant equivalence by using the {\bf upwards Brauer category} instead.  This is our combinatorial description of the stable representation theory of the orthogonal group.

\begin{remark} \label{rmk:symgpdirect}
With the exception of the symmetric group, the combinatorial description of the other categories is very similar to the above.  For the symmetric group, the category that replaces the downwards Brauer category is the downwards partition category (which relates to partition algebras).  It is exceptional in that it is not {\bf weakly directed}:  the maps do not all go in one direction.  This greatly complicates the analysis of its representation category.
\end{remark}

\article[Twisted commutative algebras] \label{art:description-tca}
Suppose that $M$ is a representation of the downwards Brauer category $\db$.  Evaluating $M$ on the finite set $\ul{n}=\{1, \ldots, n\}$, we obtain a representation $M_{\ul{n}}$ of $\Aut_{\db}(\ul{n})=S_n$.  We can apply Schur--Weyl duality to this representation to obtain a polynomial representation $V_n$ of $\GL(\infty)$.  There is a map $\alpha_n \colon \ul{n} \to \ul{n+2}$ in $\db$, given by a graph with a single edge (in fact, there are several such maps).  A careful examination of how $\alpha_n$ interacts with Schur--Weyl duality shows that it corresponds to a map $\beta_n \colon \Sym^2(\bV) \otimes V_n \to V_{n+2}$ of representations of $\GL(\infty)$.  Furthermore, the relations between the $\alpha_n$ for various $n$ translate exactly to the $\beta_n$ defining the structure of a $\Sym(\Sym^2(\bV))$-module on $\bigoplus_{n \ge 0} V_n$.

A {\bf twisted commutative algebra} (tca) is a commutative associative unital algebra endowed with an action of $\GL(\infty)$ by algebra homomorphisms, under which it constitutes a polynomial representation.  A {\bf module} over a tca $A$ is an $A$-module $M$, in the usual sense, endowed with a compatible action of $\GL(\infty)$, under which it too forms a polynomial representation.  The previous paragraph can be rephrased in this language as follows:  the category $\Rep(\bO)$ is equivalent to the category of finite length modules over the tca $\Sym(\Sym^2(\bV))$.  This is our commutative algebraic description of the stable representation theory of the orthogonal group.

\begin{remark}
The module category over a non-trivial tca is necessarily not semi-simple.  Therefore the above description of representations of $\bO(\infty)$ is truly specific to the infinite case:  there is no similar description of the representation category of $\bO(d)$.  The module category over the trivial tca $\bC$ is, by definition, the semi-simple category $\Rep^{\pol}(\GL)$.
\end{remark}

\article[Generalizations of Schur functors] \label{art:generalize-schur}
The category of all functors $\Vec \to \Vec$ is an abelian category.  An object of this category is called {\bf polynomial} if it appears as a subquotient of a direct sum of objects of the form $V \mapsto V^{\otimes n}$.  Let $\cS$ be the full subcategory of polynomial functors; this is the {\bf Schur algebra}.  The theory of Schur functors provides an equivalence between $\cS$ and $\Rep^{\pol}(\GL)$.

We present an analogous theory for the other cases under consideration.  For now, we consider only the orthogonal case.  Let $T_0$ be the category of pairs $(V, \omega)$, where $V$ is a finite dimensional vector space and $\omega$ is a symmetric bilinear form on $V$.  (We have not placed any non-degeneracy conditions on $\omega$, but one can do this without changing what follows.)  We associate to every object of $\Rep(\bO)$ a natural functor $T_0 \to \Vec$, which we call an {\bf orthogonal Schur functor}.  We show that the resulting functor $\Rep(\bO) \to \Fun(T_0, \Vec)$ is fully faithful.  Thus the stable representation theory of the orthogonal group can be interpreted in terms of orthogonal Schur functors.

\article[Universal descriptions]
The category $\Rep^{\pol}(\GL)$ is distinguished by an elegant universal property:  it is the universal abelian tensor category.  Precisely, to give a tensor functor from $\Rep^{\pol}(\GL)$ to some abelian tensor category $\cA$ is the same as to give an object of $\cA$ (see \pref{art:schur-univ} for details).  We show that the other categories under consideration satisfy similar universal properties.  For instance, to give a left-exact tensor functor from $\Rep(\bO)$ to some tensor category $\cA$ is the same as giving a pair $(A, \omega)$, where $A$ is an object of $\cA$ and $\omega$ is a symmetric bilinear form on $A$.

\subsection{Additional results, applications, and remarks}

\noarticle
We apply the descriptions of the stable representation categories discussed in the previous section to prove an array of other results.  We now describe some of them, and make some additional comments.

\article[Structural results]
We obtain many structural results in each of the five cases:  classification of simple, injective, and projective objects, blocks of the category, minimal resolutions of simple objects and computation of the $\Ext$ groups between simple objects.  Some of these results are contained in \cite{koszulcategory}.  In many cases, however, our proofs are more natural, due to the additional tools at our disposal.  For instance, \cite{koszulcategory} calculates the $\Ext$ groups between simples for $\GL$, but not in other cases.  We give an easy calculation in all cases using the tca viewpoint.

\article[The specialization functor] \label{art:specializationfunctor}
As stated, one of the main aims of our theory is to relate stable representation theory to representation theory at finite level.  The link between the two is provided by the {\bf specialization functor}.  We discuss the orthogonal case here, the others being similar.  The standard representation $\bC^d$ of $\bO(d)$ admits, by definition, an invariant symmetric form.  Thus, by the universal property of $\Rep(\bO)$, there is a corresponding left-exact tensor functor
\begin{displaymath}
\Gamma_d \colon \Rep(\bO) \to \Rep(\bO(d)).
\end{displaymath}
This is the specialization functor.  We give a much more concrete description of this functor in terms of the representation theory:  $\Gamma_d(V)$ is the space of invariants of $V$ under a certain subgroup $H_d$ of $\bO(\infty)$.  However, by far the most interesting result on $\Gamma_d$ comes from \cite{ssw}:  if $V$ is a simple object of $\Rep(\bO)$ then $\rR^i \Gamma_d(V)$ either vanishes for all $i$ or is non-zero for at most one $i$, and is then an irreducible representation of $\bO(d)$.  Furthermore, there is a combinatorial rule,  reminiscent of the Borel--Weil--Bott theorem, which gives the index of non-vanishing (if it exists) and the resulting irreducible of $\bO(d)$.  This had previously been proved at the level of Euler characteristic by Koike--Terada \cite[\S 2.4]{koiketerada}.  See \pref{br:example} for a specific example of how this theory can be applied to problems at finite level.

\article[Orthogonal--symplectic duality]
We show that the stable representation theory of the orthogonal group is dual to that of the symplectic group, that is, there is a natural equivalence of categories $\Rep(\bO) \cong \Rep(\Sp)$.  This equivalence is an {\bf asymmetric} tensor functor:  it commutes with tensor products but does not respect the commutativity isomorphism of the tensor product.  The proof is short enough to recapitulate here:  transposition of partitions defines an asymmetric auto-equivalence of $\Rep^{\pol}(\GL)$ which interchanges the tca's $\Sym(\Sym^2(\bV))$ and $\Sym(\lw^2(\bV))$; it therefore induces an equivalence between the module categories, the first of which is $\Rep(\bO)$, the second $\Rep(\Sp)$.

This result was first proved (to the best of our knowledge) at the level of representation rings by Koike--Terada \cite[Theorem 2.3.2]{koiketerada}.  Later, in \cite[Corollary~6.11]{koszulcategory}, the equivalence was established at the level of abelian categories (ignoring the tensor structure).  Our result is the common generalization of the two. Serganova has also obtained this result by making use of the infinite orthosymplectic Lie superalgebra \cite[\S 4.3]{serganova-inf}.

\article[The groups $\GA$ and $\fS$] \label{art:intro-degeneration}
It follows from our descriptions of the categories $\Rep^{\pol}(\GA)$ and $\Rep(\fS)$ that they are equivalent as abelian categories. In fact, we realize this equivalence as an ``infinite Schur--Weyl duality,'' see \pref{cor:sym-ga}. However, the two categories are not equivalent as tensor categories:  the structure constants for tensor products of simple objects are given by the Littlewood--Richardson coefficients in the former and the stable Kronecker coefficients in the latter.  Thus $\Rep^{\pol}(\GA)$, as a tensor category, can be regarded as a degeneration of $\Rep(\fS)$ (see \pref{ques:degen} for details).  The category $\Rep(\fS)$ seems to be the most natural categorical home of the stable Kronecker coefficients.  

\article[The Fourier transform and Koszul duality]
It follows from our results that each of the categories under consideration is Koszul.  The category $\Rep^{\pol}(\GA) \cong \Rep(\fS)$, is Koszul self-dual: this was established in \cite{symc1}, where we constructed a canonical auto-equivalence of the derived category, called the {\bf Fourier transform}, realizing the auto-duality.  Here, we extend this construction to $\Rep(\GL)$, showing that it is its own dual.  The Fourier transform now involves a small choice:  it is only canonical up to a $\bZ/2$ ambiguity.  We also show that $\Rep(\bO)$ and $\Rep(\Sp)$ are neither self-dual nor dual to each other; the dual categories are just different things.

The Koszul properties of $\Rep(\GL)$, $\Rep(\bO)$, and $\Rep(\Sp)$ were investigated in \cite{koszulcategory}.  It was shown that these categories are Koszul, and that the first is self-dual.  However, \cite{koszulcategory} does not construct the nearly canonical auto-duality of $\Rep(\GL)$ that we do:  the proof of self-duality in loc.~cit.\ is through an explicit computation with quadratic rings.

\article[Tensor product and branching rules]
We determine the multiplicity of a simple object in the tensor product of two other simple objects in the categories $\Rep(\GL)$ and $\Rep(\bO) \cong \Rep(\Sp)$.  These results had previously been obtained by Koike \cite{koike}.  However, the proof of loc.~cit.\ was by character calculations.  Our proof is more conceptual, using a general principle to reduce the problem to a simple exercise on the symmetric group.  Combined with results on the specialization functor, this recovers tensor product formulas at finite level, also obtained by Koike.

One might hope to apply the same method to the symmetric groups and obtain formulas for the stable Kronecker coefficients. Unfortunately, we have not been able to do this.

A branching rule describes how an irreducible representation of a group decomposes under a subgroup.  There are several interesting embeddings of classical groups, each giving rise to a branching rule.  For example, tensor product decompositions are branching rules for diagonal embeddings.  We discuss these rules in \S\ref{sec:relations}, and show how our viewpoint can be used to rederive some of them.

\article[Odd symplectic groups]
As discussed in \pref{art:description-tca}, the stable representation categories that we consider are equivalent to categories of finite length modules over certain tca's. Precisely, the categories $\Rep^{\pol}(\GL)$, $\Rep^{\pol}(\GA)$, $\Rep(\bO)$, $\Rep(\Sp)$, $\Rep(\GL)$ correspond to the tca's $\bC$, $\Sym(\bV)$, $\Sym(\Sym^2(\bV))$, $\Sym(\lw^2(\bV))$, and $\Sym(\bV \otimes \bV')$ (here $\bV'$ denotes a separate copy of $\bV$, and the last is a 2-variable tca). These tca's share an important property:  they are multiplicity-free.  There is another symmetric algebra tca with this property:  $\Sym(\bV \oplus \lw^2 \bV)$.  This tca corresponds to the so-called odd symplectic groups \cite{proctor}. We do not discuss these groups in this paper, but the general pattern we establish applies to them as well.

\article[Opposites of stable categories]
We describe the opposite of each of the categories listed in \S \ref{intro:cats}:  the opposite of the category of algebraic representations of $G$ is the category of pro-algebraic representations of $G$; the opposite of the category of representations of the downwards Brauer algebra is the category of representations of the upwards Brauer algebra; etc.  These identifications are natural, and come from various dualities.  Having these opposite points of view can be convenient:  for example, a certain construction we use requires coalgebras in the usual categories; in the opposite categories it uses algebras, which are easier to contemplate.

\article[Quantum variants]
We expect that the content of this paper will work in the setting of ``quantum multilinear algebra'' in the sense of \cite{hh}. In this setting, we replace the symmetric monoidal category of $\bC$-vector spaces with a braided monoidal category of $\bC(q)$-vector spaces associated to an R-matrix.  The groups $\GL(\infty)$, etc., get replaced with their quantum analogues, and the symmetric groups are replaced by certain Hecke algebras.  There are likely many other changes to be made, but we have not thought through the details.

\subsection{Relation to other work}

\article[Polynomial theory of $\GL$]
The primary antecedent of our work is the polynomial theory of the general linear group.  This theory was developed by Schur and Weyl in the 1920s.  They completely understood the connection to symmetric groups, the theory of Schur functors, and the behavior of the specialization functor.  They also understood the stability properties of the theory, in the language of symmetric functions.

\article[Stability of character theory]
It is natural to wonder if the description of the stable polynomial character theory of $\GL$ in terms of symmetric functions can be extended to other settings, such as the other classical groups.  It was known to Littlewood \cite{littlewood} that the character theory of these groups is {\it eventually} stable: calculations in an idealized infinite setting remain valid if we specialize to a sufficiently large finite setting. The question of what happens if we remove ``sufficiently large'' was studied by Koike and Terada \cite{koiketerada, koike} (see also \cite{king}).  They constructed analogues of the ring of symmetric functions, and specialization homomorphisms down to the character rings at finite level.  For symmetric groups, the existence of such a ring follows from Murnaghan's theorem (which was first completely proved in \cite{littlewood2}).  As far as we are aware, the specialization map in this context was not studied until \cite{symc1}, and even there it is phrased in a different language.  These stable character rings can be viewed as the Grothendieck groups of the categories we study.  Furthermore, the complicated behavior of the specialization maps can be seen as a reflection of the lack of semi-simplicity of these categories.

\article[Centralizer algebras]
An important point in the Schur--Weyl theory is the determination of the centralizer of $\GL(d)$ acting on tensor powers of its standard representation $\bC^d$.  It is also natural to ask how this result extends to other contexts (see \cite{weyl} for this perspective).  The situation for the orthogonal and symplectic groups was considered by Brauer \cite{brauer}. The diagram algebras are named Brauer algebras in his honor, and many of their fundamental properties were worked out by Wenzl \cite{wenzl}. The centralizer algebras for mixed tensor representations of the general linear group have been considered by many authors, but, to the best of our knowledge, the first systematic study was given in \cite{koike} and \cite{walledbrauer}. This algebra was later named the walled Brauer algebra due to the description of its diagrams. Finally, for the symmetric group, the centralizer algebra seems to have been first studied in  \cite{martin} and \cite{jones}, and is now called the partition algebra.  The ideas and results of these works are important for us, as they furnish the combinatorial descriptions of our categories.

\article[Representations of infinite rank groups]
To the best of our knowledge, the categories $\Rep(\GL)$, $\Rep(\bO)$, and $\Rep(\Sp)$ first appeared in the works of Penkov and his collaborators \cite{penkovstyrkas, penkovserganova, koszulcategory} (where they are denoted $\mathbb{T}_{\mf{g}}$) and in the work of Ol$'$shanski{\u \i} \cite{olshanskii}. In the first group of references, these categories are studied in the context of locally finite Lie algebras.  They were defined by certain Lie-theoretic conditions and then later shown to coincide with the representations which appear as subquotients of (mixed) tensor representations. (Our point of view in this paper is to ignore these characterizations and just define the categories in terms of mixed tensor representations.)  Some of the results we establish in this paper occurred earlier in these works, though often with different proofs; we have tried to be careful to point out the overlaps.  We have not found $\Rep(\fS)$ in the literature, but we would not be surprised if it is there.

\article[Twisted commutative algebras]
The commutative algebraic description of our categories is in terms of twisted commutative algebras.  Tca's seem to have been around since the 1970s, and are closely related to Joyal's theory of species.  However, to the best of knowledge, they were first treated from the perspective of commutative algebra in \cite{snowden}.  We have since developed the theory further in \cite{symc1} and \cite{expos}.  One of our motivations for studying the representation theory of $\bO(\infty)$ was to understand $\bO(\infty)$-analogues of tca's.  It came as a surprise to us that this representation theory could be described using tca's, the very objects we set out to generalize!

\article[Deligne's categories]
Deligne has introduced the idea of representation theory in ``complex rank'' \cite{deligne}: he defined family of categories $\Rep(\fS(\delta))$ depending on a complex parameter $\delta$ which, in a sense, interpolates the categories $\Rep(\fS(n))$ for $n$ a positive integer. Similar definitions exist for the classical groups.  One can interpret the categories we consider in this paper as a limit of Deligne's categories as the parameter $\delta$ goes to infinity.  Furthermore, objects of, e.g., $\Rep(\bO(\delta))$ can be defined as representations of a certain Brauer category, which is closely related to the downwards Brauer categories that we explore in this paper.

\article[Spinors and oscillators] \label{art:spin-osc}
The spinor representations of the orthogonal groups (and many related representations) are not algebraic in the sense of this paper, and so do not fit into the theory we develop here. In fact, a stable theory of spinor representations can be developed along the lines of the formalism in this paper, which we do in \cite{spincat}. This issue does not arise for the symplectic Lie algebra, but a systematic consideration of going to the infinite rank limit suggests that the role of spinors are played by oscillators, which are infinite-dimensional even for finite rank symplectic Lie algebras. This is also discussed in \cite{spincat}.

\subsection{Future directions}

\article[Classical superalgebras] \label{art:future-super}
Let $V$ be a super (i.e., $\bZ/2$-graded) vector space. Its symmetries are encoded by the general linear Lie superalgebra $\fgl(V)$.  The polynomial representations (those constructed from tensor operations on $V$) of $\fgl(V)$ are well-understood \cite{bereleregev, sergeev}, and a next natural step is to investigate the category of mixed tensor representations (those constructed from tensor operations that involve both $V$ and $V^*$). The decomposition of mixed tensor powers into indecomposable summands has been investigated in \cite{BS:IV} and \cite{comeswilson}. We believe that the category of {\it rational Schur functors}, whose study is initiated in \S\ref{sec:rationalschur}, is crucial to a further understanding of these representations. For the orthosymplectic algebra (automorphisms of $V$ preserving a non-degenerate supersymmetric form), similar remarks apply for the category of orthogonal (resp., symplectic) Schur functors studied in \S\ref{sec:orthosymschur}.

While the character problem for these algebras is in principle solved (by reducing to the combinatorics of Kazhdan--Lusztig polynomials), see for example \cite{serganova, brundan, chenglamwang}, we do not know of a general tensor construction of the irreducible representations:  they do not all occur in mixed tensor spaces. However, the notion of {\it super duality}, see for example \cite{chenglam, chenglamwang}, suggests that this problem is closely related to the categories $\Rep(\GL)$, $\Rep(\bO)$, and $\Rep(\Sp)$. In a special case relevant here this was established first in \cite{BS:IV} based on \cite{BS:III}. The involved categories for the Lie superalgebra, $\fgl_n$ and the walled Brauer algebra then all occur as some idempotent truncation of a generalized Khovanov algebra. We point out the conjecture in the end of the introduction of \cite{BS:grading} on the connection between Deligne's category and the diagrammatic aspects of the walled Brauer algebra.

\article[Quiver descriptions of categories of homogeneous bundles] 
The general affine group $\GA$ is a stabilizer subgroup of a torus bundle over a projective space (specifically, the total space of the line bundle $\cO(1)$ minus its zero section). From the equivalence between homogeneous bundles on a homogeneous space and representations of the corresponding stabilizer subgroup, our model of $\Rep^{\pol}(\GA)$ gives a quiver description of the category of ``polynomial'' homogeneous bundles on a torus bundle over infinite dimensional projective space. 

A related setup was considered in \cite{ottavianirubei}, where quiver descriptions are given for homogeneous bundles on $G/P$, with $G$ simply-laced and $P$ a parabolic subgroup of Hermitian symmetric type (in particular, $G/P$ is compact in these cases). On a finite-dimensional projective space, each homogeneous bundle can be written as a polynomial homogeneous bundle twisted by a line bundle $\cO(d)$ for some $d \in \bZ$. Pulling back this homogeneous bundle to the total space of $\cO(1)$ has the effect of forgetting the twist, so one can interpret a suitable truncation of our quiver model of $\Rep^{\pol}(\GA)$ as a quotient of the quiver considered in \cite{ottavianirubei} for projective space.

The method of \cite{ottavianirubei} was through direct calculations. By working in our general framework of diagram categories, we hope to give conceptual descriptions of categories of homogeneous bundles on homogeneous spaces in future work.

\article[Structure of tca's]
As mentioned, twisted commutative algebras play an important role in this work.  The most basic example of a tca is $\Sym(\bV)$, which can be thought of as the polynomial ring $\bC[x_1, x_2, \ldots]$ equipped with its natural $\GL(\infty)$ action.  We gave a detailed analysis of the category of modules over this tca in \cite{symc1}. This category was also studied in \cite{fimodules}, where such modules are called FI-modules.

Having understood the simplest tca, it is natural to try to understand more complicated ones.  The next most simple ones to understand are polynomial rings of the form $\Sym(\bV^{\oplus n})=\bC[x_{i,j}]$, where $1 \le i \le n$ and $j \ge 1$.  We expect that the perspective of this paper will be useful in the study of such modules.  In particular, modules over the ``generic fiber'' of these tca's should be closely related to representations of certain generalizations of the general affine group.

The examples beyond $\Sym(\bV^{\oplus n})$ are much more complicated, as they enter the realm of unbounded tca's. We expect that $\Sym(\Sym^2(\bV))$ and $\Sym(\lw^2(\bV))$ might be tractable to analyze, however, and in recent joint work with Nagpal \cite{deg2tca}, the categories $\Rep(\bO)$ and $\Rep(\Sp)$ served an essential role in establishing that these algebras are noetherian.

\article[Pure free resolutions over quadric hypersurfaces] \label{art:mot-bs}
One of our original motivations for trying to understand the algebraic framework behind the work of Koike--Terada, and to better understand the structure of tca's, was to construct ``pure free resolutions'' over the homogeneous coordinate ring of a smooth quadric hypersurface. This construction is known for polynomial rings \cite{efw, es:bs} and is the first step in the proof \cite{es:bs} of the Boij--S\"oderberg conjectures \cite{boijsoderberg}, which describe the linear inequalities that define the cone of graded Betti numbers of finitely generated modules.

The construction in \cite{efw} naturally lives in the world of Schur functors, and it was observed by the first author and Jerzy Weyman that certain formal manipulations (i.e., on the level of the character ring) of these resolutions would produce the desired resolutions over a quadric. These manipulations made use of two constructions of \cite{koiketerada}: a certain transpose operation, and the specialization homomorphism mentioned earlier.  The natural setting for the specialization homomorphism is the specialization functor studied in \S\ref{sec:orthosymschur}. Unfortunately we have not yet understood the meaning of the Koike--Terada transpose operation.

\subsection{Organization}

\noarticle
In \S\ref{sec:prelim} we develop technical results used in the rest of the paper. The material in \S\ref{ss:schurfunctors} and \S\ref{ss:tca} is used implicitly throughout. The other subsections in \S\ref{sec:prelim} are technical, and on a first reading, we suggest that the reader skip them and refer back for the referenced statements as necessary.

The next four sections are devoted to the analysis of the stable representation theory of the five families of groups previously mentioned. The general linear group is treated in \S\ref{sec:gl}.  Due to their similarity, the orthogonal and symplectic groups are treated simultaneously in \S\ref{sec:osp}.  In \S\ref{sec:alin} we handle the general affine group.  Many results on this group were developed in \cite{symc1}, so this section is brief.  We tackle the symmetric group in \S\ref{sec:symgroup}.  This case is more involved than the others since the relevant diagram category is not weakly directed.  There is little interdependence between these sections, so the reader is encouraged to skip ahead to whichever groups are of the most interest.  However, the general linear group is treated in the most detail, with similar arguments omitted in later sections.

In \S\ref{sec:relations}, we examine the relationships between different categories (branching rules, including tensor products). We close with \S \ref{sec:ques}, which lists some open problems.

\subsection{Notation and conventions}

\noarticle
We work over the field of complex numbers $\bC$. Everything in this paper can be done over the rational numbers $\bQ$ if one works with split forms of the groups.  We list here some particularly important notation and conventions used throughout the paper:
\begin{itemize}
\item $\Vec$ = category of complex vector spaces.
\item $V^*$ = dual of a vector space $V$.
\item Abelian category = $\bC$-linear abelian category.
\item $\cA^{\fin}$ = category of finite length objects in $\cA$ ($\cA$ is an abelian category).
\item Tensor category = abelian category with biadditive symmetric monoidal functor.
\item Tensor functor = additive strict \emph{symmetric} monoidal functor.
\item Asymmetric tensor functor, same as above but not symmetric.
\item $\LEx(\cA, \cB)$ = left-exact functors $\cA \to \cB$.
\item $\Fun^{\otimes}(\cA, \cB)$ = tensor functors $\cA \to \cB$.
\item $\bV=\bC^{\infty}=\bigcup_{d \ge 1} \bC^d$.  We let $e_1, e_2, \ldots$ be a basis of $\bV$ compatible with this union.
\item $\bV_*=\bigcup_{d \ge 1} \bC^d{}^*$ is the restricted dual of $\bV$.  The $e_i^*$ form a basis of $\bV_*$.
\item $S_n$, and (in \S \ref{sec:symgroup}) $\fS(n)$, is the symmetric group on $n$ letters.
\item $\un$ is the set $\{1,\ldots,n\}$.
\end{itemize}
\newpage
Some other notations defined in the body of the paper include:
\begin{multicols}{2}
\noindent $\Mod_\Lambda$, $\Mod_\Lambda^\fin$, $\Mod_\Lambda^{\gfin}$ ($\Lambda$ a category), \pref{art:repncategory}\\
$f_*$, $f_\#$ ($f$ a functor), \pref{diag:op}\\
$S_x(V)$, $P_x(V)$, $I_x(V)$, \pref{art:projinj}\\
$\hom_\Lambda$, \pref{diag:hom}\\
$\otimes^\Lambda$, \pref{diag:tens}\\
$\cK$, $\Phi$, $\Psi$, \pref{ker-func}\\
$\amalg$, $\otimes_\ast$, $\otimes_\#$, \pref{diag:conv}\\
$\Rep^{\pol}(\GL)$, $T_n$, \pref{art:polyrepGL}\\
$\bM_\lambda$, $T^{d}_n$, \pref{art:schurfunctors}\\
$c^\nu_{\lambda,\mu}$, \pref{art:LR}\\
$\Rep(S_\ast)$, $\fs$, \pref{art:vecfs}\\
$\bS_\lambda$, \pref{art:polyfunctors}\\
${}^\vee$, \pref{art:duality}\\
${}^\dagger$, \pref{gl:transp}\\
$\cV$, \pref{art:defnV}\\
$\bC\langle 1 \rangle$, \pref{art:defnc1}\\
$\ast^A$, \pref{tca:tensor}\\
$\wh{\Vec}$, \pref{art:whvecdefn}\\
$\Rep(\GL)$, \pref{art:repGL}\\
$V_{\lambda, \lambda'}$, \pref{art:glweylinf}\\
$\wh{\Rep}(\GL)$, \pref{gl:pro}\\
$\cB_{n,m}$, \pref{gl:walgebra}\\
$\dwb$, \pref{dwb:def}\\
$\uwb$, \pref{uwb:def}\\
$\Sym(\bC\langle 1,1 \rangle)$, \pref{gl:amod}\\
$\Gamma_d$, \pref{gl:special}, \pref{c:special}, \pref{art:ga-special}, \pref{sym:special} \\
$T_0$, $T_1$, \pref{art:gl-T0T1}, \pref{art:osp-T0T1}, \pref{ga:T0}, \pref{sym:T0} \\
$\Rep(\bO)$, $\wh{\Rep}(\bO)$, \pref{art:Oinfdefn}\\
$V_\lambda$, \pref{art:weyl-inf-orth}, \pref{art:ga-weyl}, \pref{symgp:weyl:inf} \\
$\Rep(\Sp)$, $\wh{\Rep}(\Sp)$, \pref{art:Spinfdefn}\\
$\cB_n$, \pref{art:brauer-alg}\\
$\db$, $\ub$ \pref{art:def:db}\\
$\dsb$, $\usb$, \pref{art:dsbdefn}\\
$\GA(n)$, $\GA(\infty)$, \pref{art:gadefn}\\
$\ds$, $\us$, \pref{art:defnds}\\
$\fS$, $\Rep(\fS)$, $\wh{\Rep}(\fS)$, \pref{art:symgpdefn}\\
$\cA_n$, \pref{art:partition-algebra}\\
$\dpc$, $\upc$, \pref{art:defndp}
\end{multicols}

\vskip.6\baselineskip\par\noindent
{\bf Acknowledgements.}
We thank Sarah Kitchen, Ivan Penkov, Vera Serganova, and Catharina Stroppel for helpful conversations.  We also thank Stroppel for her comments on a draft of the paper.

\section{Preliminaries} \label{sec:prelim}

\subsection{Representations of categories} \label{ss:repsofcategory}

\article[Categorical conditions]
\label{cat-cond}
We consider the following conditions on a category $\Lambda$:
\begin{itemize}
\item {\bf Hom-finite:} for all objects $x$ and $y$, the set $\Hom_{\Lambda}(x, y)$ is finite.
\item {\bf Weakly directed:} any self-map is an isomorphism.  When this condition holds, there is a natural partial order on the isomorphism classes:  $x \le y$ if there exists a morphism $x \to y$.   
\item {\bf Inwards finite:} for any $x$ there exists only finitely many $y$, up to isomorphism, for which there exists a map $y \to x$.  (There is an obvious dual condition, called {\bf outwards finite}.)
\end{itemize}
We assume in this section that $\Lambda$ is Hom-finite, weakly directed and either inwards or outwards finite.  (We assume the same for similarly named categories, e.g., $\Lambda'$.)  Some of the results in this section do not require these conditions, but most categories we are interested in do satisfy these conditions, and assuming them allows for some simplifications in the discussion.

\article[Representations of categories]
\label{art:repncategory}
Let $\cA$ be an abelian category.  A {\bf representation} of $\Lambda$ valued in $\cA$ is a functor $\Lambda \to \cA$.  A morphism of representations is a natural transformation of functors.  We let $\cA^{\Lambda}$ denote the category of representations; it is an abelian category.  We typically denote the value of an object $M$ of $\cA^{\Lambda}$ on an object $x$ of $\Lambda$ by $M_x$.  We often write $\Hom_{\Lambda}$ in place of $\Hom_{\cA^{\Lambda}}$.  In the special case where $\cA=\Vec$, we write $\Mod_{\Lambda}$ in place of $\cA^{\Lambda}$.  As usual, we write $\Mod_{\Lambda}^{\fin}$ for the objects of $\Mod_{\Lambda}$ of finite length.  We write $\Mod_{\Lambda}^{\gfin} = (\Vec^{\fin})^{\Lambda}$ for the {\bf graded finite} objects of $\Mod_{\Lambda}$, i.e., those whose values at each object of $\Lambda$ are finite dimensional vector spaces.

\article[Duality]
We have a natural equivalence $\cA^{\Lambda^{\op}}=((\cA^{\op})^{\Lambda})^{\op}$.  If we have an equivalence $\cA=\cA^{\op}$, then this yields an equivalence $\cA^{\Lambda^{\op}}=(\cA^{\Lambda})^{\op}$.  In particular, we have an equivalence $\Mod_{\Lambda^{\op}}^{\gfin}=(\Mod_{\Lambda}^{\gfin})^{\op}$ defined by taking an object $M$ of $\Mod_{\Lambda^{\op}}^{\gfin}$ to the object $M^*$ of $\Mod_{\Lambda}^{\gfin}$ given by $(M^*)_x=M_x^*$.

\article[Push-forwards and pull-backs]
\label{diag:op}
Let $f \colon \Lambda \to \Lambda'$ be a functor.  We then get a pull-back functor $f^* \colon \cA^{\Lambda'} \to \cA^{\Lambda}$.  For an object $y$ of $\Lambda'$, let $\Lambda_{/y}$ denote the category of pairs $(x, \alpha)$ where $x$ is an object of $\Lambda$ and $\alpha \colon f(x) \to y$ is a morphism in $\Lambda'$.  Define ${}_{y \bs} \Lambda$ similarly, but with $\alpha \colon y \to f(x)$.  For $M \in \cA^{\Lambda}$ and $y \in \Lambda'$, define
\begin{displaymath}
f_*(M)_y=\lim(M \mid {}_{y \bs}\Lambda), \qquad f_{\#}(M)_y=\colim(M \mid \Lambda_{/y}).
\end{displaymath}
We assume in this section that these limits and colimits always exist, as they will in all cases of interest.  It is clear then that $f_*(M)$ and $f_{\#}(M)$ define objects of $\cA^{\Lambda'}$.  In fact, $f_*$ and $f_{\#}$ define functors $\cA^{\Lambda} \to \cA^{\Lambda'}$, and are the right and left adjoints of $f^*$, respectively.  Note that $f_*$ and $f_{\#}$ are interchanged under duality, i.e., the diagram
\begin{displaymath}
\xymatrix{
\cA^{\Lambda^{\op}} \ar[r]^{(f^{\op})_*} \ar@{=}[d] &
\cA^{(\Lambda')^{\op}} \ar@{=}[d] \\
((\cA^{\op})^{\Lambda})^{\op} \ar[r]^{(f_{\#})^{\op}} &
((\cA^{\op})^{\Lambda'})^{\op} }
\end{displaymath}
commutes (up to natural isomorphism).

\article[Simples, projectives and injectives] \label{art:projinj}
Let $x$ be an object of $\Lambda$ and let $G=\Aut(x)$, a finite group.  Let $V$ be an irreducible representation of $G$.  There is a unique (up to isomorphism) object $S_x(V)$ of $\Mod_{\Lambda}$ such that $S_x(V)_x=V$ and $S_x(V)_y=0$ if $y$ is not isomorphic to $x$.  The objects $S_x(V)$ are simple, and one easily sees that they exhaust the simple objects of $\Mod_{\Lambda}$.  From this description of simple objects, one finds that an object $M$ of $\Mod_{\Lambda}$ is of finite length if and only if $M_x$ is finite dimensional for all $x$ and non-zero for only finitely many $x$, up to isomorphism.  These statements depend crucially on $\Lambda$ being weakly directed.

Let $\rB{G}$ be the category with one object with automorphism group $G$, and let $i \colon \rB{G} \to \Lambda$ be the natural fully faithful functor.  We regard $V$ as an object of $\Mod_{\rB{G}}$, and we can thus form $P_x(V)=i_{\#}(V)$.  The object $P_x(V)$ is projective, since $\Mod_{\rB{G}}$ is semi-simple and $i_{\#}$ takes projectives to projectives.  It follows immediately from the definition of $i_{\#}$ and the weakly directed hypothesis that $P_x(V)_x=V$.  In fact, there is a natural surjection $P_x(V) \to S_x(V)$, which realizes $P_x(V)$ as the projective cover of the simple object $S_x(V)$.  The kernel of this surjection is supported on objects larger than $x$ (in the partial order).  If $\Lambda$ is outwards finite, then $P_x(V)$ has finite length and every object of $\Mod_{\Lambda}^{\fin}$ has finite projective dimension.

Similarly, we can form $I_x(V)=i_*(V)$.  The same discussion applies:  this is the injective envelope of $S_x(V)$, and if $\Lambda$ is inwards finite then $I_x(V)$ has finite length and every object of $\Mod_{\Lambda}^{\fin}$ has finite injective dimension.

\begin{Proposition}
\label{pf-finite}
Let $f \colon \Lambda \to \Lambda'$ be a functor and suppose that $\Lambda'$ is outwards finite.  Then $f_{\#}$ takes finite length objects of $\Mod_{\Lambda}$ to finite length objects of $\Mod_{\Lambda'}$.
\end{Proposition}

\begin{proof}
Since $f_{\#}$ is right exact, it suffices to show that it takes simple objects to finite length objects.  It follows from the definition of $f_{\#}$ that $f_{\#}(S_x(V))_y$ is a quotient of $V \otimes \bC[\Hom(f(x), y)]$.  Since $\Lambda'$ is Hom-finite, the space $V \otimes \bC[\Hom(f(x), y)]$ is finite dimensional for all $y$, and since $\Lambda'$ is outwards finite, it is non-zero for only finitely many isomorphism classes $y$.  It follows that $f_{\#}(S_x(V))$ is finite length, which completes the proof.
\end{proof}

\article[The $\Vec$-module structure on $\cA$]
\label{vecmodule}
Let $A$ be an object of $\cA$ and let $V$ be a vector space of finite dimension $d$.  We define objects $V \otimes A$ and $\Hom(V, A)$ of $\cA$ by the functors they represent:
\begin{align*}
\Hom_{\cA}(-, V \otimes A) &= V \otimes \Hom_{\cA}(-, A),\\
\Hom_{\cA}(-, \Hom(V, A)) &= \Hom(V, \Hom_{\cA}(-, A)).
\end{align*}
After picking a basis for $V$, both $V \otimes A$ and $\Hom(V, A)$ are canonically isomorphic to $A^{\oplus d}$, which shows that the above functors are representable.  Note that $\Hom(V, A)$ is canonically isomorphic to $V^* \otimes A$.  This construction appears in \cite[\S 2.9]{DeligneTannak}.

\article[Structured $\Hom$ spaces]
\label{diag:hom}
Suppose that $M$ is an object of $\Mod^{\fin}_{\Lambda}$ and $N$ is an object of $\cA^{\Lambda}$.  We define an object $\Hom_{\Lambda}(M, N)$ of $\cA$ as follows:
\begin{displaymath}
\Hom_{\Lambda}(M, N)=\lim_{(x, y) \in \Lambda^{\op} \times \Lambda} \Hom(M_x, N_y).
\end{displaymath}
One can show that this limit is equivalent to a finite limit, and therefore exists.  As this  definition is a bit abstract, we now give a more straightforward, though less intrinsic, definition.  Suppose that $\cA$ is a subcategory of $\Mod_R$ for some $\bC$-algebra $R$; this can essentially always be arranged by the Freyd--Mitchell embedding theorem.  We can then think of $N$ as an object of $\Mod_{\Lambda}$ such that each $N_x$ has the structure of an $R$-module, in a compatible manner.  We can thus form $\Hom_{\Mod_{\Lambda}}(M, N)$, and the result will have the structure of an $R$-module.  This is $\Hom_{\Lambda}(M, N)$.

\article[Structured tensor products]
\label{diag:tens}
There is a covariant version of the previous construction.  Suppose that $M$ is an object of $\Mod_{\Lambda^{\op}}^{\fin}$ and $N$ is an object of $\cA^{\Lambda}$.  We then put
\begin{displaymath}
M \otimes^{\Lambda} N = \lim_{(x, y) \in \Lambda^{\op} \times \Lambda} M_x \otimes N_y,
\end{displaymath}
which is an object of $\cA$.  The identifications $\Hom_{\Lambda}(M, N)=M^* \otimes^{\Lambda} N$ and $M \otimes^{\Lambda} N=\Hom_{\Lambda}(M^*, N)$ hold.

\article[Transforms defined by kernels]
\label{ker-func}
Let $\cK$ be an object of $\cA^{\Lambda}$.  We have contravariant functors
\begin{displaymath}
\Phi \colon \Mod_{\Lambda}^{\fin} \to \cA, \qquad \Phi(M) = \Hom_{\Lambda}(M, \cK)
\end{displaymath}
and
\begin{displaymath}
\Psi \colon \cA \to \Mod_{\Lambda}, \qquad \Psi(N) = \Hom_{\cA}(N, \cK).
\end{displaymath}
We call $\cK$ the {\bf kernel} of these functors.

\begin{proposition}
The contravariant functors $\Phi$ and $\Psi$ are adjoint on the right, that is, for $M \in \Mod_{\Lambda}^{\fin}$ and $N \in \cA$ there is a natural isomorphism
\begin{displaymath}
\Hom_{\Lambda}(M, \Psi(N))=\Hom_{\cA}(N, \Phi(M)).
\end{displaymath}
Furthermore, the adjunctions $M \to \Psi(\Phi(M))$ and $N \to \Phi(\Psi(N))$ are injective.
\end{proposition}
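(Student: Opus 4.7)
The plan is to split the proposition into two substantively different statements and handle them in turn: first the adjunction isomorphism, then the injectivity of the unit and counit.

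\emph{Step 1: the adjunction.} I will unpack both sides of the claimed isomorphism using the $\Vec$-module structure on $\cA$ from \pref{vecmodule} and the limit definition of the structured Hom from \pref{diag:hom}. A natural transformation $\alpha : M \to \Psi(N)$ consists of linear maps $\alpha_x : M_x \to \Hom_\cA(N, \cK_x)$, natural in $x$. Since $M$ has finite length and $\Lambda$ is weakly directed, each $M_x$ is finite-dimensional (by \pref{art:projinj}), so I may apply the identity $\Hom_\Vec(V, \Hom_\cA(A, B)) = \Hom_\cA(A, V^* \otimes B)$ with $V = M_x$ to rewrite $\alpha_x$ as an $\cA$-morphism $\tilde{\alpha}_x : N \to M_x^* \otimes \cK_x = \Hom(M_x, \cK_x)$. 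The naturality of $(\alpha_x)$ in $x$ translates exactly into the compatibility needed for $(\tilde{\alpha}_x)$ to assemble into an $\cA$-morphism $N \to \lim_{(x,y)} \Hom(M_x, \cK_y) = \Phi(M)$, and conversely. This identification is visibly natural in both $M$ and $N$, so gives the desired adjunction. The unit $\eta_M : M \to \Psi(\Phi(M))$ and counit $\epsilon_N : N \to \Phi(\Psi(N))$ are then obtained by transporting $\id_{\Phi(M)}$ and $\id_{\Psi(N)}$ across this bijection.

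\emph{Step 2: injectivity.} Chasing identities through the adjunction yields a concrete description of the unit: at component $x$, the map $(\eta_M)_x : M_x \to \Hom_\cA(\Phi(M), \cK_x)$ sends $m \in M_x$ to the composite $\Phi(M) \to M_x^* \otimes \cK_x \xrightarrow{m \otimes \id} \cK_x$, where the first arrow is the $(x,x)$-projection out of the defining limit of $\Phi(M)$. Since $m \neq 0$ makes $m \otimes \id$ nonzero (in fact a split surjection of objects of $\cA$), the composite will be nonzero provided one can exhibit, for each prescribed $\psi \in \Hom(M_x, \cK_x)$, an actual natural transformation $M \to \cK$ whose $x$-component is $\psi$. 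I would produce such extensions by adjunction with the functor $P_x$ from \pref{art:projinj}: a map $V \to M_x$, with $V$ an irreducible $\Aut(x)$-representation, lifts to $P_x(V) \to M$, and the adjunction $\Hom_\Lambda(P_x(V), \cK) = \Hom_\Vec(V, \cK_x)$ (i.e., the adjointness of $i_\#$ and $i^*$) realizes every desired evaluation. The dual argument — using the injective envelopes $I_x$ and the identification of opposite categories from \pref{diag:op} — handles the counit $\epsilon_N$.

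\emph{Main obstacle.} The adjunction itself is purely formal. The injectivity is not: it fails trivially for $\cK = 0$, so any argument must use the structure of $\Phi$ and $\Psi$ rather than only the abstract adjunction, and the reduction to projectives/injectives must be carried out carefully. The delicate point is verifying that the $(x,x)$-projection out of the limit defining $\Phi(M)$ has image rich enough to separate points of $M_x$; this is precisely what the weakly-directed and inwards/outwards finite hypotheses on $\Lambda$ buy us, by ensuring that $P_x(V)$ and $I_x(V)$ of \pref{art:projinj} exist, are of finite length, and map to every simple subquotient of $M$ supported at $x$ with the right multiplicities.
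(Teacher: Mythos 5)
Step 1 (the adjunction isomorphism) is correct: both sides compute the end $\int_{x} \Hom_\Vec\bigl(M_x, \Hom_\cA(N, \cK_x)\bigr)$, using the $\Vec$-module identity of \pref{vecmodule} and the fact that $\Hom_\cA(N, -)$ commutes with the finite limit defining $\Phi(M)$. Naturality then gives the adjunction; this is evidently the ``completely formal'' content alluded to.

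Step 2 is where the proposal has genuine gaps, though it also usefully flags a real imprecision in the statement. You are right that the injectivity of the unit maps is not formal: for $\cK = 0$ the units are zero maps, so the claim cannot hold for arbitrary $\cK$. In fact, unwinding the adjunction shows that $M \to \Psi\Phi(M)$ is a monomorphism for all $M$ if and only if $\Phi$ is faithful, and $N \to \Phi\Psi(N)$ is a monomorphism for all $N$ if and only if $\Psi$ is faithful; neither is a consequence of the abstract adjunction, and the paper's ``completely formal'' remark oversells what is actually being proved.

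However, the repair you propose does not work. The sufficient condition you isolate --- that the $(x,x)$-projection $\Phi(M) \to \Hom(M_x, \cK_x)$ be surjective --- already fails for the projective generators $M = P_x(V)$: the image of $\Hom_\Lambda(P_x(V), \cK) \to \Hom(V, \cK_x)$ is only $\Hom_{\Aut(x)}(V, \cK_x)$ (the $\Aut(x)$-equivariant maps), a proper subspace whenever $\Aut(x) \neq 1$. What is actually needed is the weaker \emph{separating} condition that for each nonzero $m \in M_x$ some element of $\Phi(M)$ has nonzero evaluation on $m$. Moreover, the $P_x(V)$-adjunction argument you sketch has the wrong variance: a map $P_x(V) \to M$ gives by precomposition a \emph{restriction} $\Phi(M) \to \Hom_\Lambda(P_x(V), \cK) = \Hom_{\Aut(x)}(V, \cK_x)$ (note: equivariant homomorphisms, not $\Hom_\Vec(V, \cK_x)$); it produces no new natural transformations $M \to \cK$ and hence cannot ``realize every desired evaluation.'' The injectivity is only established under nontriviality hypotheses on $\cK$ such as condition (a) of Theorem~\pref{thm:ker-equiv}, which forces $\cK_{[x]}$ to contain every irreducible of $\Aut(x)$; from that the separating condition holds for simples $S_x(V)$, and then for all finite-length $M$ by left exactness of $\Psi\Phi$.
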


\begin{proof}
This is completely formal and left to the reader.
\end{proof}

\article[A criterion for equivalence]
\label{thm:ker-equiv}
Suppose $\Lambda$ is outwards finite and let $\cK \in \cA^{\Lambda}$.  For an object $x$ of $\Lambda$, we put
\begin{displaymath}
\cK_{[x]}=\bigcap_{f \colon x \to y} \ker(\cK_x \to \cK_y),
\end{displaymath}
where the intersection is taken over all non-isomorphisms $f$.  We then have the following general criterion for $\Phi$ and $\Psi$ to be equivalences.

\begin{theorem}
Suppose the following conditions hold:
\begin{enumerate}[\rm (a)]
\item For any object $x \in \Lambda$ and any irreducible representation $V$ of $\Aut(x)$, the space $\Hom_{\Aut(x)}(V, \cK_{[x]})$ is a simple object of $\cA$.
\item For each simple object $A$ of $\cA$ there is a unique object $x$ of $\Lambda$ (up to isomorphism) such that $\Hom_{\cA}(A, \cK_x)$ is non-zero, and it is then an irreducible representation of $\Aut(x)$.
\end{enumerate}
Then $\Phi \colon \Mod_{\Lambda}^{\fin} \to \cA^{\fin}$ and $\Psi \colon \cA^{\fin} \to \Mod_{\Lambda}^{\fin}$ are mutually quasi-inverse equivalences.
\end{theorem}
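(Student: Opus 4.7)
The plan is to compute $\Phi$ and $\Psi$ on simple objects, verify the adjunction unit and counit are isomorphisms on simples, and then extend by a length-counting argument. Both $\Phi$ and $\Psi$ are contravariant left exact (being structured Hom functors into $\cK$), and by the preceding proposition their units $M \to \Psi\Phi(M)$ and $N \to \Phi\Psi(N)$ are always injective. These two facts are the engine for everything that follows.

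I first identify $\Phi(S_x(V))$. Since $S_x(V)$ is supported (up to isomorphism) at $x$ alone and $\Lambda$ is weakly directed, a morphism $S_x(V) \to \cK$ amounts to an $\Aut(x)$-equivariant map $V \to \cK_x$ whose composition with every non-isomorphism $x \to y$ vanishes, i.e., a map into $\cK_{[x]}$. Upgrading this identification from $\Vec$-valued to the structured Hom of \pref{diag:hom} (using the $\Vec$-module structure of \pref{vecmodule}) yields $\Phi(S_x(V)) \cong \Hom_{\Aut(x)}(V, \cK_{[x]})$, which is simple by hypothesis (a). Dually, for a simple $A$ of $\cA$, hypothesis (b) says $\Psi(A)_y = \Hom_\cA(A, \cK_y)$ vanishes unless $y$ is isomorphic to a unique $x$, at which it is an irreducible representation $W$ of $\Aut(x)$; hence $\Psi(A) \cong S_x(W)$. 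The unit $S_x(V) \to \Psi\Phi(S_x(V))$ is therefore an injection between simples of $\Mod_\Lambda$ (whose target has the form $S_x(W)$), and is automatically an isomorphism; the counit on simples of $\cA$ is handled symmetrically.

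To pass from simples to all finite length objects, I show that $\Phi$ and $\Psi$ do not increase length. Induction on length, using the exact sequence $0 \to \Phi(M'') \to \Phi(M) \to \Phi(M')$ produced from $0 \to M' \to M \to M'' \to 0$, gives $\len(\Phi(M)) \le \len(\Phi(M'')) + \len(\Phi(M')) \le \len(M)$, and analogously for $\Psi$. In particular $\Phi$ and $\Psi$ restrict to functors between the finite-length subcategories. Then for $M \in \Mod_\Lambda^\fin$, the chain
\[
\len(M) \le \len(\Psi\Phi(M)) \le \len(\Phi(M)) \le \len(M),
\]
in which the first inequality uses injectivity of the unit, forces equality throughout, so the mono $M \hookrightarrow \Psi\Phi(M)$ must be an isomorphism. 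The counit on $\cA^\fin$ is handled identically. The only genuine content is the identification $\Phi(S_x(V)) \cong \Hom_{\Aut(x)}(V, \cK_{[x]})$ as an object of $\cA$; once that bridge between the abstract structured Hom and the concrete expression in hypothesis (a) is in place, the rest is a routine d\'evissage.
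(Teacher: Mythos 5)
Your proof is correct and follows essentially the same line of reasoning as the paper: identify $\Phi(S_x(V))$ with $\Hom_{\Aut(x)}(V,\cK_{[x]})$ and invoke (a), use (b) dually for $\Psi$, derive $\len(\Phi(M))\le\len(M)$ (and likewise for $\Psi$) by left-exactness plus induction, then combine with injectivity of the adjunction units to force the units to be isomorphisms by length counting. The only difference is that you first verify the unit is an isomorphism on simples, a step the length-counting argument renders unnecessary; otherwise the two proofs coincide.
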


\begin{proof}
Let $x$ be an object of $\Lambda$ and let $V$ be an irreducible representation of $\Aut(x)$.  Then $\Hom_{\Lambda}(S_x(V), \cK)=\Hom_{\Aut(x)}(V, \cK_{[x]})$, and so (a) shows that $\Phi$ takes simple objects of $\Mod_{\Lambda}$ to simple objects of $\cA$.  Condition (b) exactly shows that $\Psi$ takes simple objects of $\cA$ to simple objects of $\Mod_{\Lambda}$.  Since $\Phi$ is left-exact and takes simples to simples, an easy inductive argument shows that $\len(\Phi(M)) \le \len(M)$; in particular, $\Phi$ takes finite length objects to finite length objects.  The same holds for $\Psi$.

Now, we have natural injective maps $\eta \colon \id \to \Phi \Psi$ and $\eta' \colon \id \to \Psi \Phi$.  For any finite length object $A$ of $\cA$, we thus have an injection $A \to \Phi(\Psi(A))$.  Since $\len(\Phi(\Psi(A))) \le \len(A)$, this map is necessarily an isomorphism.  Thus $\eta$ is an isomorphism of functors.  A similar argument shows that $\eta'$ is an isomorphism of functors, which completes the proof.
\end{proof}

\begin{Corollary}
In the setting of Theorem~\pref{thm:ker-equiv}, we have a covariant equivalence of categories $\Mod_{\Lambda^{\op}}^{\fin} \to \cA^{\fin}$ given by $M \mapsto M \otimes^{\Lambda} \cK$.
\end{Corollary}

\article[The pointwise tensor product]
\label{art:ptwise}
Suppose now that $\cA$ has a tensor product $\otimes$.  Given two objects $M$ and $N$ of $\cA^{\Lambda}$, we let $M \boxtimes N$ be the object of $\cA^{\Lambda}$ defined by $x \mapsto M_x \otimes N_x$.  We call this the {\bf pointwise tensor product} of $M$ and $N$.  This tensor product preserves finite length objects of $\Mod_{\Lambda}$, by the characterization of such objects given in \pref{art:projinj}.

\article[Convolution tensor products]
\label{diag:conv}
Suppose now that $\Lambda$ is equipped with a symmetric monoidal functor $\amalg$.  Let $p_1,p_2 \colon \Lambda \times \Lambda \to \Lambda$ be the projection maps.  We then have two {\bf convolution tensor products} on $\cA^{\Lambda}$, denoted $\otimes_{\#}$ and $\otimes_*$, and defined as follows
\begin{displaymath}
M \otimes_{\#} N=\amalg_{\#}(p_1^*M \boxtimes p_2^*N), \qquad
M \otimes_* N = \amalg_*(p_1^*M \boxtimes p_2^*N).
\end{displaymath}
When $\Lambda$ is outwards (resp.\ inwards) finite we put $\otimes=\otimes_{\#}$ (resp.\ $\otimes=\otimes_*$).  This tensor product preserves finite length objects of $\Mod_{\Lambda}$ by Proposition~\pref{pf-finite}.

\begin{Lemma}
\label{ker-tens-1}
Assume $\Lambda$ is outwards finite, let $M$ and $N$ be objects of $\Mod_{\Lambda}^{\fin}$ and let $M'$ and $N'$ be objects of $\cA^{\Lambda}$.  Then the natural map
\begin{displaymath}
\Hom_{\Lambda}(M, M') \otimes \Hom_{\Lambda}(N, N') \to \Hom_{\Lambda \times \Lambda}(p_1^*M \boxtimes p_2^*N, p_1^*M' \boxtimes p_2^*N')
\end{displaymath}
is an isomorphism.
\end{Lemma}

\begin{proof}
Fix $M'$ and $N'$, and define functors $F,G \colon \Mod_{\Lambda}^{\fin} \times \Mod_{\Lambda}^{\fin} \to \cA$ by
\begin{displaymath}
\begin{split}
F(M,N) &= \Hom_{\Lambda}(M,M') \otimes \Hom_{\Lambda}(N,N') \\
G(M,N) &= \Hom_{\Lambda \times \Lambda}(p_1^*M \boxtimes p_2^*N, p_1^*M' \boxtimes p_2^*N')
\end{split}
\end{displaymath}
There is a natural map $F(M,N) \to G(M,N)$, which we must show is an isomorphism.

We first treat the case where $M$ and $N$ are projective.  It suffices to treat the indecomposable case, so say $M=P_x(U)$ and $N=P_y(V)$.  We then have
\begin{displaymath}
\Hom_{\Lambda}(M, M')=\Hom_{\Aut(x)}(U, M'_x), \qquad
\Hom_{\Lambda}(N, N')=\Hom_{\Aut(y)}(V, M'_y).
\end{displaymath}
Then $p_1^*M \boxtimes p_2^*N=P_{(x,y)}(U \boxtimes V)$, and so
\begin{displaymath}
\Hom_{\Lambda \times \Lambda}(p_1^*M \boxtimes p_2^*N, p_1^*M' \boxtimes p_2^*N')=
\Hom_{\Aut(x) \times \Aut(y)}(U \boxtimes V, M'_x \boxtimes N'_y).
\end{displaymath}
The natural map $F(M,N) \to G(M,N)$ is the obvious one, and is an isomorphism by standard finite group representation theory.

Now we treat the case where $N$ is a projective and $M$ is arbitrary.  Pick a presentation
\begin{displaymath}
P' \to P \to M \to 0
\end{displaymath}
with $P$ and $P'$ finite length projectives.  We then have a commutative square
\begin{displaymath}
\xymatrix{
0 \ar[r] & F(M, N) \ar[r] \ar[d] & F(P, N) \ar[r] \ar[d] & F(P', N) \ar[d] \\
0 \ar[r] & G(M, N) \ar[r] & G(P, N) \ar[r] & G(P', N) }
\end{displaymath}
The two right vertical arrows are isomorphisms, and so the left arrow is an isomorphism as well.

Finally, we treat the case where both $M$ and $N$ are arbitrary.  Use the same reasoning as in the above paragraph:  pick a presentation for $N$, and use the fact that we know $F(M,P) \to G(M,P)$ is an isomorphism when $P$ is a finite projective object.
\end{proof}

\article[Tensor kernels]
\label{ker-tens}
Suppose $\Lambda$ is outwards finite. We say that an object $\cK \in \cA^{\Lambda}$ is a {\bf tensor kernel} if the functor $\cK \colon \Lambda \to \cA$ is a monoidal functor, that is, we require a functorial isomorphism $\cK(L \amalg L') \to \cK(L) \otimes \cK(L')$ which is compatible with the commutativity and associativity structures.  Equivalently, $\cK$ is a tensor kernel if there is an isomorphism $\amalg^*(\cK) \to p_1^*\cK \boxtimes p_2^*\cK$ which is compatible with the associativity and commutativity structures.

\begin{proposition} \label{prop:tensorkernel}
Let $\cK$ be a tensor kernel.  Then $\Phi$ defines a tensor functor $\Mod_{\Lambda}^{\fin} \to \cA$, that is, for any objects $M$ and $N$ of $\Mod_{\Lambda}^{\fin}$ there is a natural isomorphism $\Phi(M \otimes N)=\Phi(M) \otimes \Phi(N)$.
\end{proposition}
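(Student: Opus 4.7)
The plan is to construct the required isomorphism by chaining together three inputs: the adjunction between $\amalg_{\#}$ and $\amalg^*$, the tensor kernel property of $\cK$, and Lemma~\pref{ker-tens-1}. Concretely, starting from the definition of the convolution tensor product $M \otimes N = \amalg_{\#}(p_1^*M \boxtimes p_2^*N)$, I would write
\begin{align*}
\Phi(M \otimes N) &= \Hom_{\Lambda}(\amalg_{\#}(p_1^*M \boxtimes p_2^*N), \cK) \\
&\cong \Hom_{\Lambda \times \Lambda}(p_1^*M \boxtimes p_2^*N, \amalg^* \cK) \\
&\cong \Hom_{\Lambda \times \Lambda}(p_1^*M \boxtimes p_2^*N, p_1^*\cK \boxtimes p_2^*\cK) \\
&\cong \Hom_{\Lambda}(M, \cK) \otimes \Hom_{\Lambda}(N, \cK) = \Phi(M) \otimes \Phi(N),
\end{align*}
where the first isomorphism is the adjunction, the second uses the tensor kernel isomorphism $\amalg^*\cK \cong p_1^*\cK \boxtimes p_2^*\cK$, and the third is Lemma~\pref{ker-tens-1} applied with $M' = N' = \cK$.

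The naturality of each step in $M$ and $N$ is straightforward, so the composite defines a natural transformation $\Phi(M \otimes N) \to \Phi(M) \otimes \Phi(N)$. To complete the claim that $\Phi$ is a tensor functor, I would verify the hexagon and pentagon coherence axioms: compatibility with the associativity isomorphism follows because all three steps in the chain are associative (the adjunction is functorial, the tensor kernel isomorphism is compatible with associativity by hypothesis, and Lemma~\pref{ker-tens-1} is manifestly associative in the two factors), and compatibility with the commutativity isomorphism follows for the same reasons, using the compatibility built into the definition of a tensor kernel. The identity/unit compatibility reduces to the case where $\amalg$ is applied to the empty tuple.

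One subtlety I would want to verify carefully is that the adjunction $(\amalg_{\#}, \amalg^*)$ upgrades from an isomorphism of $\Hom$-sets to an isomorphism of structured $\Hom$-objects in $\cA$ (as defined in \pref{diag:hom}). This is a formal consequence of the universal property presentation of these structured Homs, but it is the one place where I would want to be careful not to gloss over the distinction between $\Hom$-sets and $\Hom$-objects in $\cA$.

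The main obstacle, and the reason Lemma~\pref{ker-tens-1} was proved in advance, is the third isomorphism: identifying a Hom in $\Mod_{\Lambda \times \Lambda}$ of external tensor products with the external tensor product of Homs. Everything else in the argument is a nearly mechanical application of adjunction and the defining property of a tensor kernel, so the real work has already been done.
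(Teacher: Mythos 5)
Your proposal is correct and is essentially the paper's proof: it uses the exact same chain of three isomorphisms (adjunction between $\amalg_{\#}$ and $\amalg^*$, the tensor kernel identity $\amalg^*\cK \cong p_1^*\cK \boxtimes p_2^*\cK$, and Lemma~\pref{ker-tens-1}), merely written in the reverse direction. The extra remarks about coherence axioms and about the adjunction at the level of structured $\Hom$-objects are reasonable points of care, but the substance matches the paper's argument.
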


\begin{proof}
We have the following identifications:
\begin{displaymath}
\begin{split}
\Phi(M) \otimes \Phi(N) &= \Hom_{\Lambda}(M, \cK) \otimes \Hom_{\Lambda}(N, \cK)
= \Hom_{\Lambda \times \Lambda}(p_1^*M \boxtimes p_2^*N, p_1^* \cK \boxtimes p_2^* \cK) \\
&=\Hom_{\Lambda \times \Lambda}(p_1^*M \boxtimes p_2^* N, \amalg^*(\cK))
=\Hom_{\Lambda}(M \otimes N, \cK)=\Phi(M \otimes N).
\end{split}
\end{displaymath}
In the second equality we used Lemma~\pref{ker-tens-1}, in the third we used the fact that $\cK$ is a tensor kernel and in the fourth we used the adjunction between $\amalg^*$ and $\amalg_{\#}$.
\end{proof}

\begin{Corollary}
\label{ker-tens3}
In the setting of Proposition~\pref{ker-tens}, the functor $\Mod_{\Lambda^{\op}}^{\fin} \to \cA^{\fin}$ defined by $M \mapsto M \otimes^{\Lambda} \cK$ is a tensor functor.
\end{Corollary}

\subsection{\texorpdfstring{Polynomial representations of $\GL(\infty)$ and category $\cV$}{Polynomial representations of GL(inf)}} \label{ss:schurfunctors}

\noarticle
In this section, we review the category $\cV$ and some of its models.  We refer to \cite[Part~2]{expos} for a more thorough discussion.  There are many similarities between this theory and the more difficult ones developed later, so this material serves as a good warm-up for the rest of the paper.

\article[Polynomial representations]
\label{art:polyrepGL}
Let $\bV=\bC^{\infty}=\bigcup_{n \ge 1} \bC^n$ and let $T_n=\bV^{\otimes n}$ be its $n$th tensor power.  The group $\GL(\infty)=\bigcup_{n \ge 1} \GL(n)$ acts on $\bV$ and on $T_n$.  We say that a representation of $\GL(\infty)$ is {\bf polynomial} if it is a subquotient of a finite direct sum of $T_n$'s.  We denote by $\Rep^{\pol}(\GL)$ the category of polynomial representations.  It is abelian and stable under tensor products.

\article[The action of the center]
\label{gl:center}
The group $\GL(\infty)$ does not contain the scalar matrices and thus has trivial center.  However, large diagonal matrices can be used to approximate scalar matrices, and this allows us to define an action of the ``central $\bG_m$'' on polynomial representations.  Precisely, let $V$ be a polynomial representation of $\GL(\infty)$ and let $z$ be an element of $\bG_m$.  Let $g_{n,z} \in \GL(\infty)$ be the diagonal matrix whose first $n$ entries are $z$ and whose remaining entries are 1.  Given $v \in V$, we define $zv$ to be $g_{n,z}v$ for $n \gg 0$.  Since $v$ only involves a finite number of the basis vectors of $\bV$, this formula is well-defined, and one easily verifies that it defines an action of $\bG_m$ on $V$ which commutes with that of $\GL(\infty)$.

An action of $\bG_m$ is equivalent to a $\bZ$-grading, so the above paragraph can be rephrased as:  every polynomial representation $V$ of $\GL(\infty)$ admits a canonical grading $V=\bigoplus_{n \in \bZ} V_n$.  The space $V_n$ is the subspace where $\bG_m$ acts through its $n$th power.  The representation $T_n$ is concentrated in degree $n$.  It follows that every polynomial representation is graded by $\bZ_{\ge 0}$.

\article[Weyl's construction (finite case)]
\label{art:schurfunctors}
To determine the structure of $\Rep^{\pol}(\GL)$ we use Schur--Weyl duality, which we now recall.  Let $T_n^d=(\bC^d)^{\otimes n}$.  The group $S_n$ acts on $T_n^d$ by permuting coordinates, and this action commutes with that of $\GL(d)$.  For a partition $\lambda$ of $n$, let $\bM_\lambda$ be the irreducible representation of $S_n$ associated to $\lambda$; our conventions are such that $\lambda=(n)$ gives the trivial representation and $\lambda=(1^n)$ the sign representation.  Put
\begin{displaymath}
V_{\lambda}^d=\Hom_{S_n}(\bM_{\lambda}, T_n^d).
\end{displaymath}
We then have the following result:

\begin{proposition}
Let $r=\ell(\lambda)$.  If $r \le d$ then $V_{\lambda}^d$ is the irreducible representation of $\GL(d)$ with highest weight $(\lambda_1, \ldots, \lambda_r, 0, \ldots, 0)$.  If $r>d$ then $V_{\lambda}^d=0$.
\end{proposition}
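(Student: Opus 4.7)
The plan is to apply the classical Schur--Weyl double centralizer theorem. The first step is to establish that, inside $\End(T_n^d)$, the images of $\bC[S_n]$ and of $\bC[\GL(d)]$ are each other's centralizers. That the two actions commute is immediate. For the nontrivial direction, I would identify
\begin{displaymath}
\End_{\GL(d)}(T_n^d) = \bigl(\End(\bC^d)^{\otimes n}\bigr)^{\GL(d)},
\end{displaymath}
and then, using that $\GL(d)$ is reductive and its invariants in a tensor power equal the symmetric tensors, reduce the claim to: $\Sym^n \End(\bC^d)$ is spanned (as a vector space) by elements of the form $x^{\otimes n}$, $x \in \End(\bC^d)$. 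This follows by polarization (or, equivalently, Zariski density of $\GL(d) \subset \End(\bC^d)$ together with the fact that $x^{\otimes n}$ lies in the image of the symmetrizer $\tfrac{1}{n!}\sum_{\sigma \in S_n} \sigma$ acting on $x_1 \otimes \cdots \otimes x_n$ specialized at $x_i = x$).

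Given the double centralizer statement, the semisimplicity of $\bC[S_n]$ yields the bimodule decomposition
\begin{displaymath}
T_n^d = \bigoplus_{\lambda \vdash n} V_\lambda^d \otimes \bM_\lambda,
\end{displaymath}
in which each nonzero $V_\lambda^d = \Hom_{S_n}(\bM_\lambda, T_n^d)$ is an irreducible $\GL(d)$-module and distinct $\lambda$ give non-isomorphic irreducibles. It then remains to determine which $V_\lambda^d$ are nonzero and to compute their highest weights.

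For this, I would work with the Young symmetrizer $c_\lambda = a_\lambda b_\lambda \in \bC[S_n]$ attached to the tableau $T$ of shape $\lambda$ whose $j$-th column is filled top-to-bottom with $1,2,\ldots,\lambda'_j$. Setting $e_T = e_{i_1} \otimes \cdots \otimes e_{i_n}$ where $i_k$ is the row index of $k$ in $T$, the vector $c_\lambda \cdot e_T$ lies in the $\lambda$-isotypic component of $T_n^d$. If $\ell(\lambda) > d$ then some column of $T$ has length exceeding $d$, so the column antisymmetrization $b_\lambda$ forces $c_\lambda \cdot e_T = 0$ on every basis tensor, yielding $V_\lambda^d = 0$. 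If $\ell(\lambda) \le d$, a direct check against the upper triangular Borel shows that $c_\lambda \cdot e_T$ is annihilated by the strictly upper triangular part, is an eigenvector of the diagonal torus with weight $(\lambda_1,\ldots,\lambda_{\ell(\lambda)},0,\ldots,0)$, and is nonzero. By the bimodule decomposition above, $V_\lambda^d$ is then the irreducible $\GL(d)$-representation with that highest weight.

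The main obstacle is the first step: proving that $\bC[S_n]$ surjects onto $\End_{\GL(d)}(T_n^d)$. Once this classical centralizer statement is in hand, the remainder is bookkeeping with Young symmetrizers and weight calculations.
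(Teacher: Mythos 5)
The paper does not prove this proposition; it is stated in \pref{art:schurfunctors} as a recollection of classical Schur--Weyl duality, with the fact cited rather than derived. So the comparison is against folklore rather than against an argument in the text. Your plan is the standard classical proof (double centralizer theorem plus Young symmetrizer computations), and the overall structure is right: double centralizer gives the bimodule decomposition, Young symmetrizers show $V_\lambda^d=0$ when $\ell(\lambda)>d$ and exhibit a highest weight vector of weight $(\lambda_1,\ldots,\lambda_r,0,\ldots,0)$ otherwise.

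One step is garbled, though. You write that you would identify $\End_{\GL(d)}(T_n^d)=(\End(\bC^d)^{\otimes n})^{\GL(d)}$ and then use that ``$\GL(d)$ is reductive and its invariants in a tensor power equal the symmetric tensors'' to reduce to the polarization claim. The $\GL(d)$-invariants of $\End(\bC^d)^{\otimes n}$ under diagonal conjugation are \emph{not} the symmetric tensors; the symmetric tensors $\Sym^n \End(\bC^d)$ are the $S_n$-invariants, i.e.\ they are $\End_{S_n}(T_n^d)$, which is the \emph{other} centralizer. The correct route is: show that the image of $\bC[\GL(d)]$ in $\End(T_n^d)$ (the span of $g^{\otimes n}$, $g\in\GL(d)$) equals $\End_{S_n}(T_n^d)=\Sym^n\End(\bC^d)$; for this your Zariski density plus polarization argument is exactly what is needed. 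Then invoke the double centralizer theorem, using the semisimplicity of $\bC[S_n]$, to conclude that the image of $\bC[S_n]$ equals $\End_{\GL(d)}(T_n^d)$. (Alternatively, one can compute $(\End(\bC^d)^{\otimes n})^{\GL(d)}$ directly by the first fundamental theorem of invariant theory for $\GL(d)$, which says these invariants are spanned by the permutations $\sigma\in S_n$; but that is a different argument than the one you outline.) The rest of your write-up --- the semisimple bimodule decomposition, the pigeonhole vanishing when $\ell(\lambda)>d$, and the weight computation for $c_\lambda\cdot e_T$ --- is correct.
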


\article[Weyl's construction (infinite case)]
\label{art:schurfunctor-inf}
For a partition $\lambda$ of $n$ we again put
\begin{displaymath}
V_{\lambda}=\Hom_{S_n}(\bM_{\lambda}, T_n).
\end{displaymath}
Note that we have a decomposition of $S_n \times \GL(\infty)$ representations
\begin{equation}
\label{pol:decomp}
T_n=\bigoplus_{\vert \lambda \vert=n} \bM_{\lambda} \boxtimes V_{\lambda}
\end{equation}
The following result classifies the simple objects of $\Rep^{\pol}(\GL)$.

\begin{Proposition}
\label{pol:simples}
The $V_{\lambda}$ constitute a complete irredundant set of simple objects of $\Rep^{\pol}(\GL)$.
\end{Proposition}

\begin{proof}
Since $V_{\lambda}=\bigcup_{d \ge 0} V_{\lambda}^d$ and $V_{\lambda}^d$ is non-zero and irreducible for $d \gg 0$, it follows that the $V_{\lambda}$ are simple.  Every simple object of $\Rep^{\pol}(\GL)$ is a constituent of some $T_n$, and \eqref{pol:decomp} shows that every simple constituent of $T_n$ is isomorphic to some $V_{\lambda}$.  Thus the $V_{\lambda}$ are a complete set of simples.  Finally, to prove that they are irredundant, we note that the character of $V_{\lambda}$ is the Schur function $s_{\lambda}$, and $s_{\lambda} \ne s_{\mu}$ for $\lambda \ne \mu$.
\end{proof}

\begin{Proposition} \label{prop:polGLss}
The category $\Rep^{\pol}(\GL)$ is semi-simple, i.e., every polynomial representation is a finite direct sum of $V_{\lambda}$'s.
\end{Proposition}

\begin{proof}
By \eqref{pol:decomp} and \pref{pol:simples}, $T_n$ is semi-simple.  Since any finite direct sum or quotient of semi-simple objects is again semi-simple, the result follows.
\end{proof}

\article[Tensor product decompositions] \label{art:LR}
By Proposition~\pref{prop:polGLss}, a tensor product $V_{\lambda} \otimes V_{\mu}$ of simples decomposes into a direct sum of $V_{\nu}$'s with certain multiplicities.  The multiplicity of $V_{\nu}$ in this decomposition is called the {\bf Littlewood--Richardson coefficient}, and is denoted by $c^\nu_{\lambda, \mu}$. See \cite[(2.14)]{expos} for some basic discussion of, and references for, these coefficients.

\article[The categories $\Mod_{\fs}$ and $\Rep(S_{\ast})$] 
\label{art:vecfs}
Let $\fs$ denote the category whose objects are finite sets and whose morphisms are bijections.  This category satisfies all the conditions of \pref{cat-cond}.  Let $\Mod_{\fs}$ denote the representation category, see \pref{art:repncategory}.  This category is equivalent to the category $\Rep(S_{\ast})$ consisting of sequences $(M_n)_{n \ge 0}$ where $M_n$ is a representation of the symmetric group $S_n$.  Disjoint union of finite sets gives $\fs$ a monoidal structure and endows $\Mod_{\fs}$ with convolution tensor products, as discussed in \pref{diag:conv}.  Since $\fs$ is a groupoid, we have $\otimes_*=\otimes_{\#}$.  In terms of the category $\Rep(S_{\ast})$, this tensor product is given by:
\begin{displaymath}
(M \otimes N)_n=\bigoplus_{i+j=n} \Ind_{S_i \times S_j}^{S_n} (M_i \otimes N_j),
\end{displaymath}
where $\Ind$ denotes induction.

\article[The equivalence between $\Mod_{\fs}^{\fin}$ and $\Rep^{\pol}(\GL)$]
\label{art:schurweyl}
For a finite set $L$, let $\cK_L=(\bC^{\infty})^{\otimes L}$.  Then $\cK$ is naturally an object of $\Rep^{\pol}(\GL)^{\fs}$, and is obviously a tensor kernel in the sense of \pref{ker-tens}.  It follows easily from the results of this section that the functors $\Phi$ and $\Psi$ of \pref{ker-func} induce equivalences between $\Mod^{\fin}_{\fs}$ and $\Rep^{\pol}(\GL)$.

\article[Polynomial functors]
\label{art:polyfunctors}
A functor $F \colon \Vec^{\fin} \to \Vec^{\fin}$ is {\bf polynomial} if for every pair of finite dimensional vector spaces $V$ and $W$, the induced map
\begin{displaymath}
F \colon \Hom(V, W) \to \Hom(F(V), F(W))
\end{displaymath}
is a polynomial map of vector spaces, the degree of which is bounded independently of $V$ or $W$.  Let $\cS$ denote the category of polynomial functors.  Given a finite dimensional representation $M$ of $S_n$ and a vector space $V$, put
\begin{displaymath}
S_M(V)=\Hom_{S_n}(M, V^{\otimes n}).
\end{displaymath}
Then $V \mapsto S_M(V)$ is a polynomial functor.  The functor $S_{\bM_{\lambda}}$ is denoted $\bS_{\lambda}$ and called the {\bf Schur functor} associated to $\lambda$.  The rule $M \mapsto S_M$ defines a functor $\Rep^{\fin}(S_n) \to \cS$ which extends additively to a functor $\Rep^{\fin}(S_{\ast}) \to \cS$.  We have the following result \cite[(5.4.4)]{expos}:

\begin{proposition}
The functor $\Rep^{\fin}(S_{\ast}) \to \cS$ is an equivalence of categories.
\end{proposition}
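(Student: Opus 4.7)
The plan is to factor the given functor as the composition
\[ \Rep^{\fin}(S_{\ast}) \xrightarrow{\,M \mapsto S_M\,} \cS \xrightarrow{\,F \mapsto F(\bV)\,} \Rep^{\pol}(\GL), \]
where the second arrow is evaluation at $\bV = \bC^{\infty}$, carrying the natural $\GL(\infty)$-action coming from functoriality. Since $F \in \cS$ has bounded degree, each $F(\bC^d)$ is a polynomial $\GL(d)$-representation of that degree, and the colimit $F(\bV) = \bigcup_{d} F(\bC^d)$ is polynomial, so evaluation is well defined. The composite sends $M \in \Rep(S_n)$ to $\Hom_{S_n}(M, T_n)$, which is precisely the Schur–Weyl kernel transform shown to be an equivalence in \pref{art:schurweyl}. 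Thus it suffices to prove that $\mathrm{ev}_{\bV} \colon \cS \to \Rep^{\pol}(\GL)$ is an equivalence.

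For essential surjectivity, Proposition~\pref{prop:polGLss} writes any $U \in \Rep^{\pol}(\GL)$ as a finite direct sum $\bigoplus_{\lambda} V_{\lambda}^{\oplus m_\lambda}$. Directly from the definition $\bS_\lambda(V) = \Hom_{S_n}(\bM_\lambda, V^{\otimes n})$ with $n = |\lambda|$, we have $\bS_\lambda(\bV) = V_\lambda$, so $U$ is the value at $\bV$ of the polynomial functor $\bigoplus_{\lambda} \bS_\lambda^{\oplus m_\lambda} \in \cS$.

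For full faithfulness, a natural transformation $\alpha \colon F \to F'$ of polynomial functors is determined by $\alpha_{\bV}$: every finite-dimensional $V$ embeds as $\bC^d \hookrightarrow \bV$, and naturality of $\alpha$ combined with polynomiality (which implies $F$ and $F'$ are filtered colimits of their restrictions to $\Vec^{\fin, \le d}$) forces $\alpha_V$ to be the restriction of $\alpha_{\bV}$. Conversely, given a $\GL(\infty)$-equivariant map $f \colon F(\bV) \to F'(\bV)$, define $\alpha_V$ by restricting $f$ along any embedding $V \hookrightarrow \bV$; independence of the embedding, and naturality with respect to arbitrary linear maps $V \to W$, are then verified using polarization.

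The main obstacle is the last step: promoting $\GL(\infty)$-equivariance at the single object $\bV$ to naturality with respect to arbitrary (not necessarily invertible) morphisms of finite-dimensional vector spaces. The key input is that, since $F$ and $F'$ are polynomial, the induced maps $\Hom(V, W) \to \Hom(F(V), F(W))$ are polynomial maps of finite-dimensional vector spaces; inside $\Hom(V, W)$ the subset of maps extending to elements of $\GL(\bV)$ is Zariski dense, so $\GL(\infty)$-equivariance of $f$ at $\bV$ is enough to pin down a natural transformation on all of $\Vec^{\fin}$. Once this density/polarization point is absorbed, the remaining verifications are formal.
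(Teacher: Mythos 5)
The paper does not supply its own proof of this proposition; it cites \cite[(5.4.4)]{expos}, so your argument must be judged on its own merits rather than compared with an in-paper proof. Your overall strategy---factor $\Rep^{\fin}(S_\ast) \to \cS \to \Rep^{\pol}(\GL)$, recognize the composite as the Schur--Weyl equivalence of \pref{art:schurweyl}, and apply the two-out-of-three property---is sound, and your treatment of essential surjectivity and faithfulness of evaluation at $\bV$ is correct (granting the mild point that $F(\bV)=\varinjlim_d F(\bC^d)$ is polynomial, which you address).

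The gap is exactly where you flag it, in fullness of $\mathrm{ev}_{\bV}$, and it is a genuine one. The claim that the maps $V \to W$ extending to elements of $\GL(\bV)$ are Zariski dense in $\Hom(V,W)$ is false whenever $\dim V > \dim W$: any $g \in \GL(\bV)$ restricts to an injection on $V$, so in that case no map $V \to W$ extends to $\GL(\bV)$ and your dense set is empty. Hence naturality of the proposed $\alpha$ with respect to non-injective maps (in particular, all split surjections) does not follow from what you have written, and the ``remaining verifications'' are not formal.

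Here is one way to close the gap. Factor an arbitrary $\phi \colon V \to W$ as
\[
V \xrightarrow{\ i\ } V \oplus W \xrightarrow{\ g\ } V \oplus W \xrightarrow{\ p\ } W,
\]
where $i(v)=(v,0)$ is a split monomorphism, $g(v,w)=(v,\phi(v)+w)$ is an isomorphism, and $p$ is the projection, a split epimorphism. Your argument handles $i$ and $g$. For the split epimorphism $p$ with section $s$, write $U = V \oplus W$ and set $e = s p$, an idempotent endomorphism of $U$. Extend $e$ to $\tilde e \in \End(\bV)$ by the identity on a complement of $U$, and consider $g_t = \tilde e + t(\id - \tilde e)$; for $t \neq 0$ one has $g_t \in \GL(\bV)$ (it differs from the identity only on $U$), while $g_0 = \tilde e$. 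Polynomiality of $F$ and $F'$ then yields naturality for $e$ by your density argument applied inside $\End(U)$. Finally, combining naturality for $s$ with naturality for $e = s p$ gives
\[
F'(s)\,\alpha_W\, F(p) \;=\; F'(s)\,F'(p)\,\alpha_U,
\]
and since $F'(s)$ is a split monomorphism, hence injective, you may cancel it to obtain $\alpha_W F(p) = F'(p)\alpha_U$, i.e.\ naturality for $p$. Without an argument of this shape the fullness claim does not go through.
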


\begin{remark}
The definition of $S_M(V)$ clearly makes sense when $V$ is infinite dimensional.  Thus, by the above proposition, every polynomial functor extends canonically to a functor $\Vec \to \Vec$.
\end{remark}

\article[Universal description] \label{art:schur-univ}
The category $\Rep^{\pol}(\GL)$ has the following universal description:

\begin{proposition}
Let $\cA$ be a tensor category. To give a tensor functor $\Rep^{\pol}(\GL) \to \cA$ is the same as to give an object of $\cA$.  The equivalence takes a functor $F$ to the object $F(\bV)$.
\end{proposition}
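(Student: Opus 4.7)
My plan is to invert the evaluation functor $F \mapsto F(\bV)$ by constructing, for each object $X$ of a tensor category $\cA$, a tensor functor $F_X \colon \Rep^{\pol}(\GL) \to \cA$ with $F_X(\bV) = X$. The construction will use the kernel formalism of \S\ref{ss:repsofcategory} together with the Schur--Weyl equivalence $\Rep^{\pol}(\GL) \cong \Mod_\fs^\fin$ of \pref{art:schurweyl}.

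First, I would define a kernel $\cK_X \in \cA^\fs$ by $\cK_X(L) = X^{\otimes L}$, the tensor product in $\cA$ of $L$ copies of $X$, with functoriality in bijections of $L$ supplied by the symmetry constraints of $\cA$. The associativity and symmetry of $\otimes$ give canonical natural isomorphisms $\cK_X(L \amalg L') \cong \cK_X(L) \otimes \cK_X(L')$ compatible with the coherence data, so $\cK_X$ is a tensor kernel in the sense of \pref{ker-tens} (the hypothesis that $\fs$ is outwards finite is clear). Corollary~\pref{ker-tens3} then produces a tensor functor $F_X \colon \Mod_\fs^\fin \to \cA$ via $F_X(M) = M \otimes^\fs \cK_X$. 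Evaluating on the simple object of $\Mod_\fs^\fin$ supported on the singleton --- which corresponds to $\bV$ under Schur--Weyl --- yields $\cK_X(\{\ast\}) = X$, as required.

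To show the two constructions are mutually inverse, I would use semi-simplicity: given any tensor functor $F$ with $X := F(\bV)$, both $F$ and $F_X$ send $T_n = \bV^{\otimes n}$ to $X^{\otimes n}$ with matching $S_n$-actions, since a strict symmetric monoidal functor transports the permutation action on the source to the permutation action on the target. By Proposition~\pref{prop:polGLss}, $\Rep^{\pol}(\GL)$ is semi-simple with simples $V_\lambda = \Hom_{S_n}(\bM_\lambda, T_n)$, so both functors extract the same multiplicity spaces, yielding a natural isomorphism $F \cong F_X$ of additive functors. The main subtlety I expect is promoting this to an isomorphism of \emph{tensor} functors: one must verify that the tensorator isomorphisms $F(V) \otimes F(W) \to F(V \otimes W)$ agree with those of $F_X$. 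This ultimately reduces to the associativity and commutativity coherences for symmetric monoidal categories applied to $X^{\otimes (n+m)} \cong X^{\otimes n} \otimes X^{\otimes m}$, and is routine but requires careful diagram-chasing to track compatibility with the Schur--Weyl decomposition.
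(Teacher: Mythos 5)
Your proof is correct and takes essentially the same route as the paper: both construct the inverse of $F \mapsto F(\bV)$ by sending an object $X \in \cA$ to the tensor functor $M \mapsto \bigoplus_n (M_n \otimes X^{\otimes n})^{S_n}$; you package this via the tensor-kernel machinery of \pref{ker-tens} and \pref{ker-tens3}, while the paper defines it directly using the $\Vec$-module structure on $\cA$ from \pref{vecmodule}, and both defer the final mutual-inverse verification (the paper to the reader, you via a semi-simplicity sketch).
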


The proposition can be phrased equivalently as saying that the functor
\begin{displaymath}
\Phi \colon \Fun^{\otimes}(\Rep^{\fin}(S_{\ast}), \cA) \to \cA, \qquad \Phi(F)=F(\bM_{(1)})
\end{displaymath}
is an equivalence of categories.  Let us now explain why this is.  Let $V$ be an object of $\cA$.  Given a finite dimensional representation $M$ of $S_n$, put
\begin{displaymath}
S_M(V)=\Hom_{S_n}(M, V^{\otimes n}).
\end{displaymath}
(See \pref{vecmodule} for how to make sense of this.) The definition of $S_M(V)$ extends additively to all $M \in \Rep(S_{\ast})$, and $M \mapsto S_M(V)$ is a tensor functor.  We have thus defined a functor
\begin{displaymath}
\Psi \colon \cA \to \Fun^{\otimes}(\Rep^{\fin}(S_{\ast}), \cA).
\end{displaymath}
We leave it to the reader to show that $\Phi$ and $\Psi$ are mutually quasi-inverse.

\begin{remark}
Let $\Cat$ be the 2-category whose objects are categories, whose 1-morphisms are functors and whose 2-morphisms are natural transformations of functors.  Let $\TCat$ be the 2-category whose objects are tensor categories, whose 1-morphisms are left-exact tensor functors and whose 2-morphisms are natural transformations of tensor functors.  Let $T \colon \TCat \to \Cat$ be the forgetful functor.  The above proposition can be rephrased more abstractly as: the functor $T$ is corepresented by $\Rep^{\pol}(\GL)$, with the universal object being $\bV \in T(\Rep^{\pol}(\GL))$.  (For this statement, there is no need to restrict to left-exact tensor functors, but the restriction is necessary for similar statements occurring below.)
\end{remark}

\article[The category $\cV$]
We showed that the categories $\Rep^{\pol}(\GL)$, $\Mod_{\fs}^{\fin}$, $\Rep^{\fin}(S_{\ast})$ and $\cS$ are all equivalent.  We write $\cV^{\fin}$ for any of these categories.  We think of $\cV^{\fin}$ abstractly, and regard the four specific categories just mentioned as concrete models for it:  we call them the GL, fs, sequence and Schur models.  It will be convenient to introduce a category $\cV$ whose objects need not be finite length.  The fs-model of $\cV$ is  $\Mod_{\fs}$.  The GL-model of $\cV$ consists of representations of $\GL(\infty)$ which occur as a subquotient of a (possibly infinite) direct sum of $T_n$'s.  We let $\cV^{\gfin}$ be the full subcategory of $\cV$ on objects in which each simple has finite multiplicity; it is equivalent to $\Mod_{\fs}^{\gfin}$.

\article[Duality] \label{art:duality}
Let $M=(M_n)$ be an object of $\Rep^{\gfin}(S_{\ast})$.  We define an object $M^{\vee}$, called the {\bf dual} of $M$, by $(M^{\vee})_n=M_n^*$.  Duality gives an equivalence $(\cV^{\gfin})^{\op} \to \cV^{\gfin}$ of tensor categories.  There is a canonical isomorphism $M \to (M^{\vee})^{\vee}$.  There is also a non-canonical isomorphism $M \cong M^{\vee}$ for each object $M$, since irreducible representations of symmetric groups are self-dual.  Note that in the GL-model, duality is {\bf not} the usual linear dual; see \cite[\S 6.1.6]{expos}. 

\article[Transpose]
\label{gl:transp}
Let $M=(M_n)$ be an object of $\Rep(S_{\ast})$.  We define an object $M^{\dag}$, called the {\bf transpose} of $M$, by $(M^{\dag})_n=M_n \otimes \sgn$, where $\sgn$ is the sign character of $S_n$.  Transpose gives an {\bf asymmetric} equivalence $\cV \to \cV$ of tensor categories:  it behaves in the expected way with respect to tensor products, but does not respect the symmetric structure on tensor products.  For example, it interchanges certain symmetric and exterior powers.  See \cite[\S 7.4]{expos} for more details.

\article[The category $\cV^{\otimes 2}$]
\label{art:V2}
We will occasionally need to use the category $\cV^{\otimes 2}$.  Objects in this category can be thought of in three equivalent ways:
\begin{itemize}
\item functors $\fs \times \fs \to \Vec$, or
\item representations of $\GL(\infty) \times \GL(\infty)$ which are polynomial in each group, or 
\item polynomial functors $\Vec \times \Vec \to \Vec$.  
\end{itemize}
See \cite[\S 6.2]{expos} for further discussion. The transpose duality in \pref{gl:transp} extends in an obvious way to $\cV^{\otimes 2}$ by twisting a representation of $S_n \times S_m$ by the product of its sign characters. This gives an asymmetric equivalence $\cV^{\otimes 2} \to \cV^{\otimes 2}$ of tensor categories. Similarly, it is sometimes useful to apply partial transpose duality just to one of the factors. To distinguish, the first one will be called the full transpose.

\subsection{Twisted commutative algebras} \label{ss:tca}

\article[Twisted commutative algebras] \label{art:defnV}
The category $\cV$ is an abelian tensor category, and so there is a notion of commutative algebra in it.  A {\bf twisted commutative algebra} (tca) is an associative commutative unital algebra in $\cV$. Each model of $\cV$ provides a different way to think about tca's:
\begin{itemize}
\item In $\Rep^{\pol}(\GL)$, a tca is a commutative associative unital $\bC$-algebra equipped with an action of $\GL(\infty)$ by algebra homomorphisms, under which it decomposes as an infinite direct sum of polynomial representations.
\item In $\Mod_{\fs}$, a tca is a functor $A \colon \fs \to \Vec$ equipped with a multiplication map $A_L \otimes A_{L'} \to A_{L \amalg L'}$ that satisfies the relevant conditions.
\item In $\cS$, a tca is a functor from $\Vec^{\fin}$ to the category of commutative associative unital $\bC$-algebras.  A finitely generated tca will take values in finitely generated $\bC$-algebras. 
\end{itemize}
A {\bf 2-variable tca} is an algebra in $\cV^{\otimes 2}$.  For a more detailed discussion of this section, see \cite[\S 8]{expos}.

\article[An example] \label{art:defnc1}
We write $\bC\langle 1 \rangle$ for the object of $\cV$ given as follows:
\begin{itemize}
\item the standard representation $\bV$ in $\Rep^{\pol}(\GL)$, or
\item the identity functor $\Vec \to \Vec$ in $\cS$, or
\item the trivial representation of $S_1$ in $\Rep(S_{\ast})$.  
\end{itemize}
The symmetric algebra $\Sym(\bC\langle 1 \rangle)$ is the simplest nontrivial example of a tca.  In $\Rep^{\pol}(\GL)$, it corresponds to the algebra $\bC[x_1, x_2, \ldots]$ equipped with the usual action of $\GL(\infty)$ by linear substitutions. We refer to \cite{symc1} for an in-depth treatment of the structure of this tca.

\article[Tensor products of modules]
\label{tca:tensor}
Let $A$ be a tca.  Then $\Mod_A$ naturally has a tensor product $\otimes_A$ induced from the one on $\cV$.  This is usually the correct tensor product to use, but is not for the purposes of this paper.  We now define an alternative tensor product.  Let $\bE$ and $\bE'$ be two copies of $\bC^{\infty}$.  Suppose $M$ is an $A$-module.  Then $M(\bE \oplus \bE')$ is naturally an $A(\bE \oplus \bE')$-module.  There is a natural ring homomorphism $A(\bE) \otimes A(\bE') \to A(\bE \oplus \bE')$, and so we can regard $M(\bE \oplus \bE')$ as a module over this ring.  We have thus defined a functor $a^* \colon \Mod_A \to \Mod_{A \boxtimes A}$, where here $\boxtimes$ denotes the external tensor product, given by $M \boxtimes N$ for $M(\bE) \otimes N(\bE')$.  The functor $a^*$ has a right adjoint $a_*$.  For two $A$-modules $M$ and $N$, we define $M \ast^A N$ to be $a_*(M \boxtimes N)$.  Then $\ast^A$ gives the category $\Mod_A$ a second tensor structure.  (We do not prove the existence of $a_*$, but it can be deduced from Proposition~\pref{sg:equiv}.)

\begin{remark}
There is an opposite version of this:  if $A$ is a twisted cocommutative coalgebra (see \pref{tc-coalg} below), then $\CoMod_A$ has a natural tensor product $\otimes^A$, and an analogue of the above procedure yields an alternative tensor product $\ast_A$.
\end{remark}

\article[Computation of certain $\Ext$ groups]
\label{tca:ext}
Let $U$ be an object of $\cV$ with $U_0=0$ and let $A$ be the tca $\Sym(U)$.  The simple $A$-modules are the objects $\bS_{\lambda}$ of $\cV$ endowed with the trivial $A$-module structure (i.e., the action comes via the homomorphism $A \to \bC$).  We now compute the $\Ext$'s of these modules.

\begin{proposition}
We have a natural identification
\begin{displaymath}
\Ext^i_A(\bS_{\lambda}, \bS_{\mu})=\Hom_{\cV}(\bS_{\lambda} \otimes \lw^i{U}, \bS_{\mu}).
\end{displaymath}
In particular, if $U$ is concentrated in degree $d$ then this $\Ext$ group vanishes unless $\vert \mu \vert-\vert \lambda \vert=di$.
\end{proposition}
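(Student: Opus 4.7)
The plan is to compute these $\Ext$ groups via a Koszul-type resolution. Since $U_0 = 0$, the Koszul complex
\begin{displaymath}
K_\bullet : \quad \cdots \to A \otimes \lw^{i}U \to A \otimes \lw^{i-1} U \to \cdots \to A \otimes U \to A
\end{displaymath}
in $\cV$, with its standard Koszul differential built from multiplication by $U$ paired against contraction in $\lw^\bullet U$, is a free resolution of the trivial $A$-module $\bC$. Tensoring over $\bC$ with $\bS_\lambda$ preserves both freeness and exactness, so $K_\bullet \otimes \bS_\lambda \to \bS_\lambda$ is a resolution of $\bS_\lambda$ (with its trivial $A$-action) by free $A$-modules.

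I would then apply $\Hom_A(-, \bS_\mu)$ term by term. The free-forgetful adjunction gives
\begin{displaymath}
\Hom_A(A \otimes \lw^i U \otimes \bS_\lambda, \bS_\mu) = \Hom_\cV(\bS_\lambda \otimes \lw^i U, \bS_\mu),
\end{displaymath}
so the target complex has exactly the right terms. The key observation is that the transferred differential vanishes identically: it is computed as the Koszul differential followed by the $A$-action on $\bS_\mu$, but the Koszul differential factors through multiplication by elements of $U$, which act by zero on $\bS_\mu$ since the $A$-module structure there is trivial. Hence the $\Hom$-complex has zero differential and its cohomology is just its terms, which yields the identification.

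The degree statement follows immediately from the canonical grading on $\cV$ (see \pref{gl:center}): every $\bS_\nu$ lies in degree $|\nu|$, and $\Hom_\cV$ between objects of distinct degrees vanishes. When $U$ is concentrated in degree $d$, $\lw^i U$ is concentrated in degree $di$, and hence $\bS_\lambda \otimes \lw^i U$ lies in degree $|\lambda| + di$; non-vanishing of the Hom then forces $|\mu| - |\lambda| = di$.

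The only step not quite formal is the exactness of $K_\bullet$ inside $\cV$, but this is standard: the usual contracting homotopy built from the Euler derivation on $\Sym(U)$ is internal to $\cV$ and works verbatim whenever $U_0 = 0$. I expect no real obstacle, since the crucial input beyond exactness --- the vanishing of the differentials after applying $\Hom_A(-, \bS_\mu)$ --- is simply a consequence of the triviality of the module structure on $\bS_\mu$.
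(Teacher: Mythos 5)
Your proof is correct and follows exactly the same approach as the paper: build the Koszul resolution $A \otimes \lw^\bullet U$ of $\bC$, tensor with $\bS_\lambda$, apply $\Hom_A(-, \bS_\mu)$, and observe that the induced differentials vanish because $U$ acts trivially on $\bS_\mu$. You simply spell out the free-forgetful adjunction and the grading argument in more detail than the paper does.
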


\begin{proof}
The Koszul complex gives a projective resolution $A \otimes \lw^{\bullet}{U}$ of the residue field $\bC$.  Tensoring with $\bS_{\lambda}$ gives a projective resolution of the simple module $\bS_{\lambda}$.  Applying $\Hom_A(-,\bS_{\mu})$ gives a complex whose terms are $\Hom_{\cV}(\bS_{\lambda} \otimes \lw^{\bullet}{U}, \bS_{\mu})$ and whose differentials vanish. 
\end{proof}

\article[Coalgebras and duality]
\label{tc-coalg}
A {\bf twisted cocommutative coalgebra} is a coassociative cocommutative counital coalgebra in $\cV$.  Recall that the duality ${}^\vee$ \pref{art:duality} is a contravariant equivalence of tensor categories from $\cV^{\gfin}$ to itself.  It follows that if $A$ is a graded-finite tca, then $A^\vee$ is naturally a twisted cocommutative coalgebra, and if $M$ is an $A$-module, then $M^\vee$ becomes an $A^\vee$-comodule.  We thus find that duality yields an equivalence
\[
{}^\vee \colon (\Mod_A^{\gfin})^{\op} \to \CoMod^{\gfin}_{A^\vee}
\]
of abelian categories.  We note in particular that this equivalence preserves finite length objects, takes finitely generated modules to finitely cogenerated comodules and interchanges projective and injective objects.

\article[Weyl algebras and projective modules] \label{art:bialgebras}
Let $U$ be an object of $\cV$, thought of as a representation of $\GL(\infty)$, and let $U^*$ be the full linear dual of $U$, which is a non-polynomial representation of $\GL(\infty)$.  Let $A=\Sym(U)$, a tca.  Let $A'$ be the quotient of the tensor algebra on $U \oplus U^*$ by two-sided ideal generated by the following relations: (a) $xy=yx$ for $x, y \in U$; (b) $\lambda \mu=\mu \lambda$ for $\lambda,\mu \in U^*$; (c) $\lambda x - x \lambda=\lambda(x)$ for $x \in U$ and $\lambda \in U^*$.  One can think of $A'$ as a Weyl algebra (algebra of differential operators).  The group $\GL(\infty)$ acts on $A'$, and by an $A'$-module we mean one with a compatible $\GL(\infty)$ action.  It is clear that $A$, as well as any projective module over $A$, has the structure of an $A'$-module.  Conversely:

\begin{proposition} 
A finitely generated $A$-module $M$ that has a compatible $A'$-module structure is projective as an $A$-module. 
\end{proposition}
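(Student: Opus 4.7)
The plan is to prove $\Tor_1^A(\bC, M) = 0$; for a finitely generated $A$-module in $\cV$, graded Nakayama then forces $M$ to be free, hence projective.  To compute the Tor, I use the Koszul resolution $A \otimes \lw^\bullet U \to \bC$, identifying $\Tor^A_\bullet(\bC, M)$ with the homology of the complex $(\lw^\bullet U \otimes M, d)$ with the standard Koszul differential $d(u_1 \wedge \cdots \wedge u_k \otimes m) = \sum_{i=1}^k (-1)^{i-1} u_1 \wedge \cdots \wh{u_i} \cdots \wedge u_k \otimes u_i m$.

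The crucial construction is a chain homotopy built from the $U^*$-action in $A'$:
\[
h \colon \lw^k U \otimes M \to \lw^{k+1} U \otimes M, \qquad h(\omega \otimes m) = \sum_{i \ge 1} e_i \wedge \omega \otimes e_i^* m.
\]
For each fixed $m$ the sum is finite, because polynomiality of the $\GL(\infty)$-action forces $e_i^* m = 0$ for all but finitely many $i$: if $m$ lies in the $\GL(N)$-fixed part of $M$ then $\GL(\infty)$-equivariance (combined with the fact that a polynomial representation admits only non-negative $\bG_m$-weight characters) gives $e_i^* m = 0$ for $i > N$.  A direct calculation using the Weyl relation $[e_i^*, e_j] = \delta_{ij}$ then yields the key identity
\[
(dh + hd)\big|_{\lw^k U \otimes M} = \id_{\lw^k U} \otimes (E + k), \qquad E := \sum_i e_i e_i^* \in \End(M),
\]
and using $[E, e_j] = e_j$ one verifies that $\id_{\lw^k U} \otimes (E+k)$ is compatible with $d$ in the sense that $d \circ (E+k) = (E+k-1) \circ d$ on the Koszul complex.

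The main obstacle is showing that $E + k$ is invertible on $M$ for every $k \ge 1$.  For injectivity, the identity $[E, e_j^*] = -e_j^*$ gives $E(e_j^* n) = e_j^*(E - 1) n$, so any solution $m \in M_d$ of $(E+k)m = 0$ satisfies $(E + k + |I|)((e^*)^I m) = 0$ for every multi-index $I$ by induction on $|I|$.  At $|I| = d$ the element $(e^*)^I m$ lies in $M_0$, where $E$ vanishes because $M$ is concentrated in non-negative $\bG_m$-degrees (see \pref{gl:center}), forcing $(k+d)(e^*)^I m = 0$ and hence $(e^*)^I m = 0$; downward induction on $|I|$ using $E((e^*)^I m) = \sum_j e_j((e^*)^{I+e_j} m) = 0$ then yields $m = 0$.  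For surjectivity, the tensor $\sum_i e_i \otimes e_i^*$ is $\GL(\infty)$-invariant, so $E + k$ is $\GL(\infty)$-equivariant and preserves each isotypic component $V_\lambda^{\oplus n_\lambda}$ of $M$; finite generation of $M$ over $A$ ensures each multiplicity $n_\lambda$ is finite, and on a finite-dimensional multiplicity space injectivity implies bijectivity.  With $E + k$ invertible and compatible with $d$, the homotopy identity $dh + hd = \id \otimes (E + k)$ forces $\id \otimes (E + k)$ to act simultaneously as zero (by null-homotopy) and as an invertible operator on $H_k(\lw^\bullet U \otimes M)$ for each $k \ge 1$, and hence $\Tor^A_k(\bC, M) = 0$ for all $k \ge 1$.
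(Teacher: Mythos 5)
Your proof is correct, and it takes a genuinely different route from the paper's. The paper argues inductively on the number of $A$-module generators: it picks an irreducible $\GL(\infty)$-submodule $V \subset M$ in the lowest nonzero degree (so $V$ is killed by all of $U^*$), shows that $A \otimes V$ is simple as an $A'$-module so that the natural map $A \otimes V \to M$ is injective, and then passes to the quotient $M/(A \otimes V)$, which again carries a compatible $A'$-structure and has fewer generators. You instead run a Koszul--Euler-operator argument: you build a contracting homotopy $h(\omega \otimes m) = \sum_i e_i \wedge \omega \otimes e_i^* m$ out of the $U^*$-action, verify $dh + hd = \id \otimes (E+k)$ on $\lw^k U \otimes M$ where $E = \sum_i e_i e_i^*$, and then prove $E+k$ is invertible for $k \ge 1$ (injectivity by a weight-dropping descent, surjectivity from finite multiplicities plus $\GL(\infty)$-equivariance), whence $\Tor^A_{\ge 1}(\bC, M) = 0$ and graded Nakayama finishes. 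Both approaches are valid; the paper's is lighter on computation and closer to the $\cD$-module-theoretic statement that $A \otimes V$ is a simple $A'$-module, while yours is a direct, self-contained homological argument in the style of Euler-operator contractions for holonomic $\cD$-modules and gives the vanishing of \emph{all} higher $\Tor$'s at once. Two small points of precision worth tightening: (i) "$(e^*)^I m$ lies in $M_0$ when $|I| = d$" is only literally true when $U$ is concentrated in $\bG_m$-degree $1$; for general $U_0 = 0$ what you actually need (and do use) is that $(e^*)^I m = 0$ once $|I|$ is large enough to force negative degree, at which point the downward induction starts. (ii) "acts as an invertible operator on $H_k$" needs a word of justification, since $f_0 = \id \otimes E$ is not invertible and so the operators $\{f_k\}$ are not an automorphism of the whole complex; the correct statement is that for a chain map with $f_{k+1}$ surjective and $f_k$ injective, nullhomotopy of $f$ forces $H_k = 0$, which holds for $k \ge 1$. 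Neither issue affects the correctness of the argument.
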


\begin{proof}
Let $V$ be an irreducible $\GL(\infty)$-submodule of $M$ which is annihilated by all of the partial derivatives in $A'$.  The $A'$-submodule of $M$ generated by $V$ is a quotient of $A \otimes V$. Since $V$ is irreducible, it follows that $A \otimes V$ is simple as an $A'$-module, and hence the map $A \otimes V \to M$ is injective. It is clear that $V$ is part of a minimal generating set of $M$, so $M / (A \otimes V)$ has fewer generators.  It follows by induction that $M$ has a filtration whose associated graded is projective, which implies that $M$ is projective.
\end{proof}

\subsection{Semi-group tca's and diagram categories}
\label{ss:sg-tca}

\article
\label{sg:def}
Let $\cG$ be a twisted commutative monoid; that is, $\cG$ is a functor $\fs \to \fs$ equipped with a multiplication map
\begin{displaymath}
\cG_L \amalg \cG_{L'} \to \cG_{L \amalg L'}
\end{displaymath}
which is associative, commutative, and has an identity, in the same sense as tca's.  We assume that $\cG_L$ is finite for all $L$.  Put $A_L=\bC[\cG_L]$.  Then $A$ is a tca.

Let $\Lambda$ be the following category.  Objects are finite sets.  A morphism $L \to L'$ consists of a triple $(U, \Gamma, f)$ where $U$ is a subset of $L'$, $\Gamma$ is an element of $\cG_U$ and $f$ is a bijection $L \to L' \setminus U$.  Given a morphism $L \to L'$ corresponding to $(U, \Gamma, f)$ and a morphism $L' \to L''$ corresponding to $(V, \Delta, g)$, the composition corresponds to the data $(g(U) \amalg V, g(\Gamma) \amalg \Delta, gf)$.   This category is $\Hom$-finite, weakly directed, and inwards finite.  Disjoint union of sets endows $\Lambda$ with a monoidal operation $\amalg$.  We let $\otimes=\otimes_*$ be the convolution tensor product on $\Mod_{\Lambda}$, see \pref{diag:conv}.  There is a natural equivalence $\Mod_A=\Mod_{\Lambda}$; in fact, a representation of $\Lambda$ is simply an $A$-module from the point of view of the $\fs$-model.

We note that everything said above works in the multivariate case, i.e., if $\cG \colon \fs^n \to \fs$ is an $n$-variable tc monoid then $A=\bC[\cG]$ is an $n$-variable tca, the definition of $\Lambda$ still makes sense (objects are now $n$-tuples of finite sets) and we have an equivalence $\Mod_A=\Mod_{\Lambda}$.

\begin{Proposition}
\label{sg:equiv}
Under the equivalence $\Mod_A=\Mod_{\Lambda}$, the tensor products $\ast^A$ and $\otimes$ coincide.
\end{Proposition}

\begin{proof}
Let $\cG'$ be the 2-variable tc monoid given by $\cG'_{L,L'}=\cG_{L \amalg L'}$, and let $\cG''$ be the one given by $\cG''_{L,L'}=\cG_L \times \cG_{L'}$.  Let $\Lambda'$ and $\Lambda''$ be the categories associated to $\cG'$ and $\cG''$.  Also, let $A'$ be the 2-variable tca $A(\bE \oplus \bE')$ and let $A \boxtimes A$ be as in \pref{tca:tensor}.  We have the following commutative diagram
\begin{displaymath}
\xymatrix{
\Mod_{\Lambda} \ar@{=}[d] \ar[r] & \Mod_{\Lambda'} \ar@{=}[d] \ar[r] & \Mod_{\Lambda''} \ar@{=}[d] \\
\Mod_A \ar[r] & \Mod_{A'} \ar[r] & \Mod_{A \boxtimes A} }
\end{displaymath}
We now elaborate on the diagram.  The functors in the bottom row are as in \pref{tca:tensor}.  The first functor on the top row takes $M \in \Mod_{\Lambda}$ to the functor $(L, L') \mapsto M_{L \amalg L'}$ in $\Mod_{\Lambda'}$.  The second functor in the top row is pullback along the functor $\Lambda'' \to \Lambda'$ corresponding to the homomorphism $\cG'' \to \cG$ given by the monoidal operation on $\cG$.  The vertical equivalences all come from viewing the bottom categories in the fs-model.  Commutativity of the diagram is an exercise left to the reader.

Now, the composition of the bottom two horizontal functors is the functor $a^* \colon \Mod_A \to \Mod_{A \boxtimes A}$ discussed in \pref{tca:tensor}.  After identifying $\Lambda''$ with $\Lambda \times \Lambda$, the composition of the top horizontal functors is identified with $\amalg^*$, where $\amalg$ is the monoidal functor on $\Lambda$.  It follows that $a_*$ coincides with $\amalg_*$, which proves the proposition.
\end{proof}

\begin{remark}
The proposition also holds in the multivariate case.
\end{remark}

\article
\label{sg:op}
Everything we just did has an opposite version, as follows.  The tca $A$ is naturally a twisted commutative coalgebra.  Let $\Lambda'$ be defined like $\Lambda$ but with $U$ a subset of $L$ instead of $L'$.  Clearly, $\Lambda'$ is just the opposite category of $\Lambda$.  We let $\otimes=\otimes_{\#}$ on $\Mod_{\Lambda'}$.  Then $\CoMod_A$ is equivalent to $\Mod_{\Lambda'}$, with $\ast_A$ corresponding to $\otimes$.

\subsection{Profinite vector spaces} \label{ss:profinite}

\noarticle
Throughout this section, we treat $\bC$ as a topological field endowed with the discrete topology.  All finite dimensional vector spaces are endowed with the discrete topology as well.

\article[Inverse limits]
Suppose that we have an inverse system $(V_i)_{i \in I}$ of discrete vector spaces, indexed by some directed category $I$.  Let $V$ be the inverse limit in the category of topological vector spaces.  The topology on $V$ can be described as follows.  Let $\pi_i \colon V \to V_i$ be the natural map and let $U_i$ be its kernel.  Then the $U_i$ form a neighborhood basis of $0 \in V$, and so a subset of $V$ is open if and only if it is a union of translates of the $U_i$'s.

\article[Profinite vector spaces] \label{art:whvecdefn}
Let $V$ be a topological vector space.  We say that $V$ is {\bf profinite} if the natural map
\begin{displaymath}
V \to \varprojlim_{V \to V'} V'
\end{displaymath}
is an isomorphism of topological vector spaces, where the limit is taken over the continuous surjections from $V$ to finite dimensional vector spaces $V'$.  Then $V$ is profinite if and only if it has a neighborhood basis of the identity consisting of open subspaces of finite codimension.  We denote by $\wh{\Vec}$ the category of profinite vector spaces, with morphisms being continuous linear maps.

\article[Continuous dual]
Let $V$ be a profinite vector space.  We define the {\bf continuous dual} of $V$, denoted $V^{\vee}$, to be the space of continuous linear functionals $V \to \bC$.  If $V$ is the inverse limit of the system $(V_i)$, with $V_i$ finite dimensional, then $V^{\vee}$ is the direct limit of the system $(V_i^*)$.  Similarly, if $V$ is a discrete vector space we define its continuous dual, denoted $V^{\vee}$, to be the usual dual $V^*$ but regarded as a profinite vector space; that is, write $V=\varinjlim V_i$ with $V_i$ finite dimensional, and then $V^{\vee}=\varprojlim V_i^*$.  If $V$ is either profinite or discrete then the natural map $V \to (V^{\vee})^{\vee}$ is an isomorphism.  We thus see that ``continuous dual'' provides a contravariant equivalence of categories between $\Vec$ and $\wh{\Vec}$.  In particular, $\wh{\Vec}$ is abelian.

\article[Completed tensor product]
Let $V$ and $W$ be profinite vector spaces.  The tensor product $V \otimes W$ is not a profinite vector space in a natural way.  We define the {\bf completed tensor product}, denoted $V \hatotimes W$, as $\varprojlim(V_i \otimes W_j)$, where $V_i=\varprojlim V_i$ and $W=\varprojlim W_j$ with $V_i$ and $W_j$ discrete.  The functor $\hatotimes$ endows $\wh{\Vec}$ with the structure of a tensor category.  Furthermore, it is compatible with duality:  if $V$ and $W$ are profinite then $(V \hatotimes W)^{\vee}=V^{\vee} \otimes W^{\vee}$, while if $V$ and $W$ are both discrete then $(V \otimes W)^{\vee}=V^{\vee} \hatotimes W^{\vee}$.

\section{The general linear group} \label{sec:gl}

\subsection{\texorpdfstring{Representations of $\GL(\infty)$}{Representations of GL(inf)}}

\noarticle
In this section, we develop what we require of the algebraic representation theory of $\GL(\infty)$.  The most important results of the section are Proposition~\pref{prop:glmaxmag} and its consequences.  These results can be deduced from those in \cite[\S 2]{penkovstyrkas}, however, our proofs are different and shorter (though we prove less than \cite[\S 2]{penkovstyrkas}).

\article[Algebraic representations]
\label{art:repGL}
Let $\bV_*=\bigcup_{d \ge 1} \bC^d{}^*$ be the {\bf restricted dual} of $\bV$, where $\bC^d{}^*$ is the subspace of $(\bC^{d+1})^*$ which kills the final basis vector.  Let $T_{n,m}=\bV^{\otimes n} \otimes \bV_*^{\otimes m}$.  We say that a representation of $\GL(\infty)$ is {\bf algebraic} if it appears as a subquotient of a finite direct sum of the $T_{n,m}$.  This definition is somewhat ad hoc; see \cite[\S\S 3, 4]{koszulcategory} for a more natural characterization of these representations.  We denote by $\Rep(\GL)$ the category of algebraic representations of $\GL(\infty)$.  It is an abelian category and stable under tensor products.  Two remarks:
\begin{enumerate}
\item As discussed in \pref{intro:ss}, the category $\Rep(\GL)$ is not semi-simple:  the pairing $\bV \otimes \bV_* \to \bC$ is a non-split surjection.
\item As in \pref{gl:center}, there is a ``central $\bG_m$'' that acts on every algebraic representation; in other words, every algebraic representation admits a canonical $\bZ$-grading.  The representation $T_{n,m}$ is concentrated in degree $n-m$.
\end{enumerate}

\article[Weyl's construction (finite case)] \label{art:glweylfin}
Before we begin our study of $\Rep(\GL)$, we recall the traceless tensor construction of the irreducible representations of $\GL(d)$.  Let $T_{n,m}^d=(\bC^d)^{\otimes n} \otimes (\bC^d{}^*)^{\otimes m}$.  We note that $S_n \times S_m \times \GL(d)$ acts on $T_{n,m}^d$.  For integers $1 \le i \le n$ and $1 \le j \le m$, we obtain a map
\begin{displaymath}
t_{i,j} \colon T_{n,m}^d \to T_{n-1,m-1}^d
\end{displaymath}
by applying the pairing $\bC^d \otimes \bC^d{}^* \to \bC$ to the $i$th $\bC^d$ factor and $j$th $\bC^d{}^*$ factor.  We let $T_{[n,m]}^d$ denote the intersection of the kernels of the maps $t_{i,j}$.  If $n=0$ or $m=0$ then $T_{[n,m]}^d=T_{n,m}^d$.  This is clearly stable under the action of $S_n \times S_m \times \GL(d)$.  For a partition $\lambda$ of $n$ and $\lambda'$ of $m$, we put
\begin{displaymath}
V_{\lambda,\lambda'}^d=\Hom_{S_n \times S_m}(\bM_{\lambda} \boxtimes \bM_{\lambda'}, T_{[n,m]}^d).
\end{displaymath}
This space carries an action of $\GL(d)$.  We have the following fundamental result, which is an analogue of Weyl's construction for the irreducible representations of the classical groups (see \cite[Theorem 1.1]{koike}):

\begin{proposition}
Let $r=\ell(\lambda)$ and $s=\ell(\lambda')$.  If $r+s \le d$ then $V_{\lambda,\lambda'}$ is the irreducible representation of $\GL(d)$ with highest weight $(\lambda_1, \ldots, \lambda_r, 0, \ldots, 0, -\lambda'_s, \ldots, -\lambda'_1)$.  If $r+s>d$ then $V_{\lambda,\lambda'}=0$.
\end{proposition}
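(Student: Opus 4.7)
The plan is to combine classical Schur--Weyl duality applied separately to the covariant and contravariant factors with the mixed tensor product decomposition for $\GL(d)$, realizing $V_{\lambda, \lambda'}^d$ as the ``top'' (non-contractible) component of that decomposition.

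First I would apply Schur--Weyl duality to $(\bC^d)^{\otimes n}$ and $(\bC^d{}^*)^{\otimes m}$ separately to obtain
\[
T_{n,m}^d \cong \bigoplus_{\lambda, \mu} (\bM_\lambda \boxtimes \bM_\mu) \otimes (V_\lambda^d \otimes (V_\mu^d)^*)
\]
as $(S_n \times S_m) \times \GL(d)$-modules, where the sum runs over $\lambda \vdash n$ and $\mu \vdash m$ with $\ell(\lambda), \ell(\mu) \le d$. Taking $\Hom_{S_n \times S_m}(\bM_\lambda \boxtimes \bM_{\lambda'}, -)$ then identifies the full (non-traceless) Hom space with $V_\lambda^d \otimes (V_{\lambda'}^d)^*$ as a $\GL(d)$-module.

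Next I would analyze how the contractions $t_{i,j}$ act after this identification. Each $t_{i,j}$ is $\GL(d)$-equivariant; decomposing source and target by Schur--Weyl, the component of $t_{i,j}$ sending the $(\lambda, \lambda')$-isotypic piece into the $(\lambda^-, (\lambda')^-)$-isotypic piece is a $\GL(d)$-equivariant tensor contraction $V_\lambda^d \otimes (V_{\lambda'}^d)^* \to V_{\lambda^-}^d \otimes (V_{(\lambda')^-}^d)^*$, which corresponds to removing a common ``linked'' box from $\lambda$ and $\lambda'$. I would then invoke the classical mixed tensor product rule (Koike)
\[
V_\lambda^d \otimes (V_{\lambda'}^d)^* \cong \bigoplus_{\alpha, \beta, \nu} c^{\lambda}_{\alpha, \nu}\, c^{\lambda'}_{\beta, \nu} \, V_{\alpha, \beta}^d,
\]
valid in the range $\ell(\alpha) + \ell(\beta) \le d$, where $V_{\alpha, \beta}^d$ denotes the irreducible with highest weight $(\alpha_1, \ldots, 0, \ldots, -\beta_1)$. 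The $\nu = \emptyset$ summand is exactly the candidate $V_{\lambda, \lambda'}^d$, which exists iff $\ell(\lambda) + \ell(\lambda') \le d$; all summands with $\nu \ne \emptyset$ arise as images of iterated contractions from the previous step. Hence the intersection of the kernels of all $t_{i,j}$ picks out the $\nu = \emptyset$ summand, yielding the desired irreducible when $r + s \le d$ and vanishing when $r + s > d$ (in the latter range no $\nu = \emptyset$ summand of the required shape exists, while every $\nu \ne \emptyset$ summand lies in the image of some contraction).

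The main technical obstacle is rigorously identifying each $t_{i,j}$ with the box-removal operation corresponding to the index $\nu$ in the Koike expansion (in particular verifying that the resulting maps on Hom spaces are nonzero on the appropriate summands). An alternative, more elementary route would sidestep the tensor product formula by exhibiting an explicit highest weight vector in $T_{[n,m]}^d$: apply the Young symmetrizer for $\bM_\lambda \boxtimes \bM_{\lambda'}$ to a pure tensor $e_{i_1} \otimes \cdots \otimes e_{i_n} \otimes e_{j_1}^* \otimes \cdots \otimes e_{j_m}^*$ with indices chosen so that $\{i_\bullet\} \cap \{j_\bullet\} = \emptyset$ (ensuring vanishing under every $t_{i,j}$) and the column-index pattern produces the desired highest weight, then verify irreducibility by a dimension check against the Weyl character formula, which forces the result of the Schur--Weyl identification to be the claimed irreducible.
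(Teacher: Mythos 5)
The paper does not actually prove this proposition: it is stated and immediately cited as Theorem~1.1 of Koike's paper on universal characters, so there is no in-paper proof for your proposal to match. That said, I can comment on the argument you sketch.

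Your first route, via the mixed tensor product decomposition of $V_\lambda^d \otimes (V_{\lambda'}^d)^*$, has two significant problems. First, that decomposition formula is itself a theorem of Koike which in his paper is derived \emph{after} the traceless tensor construction; invoking it here inverts the standard logical order and risks circularity, so at minimum you would need to exhibit an independent proof of the tensor product rule. Second, the indexed sum $\bigoplus_{\alpha,\beta,\nu} c^\lambda_{\alpha,\nu} c^{\lambda'}_{\beta,\nu} V^d_{\alpha,\beta}$ is only the \emph{stable} form of the decomposition. Outside the stable range (when $\ell(\lambda)+\ell(\lambda')$ approaches or exceeds $d$), the true decomposition of $V_\lambda^d \otimes (V_{\lambda'}^d)^*$ into irreducibles involves modification rules with cancellations, and the naive sum is wrong. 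Consequently, your argument for the vanishing case $r+s>d$ — ``no $\nu=\emptyset$ summand of the required shape exists'' — is not legitimate: you cannot read off what happens from the stable formula once you leave the stable range, and this is precisely the delicate part of the theorem. Finally, even inside the stable range, the assertion that every $\nu \ne \emptyset$ summand ``lies in the image of some contraction'' is the dual form of the claim to be proved: the clean statement is that $T^d_{[n,m]}$ is the orthogonal complement, with respect to the standard $\GL(d)$-invariant pairing, of the span of images of all co-contractions (insertions of $\sum_i e_i \otimes e_i^*$), and identifying isotypic components on both sides of this orthogonality is the content of the theorem, not an input to it.

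Your alternative route (explicit highest weight vector from a pure tensor with disjoint covariant and contravariant index sets, followed by a Young symmetrizer) is much closer to the classical arguments and is the right idea. However, as written it establishes only that the claimed irreducible \emph{appears} in $T^d_{[n,m]}$ in the correct $S_n \times S_m$-isotypic position; to finish one must bound $\dim V^d_{\lambda,\lambda'}$ from above. The standard way to do this is the double-centralizer theorem for the walled Brauer algebra — the surjectivity of $\cB_{n,m}(d) \to \End_{\GL(d)}(T^d_{n,m})$, which the paper records a few articles later as the theorem of Benkart et al. Combining your highest-weight-vector construction with that double-centralizer result (plus an analysis of the ideal generated by contractions inside $\cB_{n,m}(d)$, paralleling what the paper does for the partition algebra in the symmetric group case) would give a complete proof, but your proposal as written does not close this gap, and the dimension check ``against the Weyl character formula'' that you gesture at is not straightforward to carry out uniformly near the boundary $r+s=d$.
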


\article[Weyl's construction (infinite case)] \label{art:glweylinf}
Much of the discussion of the previous paragraph carries over to the $d=\infty$ case.  Let $t_{i,j} \colon T_{n,m} \to T_{n-1,m-1}$ and $T_{[n,m]}$ be defined as before.  Then $T_{[n,m]}$ is stable under the natural action of $S_n \times S_m \times \GL(\infty)$ on $T_{n,m}$.  For partitions $\lambda$ of $n$ and $\lambda'$ of $m$, put
\begin{displaymath}
V_{\lambda,\lambda'}=\Hom_{S_n \times S_m}(\bM_{\lambda} \boxtimes \bM_{\lambda'}, T_{[n,m]}).
\end{displaymath}
We have the decomposition
\begin{equation}
\label{gl:decomp}
T_{[n,m]}=\bigoplus_{\vert \lambda \vert=n,\ \vert \lambda' \vert=m} \bM_{\lambda} \boxtimes \bM_{\lambda'} \boxtimes V_{\lambda,\lambda'}
\end{equation}
Note that we also have an exact sequence
\begin{equation}
\label{gl:seq}
0 \to T_{[n,m]} \to T_{n,m} \to (T_{n-1,m-1})^{\bigoplus nm},
\end{equation}
where the right map is made of the $nm$ trace maps.

\begin{Proposition}
\label{gl:simple}
The $V_{\lambda,\lambda'}$ constitute a complete irredundant set of simple objects of $\Rep(\GL)$.
\end{Proposition}

\begin{proof}
Since
\begin{displaymath}
V_{\lambda,\lambda'}=\bigcup_{d \ge \ell(\lambda) + \ell(\lambda')} V^d_{\lambda,\lambda'}
\end{displaymath}
and each $V^d_{\lambda,\lambda'}$ is an irreducible representation of $\GL(d)$, it follows that $V_{\lambda,\lambda'}$ is an irreducible representation of $\GL(\infty)$.

From \eqref{gl:seq}, we see that every simple constituent of $T_{n,m}$ occurs as a constituent of $T_{[n,m]}$ or $T_{n-1,m-1}$.  From \eqref{gl:decomp}, we see that the simple constituents of $T_{[n,m]}$ are all of the form $V_{\lambda,\lambda'}$.  It thus follows from induction that the same is true for $T_{n,m}$.  Since every simple object of $\Rep(\GL)$ is a constituent of some $T_{n,m}$, we see that every simple is isomorphic to some $V_{\lambda,\lambda'}$.

Finally, $V_{\lambda,\lambda'}$ is not isomorphic to $V_{\mu,\mu'}$ if $(\lambda,\lambda') \ne (\mu,\mu')$ since their characters are distinct.  This is a modification of the theory of symmetric functions, see \cite[\S 2]{koike} for details.
\end{proof}

\begin{Proposition}
\label{gl:finlen}
Every object of $\Rep(\GL)$ has finite length.
\end{Proposition}

\begin{proof}
If suffices to show that $T_{n,m}$ has finite length.  The decomposition \eqref{gl:decomp} together with Proposition~\pref{gl:simple} shows that $T_{[n,m]}$ has finite length.  The sequence \eqref{gl:seq} thus shows, inductively, that $T_{n,m}$ has finite length.
\end{proof}

\begin{Proposition}
\label{gl:const}
The simple constituents of $T_{n,m}$ are those $V_{\lambda,\lambda'}$ with $\vert \lambda \vert \le n$, $\vert \lambda' \vert \le m$ and $\vert \lambda \vert-\vert \lambda' \vert=n-m$.
\end{Proposition}

\begin{proof}
Let $\sC(M)$ denote the simple constituents of an object $M$ of $\Rep(\GL)$.  The sequence \eqref{gl:seq} shows that
\begin{displaymath}
\sC(T_{n,m}) \subset \sC(T_{n-1,m-1}) \cup \sC(T_{[n,m]}).
\end{displaymath}
Since any individual trace map $t_{i,j} \colon T_{n,m} \to T_{n-1,m-1}$ is surjective, the above inclusion is an equality.  The result now follows from \eqref{gl:decomp} and an easy inductive argument.
\end{proof}

\article
\label{gl:mag}
Let $T \subset \GL(\infty)$ be the diagonal torus.  A {\bf weight} is a homomorphism $T \to \bG_m$ which only depends on finitely many coordinates, i.e., it is of the form $[a_1, a_2, \ldots] \mapsto a_1^{n_1} \cdots a_r^{n_r}$ for some integers $n_1, \ldots, n_r$.  The group of weights is isomorphic to the group of integer sequences $(a_1, a_2, \ldots)$ which are eventually zero.  An algebraic representation of $\GL(\infty)$ decomposes into weight spaces, as usual.  The {\bf magnitude} of a weight $\lambda$ is the sum of the absolute values of the $a_i$.  It is clear that the magnitude of any weight of $T_{n,m}$ is at most $n+m$.  The following result shows that this maximum is achieved for every non-zero submodule.

\begin{Proposition} \label{prop:glmaxmag}
Every non-zero submodule of $T_{n,m}$ has a weight of magnitude $n+m$.
\end{Proposition}

\begin{proof}
For $\alpha \in \bZ_{>0}^n$ and $\beta \in \bZ_{>0}^m$, put
\begin{displaymath}
e_{\alpha|\beta}=e_{\alpha_1} \otimes \cdots \otimes e_{\alpha_n} \otimes e_{\beta_1}^* \otimes \cdots \otimes e_{\beta_m}^*.
\end{displaymath}
Then the $e_{\alpha|\beta}$ form a basis of $T_{n,m}$.  We say that a basis vector ``occurs'' in a non-zero element $x$ of $T_{n,m}$ if its coefficient is non-zero in the expression of $x$ in this basis.  For an integer $k$, we let $N(k; \alpha)$ denote the number of coordinates of $\alpha$ which are equal to $k$.  We say that an integer $k$ ``occurs'' in $x$ if there is some basis vector $e_{\alpha|\beta}$ which occurs in $x$ with $N(k;\alpha)$ or $N(k;\beta)$ non-zero.

Let $M$ be a non-zero submodule of $T_{n,m}$ and let $x$ be a non-zero vector in $M$.  Let $I$ be the set of indices occurring in $x$.  We show that $M$ contains an element in the span of the $e_{\alpha|\beta}$ where no entry of $\alpha$ belongs to $I$ and every entry of $\beta$ belongs to $I$.  As such basis elements have weight of magnitude $n+m$, this will establish the result.

For a subset $J \subseteq I$, let $M_J$ denote the set of elements of $M$ which belong to the span of the $e_{\alpha|\beta}$ where the entries of $\alpha$ belong to $J \cup (\bZ_{>0} \setminus I)$ and the entries of $\beta$ belong to $I$.  We must show that $M_{\emptyset}$ is non-zero.  We show by descending induction on $|J|$ that $M_J$ is non-zero for each $J \subseteq I$.  We are given that $M_I$ is non-zero, as it contains $x$.

Let $y$ be a non-zero element of $M_J$, for some non-empty $J \subseteq I$, and let $j$ be an element of $J$.  We will show that $M_{J \setminus \{j\}}$ is non-zero.  Let $N$ be the maximum value of $N(j; \alpha)$ over those $(\alpha, \beta)$ for which $e_{\alpha|\beta}$ occurs in $y$.  If $N=0$ then $y$ already belongs to $M_{J \setminus \{j\}}$ and we are done; thus assume that $N>0$.  Let $k \in \bZ_{>0} \setminus I$ be an index not occurring in $y$.  Let $g \in \GL(\infty)$ be defined by $ge_r=e_r$ for $r \ne j$ and $ge_j=e_j+e_k$.  Then $ge_r^*=e_r^*$ for $r \ne k$ and $ge_k^*=e_k^*-e_j^*$.  In particular, if $e_{\alpha|\beta}$ occurs in $y$ then $ge_{\alpha|\beta}$ is computed by changing each $e_j$ in $e_{\alpha|\beta}$ to $e_j+e_k$ and then expanding the tensor product; the ``$\beta$ part'' remains unchanged.

Let $\lambda$ be the weight of $y$ and let $\lambda'$ be the weight obtained by subtracting $N$ from the $j$th spot and adding $N$ to the $k$th spot of $\lambda$.  We claim that the $\lambda'$-component of $gy$ is non-zero and belongs to $M_{J \setminus \{j\}}$.  Let us now prove this.  For $\alpha \in \bZ_{>0}^n$, let $\alpha'$ be obtained by changing each $j$ in $\alpha$ to $k$.  If $e_{\alpha|\beta}$ has weight $\lambda$ and $N(j;\alpha)=N$ (resp.\ $N(j; \alpha)<N$) then the $\lambda'$-component of $ge_{\alpha|\beta}$ is $e_{\alpha'|\beta}$ (resp.\ 0).  Thus if we write
\begin{displaymath}
y=\sum c_{\alpha,\beta} e_{\alpha|\beta}
\end{displaymath}
then the $\lambda'$ component of $gy$ is
\begin{displaymath}
\sum_{N(j; \alpha)=N} c_{\alpha,\beta} e_{\alpha'|\beta}.
\end{displaymath}
This is non-zero by the definition of $N$, and belongs to $M_{J \setminus \{j\}}$.  This completes the proof.
\end{proof}

\begin{Proposition}
\label{gl:nohom}
Let $M$ be a submodule of $T_{n,m}$.  Then $\Hom_{\GL}(M, T_{n+r,m+r})=0$ for $r>0$.
\end{Proposition}

\begin{proof}
The image of a non-zero map $M \to T_{n+r, m+r}$ would contain a weight of magnitude $n+m+2r$; as $M$ has no weight of this magnitude, such a map cannot exist.
\end{proof}

\begin{Proposition}
\label{gl:oneway}
We have
\begin{displaymath}
\Hom_{\GL}(V_{\lambda,\lambda'}, T_{n,m})=\begin{cases}
\bM_{\lambda} \boxtimes \bM_{\lambda'} & \textrm{if $n=\vert \lambda \vert$ and $m=\vert \lambda' \vert$} \\
0 & \textrm{otherwise.}
\end{cases}
\end{displaymath}
\end{Proposition}

\begin{proof}
Propositions~\pref{gl:const} and~\pref{gl:nohom} show that the $\Hom$ vanishes unless $n=\vert \lambda \vert$ and $m=\vert \lambda' \vert$.  From this observation and the exact sequence \eqref{gl:seq}, we see that the map
\begin{displaymath}
\Hom_{\GL}(V_{\lambda,\lambda'}, T_{[n,m]}) \to \Hom_{\GL}(V_{\lambda,\lambda'}, T_{n,m})
\end{displaymath}
is an isomorphism.  As the left group is $\bM_{\lambda} \boxtimes \bM_{\lambda'}$ by \eqref{gl:decomp}, the result follows.
\end{proof}

\article[Representations of $\SL(\infty)$]
\label{gl:sl}
Define a representation of $\SL(\infty)$ to be {\bf algebraic} if it is a subquotient of a finite direct sum of $T_{n,m}$'s.  We claim that any such representation naturally extends to a representation of $\GL(\infty)$.  To see this, let $V$ be an algebraic representation of $\SL(\infty)$.  For $z \in \bC^{\times}$, let $g_{n,z}$ be the diagonal matrix with $z$ in the $(n,n)$ entry and 1's in all other diagonal entries.  Given an element $v$ of $V$, we define $g_{n,z}v$ to be $g_{n,z}g_{m,z}^{-1} v$ for $m \gg 0$.  Since $v$ only involves a finite number of the basis vectors of $\bC^{\infty}$, this is well-defined.  One easily verifies that it defines an algebraic action of $\GL(\infty)$ on $V$.  Thus, letting $\Rep(\SL)$ be the category of algebraic representations of $\SL(\infty)$, we see that the restriction map $\Rep(\GL) \to \Rep(\SL)$ is an equivalence of categories.

\article[Pro-algebraic representations]
\label{gl:pro}
Let $\wh{\bV}=\varprojlim \bC^d$ and $\wh{\bV}_*=\varprojlim \bC^d{}^*$ and put $\wh{T}_{n,m}=\wh{\bV}^{\otimes n} \hatotimes \wh{\bV}_*^{\otimes m}$.  These are profinite vector spaces, and carry actions of $\GL(\infty)$.  We say that a representation of $\GL(\infty)$ is {\bf pro-algebraic} if it occurs as a subquotient of a finite direct sum of $\wh{T}_{n,m}$'s.  We let $\wh{\Rep}(\GL)$ denote the category of pro-algebraic representations.  ``Continuous dual'' provides an equivalence of tensor categories $\Rep(\GL)^{\op}=\wh{\Rep}(\GL)$.  Thus all our results on $\Rep(\GL)$ apply to $\wh{\Rep}(\GL)$, but with arrows turned around.  As we will see, it is sometimes more convenient to work in the pro category.

\begin{remark}
We can think of $\wh{\bV} \hatotimes \wh{\bV}_*$ as the space of all endomorphisms of $\bC^{\infty}$.  In particular, the scalar matrices provide a $\GL$-equivariant inclusion $\bC \to \wh{\bV} \hatotimes \wh{\bV}_*$.  As we saw, there is no copy of $\bC$ inside of $\bV \otimes \bV_*$.  On the other hand, while $\bV \otimes \bV_*$ admits a trace map to $\bC$, the space $\wh{\bV} \hatotimes \wh{\bV}_*$ does not, as one cannot form the trace of an infinite matrix in general.
\end{remark}

\subsection{The walled Brauer algebra and category}

\article[The monoid $\cG_{n,m}$]
\label{gl:gmonoid}
Let $\cV_{n,m}$ be the set of vertices $\{x_i, y_j, x'_i, y'_j\}$ with $1 \le i \le n$ and $1 \le j \le m$.  We imagine the $x_i$ and $y_j$ in one row and the $x'_i$ and $y'_j$ in a parallel row below the previous row.  We also imagine a wall dividing the $x$'s from the $y$'s.  We call an edge between two vertices ``horizontal'' if it stays within the same row and ``vertical'' if it goes between the two rows.

Let $\cG_{n,m}$ be the set of graphs $\Gamma$ on the vertex set $\cV_{n,m}$ with the following properties:  (a) $\Gamma$ is a complete matching, i.e., every vertex has valence 1; (b) no vertical edge crosses the wall; (c) every horizontal edge crosses the wall.  We give $\cG_{n,m}$ the structure of a monoid, as follows.  Let $\Gamma$ and $\Gamma'$ be two elements of $\cG_{n,m}$.  Put $\Gamma$ above $\Gamma'$, i.e., identify the bottom row of vertices of $\Gamma$ with the top row of vertices of $\Gamma'$.  In doing so, there might be some components of the resulting graph which only touch vertices in the middle row.  We write $n(\Gamma, \Gamma')$ for the number of such components; this number will be important in a moment, but for now we simply discard these components and ignore the middle vertices.  The resulting graph is the composition $\Gamma \Gamma'$.

The identity element of $\cG_{n,m}$ is the graph with all edges perfectly vertical, i.e., $x_i$ connected to $x'_i$ and $y_j$ to $y'_j$.  The group $S_n \times S_m$ is embedded in $\cG_{n,m}$ as the set of graphs with only vertical edges.  Explicitly, the pair of permutations $(\sigma, \tau)$ corresponds to the graph which has edges from $x_i$ to $x'_{\sigma(i)}$ and $y_j$ to $y'_{\tau(j)}$.

\article
\label{wb:ex}
We now give an example of composition in $\cG_{n,m}$.  Suppose $n=m=3$.  Let $\Gamma$ be the graph
\begin{equation}
\label{wb:ex:1}
\begin{xy}
(-28, 7)*{}="A1"; (-18, 7)*{}="B1"; (-8, 7)*{}="C1"; (8, 7)*{}="D1"; (18, 7)*{}="E1"; (28, 7)*{}="F1";
(-28, -7)*{}="A2"; (-18, -7)*{}="B2"; (-8, -7)*{}="C2"; (8, -7)*{}="D2"; (18, -7)*{}="E2"; (28, -7)*{}="F2";
"A1"*{\bullet}; "B1"*{\bullet}; "C1"*{\bullet}; "D1"*{\bullet}; "E1"*{\bullet}; "F1"*{\bullet};
"A2"*{\bullet}; "B2"*{\bullet}; "C2"*{\bullet}; "D2"*{\bullet}; "E2"*{\bullet}; "F2"*{\bullet};
(0, 7)*{}; (0, -7)*{}; **\dir{..};
"A1"; "C2"; **\dir{-};
"B1"; "F1"; **\crv{(5, -2)};
"C1"; "D1"; **\crv{(0, 2)};
"E1"; "F2"; **\dir{-};
"A2"; "E2"; **\crv{(-5, 2)};
"B2"; "D2"; **\crv{(-5, -2)};
\end{xy}
\end{equation}
and let $\Gamma'$ be the graph
\begin{displaymath}
\begin{xy}
(-28, 7)*{}="A1"; (-18, 7)*{}="B1"; (-8, 7)*{}="C1"; (8, 7)*{}="D1"; (18, 7)*{}="E1"; (28, 7)*{}="F1";
(-28, -7)*{}="A2"; (-18, -7)*{}="B2"; (-8, -7)*{}="C2"; (8, -7)*{}="D2"; (18, -7)*{}="E2"; (28, -7)*{}="F2";
"A1"*{\bullet}; "B1"*{\bullet}; "C1"*{\bullet}; "D1"*{\bullet}; "E1"*{\bullet}; "F1"*{\bullet};
"A2"*{\bullet}; "B2"*{\bullet}; "C2"*{\bullet}; "D2"*{\bullet}; "E2"*{\bullet}; "F2"*{\bullet};
(0, 7)*{}; (0, -7)*{}; **\dir{..};
"A1"; "E1"; **\crv{(-5, -2)};
"B1"; "A2"; **\dir{-};
"C1"; "D1"; **\crv{(0, 2)};
"F1"; "D2"; **\dir{-};
"B2"; "F2"; **\crv{(5, 2)};
"C2"; "E2"; **\crv{(5, -2)};
\end{xy}
\end{displaymath}
The dotted line denotes the wall.  After putting $\Gamma$ above $\Gamma'$, we obtain the graph
\begin{displaymath}
\begin{xy}
(-28, 14)*{}="A1"; (-18, 14)*{}="B1"; (-8, 14)*{}="C1"; (8, 14)*{}="D1"; (18, 14)*{}="E1"; (28, 14)*{}="F1";
(-28, 0)*{}="A2"; (-18, 0)*{}="B2"; (-8, 0)*{}="C2"; (8, 0)*{}="D2"; (18, 0)*{}="E2"; (28, 0)*{}="F2";
(-28, -14)*{}="A3"; (-18, -14)*{}="B3"; (-8, -14)*{}="C3"; (8, -14)*{}="D3"; (18, -14)*{}="E3"; (28, -14)*{}="F3";
"A1"*{\bullet}; "B1"*{\bullet}; "C1"*{\bullet}; "D1"*{\bullet}; "E1"*{\bullet}; "F1"*{\bullet};
"A2"*{\bullet}; "B2"*{\bullet}; "C2"*{\bullet}; "D2"*{\bullet}; "E2"*{\bullet}; "F2"*{\bullet};
"A3"*{\bullet}; "B3"*{\bullet}; "C3"*{\bullet}; "D3"*{\bullet}; "E3"*{\bullet}; "F3"*{\bullet};
(0, 14)*{}; (0, -14)*{}; **\dir{..};
"A1"; "C2"; **\dir{-};
"B1"; "F1"; **\crv{(5, 5)};
"C1"; "D1"; **\crv{(0, 9)};
"E1"; "F2"; **\dir{-};
"A2"; "E2"; **\crv{(-5, 9)};
"B2"; "D2"; **\crv{(-5, 5)};
"A2"; "E2"; **\crv{(-5, -9)};
"B2"; "A3"; **\dir{-};
"C2"; "D2"; **\crv{(0, -5)};
"F2"; "D3"; **\dir{-};
"B3"; "F3"; **\crv{(5, -5)};
"C3"; "E3"; **\crv{(5, -9)};
\end{xy}
\end{displaymath}
There is one component that only touches middle vertices, and so $n(\Gamma, \Gamma')=1$.  Discarding it and ignoring the middle vertices, we are left with
\begin{displaymath}
\begin{xy}
(-28, 7)*{}="A1"; (-18, 7)*{}="B1"; (-8, 7)*{}="C1"; (8, 7)*{}="D1"; (18, 7)*{}="E1"; (28, 7)*{}="F1";
(-28, -7)*{}="A2"; (-18, -7)*{}="B2"; (-8, -7)*{}="C2"; (8, -7)*{}="D2"; (18, -7)*{}="E2"; (28, -7)*{}="F2";
"A1"*{\bullet}; "B1"*{\bullet}; "C1"*{\bullet}; "D1"*{\bullet}; "E1"*{\bullet}; "F1"*{\bullet};
"A2"*{\bullet}; "B2"*{\bullet}; "C2"*{\bullet}; "D2"*{\bullet}; "E2"*{\bullet}; "F2"*{\bullet};
(0, 7)*{}; (0, -7)*{}; **\dir{..};
"A1"; "A2"; **\dir{-};
"B1"; "F1"; **\crv{(5, -2)};
"C1"; "D1"; **\crv{(0, 2)};
"E1"; "D2"; **\dir{-};
"B2"; "F2"; **\crv{(5, 2)};
"C2"; "E2"; **\crv{(5, -2)};
\end{xy}
\end{displaymath}
and this is $\Gamma \Gamma'$.

\article[The algebra $\cB_{n,m}$]
\label{gl:walgebra}
Let $\cB_{n,m}$ be the free $\bC[t]$-module spanned by $\cG_{n,m}$.  We write $X_{\Gamma}$ for the element of $\cB_{n,m}$ corresponding to $\Gamma \in \cG_{n,m}$.  We give $\cB_{n,m}$ the structure of an algebra by defining $X_{\Gamma} X_{\Gamma'}$ to be $t^{n(\Gamma, \Gamma')} X_{\Gamma \Gamma'}$.  For a number $\alpha \in \bC$, we let $\cB_{n,m}(\alpha)$ be defined similarly to $\cB_{n,m}$, but with $\alpha$ in place of $t$; of course, $\cB_{n,m}(\alpha)$ is just the quotient of $\cB_{n,m}$ by the two-sided ideal generated by $t-\alpha$.  These algebras are the {\bf walled Brauer algebras}, and were introduced in \cite{walledbrauer} (see also \cite[Lemma 1.2]{koike}).

\article[The $\cB_{n,m}(d)$-module $T_{n,m}^d$]
\label{gl:wmodule}
We now give $T_{n,m}^d=(\bC^d)^{\otimes n} \otimes (\bC^d{}^*)^{\otimes m}$ the structure of a $\cB_{n,m}(d)$-module.  We first introduce some notation.  For a vector $u$ in $\bC^d$, we write $f_i(u)$ for the same vector regarded in the $i$th tensor slot of $(\bC^d)^{\otimes n}$; similarly, for $u$ in $\bC^d{}^*$, we write $f'_j(u)$ for the same vector regarded in the $j$th tensor slot of $(\bC^d{}^*)^{\otimes m}$.  This notation does not really make sense on its own, but only when in an expression involving a product over all $i$ or $j$.  For example, if $n=3$ then $f_2(u) f_3(v) f_1(w)$ means $w \otimes u \otimes v$.

We now define the module structure.  Thus let $\Gamma$ be an element of $\cG_{n,m}$ and let $v$ be an element of $T_{n,m}^d$.  We assume $v$ is a pure tensor, and write $v=v_1 \otimes \cdots \otimes v_n \otimes v'_1 \otimes \cdots v'_m$.  The element $w=X_{\Gamma} v$ is defined as a product over the edges of $\Gamma$, so we just have to describe the contribution of each edge.
\begin{itemize}
\item The vertical edge $(x_i, x'_j)$ contributes $f_j(v_i)$.
\item The vertical edge $(y_i, y'_j)$ contributes $f'_j(v'_i)$.
\item The horizontal edge $(x_i, y_j)$ contributes the scalar factor $\langle v_i, v'_j \rangle$.
\item The horizontal edge $(x'_i, y'_j)$ contributes $(f_i \otimes f'_j)(\id)$, where $\id \in \bC^d \otimes \bC^d{}^*$ is the identity element.
\end{itemize}
We leave it to the reader to verify that this defines an action; we remark that the reason it is important to use the parameter $t=d$ is that the trace of the identity endomorphism on $\bC^d$ is $d$.

\article
We now give an example of the procedure described in the previous paragraph.  Suppose $\Gamma$ is the graph in \eqref{wb:ex:1}.  Then
\begin{displaymath}
X_{\Gamma}(v_1 \otimes v_2 \otimes v_3 \otimes v'_1 \otimes v'_2 \otimes v'_3)=
\langle v_2, v'_3 \rangle \langle v_3, v'_1 \rangle (f_1 \otimes f'_2)(\id) (f_2 \otimes f'_1)(\id) f_3(v_1) f'_3(v'_2)
\end{displaymath}

\article
The action of $\cB_{n,m}(d)$ on $T_{n,m}^d$ obviously commutes with that of $\GL(d)$. The following is the main result on how these actions relate.  See \cite[Theorem 5.8]{walledbrauer} and the discussion following it for details.

\begin{theorem}[Benkart et al.]
The natural map $\cB_{n,m}(d) \to \End_{\GL(d)}(T_{n,m}^d)$ is surjective, and is an isomorphism when $d \ge n+m$.
\end{theorem}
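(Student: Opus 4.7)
The plan is to prove Schur--Weyl duality for the walled Brauer algebra by reducing to the First and Second Fundamental Theorems (FFT and SFT) of classical invariant theory for $\GL(d)$.

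First I would reformulate the endomorphism space as a space of invariants. Using the natural isomorphism $(T_{n,m}^d)^* \cong T_{m,n}^d$ (obtained by dualizing and reordering tensor factors), I get
\[
\End_{\GL(d)}(T_{n,m}^d) \cong (T_{n,m}^d \otimes T_{m,n}^d)^{\GL(d)},
\]
which is a space of $\GL(d)$-invariants in a tensor product with $N := n+m$ factors of $\bC^d$ and $N$ factors of $\bC^d{}^*$.

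Next, I would invoke the FFT for $\GL(d)$: the space $((\bC^d)^{\otimes N} \otimes (\bC^d{}^*)^{\otimes N})^{\GL(d)}$ is spanned by the ``complete contraction'' invariants $v_\sigma$, one for each bijection $\sigma \colon [N] \to [N]$, where $v_\sigma$ pairs the $i$-th $\bC^d$ factor with the $\sigma(i)$-th $\bC^d{}^*$ factor via the canonical duality. Label the $N$ copies of $\bC^d$ in $T_{n,m}^d \otimes T_{m,n}^d$ by the vertex set $\{x_1,\dots,x_n, y'_1,\dots,y'_m\}$, and the $N$ copies of $\bC^d{}^*$ by $\{y_1,\dots,y_m, x'_1,\dots,x'_n\}$. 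A bijection between these two labeled sets is exactly a perfect matching on $\cV_{n,m}$ in which every edge pairs a $\bC^d$-vertex with a $\bC^d{}^*$-vertex; this is precisely the defining property of an element of $\cG_{n,m}$ (vertical edges stay within the $x$-block or within the $y$-block, horizontal edges cross the wall). Combining this bijection with the explicit description of the $\cB_{n,m}(d)$-action in \pref{gl:wmodule}---tracing through how vertical edges permute factors, horizontal edges in the top row contract, and horizontal edges in the bottom row insert the identity---shows that the natural map $\cB_{n,m}(d) \to \End_{\GL(d)}(T_{n,m}^d)$ hits exactly the FFT spanning set. This gives surjectivity.

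For the isomorphism claim when $d \geq N$, I would apply the SFT for $\GL(d)$: when $d \geq N$, the $N!$ bijection invariants $\{v_\sigma\}$ are linearly independent. Since $|\cG_{n,m}| = N!$ equals $\dim_{\bC} \cB_{n,m}(d)$ (the algebra being a specialization of the free $\bC[t]$-module on $\cG_{n,m}$), the already-established surjection is then an isomorphism by dimension count. Alternatively, one could verify the dimension count independently using the decomposition of $T_{n,m}^d$ implicit in Proposition \pref{art:glweylfin} together with Schur's lemma, but SFT delivers it directly.

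The main technical obstacle lies in step three: the bookkeeping identification of the walled Brauer action on $T_{n,m}^d$ with the FFT contractions. One must verify for each of the four edge types that the assigned operation produces exactly the pairing of a $\bC^d$ factor with a $\bC^d{}^*$ factor that FFT prescribes, and that multiplication in $\cB_{n,m}(d)$ is compatible with composition of endomorphisms---this is where the parameter $t = d$ must enter, accounting for the factor $\trace(\mr{id}_{\bC^d}) = d$ produced by each closed loop (middle-vertex-only component) discarded during composition. Once this routine but careful verification is handled, surjectivity and the isomorphism for $d \geq n+m$ follow cleanly from FFT and SFT respectively.
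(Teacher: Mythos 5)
Your argument is correct and is the standard invariant-theoretic proof of this double-centralizer statement: rewrite $\End_{\GL(d)}(T_{n,m}^d)$ as $\GL(d)$-invariants in a mixed tensor space with $n+m$ factors each of $\bC^d$ and $\bC^d{}^*$, observe that the constraints (b) and (c) defining $\cG_{n,m}$ are precisely the conditions for an edge to pair a $\bC^d$-labeled vertex with a $\bC^d{}^*$-labeled one, so walled Brauer diagrams biject with the FFT spanning set of complete contractions, and then use the SFT together with $|\cG_{n,m}|=(n+m)!=\dim\cB_{n,m}(d)$ to get injectivity for $d\ge n+m$. Note that the paper does not prove this theorem itself---it cites \cite[Theorem 5.8]{walledbrauer}---so there is no in-paper proof to compare against; your reduction is a clean self-contained route to the result.
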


\article
We now wish to apply the theory of the walled Brauer algebra to the infinite case, and obtain an equivalence of categories analogous to that given in \pref{art:schurweyl}.  However, there is a problem:  the walled Brauer algebra does not naturally act on $T_{n,m}$.  The reason for this is that a horizontal edge in the bottom row is supposed to act by inserting the invariant of $\bV \otimes \bV_*$ into the appropriate pair of tensor factors, but this space does not have any invariant (see \pref{intro:ss}).  Our solution to this problem is simple:  we disallow graphs that have horizontal edges in the bottom row.  We do allow horizontal edges in the top row, and therefore allow the two rows to have different cardinalities.  Rather than try to form this structure into a single algebra, we find it more natural to represent it as a category, with the graphs playing the role of morphisms.  The spaces $T_{n,m}$, for all $n$ and $m$ at once, will form a representation of this category.

\article
\label{dwb:def}
The {\bf downwards walled Brauer category}, denoted $\dwb$, is the following category:
\begin{itemize}
\item The objects are {\bf bisets}, i.e., pairs $L=(L_+, L_-)$ where $L_+$ and $L_-$ are finite sets.
\item A morphism $L \to L'$ is a pair $(\Gamma, f)$, where $\Gamma$ is a bipartite matching on $L$ and $f$ is an isomorphism of bisets $L \setminus V(\Gamma) \to L'$.  Thus $\Gamma$ is a graph whose vertices $V(\Gamma)$ are elements of $L$ and all edges go between $L_+$ and $L_-$.
\item Suppose $(\Gamma, f)$ defines a morphism $L \to L'$ and $(\Delta, g)$ defines a morphism $L' \to L''$.  The composition is the pair $(\Phi, h)$ defined as follows.  The matching $\Phi$ is the union of $\Gamma$ and $f^{-1}(\Delta)$.  The bijection $h$ is the composition of the bijection $L \setminus V(\Phi) \to L' \setminus V(\Delta)$ induced by $f$ with $g$.
\end{itemize}
If $L \to L'$ is a morphism in $\dwb$ then $\# L' \le \# L$ (hence ``downwards''), with equality if and only if the morphism is an isomorphism.  The automorphism group of $L$ in $\dwb$ is the product of symmetric groups $\Aut(L_+) \times \Aut(L_-)$.  Disjoint union defines a symmetric monoidal functor $\amalg$ on $\dwb$.  We let $\otimes=\otimes_{\#}$ be the resulting convolution tensor product on $\Mod_{\dwb}$ (see \pref{diag:conv}).

\article
\label{uwb:def}
There is also an {\bf upwards walled Brauer category}, denoted $\uwb$.  Its definition is the same, except that a morphism $L \to L'$ is defined by a pair $(\Gamma, f)$ where $\Gamma$ is a bipartite matching on $L'$ and $f$ is a bijection $L \to L' \setminus V(\Gamma)$.  As with $\dwb$, we have a monoidal functor $\amalg$ on $\uwb$.  We let $\otimes=\otimes_*$; note that this is reversed from $\dwb$.  There is an obvious equivalence $\uwb=\dwb^{\op}$.  The resulting equivalence $\Mod_{\uwb}^{\fin}=(\Mod_{\dwb}^{\fin})^{\op}$ is one of tensor categories.

\article
\label{gl:ker}
Given an object $L$ of $\dwb$, put
\begin{displaymath}
\cK_L=\bV^{\otimes L_+} \otimes \bV_*^{\otimes L_-}.
\end{displaymath}
Given a morphism $L \to L'$ in $\dwb$ represented by $(\Gamma, f)$, we obtain a morphism
\begin{displaymath}
\cK_L=\left( \bigotimes_{(x, y) \in E(\Gamma)} \bV^{\otimes \{x\}} \otimes \bV_*^{\otimes \{y\}} \right) \otimes \cK_{L \setminus V(\Gamma)} \to \cK_{L'},
\end{displaymath}
where we apply the pairing $\bV \otimes \bV_* \to \bC$ to the factors in the parentheses and use $f$ to identify $\cK_{L \setminus V(\Gamma)}$ with $\cK_{L'}$.  As $\cK_L$ belongs to $\Rep(\GL)$ and the morphisms $\cK_L \to \cK_{L'}$ are $\GL$-linear, we have thus defined an object $\cK$ of $\Rep(\GL)^{\dwb}$.

\begin{Theorem}
\label{gl:dwb}
The functors of \pref{ker-func} associated to the kernel $\cK$ provide contravariant mutually quasi-inverse equivalences of tensor categories between $\Rep(\GL)$ and $\Mod_{\dwb}^{\fin}$.
\end{Theorem}

\begin{proof}
To show that $\Phi$ and $\Psi$ and mutually quasi-inverse, we apply Theorem~\pref{thm:ker-equiv}.  The first hypothesis follows from the fact that $V_{\lambda,\lambda'}$ is simple; note that $\cK_{[L, L']}$ is isomorphic to $T_{[n,m]}$ if $n=\#L$ and $m=\#L'$.  The second hypothesis follows from Proposition~\pref{gl:oneway}.  It is clear that $\amalg^*\cK$ is naturally identified with $\cK \boxtimes \cK$, i.e., $\cK$ is a tensor kernel, and so Proposition~\pref{ker-tens} implies that $\Phi$ is a tensor functor.
\end{proof}

\begin{remark}
This result is closely related to \cite[Corollary 5.2]{koszulcategory}.  Specifically, we can think of $\dwb$ as a locally finite quiver with relations, and the path algebra of this quiver is isomorphic to the algebra $\cA_{\fsl_\infty}$ in \cite[\S 5]{koszulcategory}.  Note, however, that \cite[Corollary 5.2]{koszulcategory} does not describe the tensor product from this point of view.
\end{remark}

\begin{Corollary}
\label{gl:uwb}
The tensor categories $\Rep(\GL)$ and $\Mod_{\uwb}^{\fin}$ are equivalent.
\end{Corollary}

This comes from the identification $(\Mod_{\dwb}^{\fin})^{\op}=\Mod_{\uwb}^{\fin}$.  A direct equivalence $\Mod_{\uwb}^{\fin} \to \Rep(\GL)$ is given by $M \mapsto M \otimes^{\dwb} \cK$ (see \pref{diag:tens} for notation). 

\begin{Corollary}
The tensor categories $\wh{\Rep}(\GL)$ and $\Mod_{\dwb}^{\fin}$ are equivalent.
\end{Corollary}

This comes from the identification $\wh{\Rep}(\GL)=\Rep(\GL)^{\op}$ provided by the continuous dual.

\article[Classification of injectives] \label{art:gl-injectives}
As an application of Theorem~\pref{gl:dwb}, we use \pref{art:projinj} to obtain a description of the injective objects of $\Rep(\GL)$, which recovers \cite[Corollary~4.6]{koszulcategory}.

\begin{proposition}
The Schur functor $\bS_{\lambda}(\bV) \otimes \bS_{\lambda'}(\bV_*)$ is the injective envelope of the simple module $V_{\lambda, \lambda'}$.  The representations $\bS_{\lambda}(\bV) \otimes \bS_{\lambda'}(\bV_*)$ form a complete irredundant set of indecomposable injectives.
\end{proposition}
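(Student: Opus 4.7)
The plan is to transport the classification of indecomposable projectives in $\Mod_{\dwb}^{\fin}$ across the contravariant equivalence $\Phi$ of Theorem~\pref{gl:dwb}: projective covers on the $\dwb$ side become injective envelopes on the $\GL$ side, and surjections become injections.

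First I would apply \pref{art:projinj}. The category $\dwb$ is Hom-finite, weakly directed, and outwards finite (a morphism $L \to L'$ forces $\#L'_\pm \le \#L_\pm$), so the indecomposable projectives of $\Mod_{\dwb}^{\fin}$ are precisely the $P_L(\bM_\lambda \boxtimes \bM_{\lambda'}) = i_{\#}(\bM_\lambda \boxtimes \bM_{\lambda'})$, indexed by a biset $L = (L_+, L_-)$ with $\#L_+=n$, $\#L_-=m$, together with partitions $\lambda \vdash n$, $\lambda' \vdash m$ (these being the irreducibles of $\Aut_{\dwb}(L) = S_n \times S_m$). Each covers the simple $S_L(\bM_\lambda \boxtimes \bM_{\lambda'})$, which under $\Phi$ maps to $V_{\lambda,\lambda'}$ by the first condition verified in the proof of Theorem~\pref{gl:dwb}.

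Next I would compute $\Phi(P_L(\bM_\lambda \boxtimes \bM_{\lambda'}))$. Using that $i_\#$ is left adjoint to $i^*$,
\[
\Phi\bigl(P_L(\bM_\lambda \boxtimes \bM_{\lambda'})\bigr) \;=\; \Hom_{\dwb}\bigl(i_\#(\bM_\lambda \boxtimes \bM_{\lambda'}),\, \cK\bigr) \;=\; \Hom_{S_n \times S_m}\bigl(\bM_\lambda \boxtimes \bM_{\lambda'},\, \cK_L\bigr),
\]
and $\cK_L = \bV^{\otimes n} \otimes \bV_*^{\otimes m}$. Applying Schur--Weyl decomposition (\pref{art:schurweyl}) independently to the $\bV$ and $\bV_*$ factors gives
\[
\bV^{\otimes n} \otimes \bV_*^{\otimes m} \;=\; \bigoplus_{\mu \vdash n,\,\mu' \vdash m} (\bM_\mu \boxtimes \bM_{\mu'}) \boxtimes \bigl(\bS_\mu(\bV) \otimes \bS_{\mu'}(\bV_*)\bigr)
\]
as an $S_n \times S_m \times \GL(\infty)$-module, so the $\Hom$ above isolates the $(\lambda,\lambda')$-summand and yields $\bS_\lambda(\bV) \otimes \bS_{\lambda'}(\bV_*)$.

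Assembling, the projective cover surjection $P_L(\bM_\lambda \boxtimes \bM_{\lambda'}) \twoheadrightarrow S_L(\bM_\lambda \boxtimes \bM_{\lambda'})$ becomes, after applying $\Phi$, the desired essential inclusion $V_{\lambda,\lambda'} \hookrightarrow \bS_\lambda(\bV) \otimes \bS_{\lambda'}(\bV_*)$ exhibiting the latter as the injective envelope; completeness and irredundance of these indecomposable injectives transport from the corresponding statements for the $P_L(\bM_\lambda \boxtimes \bM_{\lambda'})$. The only substantive point to check is the $S_n \times S_m$-equivariant decomposition of $\bV^{\otimes n} \otimes \bV_*^{\otimes m}$, but since $\bV_*$ carries an algebraic $\GL(\infty)$-action with the same polynomial-functorial structure as $\bV$ after passing through the isotypic decomposition, this is a straightforward extension of the usual Schur--Weyl statement rather than a real obstacle.
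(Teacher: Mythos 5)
Your proposal is correct and follows essentially the same approach as the paper: transport the projective cover $P_x(V) \twoheadrightarrow S_x(V)$ across the contravariant equivalence $\Phi$, then compute $\Phi(i_\#(V)) = \Hom_G(V, i^*\cK) = \bS_\lambda(\bV) \otimes \bS_{\lambda'}(\bV_*)$ via the $i_\# \dashv i^*$ adjunction and Schur--Weyl. One small cosmetic remark: your closing sentence about $\bV_*$ having ``the same polynomial-functorial structure as $\bV$'' is a little misleading, since $\bV_*$ is not a polynomial $\GL(\infty)$-representation (it lives in degree $-1$); but this is immaterial here because the $S_m$-isotypic decomposition of $\bV_*^{\otimes m}$ is a statement purely about the permutation action on a tensor power and does not require polynomiality, so the decomposition you write is correct as stated.
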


\begin{proof}
Let $n=\vert \lambda \vert$, let $m=\vert \lambda' \vert$, let $G=S_n \times S_m$, let $i \colon \rB G \to \dwb$ be the inclusion at the object $x = (\ul{n}, \ul{m})$ and let $V=\bM_{\lambda} \boxtimes \bM_{\lambda'}$ be the irreducible representation of $G$ indexed by $(\lambda, \lambda')$.  We then have the simple object $S_x(V)$ of $\Mod_{\dwb}$ and its projective cover $P_x(V)=i_{\#}(V)$, see \pref{art:projinj}.  We have $\Phi(S_x(V))=V_{\lambda,\lambda'}$ by \pref{gl:dwb}, where $\Phi$ is as in Theorem~\pref{gl:dwb}.  It follows that $\Phi(P_x(V))$ is the injective envelope of $V_{\lambda,\lambda'}$ (note that $\Phi$ is contravariant).  We have
\[
\Phi(i_\#(V)) = \hom_\dwb(i_\#(V), \cK) = \hom_G(V, i^*\cK) = \hom_G(V, \bV^{\otimes n} \otimes \bV_*^{\otimes m}) = \bS_\lambda(\bV) \otimes \bS_{\lambda'}(\bV_*).
\]
In the second equality we used the adjunction between $i_\#$ and $i^*$, and in the fourth equality we used the construction of Schur functors from \pref{art:schurfunctor-inf}.
\end{proof}

\subsection{\texorpdfstring{Modules over $\Sym(\bC\langle 1, 1 \rangle)$}{Modules over Sym(C<1,1>)}}

\article
\label{gl:amod}
For a biset $L$, let $\cG_L$ be the set of bipartite perfect matchings on $L$.  Then $\cG$ is a (2-variable) tc monoid.  The category $\Lambda$ associated to $\cG$ in \pref{sg:def} is exactly $\uwb$.  The (2-variable) tca $A$ associated to $\cG$ is $\Sym(\bC\langle 1, 1 \rangle)$.  For clarity, let $\bE$ and $\bE'$ be two copies of $\bC^{\infty}$, so that we can identify $A$ with $\Sym(\bE \otimes \bE')$.

\begin{theorem}
We have an equivalence of tensor categories between $\Rep(\GL)$ and $\Mod_A^{\fin}$, where the latter is endowed with the tensor product $\ast^A$.
\end{theorem}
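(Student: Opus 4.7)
The plan is to exhibit the desired equivalence as a composition of two equivalences already in place: the diagram-category description of $\Rep(\GL)$ given by Corollary~\pref{gl:uwb}, and the semigroup-tca machinery of Section~\pref{ss:sg-tca} applied to the tc monoid $\cG$ introduced in \pref{gl:amod}. At the level of abelian categories this is essentially a matter of unwinding definitions; the point of the theorem is to check that under these identifications the $\ast^A$ tensor product corresponds to the convolution tensor product on $\Mod_{\uwb}$.

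First I would make explicit the identification $A = \bC[\cG]$ as tca's, matching the two descriptions of $\Sym(\bC\langle 1,1\rangle)$. In the GL-model, $\Sym^n(\bE \otimes \bE')$ decomposes by Cauchy's formula as $\bigoplus_{|\lambda|=n} \bS_\lambda(\bE) \otimes \bS_\lambda(\bE')$, so by Schur--Weyl its image in the $\fs^{\times 2}$-model at the pair $(\un,\un)$ is the $S_n \times S_n$-representation $\bigoplus_\lambda \bM_\lambda \boxtimes \bM_\lambda \cong \bC[S_n]$, which is precisely the permutation representation on bipartite perfect matchings of $\un \amalg \un$; at other bidegrees $(n,m)$ with $n \ne m$ one gets zero, matching $\cG_{(L_+,L_-)}$ being empty unless $\# L_+ = \# L_-$. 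A direct check confirms that the multiplication on $\bC[\cG]$ induced by disjoint union of matchings matches the multiplication on $\Sym(\bE \otimes \bE')$.

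Next I would apply the (2-variable version of) Proposition~\pref{sg:equiv} to obtain an equivalence $\Mod_A \cong \Mod_{\uwb}$ which identifies the tensor product $\ast^A$ on the left with the convolution product $\otimes = \otimes_*$ on the right; here I use the fact from \pref{gl:amod} that the category $\Lambda$ built from this $\cG$ via \pref{sg:def} is exactly $\uwb$. Since finite-length is an intrinsic property of abelian categories, this restricts to an equivalence $\Mod_A^\fin \cong \Mod_{\uwb}^\fin$ as tensor categories. Composing with the tensor equivalence $\Mod_{\uwb}^\fin \cong \Rep(\GL)$ from Corollary~\pref{gl:uwb} (given by $M \mapsto M \otimes^{\dwb} \cK$) gives the claimed equivalence.

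The only step with real content is the compatibility between $\ast^A$ and the convolution product $\otimes_*$, which is exactly the payoff of Proposition~\pref{sg:equiv}; everything else is bookkeeping. So the main obstacle has already been dealt with in the preliminaries, and there is nothing substantially new to verify here beyond matching up the combinatorial and algebraic descriptions of the tc monoid $\cG$.
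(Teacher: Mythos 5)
Your proof is correct and follows essentially the same route as the paper: invoke Proposition (2.4.2) to identify $\Mod_A$ with $\Mod_{\uwb}$ compatibly with $\ast^A$ and the convolution product, then compose with Corollary (3.2.11). The extra Cauchy-formula check that $\bC[\cG] = \Sym(\bC\langle 1,1\rangle)$ is a sound way to verify the identification that the paper states without proof in (3.3.1), but it does not change the argument.
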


\begin{proof}
From Proposition~\pref{sg:equiv}, we have an equivalence of categories $\Mod_{\uwb}=\Mod_A$, under which $\otimes$ corresponds to $\ast^A$.  The result follows from Corollary~\pref{gl:uwb}.
\end{proof}

\begin{remark}
The category $\wh{\Rep}(\GL)$ is equivalent to $\CoMod_A^{\fin}$.
\end{remark}

\article
We now explain how to construct the equivalence between $\Mod_A^{\fin}$ and $\Rep(\GL)$ directly.  Let $U=\bE \otimes \bE'$, so that $A=\Sym(U)$, and put $B=\Sym((\bE \otimes \bV) \oplus (\bE' \otimes \bV_*))$.  We regard $A$ and $B$ as both algebras and coalgebras.  We have a natural linear map $B \to U$ given by
\begin{displaymath}
B = \Sym^2((\bE \otimes \bV) \oplus (\bE' \otimes \bV_*)) \to (\bE \otimes \bV) \otimes (\bE' \otimes \bV_*) \to \bE \otimes \bE' = U,
\end{displaymath}
where the final map makes use of the map $\bV \otimes \bV_* \to \bC$.  This induces a coalgebra homomorphism $B \to A$ which is $\GL(\bE) \times \GL(\bE')$ equivariant, and gives $B$ the structure of an $A$-comodule.  Note that if $M$ is an $A$-module then $M^{\vee}$ is an $A$-comodule.  We thus obtain functors
\begin{displaymath}
\Phi \colon \Mod^{\fin}_A \to \Rep(\GL), \qquad M \mapsto \Hom_A(M^{\vee}, B)
\end{displaymath}
and
\begin{displaymath}
\Psi \colon \Rep(\GL) \to \Mod^{\fin}_A, \qquad V \mapsto \Hom_{\GL}(V, B)^{\vee}
\end{displaymath}
(It is not immediately clear that $\Psi$ takes values in finite length modules, but this is indeed the case.)  These functors are the equivalences between $\Mod^{\fin}_A$ and $\Rep(\GL)$ that we have already constructed.  To see this, one must simply show that the object $B$ of $\CoMod_A$ corresponds to the kernel $\cK$ in $\Rep(\GL)^{\dwb}$ of \pref{gl:ker}. We omit the details.

\article[Computation of $\Ext$ groups]
We now give an application of Theorem~\pref{gl:amod}:  the computation of the $\Ext$ groups between simple objects of $\Rep(\GL)$.  This recovers \cite[Corollary~6.5]{koszulcategory}.

\begin{proposition}
We have
\begin{displaymath}
\dim \Ext^i_{\GL}(V_{\mu,\mu'}, V_{\lambda,\lambda'})=\sum_{\vert \nu \vert=i} c^{\lambda}_{\mu,\nu} c^{\lambda'}_{\mu',\nu^{\dag}}.
\end{displaymath}
In particular, this $\Ext$ group vanishes unless $i=\vert \lambda \vert-\vert \mu \vert=\vert \lambda' \vert-\vert \mu' \vert$.
\end{proposition}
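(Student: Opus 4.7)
The plan is to transport the Ext computation from $\Rep(\GL)$ to $\Mod_A^{\fin}$ via the tensor equivalence of \pref{gl:amod}, where $A = \Sym(\bE \otimes \bE')$, and then apply the Koszul-resolution technique of Proposition~\pref{tca:ext} in the two-variable setting.

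First, I need to identify which simple $A$-module corresponds to $V_{\lambda,\lambda'}$ under the equivalence $\Mod_A^{\fin} \cong \Rep(\GL)$. Since $\Mod_A = \Mod_{\uwb}$ via \pref{sg:equiv}, and the simples of $\Mod_{\uwb}$ indexed by $(\lambda,\lambda')$ are the objects $S_{(\ul{n},\ul{m})}(\bM_\lambda \boxtimes \bM_{\lambda'})$ from \pref{art:projinj}, translating through Schur--Weyl duality in the $\cV^{\otimes 2}$ picture shows that $V_{\lambda,\lambda'}$ corresponds to the simple $A$-module $\bS_{\lambda,\lambda'} := \bS_\lambda(\bE) \otimes \bS_{\lambda'}(\bE')$ endowed with trivial $A$-action. (That the correspondence preserves $\Ext$ follows because the equivalence is covariant and exact.)

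Next I would invoke the two-variable analogue of Proposition~\pref{tca:ext}: the Koszul complex $A \otimes \lw^{\bullet}(\bE \otimes \bE')$ is a resolution of $\bC$ by projective $A$-modules, and tensoring with $\bS_{\mu,\mu'}$ gives a projective resolution of $\bS_{\mu,\mu'}$. Applying $\Hom_A(-,\bS_{\lambda,\lambda'})$ and noting that the differentials vanish (they factor through multiplication by elements of $A_+$, which act as zero on the simple target) yields
\[
\Ext^i_A(\bS_{\mu,\mu'},\bS_{\lambda,\lambda'}) = \Hom_{\cV^{\otimes 2}}\!\bigl(\bS_{\mu,\mu'} \otimes \lw^i(\bE \otimes \bE'),\, \bS_{\lambda,\lambda'}\bigr).
\]
The proof of \pref{tca:ext} goes through verbatim in the multivariate case, so there is no real obstacle here.

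Finally I would evaluate the right-hand side using the standard plethystic identity
\[
\lw^i(\bE \otimes \bE') = \bigoplus_{\vert\nu\vert = i} \bS_\nu(\bE) \otimes \bS_{\nu^\dag}(\bE'),
\]
together with the Littlewood--Richardson rule \pref{art:LR} applied separately to the $\bE$ and $\bE'$ factors. The multiplicity of $\bS_\lambda(\bE) \otimes \bS_{\lambda'}(\bE')$ in $\bS_\mu(\bE) \otimes \bS_\nu(\bE) \otimes \bS_{\mu'}(\bE') \otimes \bS_{\nu^\dag}(\bE')$ equals $c^\lambda_{\mu,\nu} c^{\lambda'}_{\mu',\nu^\dag}$, and summing over $|\nu|=i$ gives the claimed formula. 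The vanishing assertion is automatic from the fact that $c^\lambda_{\mu,\nu} = 0$ unless $|\lambda| = |\mu| + |\nu|$, and similarly for the primed partitions, forcing $i = |\nu| = |\lambda|-|\mu| = |\lambda'|-|\mu'|$. The only step requiring any care is the verification of the plethystic decomposition of $\lw^i(\bE \otimes \bE')$, which is classical (e.g., Macdonald's book), and checking that the Koszul resolution really does compute $\Ext$ in the $\cV^{\otimes 2}$ setting.
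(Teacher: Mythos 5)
Your argument follows the paper's proof essentially verbatim: transport to $\Mod_A$ with $A = \Sym(\bE \otimes \bE')$ via Theorem \pref{gl:amod}, apply the two-variable version of the Koszul computation in Proposition \pref{tca:ext}, and conclude with the Cauchy decomposition $\lw^i(\bE \otimes \bE') = \bigoplus_{|\nu|=i} \bS_\nu(\bE) \otimes \bS_{\nu^\dag}(\bE')$ and the Littlewood--Richardson rule. The one point you and the paper both pass over quickly is that the $\Ext$ in Proposition \pref{tca:ext} is computed in $\Mod_A$ (using infinite-length projectives) rather than in $\Mod_A^{\fin} \cong \Rep(\GL)$ --- the paper flags this in a remark but, like you, does not spell out why the two agree.
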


\begin{proof}
Under the equivalence of Theorem~\pref{gl:amod}, the $\Ext$ group in the statement of the proposition corresponds to
\begin{displaymath}
\Ext^i_A(\bS_{\mu}(\bE) \otimes \bS_{\mu'}(\bE'), \bS_{\lambda}(\bE) \otimes \bS_{\lambda'}(\bE')),
\end{displaymath}
where this $\Ext$ is taken in the category $\Mod_A$.  According to Proposition~\pref{tca:ext} (or, rather, its generalization to the 2-variable setting), this coincides with
\begin{displaymath}
\Hom_{\GL(\bE) \times \GL(\bE')}(\bS_{\mu}(\bE) \otimes \bS_{\mu'}(\bE') \otimes \lw^i(U), \bS_{\lambda}(\bE) \otimes \bS_{\lambda'}(\bE')).
\end{displaymath}
Now, we have the Cauchy formula (see \cite[(6.2.8)]{expos})
\begin{displaymath}
\lw^i(U)=\lw^i(\bE \otimes \bE')=\bigoplus_{\nu} \bS_{\nu}(\bE) \otimes \bS_{\nu^{\dag}}(\bE').
\end{displaymath}
Combining this with the previous identity completes the proof.
\end{proof}

\begin{remark}
An interesting point in this proof is that it uses projective $A$-modules, which have infinite length and thus do not appear in the category $\Rep(\GL)$.
\end{remark}

\article[Classification of blocks] \label{art:gl-blocks}
As a corollary, we get an easy description of the block structure of $\Rep(\GL)$, which recovers \cite[Corollary~6.6]{koszulcategory}:

\begin{corollary}
Two simples $V_{\lambda, \lambda'}$ and $V_{\mu, \mu'}$ belong to the same block of the category $\Rep(\GL)$ if and only if $|\lambda| - |\lambda| = |\mu| - |\mu'|$.
\end{corollary}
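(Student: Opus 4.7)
The plan is to combine the $\Ext^1$ formula from the preceding proposition with the standard characterization of blocks in an abelian category: two simples lie in the same block if and only if they are connected by a chain of simples in which consecutive members admit a nonzero $\Ext^1$ in either direction.

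For the ``only if'' direction, the preceding proposition gives $\Ext^i_{\GL}(V_{\mu,\mu'}, V_{\lambda,\lambda'}) = 0$ unless $i = |\lambda|-|\mu| = |\lambda'|-|\mu'|$, so in particular the nonvanishing of any $\Ext^i$ with $i\ge 1$ already forces $|\lambda|-|\lambda'| = |\mu|-|\mu'|$. (Alternatively, this direction follows directly from the central $\bG_m$-grading of \pref{art:repGL}, under which $\Rep(\GL)$ splits as a direct sum of subcategories indexed by the degree $|\lambda|-|\lambda'|$ of the simple.)

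For the ``if'' direction, specialize the formula of the preceding proposition to $i=1$: the only partition with $|\nu|=1$ is $\nu=(1)$, and $(1)^{\dag}=(1)$, so
\[
\dim \Ext^1_{\GL}(V_{\mu,\mu'}, V_{\lambda,\lambda'}) = c^{\lambda}_{\mu,(1)} \cdot c^{\lambda'}_{\mu',(1)},
\]
which by Pieri's rule is nonzero exactly when $\lambda$ is obtained from $\mu$ by adding a single box and $\lambda'$ from $\mu'$ by adding a single box. Thus the block equivalence on simples is the equivalence relation generated by the elementary move of simultaneously adding (or removing) one box to each of the two partitions.

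It therefore remains to show that any two pairs $(\lambda,\lambda')$ and $(\mu,\mu')$ with $|\lambda|-|\lambda'| = |\mu|-|\mu'| = d$ are connected by such moves. Assume without loss of generality $d \ge 0$. By iteratively removing one box from each of $\lambda$ and $\lambda'$, which is possible as long as $\lambda' \ne \emptyset$, we reduce to the case $\lambda' = \emptyset$ and $|\lambda| = d$, and similarly for $\mu$. So it suffices to connect $(\widetilde\lambda,\emptyset)$ to $(\widetilde\mu,\emptyset)$ for any two partitions $\widetilde\lambda,\widetilde\mu$ of size $d$. From $(\widetilde\lambda,\emptyset)$ we add one box to each side to reach $(\widetilde\lambda+b,(1))$ for some addable box $b$ of $\widetilde\lambda$, then remove any removable box $b'$ of $\widetilde\lambda+b$ together with the unique box of $(1)$ to reach $(\widetilde\lambda+b-b',\emptyset)$. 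This produces a ``move one box'' relation on partitions of size $d$, and the only mild subtlety in the proof is verifying that this relation has a single equivalence class; a routine induction (for instance, connecting any partition of $d$ to the row partition $(d)$ by successively moving boxes upward) suffices. Combining the two reductions completes the argument.
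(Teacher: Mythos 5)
Your argument is correct and carries out exactly what the paper leaves implicit: the $\bG_m$-grading (or equivalently the vanishing condition $i = |\lambda|-|\mu| = |\lambda'|-|\mu'|$ in the preceding $\Ext$ formula) gives the ``only if'' direction, while the $i=1$ Pieri specialization plus the single-box-move connectivity of partitions of a fixed size gives the ``if'' direction. This matches the intended route; the one genuinely nontrivial point, that the elementary moves connect $(\widetilde\lambda,\emptyset)$ to $(\widetilde\mu,\emptyset)$ for all partitions of size $d$, is handled correctly by your reduction to the row partition $(d)$.
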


\begin{remark}
The blocks are naturally indexed by $\bZ$: given an integer $d$, the corresponding block $\Rep(\GL)_d$ is the one containing simples $V_{\lambda,\lambda'}$ for which $|\lambda| - |\lambda'| = d$. It is easy to see that there are equivalences $\Rep(\GL)_d \simeq \Rep(\GL)_{-d}$ and that no other non-trivial equivalences exist amongst the blocks (this is a statement about the combinatorics of the underlying quivers).
\end{remark}

\article[Littlewood varieties] \label{art:gl-lw}
We now give a second application of Theorem~\pref{gl:amod}:  the construction of injective resolutions of simple objects of $\Rep(\GL)$.  For this it will actually be more natural to use the pro version $\wh{\Rep}(\GL)$ of the category (see \pref{gl:pro}).  We write $\wh{V}_{\lambda,\lambda'}$ for the simple object of this category corresponding to $(\lambda, \lambda')$.  Let $E$ and $E'$ be finite dimensional vector spaces, let $U=E \otimes E'$, let $A=\Sym(U)$ and let $B=\Sym((E \otimes \wh{\bV}) \oplus (E' \otimes \wh{\bV}_*))$, where each symmetric power is taken in $\wh{\Vec}$. The space $U$ is a subspace of $B$, and so we have an algebra homomorphism $A \to B$.  We let $C$ be the quotient of $B$ by the ideal generated by $U$.  We have maps
\begin{displaymath}
\Spec(C) \to \Spec(B) \to \Spec(A).
\end{displaymath}
To give a geometric interpretation of these maps, we ignore subtleties caused by spaces being infinite dimensional.  The space $\Spec(A)$ is identified with the space of forms $E \otimes E' \to \bC$, while $\Spec(B)$ is identified with the space $\Hom(E, \bV) \times \Hom(E', \bV_*)$ of pairs of maps $(\varphi \colon E \to \bV, \psi \colon E' \to \bV_*)$.  The map $\Spec(B) \to \Spec(A)$ takes a pair of linear maps $(\varphi, \psi)$ to the form $(\varphi \otimes \psi)^* \bomega$, where $\bomega$ is the natural pairing on $\bV \otimes \bV_*$.  The space $\Spec(C)$, which we call the {\bf Littlewood variety}, is the scheme-theoretic fiber of this map above 0, i.e., it consists of those pairs $(\varphi, \psi)$ for which $(\varphi \otimes \psi)^* \bomega=0$.  Alternatively, we can think of this as the variety defined by the condition $\psi^* \phi = 0$. Let $K_{\bullet}=B \otimes \lw^{\bullet}(U)$ be the Koszul complex of the Littlewood variety.

\begin{remark}
We would like to take $E$ and $E'$ to be $\bC^{\infty}$, but this presents the technical annoyance of mixing ind-finite and pro-finite vector spaces, which we prefer to avoid.
\end{remark}

\begin{Proposition}
\label{gl:koszul}
The augmented complex $K_{\bullet} \to C$ is exact. We have a decomposition
\begin{displaymath}
C=\bigoplus_{\lambda,\lambda'} \bS_{\lambda}(E) \otimes \bS_{\lambda'}(E') \otimes \wh{V}_{\lambda,\lambda'}.
\end{displaymath}
\end{Proposition}

\begin{proof}
See \cite[\S 5.3]{ssw}.
\end{proof}

\article[Littlewood complexes] \label{art:littlewoodcx}
We now give the projective resolutions of simple objects.  For a pair of partitions $(\lambda, \lambda')$, define the {\bf Littlewood complex} $L^{\lambda,\lambda'}_{\bullet}$ by
\begin{displaymath}
L^{\lambda,\lambda'}_{\bullet}=\Hom_{\GL(E) \times \GL(E')}(\bS_{\lambda}(E) \otimes \bS_{\lambda'}(E'), K_{\bullet}),
\end{displaymath}
where the dimensions of $E$ and $E'$ are sufficiently large (the definition is then independent of $E$ and $E'$).  Proposition~\pref{gl:koszul} shows that
\begin{displaymath}
\rH_i(L^{\lambda,\lambda'}_{\bullet})=\begin{cases}
\wh{V}_{\lambda,\lambda'} & \textrm{if $i=0$} \\
0 & \textrm{otherwise,} \end{cases}
\end{displaymath}
and so $L^{\lambda,\lambda'}_{\bullet}$ is a resolution of the simple object $\wh{V}_{\lambda,\lambda'}$.  Furthermore, it is clear that $K_{\bullet}$ is built from polynomial Schur functors applied to $\wh{\bV}$ and $\wh{\bV}_*$, and so each $K_i$ is projective in $\wh{\Rep}(\GL)$.  In fact, we have
\begin{displaymath}
L^{\lambda,\lambda'}_i=\bigoplus_{\vert \mu \vert=i} \bS_{\lambda/\mu}(\wh{\bV}) \otimes \bS_{\lambda'/\mu^{\dag}}(\wh{\bV}_*).
\end{displaymath}
Thus the Littlewood complex $L^{\lambda,\lambda'}_{\bullet}$ is a projective resolution of $\wh{V}_{\lambda,\lambda'}$; in fact, it is a minimal resolution.

\article[Transpose duality] \label{art:gl-transpose}
As an application of Theorem~\pref{gl:amod}, we can find a symmetry of $\Rep(\GL)$:

\begin{theorem}
There is a natural asymmetric auto-equivalence of tensor categories $\Rep(\GL) \to \Rep(\GL)$. This equivalence takes the simple object $V_{\lambda, \mu}$ to $V_{\lambda^\dagger, \mu^\dagger}$.
\end{theorem}

\begin{proof}
As discussed in \pref{art:V2}, full transpose is an asymmetric autoequivalence on the tensor category $\cV^{\otimes 2}$. Since it takes $\bE$ and $\bE'$ to themselves, it takes $A$ to itself and induces an equivalence of categories $\Mod_A \cong \Mod_A$. Since the tensor products $\ast^A$ are defined using only the tensor structure on $\cV^{\otimes 2}$, it maps to itself under this equivalence.
\end{proof}

\article[The Fourier transform] \label{rmk:glkoszul}
We give another application of Theorem~\pref{gl:amod}:  the construction of a (nearly canonical) derived auto-equivalence of $\Rep(\GL)$, which we call the {\bf Fourier transform}.  By the theorem, it is enough to construct such an auto-equivalence on the derived category of $\Mod_A^{\fin}$.  To do this, we use a variant of the construction of the Fourier transform on $\Sym(\bC\langle 1 \rangle)$ given in \cite[\S 6]{symc1}.  We give only the main ideas here, details will appear in \cite{sskoszul}.  Given an $A$-module $M$, the module
\[
\cT_n(M) = \bigoplus_{p \ge 0} \Tor^A_p(M, \bC)_{p+n}
\]
is naturally a comodule over $B = \lw(\bE \otimes \bE')$, and so $\cT_n(M)^\vee$ is a module over $B$. We can apply the partial transpose functor \pref{art:V2} with respect to $\bE'$. By the Cauchy identity \cite[(6.2.8)]{expos}, this turns $B$ into $A$, and the $B$-module $\cT_n(M)$ into an $A$-module $\cF_n(M)$.  In fact, $\cF_n$ is the $n$th homology of an equivalence of triangulated categories
\[
\cF \colon \rD(\Mod_A)^{\rm op} \to \rD(\Mod_A).
\]
This equivalence takes $\rD^b(\Mod_A^{\fin})$ to the category of perfect complexes in $\rD(\Mod_A)$.  Finally, the category of finitely generated projective objects of $\Mod_A$ is equivalent to the category of finite length injective objects of $\Mod_A$; this is a variant of \cite[\S 2.5]{symc1}.  We can therefore move from perfect complexes back to $\rD^b(\Mod_A^{\fin})$.

\begin{remark}
\begin{enumerate}[(a)]
\item Obviously, one could have applied transpose with respect to $\bE$ instead of $\bE'$.  Thus there are naturally two Fourier transforms, and they differ by the full transpose.

\item The Fourier transform realizes $\Rep(\GL)$ as its own Koszul dual.  The fact that $\Rep(\GL)$ is Koszul self-dual was also proved in \cite[Corollary~6.4]{koszulcategory}, though in a different way: the proof of loc.\ cit.\ identifies $\Rep(\GL)$ with modules over a quadratic ring, and then shows that this ring is Koszul self-dual by computation.
\end{enumerate}
\end{remark}

\subsection{Rational Schur functors, universal property and specialization} 
\label{sec:rationalschur}

\article[Rational Schur functors]
\label{gl:schur:intro}
Let $\cA$ be a tensor category.  Let $T(\cA)$ be the category whose objects are triples $(A, A', \omega)$, where $A$ and $A'$ are objects of $\cA$ and $\omega$ is a pairing $A \otimes A' \to \bC$, and whose morphisms are the obvious things.  We typically suppress $\omega$ from the notation.  Given $(A, A') \in T(\cA)$, define $\cK(A, A')$ to be the object of $\cA^{\dwb}$ given by $L \mapsto A^{\otimes L_+} \otimes (A')^{\otimes L_-}$.  Functoriality with respect to morphisms in $\dwb$ makes use of the pairing $\omega$, and is defined just like in \pref{gl:ker}.  For an object $M$ of $\Mod_{\uwb}^{\fin}$, define
\begin{displaymath}
S_M(A, A')=M \otimes^{\dwb} \cK(A, A')
\end{displaymath}
(see \pref{diag:tens} for notation). Then $M \mapsto S_M(A, A')$ defines a covariant functor $\Mod_{\uwb} \to \cA$ which is left-exact (see \pref{diag:tens}) and a tensor functor (since $\cK(A, A')$ is obviously a tensor kernel, see \pref{ker-tens}).  We call $S_M$ the {\bf rational Schur functor} associated to $M$.  

\begin{Theorem}
\label{gl:univ}
To give a left-exact tensor functor from $\Rep(\GL)$ to an arbitrary tensor category $\cA$ is the same as to give an object of $T(\cA)$.  More precisely, letting $\bM$ and $\bM_*$ be the objects of $\Mod_{\uwb}$ corresponding to $\bV$ and $\bV_*$, the functors
\begin{displaymath}
\Phi_\cA \colon \LEx^{\otimes}(\Mod_{\uwb}^{\fin}, \cA) \to T(\cA), \qquad F \mapsto (F(\bM), F(\bM_*))
\end{displaymath}
and
\begin{displaymath}
\Psi_\cA \colon T(\cA) \to \LEx^{\otimes}(\Mod_{\uwb}^{\fin}, \cA), \qquad (A, A') \mapsto (M \mapsto S_M(A, A'))
\end{displaymath}
are mutually quasi-inverse equivalences.
\end{Theorem}

\begin{proof}
Let $(A, A')$ be an object of $T(\cA)$.  Applying $\Psi_{\cA}$, we obtain a functor $F \colon \Mod_{\uwb}^{\fin} \to \cA$.  It is obvious from the construction of this functor that $F(\bM)=A$ and $F(\bM_*)=A'$.  Thus $\Phi_{\cA}(F) \cong (A, A')$.  This reasoning shows that the natural morphism $\id \to \Phi_{\cA} \Psi_{\cA}$ is an isomorphism.

Now suppose that $F \colon \Mod_{\uwb}^{\fin} \to \cA$ is a left-exact tensor functor.  Applying $\Phi_{\cA}$ and then $\Psi_{\cA}$, we obtain the functor $\Mod_{\uwb}^{\fin} \to \cA$ given by
\begin{displaymath}
M \mapsto M \otimes^{\dwb} \cK(F(\bM), F(\bM_*)).
\end{displaymath}
Since $F$ is a tensor functor, we have $\cK(F(\bM), F(\bM_*))=F(\cK(\bM, \bM_*))$, and since $F$ is left-exact it pulls out of $\otimes^{\dwb}$.  We thus see that
\begin{displaymath}
M \otimes^{\dwb} \cK(F(\bM), F(\bM_*)) = F(M \otimes^{\dwb} \cK(\bM, \bM_*))=F(M).
\end{displaymath}
Here we have used that $M \otimes^{\dwb} \cK(\bM, \bM_*)$ is naturally identified with $M$, which follows from the proof of Theorem~\pref{gl:dwb}.  This shows that $\Psi_{\cA}(\Phi_{\cA}(F))=F$, from which one deduces that the natural morphism $\id \to \Psi_{\cA} \Phi_{\cA}$ is an isomorphism.
\end{proof}

\begin{remark}
This theorem can be rephrased as follows:  the functor $T \colon \TCat \to \Cat$ is corepresented by $\Rep(\GL)$, with the universal object in $T(\Rep(\GL))$ being $(\bV, \bV_*)$.  See \pref{art:schur-univ} for notation.
\end{remark}

\article[The specialization functor] \label{gl:special}
The pair $(\bC^d, \bC^d{}^*)$ defines an object of $T(\Rep(\GL(d)))$, and so by Theorem~\pref{gl:univ} we obtain a left-exact tensor functor
\begin{displaymath}
\Gamma_d \colon \Rep(\GL) \to \Rep(\GL(d)),
\end{displaymath}
which we call the {\bf specialization functor}.  This functor is given by $\Gamma_d(V)=S_V(\bC^d)$, where $S_V$ is the rational Schur functor associated to $V$.  The results of \pref{art:glweylfin} describe this functor on simple objects:  $\Gamma_d(V_{\lambda,\lambda'})$ is the irreducible $V_{\lambda,\lambda'}^d$ if $d \ge \ell(\lambda)+\ell(\lambda')$ and 0 otherwise.

\article[Specialization via invariants]
We now give a more direct description of specialization.  Let $G_d$ be the subgroup of $\GL(\infty)$ which coincides with the identity matrix away from the top left $d \times d$ block, and let $H_d$ be the subgroup which coincides with the identity matrix away from the complementary diagonal block.  Of course, $G_d=\GL(d)$ and $H_d$ is isomorphic to $\GL(\infty)$.  The subgroups $G_d$ and $H_d$ commute, and so $G_d \times H_d$ is a subgroup of $\GL(\infty)$; in particular, the $H_d$-invariants of any representation of $\GL(\infty)$ form a representation of $G_d$.

\begin{proposition} \label{prop:gl_invts}
We have a natural identification $\Gamma_d(V)=V^{H_d}$.
\end{proposition}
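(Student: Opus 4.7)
The strategy is to show that $F \colon \Rep(\GL) \to \Rep(\GL(d))$ given by $F(V) = V^{H_d}$ satisfies the universal property of Theorem~\pref{gl:univ}: namely, that it is a left-exact tensor functor sending $(\bV, \bV_*)$ to $(\bC^d, \bC^d{}^*)$ equipped with the standard pairing.  Since $\Gamma_d$ was constructed in \pref{gl:special} as the functor corresponding to $(\bC^d, \bC^d{}^*)$ under that universal property, uniqueness yields a canonical natural isomorphism $F \cong \Gamma_d$.

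Left-exactness of $F$ is formal, as $(-)^{H_d}$ is right adjoint to the functor that equips a vector space with the trivial $H_d$-action.  For the values on generators, decompose $\bV = \bC^d \oplus \bV'$ as a $G_d \times H_d$-module, where $\bV' = \mathrm{span}(e_{d+1}, e_{d+2}, \ldots)$ is the standard representation of $H_d \cong \GL(\infty)$.  Proposition~\pref{gl:oneway} applied with $V_{\lambda,\lambda'} = \bC = V_{\emptyset,\emptyset}$ gives $\Hom_{\GL}(\bC, T_{1,0}) = 0$, so $(\bV')^{H_d} = 0$ and hence $\bV^{H_d} = \bC^d$.  The analogous decomposition $\bV_* = \bC^d{}^* \oplus \bV_*'$ gives $\bV_*^{H_d} = \bC^d{}^*$, and the canonical pairing $\bV \otimes \bV_* \to \bC$ restricts on invariants to the standard pairing.

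The main work is verifying the tensor property: the canonical inclusion $V^{H_d} \otimes W^{H_d} \hookrightarrow (V \otimes W)^{H_d}$ is an isomorphism.  First I would check this on the generators $T_{n,m}$.  Expanding $T_{n,m}$ via the decompositions above gives a direct sum over the choice, for each tensor factor, of whether it lies in the $G_d$-part or the $H_d$-part; each summand's ``$H_d$-part'' is a copy of some $T_{b,b'}$ viewed as a representation of $H_d \cong \GL(\infty)$, and Proposition~\pref{gl:oneway} forces $b = b' = 0$ for such a summand to contribute invariants.  This yields $T_{n,m}^{H_d} = T_{n,m}^d$, with the tensor compatibility $T_{n,m}^{H_d} \otimes T_{n',m'}^{H_d} \cong T_{n+n', m+m'}^{H_d}$ visible directly.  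To promote this to all objects, I would use that the injectives of $\Rep(\GL)$ --- the functors $\bS_\lambda(\bV) \otimes \bS_{\lambda'}(\bV_*)$ from \pref{art:gl-injectives} --- are direct summands of the $T_{n,m}$, that their tensor products are again injective (by the Littlewood--Richardson rule), and that both $\Rep(\GL)$ and $\Rep(\GL(d))$ have exact tensor products.  A standard double-complex argument applied to a pair of injective resolutions then extends the tensor compatibility from generators to all pairs $(V, W)$.  The universal property of Theorem~\pref{gl:univ} now concludes $F \cong \Gamma_d$.

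The main obstacle is this tensor property: for a general group action, invariants do not commute with tensor products (e.g., $V \otimes V^*$ always carries an invariant).  What makes it work here is the asymmetric duality in $\Rep(\GL)$ --- specifically, that $\bV \otimes \bV_*$ admits no $\GL(\infty)$-invariant (cf.\ the example in \pref{intro:ss}) despite $\bC$ occurring as a quotient.  This peculiar feature of the algebraic theory, encapsulated in Proposition~\pref{gl:oneway}, is precisely what forces the $H_d$-invariants of $T_{n,m}$ to concentrate entirely in the $(\bC^d \oplus \bC^d{}^*)$-summand.
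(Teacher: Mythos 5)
Your proposal is correct, but it takes a genuinely different route from the paper's. Both arguments share the same core computation — the $G_d \times H_d$-equivariant decomposition of $T_{n,m}$ and the vanishing of $(T'_{n,m})^{H_d}$ for $(n,m) \ne (0,0)$, forced by Proposition~\pref{gl:oneway} — and you handle that step correctly. What differs is the wrapper. The paper writes $V = M \otimes^{\dwb} \cK(\bV, \bV_*)$ and $\Gamma_d(V) = M \otimes^{\dwb} \cK(\bC^d, \bC^d{}^*)$, then observes that $\otimes^{\dwb}$ is a limit and hence commutes with $(-)^{H_d}$; the whole proposition thereby reduces to $\cK(\bV,\bV_*)^{H_d} = \cK(\bC^d,\bC^d{}^*)$, and the tensor-compatibility of $(-)^{H_d}$ is harvested afterward as a corollary (the remark following the proposition). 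You go the other way: you verify directly that $V \mapsto V^{H_d}$ is a left-exact tensor functor sending $(\bV,\bV_*)$ to $(\bC^d,\bC^d{}^*)$ and then invoke uniqueness from Theorem~\pref{gl:univ}. This forces you to prove, up front, that the inclusion $V^{H_d} \otimes W^{H_d} \hookrightarrow (V\otimes W)^{H_d}$ is always an isomorphism, for which your sketch (tensor products of the injectives $\bS_\lambda(\bV)\otimes\bS_{\lambda'}(\bV_*)$ are again injective by Littlewood--Richardson, both tensor products are exact, and $\Rep(\GL(d))$ is semisimple so K\"unneth identifies the $H^0$ of the tensored resolution with $V^{H_d}\otimes W^{H_d}$) does work but deserves to be filled in more carefully, in particular checking that the resulting isomorphism is the natural one. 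So your proof costs more machinery than the paper's limit-commutes-with-invariants shortcut, but it has the conceptual merit of explaining \emph{why} the tensor property holds rather than deducing it after the fact.
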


\begin{proof}
Let $M$ be the object of $\Mod_{\uwb}^{\fin}$ corresponding to $V \in \Rep(\GL)$.  We have identifications
\begin{displaymath}
\Gamma_d(V)=M \otimes^{\dwb} \cK(\bC^d, \bC^d{}^*), \qquad
V=M \otimes^{\dwb} \cK(\bV, \bV_*).
\end{displaymath}
As $\otimes^{\dwb}$ is a limit, it commutes with the formation of invariants, and so
\begin{displaymath}
V^{H_d}=M \otimes^{\dwb} \cK(\bV, \bV_*)^{H_d},
\end{displaymath}
and so it suffices to show that $\cK(\bV, \bV_*)^{H_d}=\cK(\bC^d, \bC^d{}^*)$.  For this, it is enough to show that $T_{n,m}^{H_d}=T_{n,m}^d$.  Let $\bV'$ be the span of the $e_i$ with $i>d$, define $\bV'_*$ similarly, and let $T'_{n,m}$ be defined using $\bV'$ and $\bV'_*$.  We have $\bV=\bC^d \oplus \bV'$, and similarly for $\bV_*$.  Then
\begin{displaymath}
T_{n,m}=\bigoplus_{\substack{0 \le i \le n \\ 0 \le j \le m}} W_{i,j} \otimes (\bC^d)^{\otimes n-i} \otimes (\bC^d{}^*)^{\otimes m-j} \otimes T'_{i,j},
\end{displaymath}
where $W_{i,j}$ is a multiplicity space of dimension $\binom{n}{i} \binom{m}{j}$.  Now, $H_d$ is isomorphic to $\GL(\infty)$, and under this isomorphism $T'_{n,m}$ corresponds to $T_{n,m}$.  It follows from Proposition~\pref{gl:oneway} that $(T'_{n,m})^{H_d}=0$ unless $n=m=0$.  Applying this to the above decomposition, we see that $T_{n,m}^{H_d}=T_{n,m}^d$, which completes the proof.
\end{proof}

\begin{remark}
Since $\Gamma_d$ is a tensor functor, the above proposition shows that if $V$ and $W$ are algebraic representations of $\GL(\infty)$ then $(V \otimes W)^{H_d}=V^{H_d} \otimes W^{H_d}$.
\end{remark}

\article[Derived specialization]
As the category $\Rep(\GL)$ has enough injectives, the derived functor $\rR\Gamma_d$ of $\Gamma_d$ exists.  The injective resolution of the simple object $V_{\lambda,\lambda'}$ is given by the Littlewood complex $L^{\lambda,\lambda'}_{\bullet}$ (see \pref{art:littlewoodcx} for the dual picture).  Since specialization behaves in the obvious manner on polynomial Schur functors (see \pref{gl:special}), $\rR\Gamma_d(V_{\lambda,\lambda'})=\Gamma_d(L^{\lambda,\lambda'}_{\bullet})$ is just $L^{\lambda,\lambda'}_{\bullet}(\bC^d)$, which is by definition the result of evaluating the Schur functors in $L^{\lambda, \lambda'}_\bullet$ on $\bC^d$.  The cohomology of this complex is computed in \cite[\S 5.5]{ssw}, the result being:

\begin{theorem}
Let $(\lambda,\lambda')$ be a pair of partitions and let $d \ge 1$ be an integer.  Then $\rR^i\Gamma_d(V_{\lambda,\lambda'})$ either vanishes identically or else there exists a unique $i$ for which it is non-zero, and it is then an irreducible representation of $\GL(d)$.
\end{theorem}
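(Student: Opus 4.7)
The plan is to compute $\rR\Gamma_d(V_{\lambda,\lambda'})$ term by term using the Littlewood complex of \pref{art:littlewoodcx}. Via Proposition~\pref{gl:koszul}, the Littlewood complex $L^{\lambda,\lambda'}_{\bullet}$ provides a minimal resolution of $\wh{V}_{\lambda,\lambda'}$ whose $i$-th term is
\[
L^{\lambda,\lambda'}_i = \bigoplus_{|\mu|=i} \bS_{\lambda/\mu}(\wh{\bV}) \otimes \bS_{\lambda'/\mu^{\dag}}(\wh{\bV}_*).
\]
Passing through the duality $\wh{\Rep}(\GL) = \Rep(\GL)^{\op}$, this yields an injective resolution of $V_{\lambda,\lambda'}$ in $\Rep(\GL)$ by objects of the form $\bS_{\lambda/\mu}(\bV) \otimes \bS_{\lambda'/\mu^{\dag}}(\bV_*)$; injectivity of each term follows from Proposition~\pref{art:gl-injectives} since skew Schur functors decompose into ordinary Schur functors. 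Because $\Gamma_d$ is a left-exact tensor functor (Theorem~\pref{gl:univ}) that sends $\bV \mapsto \bC^d$ and $\bV_* \mapsto \bC^d{}^*$, applying it to this resolution yields the complex of rational $\GL(d)$-representations
\[
\Gamma_d(L^{\lambda,\lambda'}_i) = \bigoplus_{|\mu|=i} \bS_{\lambda/\mu}(\bC^d) \otimes \bS_{\lambda'/\mu^{\dag}}(\bC^d{}^*),
\]
whose cohomology computes $\rR^{\bullet}\Gamma_d(V_{\lambda,\lambda'})$.

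The next step is a character-level reduction. At the level of Euler characteristics, the alternating sum of the characters of $\Gamma_d(L^{\lambda,\lambda'}_{\bullet})$ is exactly the quantity computed by Koike--Terada \cite[\S 2.4]{koiketerada}: they show that this sum is either zero or equal to $\pm\chi(V_{\mu,\mu'}^d)$ for a uniquely determined pair $(\mu,\mu')$ with $\ell(\mu)+\ell(\mu') \le d$, obtained from $(\lambda,\lambda')$ by an explicit border-strip ``modification rule,'' and the sign records the parity of the cohomological degree in which the surviving irreducible should appear. The remaining task is to promote this Euler characteristic identity to a statement about individual cohomology groups, i.e., to establish that the cohomology is concentrated in a single degree.

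The main obstacle is precisely this single-degree concentration, which is what distinguishes the cohomology-level theorem from its Euler characteristic shadow. My preferred route is geometric. The object $C$ of Proposition~\pref{gl:koszul} is the coordinate ring of the Littlewood variety in $\Hom(E,\wh\bV) \times \Hom(E',\wh\bV_*)$, and the Koszul complex $K_{\bullet}$ resolves $C$ over $B$; taking the $\bS_\lambda(E) \otimes \bS_{\lambda'}(E')$-isotypic component extracts $\wh V_{\lambda,\lambda'}$ together with the Littlewood complex as its resolution. Specialization $\Gamma_d$ corresponds to replacing $\wh\bV, \wh\bV_*$ by $\bC^d, \bC^d{}^*$, which geometrically is restriction of the Littlewood variety. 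Desingularizing this restricted variety by passing to a Grassmannian bundle parametrizing $\im(\varphi)$ (or, dually, $\ker(\psi^*)$) converts the computation into the derived pushforward of a $\GL(d)$-equivariant vector bundle; concentration in one cohomological degree and identification of the surviving irreducible then come from a Bott-type vanishing theorem applied to a dominant/affine weight, with the Koike--Terada modification rule reappearing naturally as the $\rho$-shifted Weyl group action.

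As a fallback, one may prove the vanishing by induction on $d$. The inclusion $\bC^{d-1} \hookrightarrow \bC^d$ yields a short exact sequence of functors relating $\Gamma_d$ and $\Gamma_{d-1}$, producing a Grothendieck-style long exact sequence in $\rR^{\bullet}\Gamma$. Combined with the semisimplicity of the polynomial theory in the top row $\vert\lambda\vert$, and the stable branching rules, this reduces single-degree vanishing to the combinatorial termination property of iterated border-strip removal that already drives the Koike--Terada rule. Either route yields the theorem.
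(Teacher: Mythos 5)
Your setup---using the Littlewood complex $L^{\lambda,\lambda'}_\bullet$ as an injective resolution of $V_{\lambda,\lambda'}$ and applying $\Gamma_d$ by evaluating the Schur functors at $\bC^d$---coincides exactly with the paper's own reduction. Where you diverge is at the hard part: the paper does not prove concentration in a single degree here at all; it simply cites \cite[\S 5.5]{ssw} for the cohomology of $L^{\lambda,\lambda'}_\bullet(\bC^d)$. Your ``preferred geometric route'' is in fact a reasonably accurate sketch of the argument carried out in that reference: one resolves the restricted Littlewood variety by a Grassmannian bundle over $\Gr(k,\bC^d)$ parametrizing intermediate subspaces $\im(\varphi)\subseteq W\subseteq\ker(\psi^*)$, reduces to a derived pushforward of an equivariant bundle, and invokes Bott vanishing; the modification rule then appears as the affine Weyl reflection bookkeeping. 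So your proof is not a different route from the paper so much as an unpacking of the citation the paper makes---which is useful, but you should be explicit that the bulk of the work lives in \cite{ssw}, and that the Koike--Terada computation you invoke gives only the Euler characteristic, not the vanishing.

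Your fallback argument is where I see a genuine gap. The inclusion $\bC^{d-1}\hookrightarrow\bC^d$ does \emph{not} yield a short exact sequence of functors relating $\Gamma_{d-1}$ and $\Gamma_d$. Recall $\Gamma_d(V)=V^{H_d}$ for the commuting subgroup $H_d\cong\GL(\infty)$; the subgroups $H_d$ and $H_{d-1}$ are not nested in a way that produces a long exact sequence, and there is no natural exact triangle relating the two specialization functors. What one does have is a \emph{filtration} (not a short exact sequence) of $\bS_\mu(\bC^d)$ as a $\GL(d-1)$-module, but this lives in the wrong category ($\Rep(\GL(d-1))$ rather than $\Rep(\GL(d))$) and does not directly propagate vanishing across degrees. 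Unless you can supply the precise comparison map and the exactness you are asserting, the induction does not get off the ground, and you are left with the geometric route as the only complete argument.
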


Furthermore, there is a rule, the {\bf modification rule}, which calculates where the cohomology is non-zero, and what the resulting irreducible of $\GL(d)$ is.  See \cite[\S 5.4]{ssw} for details. The Euler characteristic of this complex was previously computed by \cite[Proposition 2.2]{koike}, which suggested the result of \cite{ssw}.

\article \label{art:gl-T0T1}
Let $T_0=T(\Vec^{\fin})$ be the category of triples $(V, V', \omega)$ where $V$ and $V'$ are finite dimensional vector spaces and $\omega \colon V \otimes V' \to \bC$ is a bilinear form.  There are no conditions placed on $\omega$; it could even be zero.  Let $T_1$ be the category whose objects are vector spaces and whose morphisms are split injections; that is, a morphism $V \to V'$ consists of a pair $(i, p)$ where $i \colon V \to V'$ and $p \colon V' \to V$ are linear maps with $pi=\id_V$.  There is a natural functor $T_1 \to T_0$ taking $V$ to $(V, V^*, \omega)$, where $\omega$ is the canonical pairing between $V$ and $V^*$.  In this way, $T_1$ is identified with the full subcategory of $T_0$ where the form $\omega$ is perfect.

\article
\label{gl:acat}
Let $\cA$ be the category of representations of $\GL(\infty)$ such that every element is stabilized by $H_d$ for some $d$. Define a functor
\begin{displaymath}
\sF \colon \cA \to \Fun(T_1, \Vec)
\end{displaymath}
as follows.  For $U \in \cA$ and $V \in T_1$, pick an isomorphism $V \cong \bC^d$ and put $\sF(U)(V)=U^{H_d}$.  This can be said more canonically as follows.  Let $S(V)$ be the groupoid of split injections $V \to \bC^{\infty}$.  Given such an injection, let $V' \subset \bC^{\infty}$ be the complement of $V$.  We obtain a functor $S(V) \to \Vec$ by sending a split injection to $U^{\GL(V')}$.  The space $\sF(U)$ is then canonically the limit of this functor.  It is clear that $\sF$ is left-exact.  We also define a functor
\begin{displaymath}
\sG \colon \Fun(T_1, \Vec) \to \cA
\end{displaymath}
by $\sG(F)=\varinjlim F(\bC^d)$, where the transition map $\bC^d \to \bC^{d+1}$ is the standard inclusion using the first $d$ elements of a basis (with its standard splitting).  Basic properties of direct limits show that $\sG$ is exact and respects tensor products.  There are natural maps $\sG \sF \to \id$ and $\id \to \sF \sG$, the first of which is an isomorphism, essentially by the definition of $\cA$.  We thus see that $\sF$ and $\sG$ are naturally adjoint to each other (with $\sF$ being the right adjoint).

\begin{Lemma}
The functor $\sF$ is fully faithful.
\end{Lemma}

\begin{proof}
The isomorphism $\sG \sF = \id$ shows that $\sF$ is faithful.  We claim that $\sG$ is faithful on the image of $\sF$.  To see this, suppose that $V$ and $W$ belong to $\cA$ and let $f \colon \sF(V) \to \sF(W)$ be a map in $\Fun(T_1, \Vec)$.  Let $f'=\sG(f) \colon V \to W$.  We obtain a commutative square
\begin{displaymath}
\xymatrix{
\sF(V)(\bC^d) \ar[r] \ar[d]_{f(\bC^d)} & V \ar[d]^{f'} \\
\sG(W)(\bC^d) \ar[r] & W }
\end{displaymath}
The horizontal maps are injective, since $\sF(V)(\bC^d)$ is just $V^{H_d}$.  Thus if $f'=0$ then $f(\bC^d)=0$ for all $d$, and so $f=0$.  This shows that $\sG$ is faithful on the image of $\sF$.

We now show that $\sF$ is full.  Suppose $f \colon \sF(V) \to \sF(W)$ is a map in $\Fun(T_1, \Vec)$.  Let $g=\sG(f)$.  Then $\sG(f)=\sG(\sF(g))$, since $\sG \sF=\id$, and so $\sG(f-\sF(g))=0$.  Since $\sG$ is faithful on the image of $\sF$, this gives $f=\sF(g)$, and shows that $\sF$ is full.
\end{proof}

\begin{Theorem}
\label{gl:schur}
The functor $\sF$ induces a left-exact fully faithful tensor functor $\Rep(\GL) \to \Fun(T_1, \Vec^{\fin})$.
\end{Theorem}

\begin{proof}
All that remains to be shown is that the restriction of $\sF$ to $\Rep(\GL)$ is a tensor functor and takes values in $\Fun(T_1, \Vec^{\fin})$.  This follows from Proposition~\pref{prop:gl_invts} and basic properties of the specialization functor.
\end{proof}

\article \label{gl:catc}
The above theorem shows that we can regard $\Rep(\GL)$ as a category of functors $T_1 \to \Vec^{\fin}$.  However, it is not an abelian subcategory of the functor category, since there are surjections in $\Rep(\GL)$ which are not surjections of functors (as specialization is not exact).  The abelian closure $\cC$ of the image should be an interesting category.  We now explain what we expect to be true of it.  We intend to prove these statements, and more, in a subsequent work.

Call a functor $F \colon T_1 \to \Vec^{\fin}$ {\bf algebraic} if the maps it induces on $\Hom$ spaces are maps of varieties (the $\Hom$ sets in $T_1$ and $\Vec^{\fin}$ are naturally quasi-affine varieties).  The following conditions on a functor $F$ are equivalent:  (1) $F$ belongs to $\cC$; (2) $F$ is algebraic and finitely generated; (3) $F$ is a subquotient of a finite direct sum of mixed tensor powers (functors of the form $V \mapsto V^{\otimes n} \otimes (V^*)^{\otimes m})$.  The functor $\sG$ is an exact tensor functor mapping $\cC$ to $\Rep(\GL)$; in fact, it realizes $\Rep(\GL)$ as the Serre quotient of $\cC$ by its subcategory of finite length objects.  The functor $\sF$ maps $\Rep(\GL)$ fully faithfully into $\cC$; the essential image consists of ``saturated'' objects.

Thus the pair $(\cC, \Rep(\GL))$ is very analogous to the pair $(\Mod_A, \Mod_K)$, where $A=\Sym(\bC\langle 1 \rangle)$.  We expect that more of the picture developed in \cite{symc1} will hold for the former pair.  Furthermore, one should be able to replace $T_1$ with $T_0$ and maintain some of this picture.

\section{The orthogonal and symplectic groups} \label{sec:osp}

\subsection{\texorpdfstring{Representations of $\bO(\infty)$ and $\Sp(\infty)$}{Representations of O(inf) and Sp(inf)}}

\article \label{art:Oinfdefn}
Let $\bomega$ be a non-degenerate symmetric bilinear on $\bV$ such that each $e_i$ is orthogonal to all but finitely many $e_j$.  Let $\bO(\infty)$ be the subgroup of $\GL(\infty)$ stabilizing $\bomega$. For concreteness, we define $\bomega$ by
\[
\bomega\bigg(\sum_{i \ge 1} a_i e_i, \sum_{j \ge 1} b_j e_j \bigg) = \sum_{k \ge 1} (a_{2k-1} b_{2k} + a_{2k} b_{2k-1}).
\]
We say that a representation of $\bO(\infty)$ is {\bf algebraic} if it appears as a subquotient of a finite direct sum of the spaces $T_n=\bV^{\otimes n}$.  Again, a more intrinsic definition is given in \cite[\S 4]{penkovstyrkas}.  We denote by $\Rep(\bO)$ the category of algebraic representations.  It is an abelian category and stable under tensor product.  Some remarks:
\begin{enumerate}[(a)]
\item The form $\bomega$ provides an isomorphism $\bV \to \bV_*$, which is why we do not need to consider subquotients of tensor powers involving $\bV_*$.
\item Just like $\Rep(\GL)$, the category $\Rep(\bO)$ is not semi-simple:  the surjection $\bomega \colon \Sym^2(\bV) \to \bC$ is not split.
\item There is a ``central $\mu_2$'' that acts on every algebraic representation of $\bO(\infty)$.  This is just the restriction to $\mu_2$ of the ``central $\bG_m$'' of $\GL(\infty)$, as defined in \pref{gl:center}.  This action endows every algebraic representation with a canonical $\bZ/2$ grading.  The representation $T_n$ is concentrated in degree $n \pmod{2}$.
\item Define a representation of $\SO(\infty)$ to be algebraic if it appears as a subquotient of a finite direct sum of $T_n$'s.  Just as in \pref{gl:sl}, every such representation canonically extends to an algebraic representation of $\bO(\infty)$, and so the restriction functor $\Rep(\bO) \to \Rep(\SO)$ is an equivalence.
\item One can also consider pro-algebraic representations of $\bO(\infty)$, and this leads to a category $\wh{\Rep}(\bO)$ (see \pref{gl:pro} for details).  In this category, there is a non-split injection $\bC \to \Sym^2(\bV)$.  ``Continuous dual'' provides an equivalence of $\wh{\Rep}(\bO)$ with $\Rep(\bO)^{\op}$.
\end{enumerate}

\article[Weyl's construction (finite case)]
\label{o:weylfin}
Before studying $\Rep(\bO)$, we recall Weyl's construction of the irreducible representations of $\bO(d)$.  Let $T_n^d=(\bC^d)^{\otimes n}$.  The group $S_n \times \bO(d)$ acts on $T_n^d$.  For integers $1 \le i, j \le n$ we obtain a map
\begin{displaymath}
t_{i,j} \colon T_n^d \to T_{n-2}^d
\end{displaymath}
by applying the pairing $\bomega$ to the $i$th and $j$th tensor factors.  We let $T_{[n]}^d$ denote the intersection of the kernels of the maps $t_{i,j}$.  If $n=0$ or $m=0$ then $T_{[n]}^d=T_n^d$.  The space $T_{[n]}^d$ is clearly stable under the action of $S_n \times \bO(d)$.  For a partition $\lambda$ of $n$, we put
\begin{displaymath}
V_{\lambda}^d=\Hom_{S_n}(\bM_{\lambda}, T_{[n]}^d).
\end{displaymath}
This space carries an action of $\bO(d)$, and we have the following fundamental result of Weyl (see \cite[\S 19.5]{fultonharris}):

\begin{proposition}
If the first two columns of $\lambda$ have at most $d$ boxes in total then $V_{\lambda}^d$ is an irreducible representation of $\bO(d)$.  Otherwise, $V_{\lambda}^d=0$.  
\end{proposition}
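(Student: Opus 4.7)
The plan is to deploy Schur--Weyl duality between $\bO(d)$ and the Brauer algebra $\cB_n(d)$ on $T_n^d$ and to exploit the fact that traceless tensors see only the symmetric group part of the centralizer.

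First, I would show that the action of $\cB_n(d)$ on $T_{[n]}^d$ factors through the quotient $\cB_n(d) \twoheadrightarrow \bC[S_n]$ obtained by retaining only permutation diagrams. Every Brauer diagram is either a pure permutation or contains at least one horizontal edge among its top $n$ vertices; in the latter case the diagram begins by applying some contraction $t_{i,j}$, which kills $T_{[n]}^d$ by definition. Combining this with Brauer's theorem that $\cB_n(d) \twoheadrightarrow \End_{\bO(d)}(T_n^d)$, the image of $\bC[S_n]$ in $\End(T_{[n]}^d)$ is the full $\bO(d)$-centralizer. Since $\bC[S_n]$ is semisimple, the double centralizer theorem yields a multiplicity-free decomposition
\begin{equation*}
T_{[n]}^d \;=\; \bigoplus_{\lambda \vdash n} \bM_\lambda \boxtimes V_\lambda^d
\end{equation*}
as an $S_n \times \bO(d)$-representation, in which each nonzero $V_\lambda^d$ is automatically irreducible and distinct nonzero summands are pairwise non-isomorphic. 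This already establishes the irreducibility half of the claim.

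Second, to pin down the vanishing condition, I would construct an explicit highest weight vector in the allowed range. Choose a basis $e_1, \dots, e_d$ of $\bC^d$ such that $e_1, \dots, e_{\lfloor d/2 \rfloor}$ span a maximal isotropic subspace for $\bomega$. Let $k$ denote the total number of boxes in the first two columns of $\lambda$, and assume $k \le d$. Form the tensor $u_\lambda \in T_n^d$ obtained by placing $e_i$ in every box of the $i$th row of the Young diagram of $\lambda$, and then apply the column antisymmetrizer of $\lambda$ to obtain $v_\lambda$. The assumption $k \le d$ ensures that the basis vectors occurring in any one column of $v_\lambda$ belong to an isotropic family, so $\bomega(e_i,e_j)=0$ for all such pairs; hence $t_{i,j}(v_\lambda)=0$ and $v_\lambda \in T_{[n]}^d$. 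A direct weight computation identifies $v_\lambda$ as a highest weight vector for $\bO(d)$ of weight $\lambda$ in the standard coordinates on a maximal torus, so $V_\lambda^d \neq 0$. Conversely, when the first two columns of $\lambda$ contain more than $d$ boxes, I would argue by a character/dimension count: decompose $T_n^d$ via $\GL(d)$-Schur--Weyl duality, express $T_{[n]}^d$ as the kernel of the trace maps by inclusion--exclusion, and rewrite the resulting character using Littlewood's branching rule from $\GL(d)$ to $\bO(d)$. After cancellation, exactly the irreducibles of $\bO(d)$ indexed by partitions obeying the column bound survive, each with multiplicity $\dim \bM_\lambda$, which forces $V_\lambda^d=0$ beyond the bound.

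The main obstacle is the vanishing direction. Non-vanishing in the admissible range is a concrete linear-algebra computation once the isotropic basis is fixed, but the converse requires careful bookkeeping to rule out spurious simple constituents, especially in the middle range where $2\ell(\lambda) \approx d$ and the distinction between $\bO(d)$ and $\SO(d)$ representations intervenes. For a polished write-up I would either appeal to \cite[\S 19.5]{fultonharris}, since this classical Weyl construction is invoked rather than re-derived in the paper, or carry out the character calculation along the lines sketched above.
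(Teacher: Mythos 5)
The paper does not actually prove this proposition: it cites Fulton--Harris \cite[\S 19.5]{fultonharris} and moves on, so there is no ``paper's own proof'' to compare against. With that caveat, here is an assessment.

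Your Brauer-algebra argument for irreducibility is sound, and it closely mirrors the argument the paper \emph{does} carry out in full for the symmetric-group analogue in \S 6.1 (where the partition algebra $\cA_n(d)$ plays the role of $\cB_n(d)$). The key point --- every Brauer diagram that is not a pure permutation has a horizontal edge in the top row, hence factors through a contraction $t_{i,j}$, hence kills $T_{[n]}^d$ --- is correct, and combining Brauer's surjectivity theorem with the double centralizer theorem then does yield irreducibility or vanishing of each $V_\lambda^d$. This part of your plan works.

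The explicit non-vanishing construction, however, has a real gap. The column bound $\lambda_1' + \lambda_2' \le d$ allows $\ell(\lambda) = \lambda_1'$ to be as large as $d$, far beyond the Witt index $\lfloor d/2\rfloor$, so the entries $e_1,\dots,e_{\ell(\lambda)}$ in a column are \emph{not} all isotropic. Column antisymmetrization kills traces only between boxes lying in the same column; cross-column traces survive. Concretely, take $\lambda=(2,1)$, $d=3$, with $\bomega(e_1,e_2)=\bomega(e_3,e_3)=1$ and all other $\bomega(e_i,e_j)=0$. Ordering the slots as $(1,1),(2,1),(1,2)$, your construction gives
\[
v_\lambda \;=\; e_1\otimes e_2\otimes e_1 \;-\; e_2\otimes e_1\otimes e_1,
\]
and
\[
t_{1,3}(v_\lambda) \;=\; \bomega(e_1,e_1)\,e_2 \;-\; \bomega(e_2,e_1)\,e_1 \;=\; -e_1 \;\neq\; 0,
\]
so $v_\lambda\notin T_{[3]}^3$ even though $\lambda_1'+\lambda_2'=3\le 3$ and $V_{(2,1)}^3$ really is nonzero. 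A traceless highest weight vector exists in this range, but producing one is not the triviality your write-up suggests: it requires either a more delicate choice of tableau filling or an explicit projection onto the traceless part. This is precisely why the classical sources do not exhibit such a vector directly but instead argue through characters.

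You correctly flag the vanishing direction as the hard step, and you correctly identify \cite[\S 19.5]{fultonharris} as the fallback. Since the paper itself takes exactly that fallback, the cleanest fix is to invoke the reference there too. If you insist on a self-contained treatment, the character route you sketch --- Littlewood's restriction rule from $\GL(d)$ to $\bO(d)$, corrected in the range $\lambda_1'+\lambda_2'>d$ by the modification rules of King and Koike--Terada \cite{king, koiketerada} --- is the standard one, but those modification rules (with their signs and cancellations) must be used explicitly; an uncorrected application of the stable branching rule does not suffice near the boundary.
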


\article[Weyl's construction (infinite case)] \label{art:weyl-inf-orth}
Much of the preceding discussion carries over to the infinite case.  We let $t_{i,j} \colon T_n \to T_{n-2}$ and $T_{[n]}$ be defined as before.  Then $T_{[n]}$ is stable under $S_n \times \bO(\infty)$.  For a partition $\lambda$ of $n$, put
\begin{displaymath}
V_{\lambda}=\Hom_{S_n}(\bM_{\lambda}, T_{[n]}).
\end{displaymath}
We have the decomposition
\begin{equation}
\label{o:decomp}
T_{[n]}=\bigoplus_{\vert \lambda \vert=n} \bM_{\lambda} \boxtimes V_{\lambda}.
\end{equation}
Note that we also have an exact sequence
\begin{displaymath}
0 \to T_{[n]} \to T_n \to T_{n-2}^{\oplus n(n+1)/2}.
\end{displaymath}

\begin{Proposition}
The $V_{\lambda}$ constitute a complete irredundant set of simple objects of $\Rep(\bO)$.
\end{Proposition}

\begin{proof}
This is just like the proof of Proposition~\pref{gl:simple}. For the relevant character theory, see \cite[\S 2.1]{koiketerada}.
\end{proof}

\begin{Proposition}
\label{o:finlen}
Every object of $\Rep(\bO)$ has finite length.
\end{Proposition}

\begin{proof}
This is just like the proof of Proposition~\pref{gl:finlen}.
\end{proof}

\begin{Proposition}
\label{o:const}
The simple constituents of $T_n$ are those $V_{\lambda}$ with $\vert \lambda \vert \le n$ and $\vert \lambda \vert=n \pmod{2}$.
\end{Proposition}

\begin{proof}
This is just like the proof of Proposition~\pref{gl:const}.
\end{proof}

\article
Let $T \subset \bO(\infty)$ be the diagonal torus. Given our choice of $\bomega$ in \pref{art:Oinfdefn}, $T$ is the subgroup of diagonal matrices of the form ${\rm diag}(a_1, a_1^{-1}, a_2, a_2^{-1}, \dots)$, which we abbreviate by $[a_1, a_2, \dots]$. A {\bf weight} is a homomorphism $T \to \bG_m$ which only depends on finitely many coordinates, i.e., it is of the form $[a_1, a_2, \ldots] \mapsto a_1^{n_1} \cdots a_r^{n_r}$ for some integers $n_1, \ldots, n_r$.  The group of weights is isomorphic to the group of integer sequences $(a_1, a_2, \ldots)$ which are eventually zero.  An algebraic representation of $\bO(\infty)$ decomposes into weight spaces, as usual.  The {\bf magnitude} of a weight $\lambda$ is the sum of the absolute values of the $a_i$.  It is clear that the magnitude of any weight of $T_n$ is at most $n$. The following result shows that this maximum is achieved for every non-zero submodule.

\begin{Proposition} \label{prop:Oweight}
Every non-zero submodule of $T_n$ has a weight of magnitude $n$.
\end{Proposition}

\begin{proof}
Let $\bV'$ be the isotropic subspace of $\bV$ spanned by $e_1, e_3, e_5, \dots$.  Using the orthogonal form $\bomega$, the span of $e_2, e_4, e_6, \dots$ is identified with $\bV'_\ast$.  The representation of $\GL(\bV')$ on $\bV=\bV' \oplus \bV'_*$ realizes $\GL(\bV')$ as a subgroup of $\bO(\infty)$.  Furthermore, the diagonal torus of $\GL(\bV')$ coincides with the maximal torus $T$ of $\bO(\infty)$, and the notion of magnitude defined in \pref{gl:mag} agrees with the one defined above.  We have a decomposition of $\GL(\bV')$ representations
\[
\bV^{\otimes n} = \bigoplus_{a + b = n} W_{a,b} \otimes {\bV'}^{\otimes a} \otimes {\bV'_\ast}^{\otimes b}
\]
where $W_{a,b}$ is a multiplicity space of dimension $\binom{n}{a}$.  If $M$ is a non-zero $\bO(\infty)$-submodule of $\bV^{\otimes n}$ then it has a non-zero projection $M'$ to some ${\bV'}^{\otimes a} \otimes {\bV'_{\ast}}^{\otimes b}$.  Since $M'$ is a $\GL(\bV')$-submodule of ${\bV'}^{\otimes a} \otimes {\bV'_{\ast}}^{\otimes b}$, it has a weight of magnitude $n=a+b$ by Proposition~\pref{prop:glmaxmag}.
\end{proof}

\begin{Proposition}
\label{o:nohom}
Let $M$ be a submodule of $T_n$.  Then $\Hom_{\bO}(M, T_{n+r})=0$ for $r>0$.
\end{Proposition}

\begin{proof}
This is just like the proof of Proposition~\pref{gl:nohom}.
\end{proof}

\begin{Proposition}
\label{o:oneway}
We have
\begin{displaymath}
\Hom_{\bO}(V_{\lambda}, T_n)=\begin{cases}
\bM_{\lambda} & \textrm{if $n=\vert \lambda \vert$} \\
0 & \textrm{otherwise.}
\end{cases}
\end{displaymath}
\end{Proposition}

\begin{proof}
This follows immediately from Propositions~\pref{o:const} and~\pref{o:nohom}.
\end{proof}

\article[Representations of $\Sp(\infty)$] \label{art:Spinfdefn}
The situation for the symplectic group $\Sp(\infty)$ is very similar to the situation of the orthogonal group $\bO(\infty)$, and essentially everything goes through without change. For concreteness, we choose our symplectic form $\bomega$ to be
\begin{displaymath}
\bomega \bigg( \sum_i a_i e_i, \sum_j b_j e_j \bigg) = \sum_k (a_{2k-1} b_{2k} - a_{2k} b_{2k-1}).
\end{displaymath}
We define $T_n$ and $T_{[n]}$ and their finite even dimensional versions as in the orthogonal case. We denote the category of algebraic representations of $\Sp(\infty)$ by $\Rep(\Sp)$.

The result of Weyl's construction in the finite case is slightly different:  the representation $V_{\lambda}^{2d}$ of $\Sp(2d)$ is irreducible if $\ell(\lambda) \le d$ and 0 otherwise, see \cite[\S 17.3]{fultonharris}.  The result in the infinite case is the same:  the $V_{\lambda}$ are a complete irredundant set of irreducibles.  The remaining propositions carry through unchanged.

\subsection{The Brauer algebra and category}

\article[The monoid $\cG_n$]
This discussion is similar to that of \pref{gl:gmonoid}, and thus abbreviated somewhat.  Let $\cV_n$ be the set of vertices $\{x_i, y_i\}$ with $1 \le i \le n$.  We imagine the $x_i$ in the top row and the $y_i$ in the bottom row.  We thus have a notion of horizontal and vertical edges, as before, but there is no longer any wall.

Let $\cG_n$ be the set of complete matchings on the vertex set $\cV_n$.  We give $\cG_n$ the structure of a monoid.  The definition is similar to before:  to multiply $\Gamma$ and $\Gamma'$, put $\Gamma$ on top of $\Gamma'$ and ignore the middle vertices, discarding any components that use only the middle vertices.  Again, we write $n(\Gamma, \Gamma')$ for the number of discarded components.  As before, the symmetric group $S_n$ is identified with the submonoid of $\cG_n$ consisting of graphs with only vertical edges.

\article[The algebra $\cB_n$ and the module $T_n^d$] \label{art:brauer-alg}
We define algebras $\cB_n$ and $\cB_n(\alpha)$ exactly as in \pref{gl:walgebra}.  These algebras are the {\bf Brauer algebras}, introduced in \cite{brauer} (see also \cite{wenzl} for some of its fundamental properties).  We give $T_n^d$ the structure of a $\cB_n(d)$-module.  The definition is similar to that given in \pref{gl:wmodule}, but we provide details (using the same notation). Recall that for $u \in \bC^d$, we defined $f_i(u)$ to be the vector $u$ regarded in the $i$th tensor slot of $(\bC^d)^{\otimes n}$. Let $\Gamma$ be an element of $\cG_n$ and let $v=v_1 \otimes \cdots \otimes v_n$ be an element of $T_n^d$.  The element $w=X_{\Gamma} v$ is defined as a product over the edges of $\Gamma$.  The contribution of edges is as follows:
\begin{itemize}
\item The vertical edges $(x_i, x_j')$ contributes $f_j(v_i)$.
\item The horizontal edge $(x_i, x_j)$ contributes $\langle v_i, v_j \rangle$.
\item The horizontal edge $(x'_i, x'_j)$ contributes $(f_i \otimes f_j)(\omega)$, where $\omega \in \bC^d \otimes \bC^d$ is the form on $\bC^d$.  (We have used the form to identify $\bC^d$ with its dual.) Here $\omega$ can be either symmetric or skew-symmetric, so that we can treat the orthogonal and symplectic cases uniformly.
\end{itemize}
As before, it is critical in this definition that we have specialized the parameter $t$ of $\cB_n$ to $d$, the reason being that when we evaluate the pairing on $\bC^d$ on ``itself'' (regarding it as an element of $\bC^d \otimes \bC^d$), we get $d$.

\article
\label{thm:brauer}
The action of $\cB_n(d)$ on $T_n^d$ obviously commutes with that of $\bO(d)$.  The following is the main result on how these actions relate.  See \cite[Proposition~10.1.3, Corollary~10.1.4]{goodmanwallach} for a proof.

\begin{theorem}[Brauer]
The map $\cB_n(d) \to \End_{\bO(d)}(T_n^d)$ is surjective, and is bijective when $d \ge n$.
\end{theorem}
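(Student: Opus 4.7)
The plan is to reduce the statement to the classical first fundamental theorem (FFT) of invariant theory for $\bO(d)$, in close analogy with the approach for the walled Brauer algebra sketched implicitly in the preceding subsection. The form $\bomega$ provides an $\bO(d)$-equivariant isomorphism $\bC^d \cong (\bC^d)^*$, so
\[
\End_{\bO(d)}(T_n^d) \;=\; \bigl( T_n^d \otimes (T_n^d)^* \bigr)^{\bO(d)} \;\cong\; \bigl( (\bC^d)^{\otimes 2n} \bigr)^{\bO(d)}.
\]
After labeling the first $n$ tensor factors by $\{x_1',\ldots,x_n'\}$ (the ``bottom row'') and the last $n$ by $\{x_1,\ldots,x_n\}$ (the ``top row''), I would invoke the FFT for $\bO(d)$ (in the symmetric form case; use the skew version of FFT for $\Sp$): the space of invariants is spanned by the elements
\[
\omega_\Gamma \;=\; \prod_{(a,b) \in E(\Gamma)} \bomega_{a,b},
\]
where $\Gamma$ ranges over perfect matchings of the $2n$ vertices, i.e., over $\cG_n$. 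A direct unwinding of the module structure defined in \pref{gl:wmodule} (applied in the present orthogonal/symplectic setup) shows that $\omega_\Gamma$ is, under the canonical isomorphism above, precisely the endomorphism by which $X_\Gamma \in \cB_n(d)$ acts on $T_n^d$. This establishes surjectivity.

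For bijectivity when $d \ge n$, since $\{X_\Gamma\}_{\Gamma \in \cG_n}$ is a $\bC$-basis of $\cB_n(d)$ and $\#\cG_n = (2n-1)!!$, it suffices to prove that the endomorphisms $\{X_\Gamma\}_\Gamma$ are linearly independent in $\End(T_n^d)$. I would do this by constructing, for each $\Gamma$, a ``test vector'' in $T_n^d$ that detects $\Gamma$ uniquely. Concretely, choose $2n$ pairwise distinct basis vectors $e_{i_1},\ldots,e_{i_{2n}}$ of $\bC^d$ (possible because $d \ge n$ — in fact one needs $2n$ distinct dual pairs, which is why the precise counting matches). Assign one of these vectors to each top-row endpoint of $\Gamma$, and feed into $X_\Gamma$ the pure tensor $v = e_{i_{n+1}} \otimes \cdots \otimes e_{i_{2n}}$. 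Using the explicit recipe from \pref{gl:wmodule}, the only basis monomial appearing in $X_{\Gamma'} v$ that is supported on the specific index set prescribed by $\Gamma$ occurs when $\Gamma' = \Gamma$; evaluating against a suitable dual basis vector then separates the $\Gamma$'s. A linear relation $\sum_\Gamma c_\Gamma X_\Gamma = 0$ applied to such test vectors forces each $c_\Gamma = 0$.

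The main obstacle is the surjectivity step, which rests on the FFT for the orthogonal and symplectic groups: I would quote this rather than reprove it (e.g., as cited from \cite{goodmanwallach}). The only genuine bookkeeping is to check that the combinatorial composition law on $\cG_n$, including the appearance of the factor $d^{n(\Gamma,\Gamma')}$ from ``closed loop'' components, matches the contraction pattern in $\omega_\Gamma \cdot \omega_{\Gamma'}$ — each closed middle loop contributes a factor $\trace(\id_{\bC^d}) = d$, which is precisely why the parameter $t$ has been specialized to $d$. The injectivity step is essentially a finite combinatorial exercise once $d \ge n$ gives enough ``room'' among basis vectors to separate matchings.
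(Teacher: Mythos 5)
The paper does not actually prove this theorem; it simply cites \cite[Proposition~10.1.3, Corollary~10.1.4]{goodmanwallach}. Your surjectivity argument — reduce to $((\bC^d)^{\otimes 2n})^{\bO(d)}$ via the $\bomega$-isomorphism $\bC^d \cong (\bC^d)^*$ and invoke the first fundamental theorem — is correct and is essentially the argument carried out in Goodman--Wallach. Your observation about the closed-loop components producing the factor $t = d$ is also the right bookkeeping. This part is fine.

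Your injectivity argument, however, has a concrete gap. You propose to choose ``$2n$ pairwise distinct basis vectors $e_{i_1},\ldots,e_{i_{2n}}$ of $\bC^d$,'' asserting this is ``possible because $d \ge n$.'' It is not: choosing $2n$ pairwise distinct basis vectors of $\bC^d$ requires $d \ge 2n$, and your parenthetical (``one needs $2n$ distinct dual pairs, which is why the precise counting matches'') would demand $d \ge 4n$ and in any case does not match the theorem's bound $d \ge n$. The theorem genuinely claims bijectivity for $d \ge n$, and there are non-trivial cases with $n \le d < 2n$ (e.g. $n = d = 2$, where $\dim \cB_2(2) = 3 = \dim \End_{\bO(2)}((\bC^2)^{\otimes 2})$); your construction simply does not produce test vectors in that range. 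The missing ingredient is essentially the \emph{second} fundamental theorem of invariant theory for $\bO(d)$ (resp.\ $\Sp(d)$): the FFT gives spanning of the invariants by the $(2n-1)!!$ contraction tensors $\omega_\Gamma$, but to know they are linearly independent for $d \ge n$ one needs to control the relations among them, which is what the SFT does. A correct elementary substitute is possible but more delicate than what you sketch: for instance, passing to an orthonormal basis $f_1,\ldots,f_d$ with $\bomega(f_i,f_j)=\delta_{ij}$ and feeding in $f_{i_1}\otimes\cdots\otimes f_{i_n}$ with distinct $i_k$ (this needs only $d \ge n$) kills every $\Gamma$ having a top horizontal edge and hence separates the permutation diagrams; but one is then left to handle the diagrams with horizontal edges by a separate inductive or filtration argument on the number of horizontal edges, which your sketch does not supply. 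As written, your argument establishes injectivity only for $d \ge 2n$, a strictly weaker statement.
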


\article
We now wish to apply the theory of the Brauer algebra in the infinite case, to obtain a diagrammatic description of $\Rep(\bO)$.  As with the walled Brauer algebra, there is no natural way to give $T_n$ the structure of a module over a Brauer algebra since $\Sym^2(\bV)$ does not contain an invariant.  Our solution, again, is to simply disallow horizontal edges in the top row.  As before, we find it more convenient to work with a diagram category than to attempt to form a single algebra.

\article \label{art:def:db}
The {\bf downwards Brauer category}, denoted $\db$, is the following category:
\begin{itemize}
\item The objects are finite sets.
\item A morphism $L \to L'$ is a pair $(\Gamma, f)$, where $\Gamma$ is a matching on $L$ and $f$ is a bijection $L \setminus V(\Gamma) \to L'$. Here $V(\Gamma)$ is the set of vertices adjacent to edges of $\Gamma$.
\item Composition is defined exactly as in $\dwb$, see \pref{dwb:def}.
\end{itemize}
If $L \to L'$ is a morphism in $\db$ then  $\#L' \le \#L$ (hence ``downwards''), with equality if and only if the morphism is an isomorphism.  The automorphism group of $L$ in $\db$ is the full symmetric group on $L$.  Disjoint union endows $\db$ with a symmetric monoidal functor $\amalg$.  We let $\otimes=\otimes_{\#}$ be the resulting convolution tensor product on $\Mod_{\db}$, as defined in \pref{diag:conv}.  We also have the {\bf upwards Brauer category}, denoted $\ub$, and defined analogously to $\uwb$ (see \pref{uwb:def}).  There is a natural equivalence of tensor categories $\Mod_{\ub}^{\fin}=(\Mod_{\db}^{\fin})^{\op}$.

\article
\label{o:ker}
Given an object $L$ of $\db$, put $\cK_L=\bV^{\otimes L}$.  Given a morphism $L \to L'$ in $\db$ we obtain a morphism $\cK_L \to \cK_{L'}$ using the pairing $\bomega$ (similar to \pref{gl:ker}).  We have thus defined an object $\cK$ of $\Rep(\bO)^{\db}$.

\begin{theorem}
\label{o:db}
The functors of \pref{ker-func} associated to the kernel $\cK$ provide contravariant mutually quasi-inverse equivalences of tensor categories between $\Rep(\bO)$ and $\Mod_{\db}^{\fin}$.
\end{theorem}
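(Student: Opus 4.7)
The plan is to mirror the proof of Theorem~\pref{gl:dwb} and apply the abstract criterion Theorem~\pref{thm:ker-equiv} to the kernel $\cK$. I first need to verify that $\db$ satisfies the standing hypotheses of \pref{cat-cond}: it is Hom-finite (matchings on a finite set are finite in number, as are bijections), weakly directed (a self-map $L\to L$ forces $V(\Gamma)=\emptyset$, hence is a permutation), and outwards finite (a morphism $L\to L'$ forces $\#L'\le\#L$, so only finitely many isomorphism classes $L'$ admit a map from $L$). This justifies use of the framework of \S\ref{ss:repsofcategory}.

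Next, I verify the two hypotheses of Theorem~\pref{thm:ker-equiv}. For an object $L$ with $n=\#L$, the subspace $\cK_{[L]}$ consists of tensors in $\bV^{\otimes L}$ killed by every non-isomorphism $L\to L'$. Since the non-isomorphisms in $\db$ are generated (under composition with bijections) by morphisms built from a single non-trivial matching edge, and such a morphism acts as a partial contraction via $\bomega$, the kernel of all such maps is precisely the traceless tensor space $T_{[n]}$ of \pref{o:weylfin}. Hypothesis (a) then follows from the decomposition \eqref{o:decomp}: for an irreducible representation $\bM_\lambda$ of $S_n=\Aut_\db(L)$,
\[
\Hom_{S_n}(\bM_\lambda,\cK_{[L]}) = \Hom_{S_n}(\bM_\lambda,T_{[n]}) = V_\lambda,
\]
which is simple in $\Rep(\bO)$. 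Hypothesis (b) is exactly Proposition~\pref{o:oneway}: for each simple $V_\lambda$ of $\Rep(\bO)$, the Hom space $\Hom_\bO(V_\lambda,\cK_L)=\Hom_\bO(V_\lambda,T_n)$ is non-zero only when $n=|\lambda|$ (so $L$ is unique up to isomorphism), in which case it equals $\bM_\lambda$, an irreducible representation of $\Aut_\db(L)=S_n$. By Theorem~\pref{thm:ker-equiv}, $\Phi$ and $\Psi$ are mutually quasi-inverse contravariant equivalences between $\Mod_\db^\fin$ and $\Rep(\bO)^\fin$; by Proposition~\pref{o:finlen}, the latter equals $\Rep(\bO)$.

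Finally, to upgrade this to an equivalence of tensor categories, I check that $\cK$ is a tensor kernel in the sense of \pref{ker-tens}. This is immediate: for finite sets $L,L'$ there is a canonical isomorphism
\[
\cK_{L\amalg L'} = \bV^{\otimes(L\amalg L')} \cong \bV^{\otimes L}\otimes \bV^{\otimes L'} = \cK_L\otimes \cK_{L'}
\]
compatible with the associativity and commutativity constraints, which amounts to the statement that $\amalg^*\cK\cong p_1^*\cK\boxtimes p_2^*\cK$. Functoriality in $\db\times\db\to\db$ is clear because the partial contractions via $\bomega$ used on each factor reassemble into the corresponding partial contractions on the disjoint union. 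Proposition~\pref{prop:tensorkernel} then gives that $\Phi$ is a tensor functor, completing the proof.

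The substantive step is the identification $\cK_{[L]}=T_{[n]}$: one must check that it suffices to test against matchings with a single edge (giving Weyl's definition of $T_{[n]}$), rather than against arbitrary non-isomorphisms in $\db$. This follows because any non-isomorphism factors through a morphism whose matching is a single edge, so the kernel of such a single-edge map (a two-index contraction $t_{i,j}$) is contained in $\cK_{[L]}$, giving one inclusion; the other is trivial. Beyond this, the argument is completely parallel to the general linear case.
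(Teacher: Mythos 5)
Your proof is correct and follows exactly the same route as the paper: apply Theorem~\pref{thm:ker-equiv}, verify hypothesis (a) via the identification $\cK_{[L]}\cong T_{[n]}$ and the decomposition \eqref{o:decomp}, verify hypothesis (b) via Proposition~\pref{o:oneway}, and observe that $\cK$ is a tensor kernel so that Proposition~\pref{prop:tensorkernel} upgrades $\Phi$ to a tensor functor. Your extra care in checking that $\db$ satisfies the standing hypotheses of \pref{cat-cond} and in justifying the factorization argument behind $\cK_{[L]}=T_{[n]}$ is welcome but not a departure from the paper's argument.
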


\begin{proof}
The proof is essentially the same as that of Theorem~\pref{gl:dwb}, but we provide details.  To show that $\Phi$ and $\Psi$ are mutually quasi-inverse, we apply Theorem~\pref{thm:ker-equiv}.  The first hypothesis follows from the fact that $V_{\lambda}$ is simple; note that $\cK_{[L]}$ is isomorphic to $T_{[n]}$ if $n=\# L$.  The second hypothesis follows from Proposition~\pref{o:oneway}.  Finally, it is clear that $\amalg^* \cK=\cK \boxtimes \cK$, and so Proposition~\pref{ker-tens} implies that $\Phi$ is a tensor functor.
\end{proof}

\begin{remark}
This result is closely related to \cite[Corollary~5.2]{koszulcategory}. Specifically, we can think of $\db$ as a locally finite quiver with relations, and the path algebra of this quiver is isomorphic to the algebra $\cA_{\fso_\infty}$ in \cite[\S 5]{koszulcategory}.  Note, however, that \cite[Corollary~5.2]{koszulcategory} does not describe the tensor product from this point of view.
\end{remark}

\begin{Corollary}
\label{o:ub}
The tensor categories $\Rep(\bO)$ and $\Mod_{\ub}^{\fin}$ are equivalent.
\end{Corollary}

\begin{proof}
This follows from the identification $(\Mod_{\db}^{\fin})^{\op}=\Mod_{\ub}^{\fin}$.  A direct equivalence $\Mod_{\ub}^{\fin} \to \Rep(\bO)$ is given by $M \mapsto M \otimes^{\db} \cK$ (see \pref{diag:tens} for notation).
\end{proof}

\begin{Corollary}
The tensor categories $\wh{\Rep}(\bO)$ and $\Mod_{\db}^{\fin}$ are equivalent.
\end{Corollary}

\begin{proof}
This follows from the equivalence $\wh{\Rep}(\bO)=\Rep(\bO)^{\op}$ provided by continuous dual.
\end{proof}

\article[Classification of injectives]
\label{art:o-injectives}
As an application of Theorem~\pref{o:db}, we use \pref{art:projinj} to describe the injective objects of $\Rep(\bO)$, recovering \cite[Corollary~4.6]{koszulcategory}.

\begin{proposition}
The Schur functor $\bS_{\lambda}(\bV)$ is the injective envelope of the simple module $V_{\lambda}$.  The representations $\bS_{\lambda}(\bV)$ constitute a complete irredundant set of indecomposable injectives.
\end{proposition}

\begin{proof}
The proof is analogous to the proof of Proposition~\pref{art:gl-injectives}, so we omit it.
\end{proof}

\article
We now explain how the results of this section can be extended to the symplectic group.  We introduce a modification $\cB'_n$ of the Brauer algebra.  The algebra is spanned by elements $X_{\Gamma}$ where $\Gamma$ is a graph as before, but now the horizontal edges are directed (the vertical edges are undirected).  The orientation of a horizontal edge can be reversed at the cost of a sign, that is, the relation $X_{\Gamma'}=-X_{\Gamma}$ holds if $\Gamma'$ is obtained from $\Gamma$ by flipping the orientation of a single horizontal edge.  To multiply $X_{\Gamma}$ and $X_{\Gamma'}$ in $\cB'_n$, first flip horizontal edges so that the orientations in $\Gamma$ and $\Gamma'$ are compatible, and then proceed as usual.  The $\cB'_n(-d)$-module structure on $T_n^d$ is as before, but with one modification:  the orientation of a horizontal edge indicates the order of the tensor factors for contractions or co-contractions (so we have to specialize our parameter to $-d$ since a coherently oriented cycle in the middle of a composition represents evaluating the symplectic form on its negative).  The obvious analogue of Theorem~\pref{thm:brauer} holds.

\article \label{art:dsbdefn}
We must accordingly modify the downwards Brauer category.  The {\bf downwards signed Brauer category}, denoted $\dsb$, is the following category:
\begin{itemize}
\item Objects are finite sets.
\item A morphism $L \to L'$ is $(\Gamma, f)$ as in $\db$ (see \pref{art:def:db}), but now $\Gamma$ is a directed matching.
\item Composition is defined exactly as in $\dwb$, see \pref{dwb:def}.
\end{itemize}
We let $\Mod_{\dsb}^-$ be the full subcategory of $\Mod_{\dsb}$ on those functors $M \colon \dsb \to \Vec$ which satisfy the following hypothesis:  if $\Gamma'$ is gotten from $\Gamma$ by flipping the orientation of $n$ edges, then $M(\Gamma', f)=(-1)^nM(\Gamma, f)$.  This category is stable under the convolution tensor product $\otimes=\otimes_{\#}$.  One similarly has an {\bf upwards signed Brauer category} $\usb$ and the category $\Mod_{\usb}^-$.

\article
\label{sp:ker}
Given an object $L$ of $\dsb$, put $\cK_L=\bV^{\otimes L}$.  Given a morphism $L \to L'$ in $\dsb$, we obtain a morphism $\cK_L \to \cK_{L'}$ using the pairing $\bomega$.  This is similar to \pref{gl:ker}, but now the directions of the edges of $\Gamma$ indicate the orders in which to pair vectors. We have thus defined an object $\cK$ of $\Rep(\Sp)^{\dsb}$.  The following theorem is proved just like Theorem~\pref{o:db}.

\begin{theorem}
The functors of \pref{ker-func} associated to the kernel $\cK$ provide contravariant mutually quasi-inverse equivalences of tensor categories between $\Rep(\Sp)$ and $\Mod_{\dsb}^{-,\fin}$.
\end{theorem}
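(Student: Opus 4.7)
The plan is to mimic, line for line, the proof of Theorem~\pref{o:db}, making the adjustments forced by the sign twist. First I would check that the assignment $L \mapsto \bV^{\otimes L}$ together with the contraction-and-bijection prescription really does define a functor into the sign-twisted category $\Mod_{\dsb}^-$, i.e., that $\cK$ is an object of $\Rep(\Sp)^{\dsb}$ satisfying the sign condition. This is exactly where the skew-symmetry of the symplectic form $\bomega$ is used: reversing the orientation of one edge of $\Gamma$ swaps the order in which the two corresponding tensor factors are plugged into $\bomega$, producing a factor of $-1$. So an $n$-fold flip scales the induced map $\cK_L \to \cK_{L'}$ by $(-1)^n$, as required.

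Once $\cK$ is in place, I would apply Theorem~\pref{thm:ker-equiv}, noting that the sign condition only cuts out a full subcategory stable under the constructions of \pref{ker-func}; the proof of \pref{thm:ker-equiv} then goes through verbatim inside $\Mod_{\dsb}^{-, \fin}$. The two hypotheses are verified as follows. Since $\Aut_{\dsb}(L) = S_n$ for $n = \# L$ and $\cK_{[L]}$ is naturally identified with the space $T_{[n]}$ of traceless tensors (flipping edges acts by a sign, but this is already absorbed by the sign condition), the decomposition of $T_{[n]}$ into $S_n \times \Sp(\infty)$-isotypic pieces established in the symplectic version of \pref{art:glweylinf} shows that $\Hom_{S_n}(V, \cK_{[L]})$ is either zero or a simple $V_\lambda$, giving hypothesis~(a). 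The symplectic analogue of Proposition~\pref{o:oneway} (obtained from Proposition~\pref{prop:Oweight} whose proof works without change for the symplectic form, since it rests only on the magnitude of weights) gives hypothesis~(b): $\Hom_{\Sp}(V_\lambda, T_n)$ vanishes except when $n = \vert\lambda\vert$, where it equals $\bM_\lambda$.

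It then remains to verify that $\Phi$ is a tensor functor. For this I would observe that $\cK$ is a tensor kernel in the sense of \pref{ker-tens}: the identity $\cK_{L \amalg L'} = \bV^{\otimes L} \otimes \bV^{\otimes L'}$ is tautological, and the compatibility with the associativity and commutativity structures is checked by the same straightforward diagram-chase as in the orthogonal case. Proposition~\pref{prop:tensorkernel} then upgrades the abelian equivalence to an equivalence of tensor categories.

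The one place that is genuinely different from Theorem~\pref{o:db}, and where I would spend the most care, is the sign bookkeeping: one has to track simultaneously (i) the sign that $\bomega$ contributes when an edge is flipped, (ii) the sign condition that cuts out $\Mod_{\dsb}^{-}$ inside $\Mod_{\dsb}$, and (iii) the fact that $\cK_{[L]} \cong T_{[n]}$ is compatible with the $S_n$-action in the presence of the sign twist. The main obstacle is really just confirming that these three signs conspire so that the equivalence is an equivalence with the sign-twisted subcategory (and not, for instance, with the full $\Mod_{\dsb}^{\fin}$, which would be wrong since the latter would also see the orthogonal simples via the trivial sign convention). Once that is pinned down, the rest of the proof is formal.
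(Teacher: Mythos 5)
Your proposal is correct and follows exactly the same route the paper takes: the paper's proof is simply ``proved just like Theorem~\pref{o:db},'' i.e.\ apply Theorem~\pref{thm:ker-equiv} using $\cK_{[L]} \cong T_{[n]}$ and the symplectic analogue of Proposition~\pref{o:oneway} for hypotheses (a) and (b), then invoke the tensor-kernel property via Proposition~\pref{prop:tensorkernel}. Your careful accounting of the three interacting signs --- the skew-symmetry of $\bomega$, the definition of $\Mod_{\dsb}^{-}$, and the compatibility of $\cK_{[L]}$ with the $S_n$-action --- fills in precisely what the paper leaves implicit in that one-line proof.
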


\subsection{\texorpdfstring{Modules over $\Sym(\Sym^2)$ and $\Sym(\lw^2)$}{Modules over Sym(Sym2) and Sym(wedge2)}}

\article
\label{o:tca}
For a finite set $L$, let $\cG_L$ denote the set of perfect matchings on $L$.  Then $\cG$ is a tc monoid.  The category $\Lambda$ associated to $\cG$ in \pref{sg:def} is exactly $\ub$.  The tca $A$ associated to $\cG$ in \pref{sg:def} is $\Sym(\Sym^2(\bE))$, where, for clarity, $\bE$ is a copy of $\bC^{\infty}$.

\begin{theorem}
We have an equivalence of tensor categories between $\Rep(\bO)$ and $\Mod^{\fin}_A$, where the latter is endowed with the tensor product $\ast^A$.
\end{theorem}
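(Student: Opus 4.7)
The plan is to combine two pieces already established in the excerpt: the abstract semigroup-tca formalism of Section \pref{ss:sg-tca} (which converts an $A$-module into a representation of a diagram category in a monoidal way) together with the concrete diagrammatic description of $\Rep(\bO)$ obtained in Corollary \pref{o:ub}. Since the proof parallels that of Theorem \pref{gl:amod} almost verbatim, I expect no serious obstacle; the content is entirely in matching up the two sides.

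First I would invoke Proposition \pref{sg:equiv}. By the definitions recorded in \pref{o:tca}, the tc monoid $\cG$ of perfect matchings has $\Lambda = \ub$ as its associated diagram category and $A = \Sym(\Sym^2(\bE))$ as its associated tca. Thus Proposition \pref{sg:equiv} yields an equivalence of abelian categories
\[
\Mod_A \;\cong\; \Mod_{\ub},
\]
and the same proposition identifies the convolution tensor product $\otimes = \otimes_\#$ on $\Mod_{\ub}$ with the tensor product $\ast^A$ on $\Mod_A$. Passing to finite length objects on both sides gives an equivalence of tensor categories $\Mod_A^{\fin} \cong (\Mod_{\ub}^{\fin}, \otimes)$.

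Second I would invoke Corollary \pref{o:ub}, which produces a covariant equivalence of tensor categories
\[
\Mod_{\ub}^{\fin} \;\xrightarrow{\;\sim\;}\; \Rep(\bO), \qquad M \longmapsto M \otimes^{\db} \cK,
\]
where $\cK$ is the kernel of \pref{o:ker}. Composing the two equivalences produces the desired equivalence of tensor categories $\Mod_A^{\fin} \cong \Rep(\bO)$. One should note that Corollary \pref{o:ub} is itself derived from Theorem \pref{o:db} via Corollary \pref{ker-tens3}, so the tensor-compatibility of the final equivalence is automatic.

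The only thing that merits a sentence of verification is that Proposition \pref{sg:equiv}, which is stated in the single-variable setting in \pref{sg:def}, applies here (the multivariate remark after its proof covers it, and the $\cG$ of perfect matchings is clearly a one-variable tc monoid anyway). I would end the proof by remarking on the opposite/dual version: the continuous-dual equivalence $\wh{\Rep}(\bO) \cong \Rep(\bO)^{\op}$ of \pref{art:Oinfdefn}(e) combined with the tca-coalgebra duality of \pref{tc-coalg} gives a companion equivalence $\wh{\Rep}(\bO) \cong \CoMod_{A^\vee}^{\fin}$, which fits the same pattern as the $\GL$ remark after Theorem \pref{gl:amod}.
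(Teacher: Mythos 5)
Your proposal is correct and matches the paper's proof: both invoke Proposition~\pref{sg:equiv} to identify $(\Mod_A, \ast^A)$ with $(\Mod_{\ub}, \otimes)$ and then compose with the equivalence of Corollary~\pref{o:ub}. The extra remarks (multivariate applicability, the dual $\CoMod$ statement) are consistent with the surrounding text but not part of the proof itself.
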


\begin{proof}
From Proposition~\pref{sg:equiv}, we have an equivalence of categories $\Mod_{\ub}=\Mod_A$, under which $\ast$ corresponds to $\ast^A$.  The result follows from Corollary~\pref{o:ub}.
\end{proof}

\article
\label{sp:tca}
There is also a signed version of the above result.  Let $\cG_L$ be the set of directed perfect matchings on $L$, so that $\cG$ is a tc monoid.  The category associated to $\cG$ is exactly $\usb$.  Let $A'$ be the tca associated to $\cG$, but where one imposes the relations that flipping the orientation of an edge changes a sign.  Then $A'=\Sym(\lw^2(\bE))$.  Combining Theorem~\pref{sp:ker} and an appropriate signed version of Proposition~\pref{sg:equiv}, we obtain the following theorem:

\begin{theorem}
We have an equivalence of tensor categories between $\Rep(\Sp)$ and $\Mod_{A'}^{\fin}$, where the latter is endowed with the tensor product $\ast^{A'}$.
\end{theorem}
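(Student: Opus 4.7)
The plan is to mirror the orthogonal argument of Theorem~\pref{o:tca} step by step, with the single new ingredient being a signed variant of Proposition~\pref{sg:equiv}. First I would upgrade Theorem~\pref{sp:ker} from a contravariant equivalence to a covariant one: exactly as in Corollary~\pref{o:ub}, the identification $\Mod_{\usb}^{-,\fin} = (\Mod_{\dsb}^{-,\fin})^{\op}$ (which comes from $\usb = \dsb^{\op}$ together with the fact that the sign condition is self-dual under reversing arrows) yields a covariant tensor equivalence
\[
\Rep(\Sp) \simeq \Mod_{\usb}^{-,\fin}, \qquad M \mapsto M \otimes^{\dsb} \cK,
\]
where $\otimes = \otimes_{\#}$ corresponds to the convolution tensor product on $\usb$.

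Next I would establish the signed analogue of Proposition~\pref{sg:equiv}. Let $\cG_L$ be the tc monoid of directed perfect matchings on $L$, so that the associated category (in the sense of \pref{sg:def}) is $\usb$ and the associated tca is $\bC[\cG]$. Passing to the quotient by the relation that reversing the orientation of an edge introduces a sign turns $\bC[\cG]$ into $A' = \Sym(\lw^2(\bE))$, and turns $\Mod_{\usb}$ into the subcategory $\Mod_{\usb}^-$ cut out by the same sign condition. The proof of Proposition~\pref{sg:equiv} goes through verbatim once one checks that all of the functors in the commutative diagram
\[
\xymatrix{
\Mod_{\Lambda} \ar@{=}[d] \ar[r] & \Mod_{\Lambda'} \ar@{=}[d] \ar[r] & \Mod_{\Lambda''} \ar@{=}[d] \\
\Mod_A \ar[r] & \Mod_{A'} \ar[r] & \Mod_{A \boxtimes A} }
\]
(with $\Lambda = \usb$ and the analogous 1- and 2-variable signed versions elsewhere) respect the sign conditions. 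Concretely, the signs introduced by reorienting an edge are the same regardless of whether the edge is produced by the monoidal operation on $\cG$ or by external tensor product, so all the functors descend to the signed subcategories and the identification $a_* = \amalg_*$ persists. This gives $\Mod_{\usb}^- = \Mod_{A'}$ as tensor categories with $\otimes$ on the left corresponding to $\ast^{A'}$ on the right.

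Combining the two equivalences produces the asserted tensor equivalence $\Rep(\Sp) \simeq \Mod_{A'}^{\fin}$. The only genuinely new piece of work is the sign bookkeeping in the second step; once one is convinced that the sign relation is compatible with the monoidal structure on $\cG$ and with the change-of-variables $a^*$ used to define $\ast^{A'}$ in \pref{tca:tensor}, the rest is formal. I expect this sign-compatibility check to be the main (though quite minor) obstacle, since it is the only point where the symplectic argument genuinely departs from the orthogonal one.
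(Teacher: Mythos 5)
Your proposal takes essentially the same route as the paper: the paper's proof of this theorem is just the sentence ``Combining Theorem~\pref{sp:ker} and an appropriate signed version of Proposition~\pref{sg:equiv}, we obtain the following theorem,'' which is exactly the outline you spell out (pass from the contravariant equivalence with $\Mod_{\dsb}^{-,\fin}$ to the covariant one with $\Mod_{\usb}^{-,\fin}$ as in Corollary~\pref{o:ub}, then apply the signed variant of Proposition~\pref{sg:equiv}). You have simply made explicit the sign-compatibility check and the dualization step that the paper leaves implicit.
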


\article
We now explain how to construct the equivalence between $\Mod_A^{\fin}$ and $\Rep(\bO)$ directly.  Let $U=\Sym^2(\bE)$, so that $A=\Sym(U)$, and put $B=\Sym(\bE \otimes \bV)$.  We regard $A$ and $B$ as both algebras and coalgebras.  We have a natural linear map $B \to U$ given by
\begin{displaymath}
B \to \Sym^2(\bE \otimes \bV) \to \Sym^2(\bE) \otimes \Sym^2(\bV) \to \Sym^2(\bE)=U,
\end{displaymath}
where the final map makes use of the form $\bomega$.  This induces a coalgebra homomorphism $B \to A$ which is $\GL(\bE)$ equivariant, and gives $B$ the structure of an $A$-comodule.  We thus obtain functors
\begin{displaymath}
\Phi \colon \Mod_A^{\fin} \to \Rep(\bO), \qquad M \mapsto \Hom_A(M^{\vee}, B)
\end{displaymath}
and
\begin{displaymath}
\Psi \colon \Rep(\bO) \to \Mod_A^{\fin}, \qquad V \mapsto \Hom_{\bO}(V, B)^{\vee}.
\end{displaymath}
(It is not immediately clear that $\Psi$ takes values in finite length modules, but this is indeed the case.)  A similar discussion holds in the symplectic case.

\article[Orthogonal--symplectic duality]
Perhaps the most important application of Theorems~\pref{o:tca} and~\pref{sp:tca} is orthogonal--symplectic duality:

\begin{theorem} \label{thm:ospequiv}
There is a natural asymmetric equivalence of tensor categories $\Rep(\bO) \cong \Rep(\Sp)$.  This equivalence takes the simple object $V_{\lambda}$ to the simple object $V_{\lambda^{\dag}}$.
\end{theorem}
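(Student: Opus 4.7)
The plan is to combine the two tca descriptions just established (Theorems \pref{o:tca} and \pref{sp:tca}) with the transpose auto-equivalence of $\cV$ from \pref{gl:transp}. Precisely, we have equivalences of tensor categories
\[
\Rep(\bO) \;\cong\; \Mod_{A}^{\fin}, \qquad \Rep(\Sp) \;\cong\; \Mod_{A'}^{\fin},
\]
where $A = \Sym(\Sym^2(\bE))$ and $A' = \Sym(\lw^2(\bE))$, both equipped with the tensor product $\ast^A$, resp.\ $\ast^{A'}$. So it suffices to produce an asymmetric tensor equivalence $\Mod_A^{\fin} \to \Mod_{A'}^{\fin}$ that carries the appropriate simples to each other.

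The key observation is that the transpose functor ${}^\dag \colon \cV \to \cV$ is an asymmetric equivalence of tensor categories that sends $\Sym^2(\bE)$ to $\lw^2(\bE)$ (and more generally $\bS_\lambda$ to $\bS_{\lambda^\dag}$). Since it is a tensor functor, it carries the tca $A = \Sym(\Sym^2(\bE))$ to the tca $A' = \Sym(\lw^2(\bE))$, and therefore lifts to a functor ${}^\dag \colon \Mod_A \to \Mod_{A'}$ which is again an asymmetric tensor equivalence. The mild subtlety is that our description of $\Rep(\bO)$ uses the ``convolution'' tensor product $\ast^A$ rather than $\otimes_A$; but $\ast^A$ was defined purely in terms of the monoidal structure on $\cV$ via the diagonal $\bE \hookrightarrow \bE \oplus \bE'$, so the transpose intertwines $\ast^A$ with $\ast^{A'}$ as well. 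This is the place I expect to need to be most careful: one has to check that the construction of $\ast^A$ via $a^\ast$ and $a_\ast$ in \pref{tca:tensor} respects the transpose, which ultimately follows because transpose is a tensor functor on $\cV^{\otimes 2}$ compatible with restriction along direct sum decompositions of $\bE$.

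Finally, to identify the images of simples, I would trace $V_\lambda \in \Rep(\bO)$ through the equivalence to $\Mod_A^{\fin}$. Using the direct construction $V \mapsto \Hom_{\bO}(V, B)^\vee$ with $B = \Sym(\bE \otimes \bV)$, the Cauchy identity gives $B = \bigoplus_\mu \bS_\mu(\bE) \otimes \bS_\mu(\bV)$, and since $\bS_\mu(\bV)$ is the injective envelope of $V_\mu$ by \pref{art:o-injectives}, $\Hom_{\bO}(V_\lambda, B) = \bS_\lambda(\bE)$; thus $V_\lambda$ corresponds to the simple $A$-module $\bS_\lambda(\bE)$ (with trivial $A$-action). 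The analogous computation in the symplectic setting shows $V_\mu \in \Rep(\Sp)$ corresponds to $\bS_\mu(\bE) \in \Mod_{A'}^{\fin}$. Since transpose sends $\bS_\lambda(\bE)$ to $\bS_{\lambda^\dag}(\bE)$, the composite equivalence $\Rep(\bO) \to \Rep(\Sp)$ sends $V_\lambda$ to $V_{\lambda^\dag}$, as claimed.
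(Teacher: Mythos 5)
Your proposal is correct and follows essentially the same route as the paper: use the transpose auto-equivalence of $\cV$ from \pref{gl:transp}, observe that it exchanges $\Sym(\Sym^2(\bE))$ and $\Sym(\lw^2(\bE))$ and hence induces an equivalence of module categories compatible with $\ast^A$ and $\ast^{A'}$ (since these are defined purely from the tensor structure of $\cV$), and conclude via Theorems \pref{o:tca} and \pref{sp:tca}. The paper leaves the identification of images of simples implicit, whereas you carry it out carefully; that extra step is a welcome addition and is correct.
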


\begin{proof}
As discussed in \pref{gl:transp}, transpose is an asymmetric autoequivalence of the tensor category $\cV$.  Since it takes $\Sym^2(\bV)$ to $\lw^2(\bV)$, it takes $A$ to $A'$, and thus induces an equivalence of categories $\Mod_A \cong \Mod_{A'}$.  As the tensor products $\ast^A$ and $\ast^{A'}$ are defined using only the tensor structure of $\cV$, they correspond under this equivalence.
\end{proof}

\article[Computation of $\Ext$ groups] \label{art:o-lw}
We now give a second application of Theorem~\pref{o:tca}:  the computation of the $\Ext$ groups between simple objects of $\Rep(\bO) = \Rep(\Sp)$.  Define $Q_1$ to be the set of partitions $\lambda$ such that for each box $b$ along the main diagonal, the number of boxes to the right of $b$ in the same row is 1 more than the number of boxes below $b$ in the same column. We define $Q_{-1}$ in the same way, except that the roles of rows and columns are swapped. The relevance of these definitions comes from the following two decompositions \cite[Example I.8.6]{macdonald}:
\begin{equation} \label{eqn:q1q-1}
\begin{split}
\lw^i(\Sym^2(\bE)) &= \bigoplus_{\nu \in Q_1,\ \vert \nu \vert=2i} \bS_{\nu}(\bE),\\
\lw^i(\lw^2(\bE)) &= \bigoplus_{\nu \in Q_{-1},\ \vert \nu \vert=2i} \bS_{\nu}(\bE).
\end{split}
\end{equation}
We have the following result:

\begin{proposition}
In $\Rep(\bO)$ we have
\begin{displaymath}
\dim{\Ext^i_{\Rep(\bO)}(V_{\mu}, V_{\lambda})}=\sum_{\nu \in Q_1,\ \vert \nu \vert=2i} c^{\lambda}_{\mu,\nu}.
\end{displaymath}
Equivalently, in $\Rep(\Sp)$ we have
\begin{displaymath}
\dim{\Ext^i_{\Rep(\Sp)}(V_{\mu}, V_{\lambda})}=\sum_{\nu \in Q_{-1},\ \vert \nu \vert=2i} c^{\lambda}_{\mu,\nu}.
\end{displaymath}
In particular, these $\Ext$ groups vanish unless $i=(\vert \lambda \vert-\vert \mu \vert)/2$.
\end{proposition}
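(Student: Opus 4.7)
The plan is to reduce the $\Ext$ computation to a calculation in the tca $A=\Sym(\Sym^2(\bE))$ via Theorem~\pref{o:tca}, apply the Koszul complex formula of Proposition~\pref{tca:ext}, and then interpret the resulting dimension via the Littlewood--Richardson rule using the decomposition \eqref{eqn:q1q-1}.

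First, under the equivalence $\Rep(\bO)\simeq \Mod_A^{\fin}$ of Theorem~\pref{o:tca}, I would identify the simple object $V_\lambda$ with the simple $A$-module $\bS_\lambda(\bE)$ (equipped with the trivial $A$-action), since by Proposition~\pref{tca:ext} these are the only simples, and they are indexed by partitions in the same way as the $V_\lambda$. One subtle point to check is that this equivalence computes $\Ext$ correctly: the equivalence of Theorem~\pref{o:tca} is one of abelian categories, so $\Ext$ groups are preserved, and we obtain
\[
\Ext^i_{\Rep(\bO)}(V_\mu,V_\lambda)=\Ext^i_A(\bS_\mu(\bE),\bS_\lambda(\bE)).
\]

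Next, I would apply Proposition~\pref{tca:ext} with $U=\Sym^2(\bE)$ to rewrite this as
\[
\Hom_{\cV}\bigl(\bS_\mu(\bE)\otimes \lw^i(\Sym^2(\bE)),\;\bS_\lambda(\bE)\bigr).
\]
Substituting the decomposition $\lw^i(\Sym^2(\bE))=\bigoplus_{\nu\in Q_1,\,|\nu|=2i}\bS_\nu(\bE)$ from \eqref{eqn:q1q-1} turns this into
\[
\bigoplus_{\nu\in Q_1,\,|\nu|=2i}\Hom_{\cV}(\bS_\mu(\bE)\otimes\bS_\nu(\bE),\bS_\lambda(\bE)).
\]
By the definition of Littlewood--Richardson coefficients \pref{art:LR}, each such $\Hom$ space has dimension $c^{\lambda}_{\mu,\nu}$, and summing gives the claimed formula. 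The vanishing statement is immediate: $c^\lambda_{\mu,\nu}=0$ unless $|\lambda|=|\mu|+|\nu|=|\mu|+2i$.

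For the symplectic case, I see two equivalent routes. The direct route repeats the same argument using the tca $A'=\Sym(\lw^2(\bE))$ of Theorem~\pref{sp:tca} and the second line of \eqref{eqn:q1q-1}. Alternatively, one can invoke orthogonal--symplectic duality (Theorem~\pref{thm:ospequiv}): since it is an equivalence of tensor categories, it preserves $\Ext$ and sends $V_\lambda$ to $V_{\lambda^\dag}$, so the $\Sp$ formula follows from the $\bO$ formula together with the elementary combinatorial identities $c^{\lambda^\dag}_{\mu^\dag,\nu^\dag}=c^\lambda_{\mu,\nu}$ and $Q_{-1}=\{\nu^\dag:\nu\in Q_1\}$.

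I do not anticipate a serious obstacle: the only step that requires any care is verifying that Proposition~\pref{tca:ext} applies verbatim here, since strictly speaking its simple modules are indexed by $\bS_\lambda$'s inside $\cV$ while our equivalence matches them up to $V_\lambda$'s. This is a matter of tracking through the equivalence of Theorem~\pref{o:tca} on simple objects, which is straightforward from the classification in Proposition~\pref{art:o-injectives} (the injective envelope of $V_\lambda$ is $\bS_\lambda(\bV)$, which on the tca side corresponds to $\bS_\lambda(\bE)\otimes A$, whose unique simple quotient is $\bS_\lambda(\bE)$).
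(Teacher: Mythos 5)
Your proof is correct and follows essentially the same route as the paper's: transport to $\Mod_A^{\fin}$ via Theorem~\pref{o:tca}, apply the Koszul-complex computation of Proposition~\pref{tca:ext}, and then expand $\lw^i(\Sym^2\bE)$ using \eqref{eqn:q1q-1} and the Littlewood--Richardson rule, with the symplectic case handled either directly via $A'=\Sym(\lw^2\bE)$ or by orthogonal--symplectic duality. The extra care you take in matching $V_\lambda$ to $\bS_\lambda(\bE)$ is sensible (the paper is terse here), though your justification via ``$\bS_\lambda(\bV)$ corresponds to $\bS_\lambda(\bE)\otimes A$'' conflates a finite-length injective of $\Rep(\bO)$ with an infinite-length projective of $\Mod_A$; the cleanest check is simply to evaluate $\Psi(V_\lambda)=\Hom_{\bO}(V_\lambda,B)^\vee$ directly using Cauchy and Proposition~\pref{o:oneway}, which yields $\bS_\lambda(\bE)$ immediately.
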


\begin{proof}
We work in the orthogonal case.  Under the equivalence of Corollary~\pref{o:tca}, the $\Ext$ group in the statement of the proposition corresponds to
\begin{displaymath}
\Ext^i_A(\bS_{\mu}(\bE), \bS_{\lambda}(\bE)),
\end{displaymath}
where this $\Ext$ is taken in $\Mod_A$.  According to Proposition~\pref{tca:ext}, this coincides with
\begin{displaymath}
\Hom_{\GL(E)}(\bS_{\mu}(\bE) \otimes \lw^i(U), \bS_{\lambda}(E)),
\end{displaymath}
where $U = \Sym^2(\bE)$. Now use \eqref{eqn:q1q-1}.
\end{proof}

\article[Classification of blocks] \label{art:osp-blocks}
We get an easy description of the block structure of $\Rep(\bO) = \Rep(\Sp)$, which recovers \cite[Proposition 6.12]{koszulcategory}:

\begin{corollary}
Two simples $V_{\lambda}$ and $V_{\mu}$ belong to the same block of the category $\Rep(\bO) = \Rep(\Sp)$ if and only if $|\lambda| = |\mu| \pmod 2$.
\end{corollary}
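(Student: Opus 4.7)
The plan is to read off both directions from the Ext formula just proved. Recall that two simples lie in a common block precisely when they can be joined by a chain of simples along which each consecutive pair has some non-zero $\Ext^1$ (in either direction). For the ``only if'' direction, the proposition shows that $\Ext^1(V_\mu,V_\lambda)$ vanishes unless $|\lambda|-|\mu|=2$; hence along any linking chain the partition size changes by exactly $\pm 2$ at each step, forcing $|\mu|\equiv|\lambda|\pmod 2$.

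For the converse I first identify when $\Ext^1$ is non-zero. Among partitions of size $2$, only $(2)$ lies in $Q_1$: for the unique diagonal box, the condition ``right $=$ below $+\,1$'' forces one box to the right and none below. Thus $\Ext^1(V_\mu,V_\lambda)\neq 0$ iff $c^\lambda_{\mu,(2)}\neq 0$, which by Pieri's rule means $\lambda/\mu$ is a horizontal $2$-strip. The symplectic case is identical (via transposition and Theorem~\pref{thm:ospequiv}, since $(1,1)$ is the unique element of $Q_{-1}$ of size $2$, and transposition swaps horizontal and vertical strips). It therefore suffices to show that the equivalence relation on partitions generated by ``$\lambda/\mu$ is a horizontal $2$-strip'' has exactly two classes, indexed by the parity of $|\lambda|$.

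I will prove this by induction on $|\lambda|$, reducing every $V_\lambda$ to $V_\emptyset$ or $V_{(1)}$. For $\lambda$ not a single column one can always remove a horizontal $2$-strip: if the final row of $\lambda$ has length $\ge 2$, remove its last two boxes; otherwise let $r$ be the largest row index with $\lambda_r\ge 2$ and remove $(r,\lambda_r)$ together with the unique box of the bottom row, which lie in columns $\lambda_r$ and $1$ respectively (so in different columns), and one checks the remaining shape is still a partition. In either case $V_\lambda$ is linked to a simple of strictly smaller size.

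The single obstruction to the induction is $\lambda=(1^k)$ with $k\ge 2$, where no horizontal $2$-strip can be removed from $\lambda$ itself. For this case I take a detour: link $V_{(1^k)}$ upward to $V_{(3,1^{k-1})}$ via the strip $\{(1,2),(1,3)\}$, then downward to $V_{(2,1^{k-2})}$ via the strip $\{(1,3),(k,1)\}$ (the two boxes lie in columns $3$ and $1$, and removing both leaves a valid partition), and finally apply the generic step to $V_{(2,1^{k-2})}$, which is not a single column. The main obstacle is precisely this single-column anomaly; once the detour is verified, the induction closes and the two parity classes exhaust the blocks, proving the corollary.
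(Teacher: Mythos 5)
Your proof is correct, and it takes the route the paper implicitly intends: both directions are read off from the $\Ext$ formula in the preceding proposition. The parity obstruction (``only if'') is immediate from the vanishing of $\Ext^i$ unless $i = (|\lambda|-|\mu|)/2$, and for the converse you correctly identify the linking relation as ``differ by a horizontal $2$-strip'' (since $(2)$ is the unique size-$2$ partition in $Q_1$, and $c^\lambda_{\mu,(2)}$ is governed by the Pieri rule), then verify connectivity by a size-reduction argument. The paper states no proof, leaving precisely this connectivity check to the reader; your induction, including the detour needed for the single-column shapes $(1^k)$ where no horizontal $2$-strip can be removed, is a correct way to close that gap.
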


\begin{remark}
The blocks are naturally indexed by $\bZ/2$, and one can show that the two blocks are not equivalent to one another.
\end{remark}

\article[Littlewood varieties]
We give a third application of Theorem~\pref{o:tca}:  the construction of injective resolutions of simple objects of $\Rep(\bO) = \Rep(\Sp)$. To avoid certain technicalities arising from the difference between $\bO(d)$ and $\SO(d)$, we will stick with $\Rep(\Sp)$. As in \pref{art:gl-lw}, it will be more natural to use $\wh{\Rep}(\Sp)$.  Let $E$ be a finite dimensional vector space, let $U=\lw^2(E)$, let $A=\Sym(U)$ and let $B=\Sym(E \otimes \wh{\bV})$, where each symmetric power is taken in $\wh{\Vec}$.  The space $U$ is a subspace of $B$, and so we have an algebra homomorphism $A \to B$. We let $C$ be the quotient of $B$ by the ideal generated by $U$.  We have maps
\begin{displaymath}
\Spec(C) \to \Spec(B) \to \Spec(A).
\end{displaymath}
To give a geometric interpretation of these maps, we ignore subtleties caused by spaces being infinite dimensional.  The space $\Spec(A)$ is identified with the space of forms $\lw^2(E) \to \bC$, while $\Spec(B)$ is identified with the space $\Hom(E, \bV)$ of maps $\varphi \colon E \to \bV$.  The map $\Spec(B) \to \Spec(A)$ takes a linear maps $\varphi$ to the form $\varphi^* \bomega$, where $\bomega$ is the symplectic form on $\bV \otimes \bV_*$.  The space $\Spec(C)$, which we call the {\bf Littlewood variety}, is the scheme-theoretic fiber of this map above 0, i.e., it consists of maps $\varphi$ for which $\varphi^* \bomega=0$. Alternatively, it consists of those maps $\phi$ whose image is totally isotropic with respect to $\bomega$.  Let $K_{\bullet}=B \otimes \lw^{\bullet}(U)$ be the Koszul complex of the Littlewood variety.

\begin{Proposition}
\label{sp:koszul}
The augmented complex $K_{\bullet} \to C$ is exact. We have a decomposition
\begin{displaymath}
C=\bigoplus_{\lambda} \bS_{\lambda}(E) \otimes \wh{V}_{\lambda}.
\end{displaymath}
\end{Proposition}

\begin{proof}
See \cite[\S 3.3]{ssw}.
\end{proof}

\article[Littlewood complexes] \label{art:sp_littlewoodcx}
We now give the projective resolutions of simple objects.  For a partition $\lambda$, define the {\bf Littlewood complex} $L^{\lambda}_{\bullet}$ by
\begin{displaymath}
L^{\lambda}_{\bullet}=\Hom_{\GL(E)}(\bS_{\lambda}(E), K_{\bullet}),
\end{displaymath}
where $E$ is of sufficient dimension (the definition is then independent of $E$).  Proposition~\pref{sp:koszul} shows that
\begin{displaymath}
\rH_i(L^{\lambda}_{\bullet})=\begin{cases}
\wh{V}_{\lambda} & \textrm{if $i=0$} \\
0 & \textrm{otherwise,} \end{cases}
\end{displaymath}
and so $L^{\lambda}_{\bullet}$ is a resolution of the simple object $\wh{V}_{\lambda}$.  Furthermore, it is clear that $K_{\bullet}$ is built from polynomial Schur functors applied to $\wh{\bV}$, and so each $K_i$ is projective in $\wh{\Rep}(\Sp)$.  In fact, by \eqref{eqn:q1q-1}, we have
\begin{displaymath}
L^{\lambda}_i=\bigoplus_{\mu \in Q_{-1},\ \vert \mu \vert=i} \bS_{\lambda/\mu}(\wh{\bV}).
\end{displaymath}
Thus the Littlewood complex $L^{\lambda}_{\bullet}$ is a projective resolution of $\wh{V}_{\lambda}$; in fact, it is a minimal resolution.

\article[(Lack of) Fourier transform]
\label{o:fourier}
Consider the four algebras:
\begin{displaymath}
A=\Sym(\Sym^2(\bE)), \quad
B=\Sym(\lw^2(\bE)), \quad
C=\lw(\Sym^2(\bE)), \quad
D=\lw(\lw^2(\bE)).
\end{displaymath}
As we have discussed, transpose interchanges $A$ and $B$, and also $C$ and $D$.  Thus we have equivalences $\Mod_A^{\fin} \cong \Mod_B^{\fin}$ and $\Mod_C^{\fin} \cong \Mod_D^{\fin}$, as abelian categories.  A modification of Koszul duality, similar to that occurring in \pref{rmk:glkoszul}, shows that $\Mod_A^{\fin}$ is Koszul dual to $\Mod_C^{\fin}$.  As the next lemma shows, $\Mod_A^{\fin}$ and $\Mod_C^{\fin}$ are not equivalent. It follows from this that $\Rep(\bO)$ and $\Rep(\Sp)$ are not Koszul dual to each other or themselves. Note that $C$ and $D$ are transpose dual to one another, so one can setup a Fourier transform from $A$ to $D$ (and from $B$ to $C$) in a way similar to \pref{rmk:glkoszul}.

\begin{lemma}
The categories $\Mod_A^{\fin}$ and $\Mod_C^{\fin}$ are not equivalent.
\end{lemma}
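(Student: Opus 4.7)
The plan is to exhibit a numerical invariant of abelian categories that distinguishes $\Mod_A^{\fin}$ from $\Mod_C^{\fin}$. For a simple object $S$, set
\[
n(S) = \#\{T \text{ simple (up to isomorphism)} : \Ext^2(S, T) \neq 0\};
\]
the multiset $\{n(S)\}_{S}$ is preserved by any equivalence of abelian categories, since such an equivalence permutes simples and preserves all $\Ext$ groups. I will show that $n(S) = 1$ is attained in $\Mod_A^{\fin}$ but $n(S) \geq 2$ for every simple $S$ in $\Mod_C^{\fin}$.

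First I would compute the relevant $\Ext$ groups. In both categories the simples are the objects $\bS_\lambda$ of $\cV$ with trivial module structure. For $A = \Sym(U)$ with $U = \Sym^2(\bE)$, Proposition~\pref{tca:ext} gives $\Ext^i_A(\bS_\lambda, \bS_\mu) = \Hom_\cV(\bS_\lambda \otimes \lw^i(U), \bS_\mu)$. For $C = \lw(U)$ the same argument, using the Koszul resolution $C \otimes \Sym^\bullet(U) \to \bC$ in place of $A \otimes \lw^\bullet(U) \to \bC$ and the same $\GL$-equivariance-based vanishing of the induced differentials on Schur-functor components, yields $\Ext^i_C(\bS_\lambda, \bS_\mu) = \Hom_\cV(\bS_\lambda \otimes \Sym^i(U), \bS_\mu)$. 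Identity~\eqref{eqn:q1q-1} shows $\lw^2(\Sym^2 \bE) = \bS_{(3,1)}$, while the classical plethysm $\Sym^n(\Sym^2 \bE) = \bigoplus_{|\nu| = 2n,\ \nu \text{ even}} \bS_\nu$ gives $\Sym^2(\Sym^2 \bE) = \bS_{(4)} \oplus \bS_{(2,2)}$.

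Next, $n(\bS_\emptyset) = 1$ in $\Mod_A^{\fin}$ is immediate: $\Ext^2_A(\bS_\emptyset, \bS_\mu) = \Hom_\cV(\bS_{(3,1)}, \bS_\mu) = \delta_{\mu,(3,1)}$. The main remaining step is to prove $n(\bS_\lambda) \geq 2$ for every $\lambda$ in $\Mod_C^{\fin}$. By Pieri, the partition $(\lambda_1 + 4, \lambda_2, \lambda_3, \ldots)$ always appears in $\bS_\lambda \otimes \bS_{(4)}$. On the other hand, every constituent $\bS_\mu$ of $\bS_\lambda \otimes \bS_{(2,2)}$ satisfies $\mu_1 \leq \lambda_1 + 2$, because in any Littlewood--Richardson tableau of shape $\mu/\lambda$ with content $(2,2)$ the number of $1$'s in row $1$, which equals $\mu_1 - \lambda_1$, cannot exceed the content multiplicity $2$. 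Since $\bS_\lambda \otimes \bS_{(2,2)}$ is nonzero (it contains $\bS_{\lambda + (2,2)}$), this produces a second distinct constituent of $\bS_\lambda \otimes (\bS_{(4)} \oplus \bS_{(2,2)})$, whence $n(\bS_\lambda) \geq 2$.

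The only slightly delicate point is the $\Ext$ computation over $C = \lw(U)$, which requires adapting the proof of Proposition~\pref{tca:ext} via the Koszul duality between $\Sym$ and $\lw$; the rest is routine plethysm and a short Littlewood--Richardson observation. The argument is robust: essentially the same comparison at $\Ext^3$ or higher (using the mismatch between $\Sym^i$ and $\lw^i$ of $\Sym^2 \bE$) would also suffice.
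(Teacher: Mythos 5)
Your proof is correct, and it takes a genuinely different route from the paper's.  The paper observes that both $\Mod_A^{\fin}$ and $\Mod_C^{\fin}$ are presented by the same underlying quiver (vertices are partitions, with an arrow $\lambda \to \mu$ whenever $\mu/\lambda$ consists of two boxes in distinct rows), argues that the only automorphism of this quiver is the identity, and concludes that any equivalence must preserve the labeling of simples; it then exhibits the single mismatch $\Ext^2_{\Mod_A}(L_\emptyset, L_{(3,1)}) = \bC$ versus $\Ext^2_{\Mod_C}(L_\emptyset, L_{(3,1)}) = 0$.  You instead use the label-free invariant $n(S)$, showing $n(\bS_\emptyset)=1$ over $A$ but $n(\bS_\lambda)\ge 2$ for every $\lambda$ over $C$.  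This avoids the quiver-automorphism step entirely --- which, while "easy to see" as the paper asserts, is not completely trivial (one has to rule out permutations of partitions that preserve the two-boxes-different-rows relation).  The trade-off is that you need the Koszul-dual $\Ext$ computation $\Ext^i_C(\bS_\lambda,\bS_\mu)=\Hom_{\cV}(\bS_\lambda\otimes\Sym^i(U),\bS_\mu)$, which you correctly derive from the resolution $C\otimes\Sym^{\bullet}(U)\to\bC$; the differentials vanish here for the same reason as in Proposition~\pref{tca:ext} (they factor through the $U$-action on the trivial module $\bS_\mu$, or equivalently by the $\GL$-degree argument you cite, since $U$ sits in degree $2$).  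Your Littlewood--Richardson observation that $\mu_1\le\lambda_1+2$ for any constituent $\bS_\mu$ of $\bS_\lambda\otimes\bS_{(2,2)}$, combined with the Pieri constituent $\bS_{(\lambda_1+4,\lambda_2,\ldots)}$ of $\bS_\lambda\otimes\bS_{(4)}$, cleanly forces $n(\bS_\lambda)\ge 2$ over $C$.  Both approaches are valid; yours is somewhat more self-contained, while the paper's gives additional structural information (that the quiver automorphism group is trivial, hence that any equivalence would have to fix labels).
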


\begin{proof}
Both of these categories are module categories for certain quivers with relations. In both cases, the quivers (ignoring the relations) are described as follows: the vertices are partitions, and there is an arrow $\lambda \to \mu$ if and only if $\lambda \subset \mu$ and $\mu / \lambda$ consists of two boxes in different columns. The reason is that the vertices index simple objects and the arrows index $\ext^1$-groups. So any hypothetical equivalence between the two categories must induce an automorphism of the underlying quiver. It is easy to see that the only possible automorphism is the identity, so any equivalence needs to preserve the indexing (by partitions) on the simple objects. But the higher extension groups do not match (for example, $\ext^2_{\Mod_A}(L_0, L_{3,1}) = \bC$ but $\ext^2_{\Mod_C}(L_0, L_{3,1}) = 0$), so no such equivalence exists.
\end{proof}

\subsection{Orthogonal and symplectic Schur functors, universal property and specialization}
\label{sec:orthosymschur}

\article[Orthogonal Schur functors]
\label{o:schurfun}
Let $\cA$ be a tensor category.  Define $T(\cA)$ to be the category whose objects are pairs $(A, \omega)$, where $A$ is an object of $\cA$ and $\omega$ is a symmetric pairing $A \otimes A \to \bC$, and whose morphisms are the obvious things.  We typically suppress $\omega$ from the notation.  Given $A \in T(\cA)$, define $\cK(A)$ to be the object of $\cA^{\db}$ given by $L \mapsto A^{\otimes L}$.  Functoriality with respect to morphisms in $\db$ makes use of the pairing $\omega$, and is defined as in \pref{o:ker}.  For an object $M$ of $\Mod_{\ub}^{\fin}$, define
\begin{displaymath}
S_M(A)=M \otimes^{\db} \cK(A).
\end{displaymath}
Then $M \mapsto S_M(A)$ defines a covariant functor $\Mod_{\ub}^{\fin} \to \cA$ which is left-exact (see \pref{diag:tens}) and a tensor functor (since $\cK(A)$ is obviously a tensor kernel, see \pref{ker-tens}).  We call $S_M$ the {\bf orthogonal Schur functor} associated to $M$.  

\begin{Theorem}
\label{c:univ}
To give a left-exact tensor functor from $\Rep(\bO)$ to an arbitrary tensor category $\cA$ is the same as to give an object of $T(\cA)$.  More precisely, letting $\bM$ be the object of $\Mod_{\ub}$ corresponding to $\bV$, the functors
\begin{displaymath}
\Phi \colon \LEx^{\otimes}(\Mod_{\ub}^{\fin}, \cA) \to T(\cA), \qquad F \mapsto F(\bM)
\end{displaymath}
and
\begin{displaymath}
\Psi \colon T(\cA) \to \LEx^{\otimes}(\Mod_{\ub}^{\fin}, \cA), \qquad A \mapsto (M \mapsto S_M(A))
\end{displaymath}
are mutually quasi-inverse equivalences.
\end{Theorem}

\begin{proof}
The proof is the same as that of Theorem~\pref{gl:univ}.
\end{proof}

\begin{remark}
This theorem can be rephrased as follows:  the functor $T \colon \TCat \to \Cat$ is corepresented by $\Rep(\bO)$, with the universal object in $T(\Rep(\bO))$ being $\bV$.  See \pref{art:schur-univ} for notation.
\end{remark}

\article[Symplectic Schur functors]
The above results carry over in an evident way to the symplectic case.  Precisely, given a tensor category $\cA$, let $T'(\cA)$ be the category of pairs $(A, \omega)$ where $A$ is an object of $\cA$ and $\omega$ is an alternating pairing on $A$.  Given $A \in T'(\cA)$ one can build a kernel $\cK(A) \in \cA^{\dsb, -}$.  Given $M \in \Mod_{\dsb}^{-,\fin}$, we define $S'_M(A)=M \otimes^{\dsb} \cK(A)$.  We call $S'_M$ the {\bf symplectic Schur functor} associated to $M$.  The functor
\begin{displaymath}
T'(A) \to \LEx^{\otimes}(\Mod_{\dsb}^{-,\fin}, \cA)
\end{displaymath}
is an equivalence of categories.

\article[The specialization functors]
\label{c:special}
The standard representations of $\bO(d)$ and (for $d$ even) $\Sp(d)$ define objects of $T(\Rep(\bO(d)))$ and $T'(\Rep(\Sp(d)))$, and so by Theorem~\pref{c:univ} and its symplectic variant we obtain left-exact tensor functors
\begin{align*}
\Gamma_d \colon \Rep(\bO) &\to \Rep(\bO(d)),\\
\Gamma_d \colon \Rep(\Sp) &\to \Rep(\Sp(d)),
\end{align*}
which we call the {\bf specialization functors}.  In the orthogonal case, the results of \pref{o:weylfin} show that $\Gamma_d(V_{\lambda})$ is the irreducible $V_{\lambda}^d$ if the first two columns of $\lambda$ have at most $d$ boxes, and 0 otherwise.  In the symplectic case, the corresponding result is that $\Gamma_d(V_{\lambda})$ is the irreducible $V_{\lambda}^d$ if $\ell(\lambda) \le d$ and 0 otherwise.

\article[Specialization via invariants]
\label{c:special2}
We now give a more direct description of specialization.  We will treat both $\bO(\infty)$ and $\Sp(\infty)$ at the same time.  Choose a decomposition $\bC^{\infty}=V \oplus V'$ where $\dim(V)=d$ and $\bomega(V, V')=0$ and the restriction of $\bomega$ to both $V$ and $V'$ is non-degenerate.  If $d$ is even, one can take $V$ to be the span of the $e_i$ with $i \le d$ and $V'$ to be the span of the $e_i$ with $i>d$.  If $d$ is odd, one can take $V$ to be the span of $e_1, \ldots, e_{d-1}, e_d+e_{d+1}$ and $V'$ to be the span of $e_d-e_{d+1}, e_{d+2}, \ldots$  Let $G_d$ (resp.\ $H_d$) be the subgroup of $\bO(\infty)$ or $\Sp(\infty)$ which preserves $V$ (resp.\ $V'$) and acts as the identity on $V'$ (resp.\ $V$).  Then $G_d$ is isomorphic to $\bO(d)$ or $\Sp(d)$ while $H_d$ is isomorphic to $\bO(\infty)$ or $\Sp(\infty)$.  The subgroups $G_d$ and $H_d$ commute, and so the $H_d$-invariants of any representation of $\bO(\infty)$ or $\Sp(\infty)$ form a representation of $G_d$.

\begin{proposition}
We have a natural identification $\Gamma_d(V)=V^{H_d}$.
\end{proposition}

\begin{proof}
The proof is similar to the proof of Proposition~\pref{prop:gl_invts}.
\end{proof}

\begin{remark}
Since $\Gamma_d$ is a tensor functor, the above proposition shows that if $V$ and $W$ are algebraic representations of $\bO(\infty)$ (or $\Sp(\infty)$) then $(V \otimes W)^{H_d}=V^{H_d} \otimes W^{H_d}$.
\end{remark}

\article[Derived specialization] \label{o:dersp}
As the categories $\Rep(\bO)$ and $\Rep(\Sp)$ have enough injectives, the derived functor $\rR\Gamma_d$ of $\Gamma_d$ exists.  The injective resolution of the simple object $V_{\lambda}$ is given by the Littlewood complex $L^{\lambda}_{\bullet}$ (see \pref{art:sp_littlewoodcx} for the dual picture).  Since specialization behaves in the obvious manner on polynomial Schur functors (see \pref{c:special}), $\rR\Gamma_d(V_{\lambda}) = \Gamma_d(L^{\lambda}_{\bullet})$ is just $L^{\lambda}_{\bullet}(\bC^d)$, which is by definition the result of evaluating the Schur functors in $L^{\lambda}_\bullet$ on $\bC^d$.  The cohomology of this complex is computed in \cite[\S\S 3.5, 4.5]{ssw}, the result being:

\begin{theorem}
Let $\lambda$ be a partition and let $d \ge 1$ be an integer.  Then $\rR^i\Gamma_d(V_{\lambda})$ either vanish identically or else there exists a unique $i$ for which it is non-zero, and it is then an irreducible representation of $\bO(d)$ or $\Sp(d)$.
\end{theorem}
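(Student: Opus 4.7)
The plan is to directly compute the cohomology of the Littlewood complex $L^\lambda_\bullet(\bC^d)$, which by the discussion preceding the theorem statement computes $\rR^\bullet \Gamma_d(V_\lambda)$. I will focus on the symplectic case, where
\[
L^\lambda_i(\bC^d) = \bigoplus_{\mu \in Q_{-1},\ \vert \mu \vert = i} \bS_{\lambda/\mu}(\bC^d);
\]
the orthogonal case is parallel with $Q_1$ in place of $Q_{-1}$.

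First I would reformulate the question as an $\Sp(d)$-multiplicity calculation. By the classical $\GL(d) \downarrow \Sp(d)$ branching rule, the multiplicity of the irreducible $\Sp(d)$-module $V_\nu^d$ (with $\ell(\nu) \le d$) inside a Schur functor $\bS_\sigma(\bC^d)$ equals $\sum_{\tau \in Q_{-1}} c^\sigma_{\nu,\tau}$. Combined with the skew Littlewood--Richardson expansion $\bS_{\lambda/\mu} = \bigoplus_\sigma c^\lambda_{\mu,\sigma}\, \bS_\sigma$, this identifies the multiplicity of $V_\nu^d$ in $L^\lambda_i(\bC^d)$ with an iterated LR count $\sum_{\mu,\tau \in Q_{-1},\ \vert \mu \vert = i} c^\lambda_{\nu,\mu,\tau}$. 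In the easy regime $\ell(\lambda) \le d$ and $\nu = \lambda$, only $\mu = \tau = \emptyset$ contributes, and one recovers $V_\lambda^d$ concentrated in degree $0$, matching the direct computation of $\Gamma_d$ on simples in \pref{c:special}.

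The crux is to promote this multiplicity calculation to the level of cohomology. The differentials in $L^\lambda_\bullet$ come from the Koszul differential on the ideal defining the Littlewood variety (see \pref{art:sp_littlewoodcx}), so on multiplicity spaces they are explicit LR maps. Following \cite[\S\S 3.5, 4.5]{ssw}, I would analyze the complex via the modification rule: when $\ell(\lambda) > d$, one performs a border-strip excision near row $d+1$, obtaining either a strictly smaller partition $\lambda'$ or terminating in a "bad" configuration. In the first case I would exhibit a quasi-isomorphism between $L^\lambda_\bullet(\bC^d)$ and an appropriate degree shift of $L^{\lambda'}_\bullet(\bC^d)$, so the result follows by induction on $\ell(\lambda)-d$ and reduces back to the easy regime; in the second case the complex is exact and $\rR^\bullet \Gamma_d(V_\lambda) = 0$.

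The main obstacle is establishing this quasi-isomorphism. The argument in \cite{ssw} identifies the pieces of $L^\lambda_\bullet(\bC^d)$ with sections of equivariant bundles on a relative flag variety of isotropic subspaces, and the differentials with those arising from Bott's algorithm; the border-strip cancellations then become the standard cancellations in Bott's theorem for non-dominant weights. Uniqueness of the nonvanishing cohomological degree follows because each step of the modification rule shifts the degree by a definite amount, so at most one partition $\nu$ can be reached before the algorithm halts; this also explains why the surviving cohomology is a single irreducible of $\Sp(d)$ (or of $\bO(d)$ in the orthogonal case).
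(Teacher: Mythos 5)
Your proposal takes the same route as the paper: reduce $\rR^i\Gamma_d(V_\lambda)$ to the cohomology of the specialized Littlewood complex $L^\lambda_\bullet(\bC^d)$ via the injective resolution and tensor-functoriality of $\Gamma_d$, and then defer the actual cohomology computation to [SSW]. The multiplicity bookkeeping and the branching-rule sanity check at the start are useful scaffolding but are not a substitute for the geometric argument in [SSW], and you correctly flag that the ``quasi-isomorphism with a shifted $L^{\lambda'}_\bullet$'' is precisely the step you cannot supply without importing the Bott-theoretic machinery of that reference; this is exactly the division of labor the paper makes as well, so there is no substantive gap, but also no alternative proof.
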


Furthermore, there is a rule, the {\bf modification rule}, which calculates where the cohomology is non-zero, and what the resulting irreducible of $\bO(d)$ or $\Sp(d)$ is.  See \cite[\S\S 3.4, 4.4]{ssw} for details. The Euler characteristic of this complex was previously computed by \cite[\S 2.4]{koiketerada}, which suggested the results of \cite{ssw}.

\article \label{art:osp-T0T1}
Let $T_0=T(\Vec^{\fin})$ be the category of pairs $(V, \omega)$ where $V$ is a finite dimensional vector space and $\omega$ is a symmetric bilinear form on $V$, and let $T_1$ be the full subcategory where $\omega$ is perfect.  Let $\cA$ be the category of representations of $\bO(\infty)$ such that every element is stabilized by $H_d$ for some $d$.  Define a functor
\begin{displaymath}
\sF \colon \cA \to \Fun(T_1, \Vec)
\end{displaymath}
as follows.  For $U \in \cA$ and $V \in T_1$, pick an isomorphism $V \cong \bC^d$ respecting the form and put $\sF(U)(V)=U^{H_d}$; see \pref{gl:acat} for how to make this canonical.  Define a functor
\begin{displaymath}
\sG \colon \Fun(T_1, \Vec) \to \cA
\end{displaymath}
by the formula $\sG(F)=\varinjlim F(\bC^d)$, where the transition map $\bC^d \to \bC^{d+1}$ is the inclusion given by our choice of bases in \pref{c:special2}, and the form on $\bC^d$ is the one restricted from $\bC^\infty$.  Basic properties of the direct limit show that $\sG$ is exact and respects tensor products.  There are natural maps $\sG \sF \to \id$ and $\id \to \sG \sF$, the first of which is an isomorphism.

\begin{Theorem} \label{o:schur}
The functor $\sF$ induces a left-exact fully faithful tensor functor $\Rep(\bO) \to \Fun(T_1, \Vec^{\fin})$.
\end{Theorem}

\begin{proof}
The proof is the same as that of Theorem~\pref{gl:schur}.  
\end{proof}

\article
By Theorem~\pref{o:schur}, we can think of $\Rep(\bO)$ as a category of functors $T_1 \to \Vec$.  As in the GL-case, $\Rep(\bO)$ is not an abelian subcategory of the functor category.  We expect the abelian closure to have similar properties as in the GL-case, see \pref{gl:catc}.  Similar comments apply in the symplectic case, using $T_1'$ in place of $T_1$.

\section{The general affine group} \label{sec:alin}

\subsection{\texorpdfstring{Representations of $\GA(\infty)$}{Representations of GA(inf)}}

\article \label{art:gadefn}
Let $\bt \colon \bV \to \bC$ be a non-zero linear map which annihilates all but finitely many basis vectors.  For concreteness, we take $\bt=e_1^*$.  The {\bf general affine group}, denoted $\GA(\infty)$, is the subgroup of $\GL(\infty)$ stabilizing $\bt$.  We let $\bV_0$ be the kernel of $\bt$; this is the subspace spanned by $e_2, e_3, \ldots$.  There is a natural surjection $\GA(\infty) \to \GL(\bV_0)$, and we let $\bT=\bT(\infty)$ be the kernel (the group of translations).  We let $\GA(d)$ and $\bT(d)$ be the intersections of $\GA(\infty)$ and $\bT(\infty)$ with $\GL(d) \subset \GL(\infty)$.

We say that a representation of $\GA(\infty)$ is {\bf polynomial} if it appears as a subquotient of a finite direct sum of the spaces $T_n=\bV^{\otimes n}$.  We let $\Rep^{\pol}(\GA)$ denote the category of polynomial representations.  It is an abelian category and stable under tensor products.  Some remarks:
\begin{enumerate}
\item We regard $\bV_0$ as a representation of $\GA(\infty)$ with $\bT$ acting trivially.  It is a sub of $T_1$ and thus polynomial.
\item The center of $\GA(\infty)$ is trivial, even in the approximate sense discussed in \pref{gl:center}.  We will see that the category $\Rep^{\pol}(\GA)$ has only one block.
\item The group $\GA(\infty)$ also admits a faithful linear representation on $\bV_*$.  This leads to a category of {\bf anti-polynomial representations} which is in fact equivalent to $\Rep^{\pol}(\GA)^{\op}$.
\item One can consider pro-polynomial representations of $\GA(\infty)$, and this leads to a category $\wh{\Rep}{}^{\pol}(\GA)$.  ``Continuous dual'' provides a contravariant equivalence of the category of pro-polynomial representations with the category of anti-polynomial representations.  Thus the categories $\wh{\Rep}{}^{\pol}(\GA)$ and $\Rep^{\pol}(\GA)$ are equivalent.
\item Define a representation of $\GA(\infty)$ to be {\bf algebraic} if it appears as a subquotient of a direct sum of representations of the form $\bV^{\otimes n} \otimes \bV_*^{\otimes m}$.  The category of algebraic representations of $\GA(\infty)$ appears to be somewhat different from the other categories we have studied:  for instance, the trivial representation is neither injective nor projective.  It would be interesting to determine the structure of this category.
\end{enumerate}

\article[An analogue of Weyl's construction] \label{art:ga-weyl}
Let $t_i \colon T_n \to T_{n-1}$ be the map which applies $\bt$ to the $i$th tensor factor.  Let $T_{[n]}$ be the intersection of the kernels of the $t_i$.  This is a representation of $S_n \times \GA(\infty)$.  We define
\begin{displaymath}
V_{\lambda}=\Hom_{S_n}(\bM_{\lambda}, T_{[n]}).
\end{displaymath}
We have a decomposition
\begin{equation}
\label{ga:decomp}
T_{[n]}=\bigoplus_{\vert \lambda \vert=n} \bM_{\lambda} \boxtimes V_{\lambda}.
\end{equation}
Note that we also have an exact sequence
\begin{equation}
\label{ga:seq}
0 \to T_{[n]} \to T_n \to T_{n-1}^{\oplus n},
\end{equation}
where the right map is made up of the $n$ trace maps.

\begin{Proposition}
The $V_{\lambda}$ constitute a complete irredundant set of simple objects of $\Rep^{\pol}(\GA)$.
\end{Proposition}

\begin{proof}
We have $T_{[n]}=\bV_0^{\otimes n}$, and so $V_{\lambda}$ is just the representation $\bS_{\lambda}(\bV_0)$ of $\GL(\bV_0)$, with $\bT$ acting trivially.  This shows that the $V_{\lambda}$ are simple and distinct.  The sequence \eqref{ga:seq} shows that every simple of $\Rep^{\pol}(\GA)$ is a constituent of some $T_{[n]}$, while the decomposition \eqref{ga:decomp} shows that every simple constituent of $T_{[n]}$ is some $V_{\lambda}$.
\end{proof}

\begin{Proposition}
Every object of $\Rep^{\pol}(\GA)$ has finite length.
\end{Proposition}

\begin{proof}
This is just like the proof of Proposition~\pref{gl:finlen}.
\end{proof}

\begin{Proposition}
\label{ga:const}
The simple constituents of $T^n$ are those $V_{\lambda}$ with $\vert \lambda \vert \le n$.
\end{Proposition}

\begin{proof}
This is just like the proof of Proposition~\pref{gl:const}.
\end{proof}

\article
By a {\bf weight} of $\GA(\infty)$ we mean a weight of the diagonal torus in $\GL(\bV_0)$.  We identify weights with sequences of integers $(a_2, a_3, \ldots)$ which are eventually zero.  (We start from 2 since $\bV_0$ has for a basis the $e_i$ with $i \ge 2$.)  The weights appearing in polynomial representations of $\GA(\infty)$ satisfy $a_i \ge 0$.  The {\bf magnitude} of a weight is the sum of the (absolute values of) the $a_i$.  Every weight in $T_n$ clearly has magnitude at most $n$.

\begin{Proposition}
Every non-zero submodule of $T_n$ has a weight of magnitude $n$.
\end{Proposition}

\begin{proof}
The space $T_n$ has for a basis pure tensors of the $e_i$, with $i \ge 1$.  The magnitude of the weight of a basis vector is simply the number of $e_i$'s with $i>1$ it has.  Let $M$ be a non-zero submodule of $T_n$ and let $x$ be a non-zero element of $M$.  Choose an index $i>1$ such that $e_i$ does not occur in the expansion of $x$ in this basis.  Regard $e_i$ as an element of $\bT$.  Then $e_i x$ is computed by changing each $e_1$ in the occurrence of $x$ to $e_1+e_i$.  Let $x'$ be the element $x$ but with all $e_1$'s changed to $e_i$.  Then $x'$ is non-zero, since we have simply changed the basis vectors occurring in $x$ and not their coefficients.  Furthermore, it is clear that all weights appearing in $x'$ have magnitude $n$, since $x'$ has no $e_1$'s in it.  Finally, it is clear that $x'$ is exactly the magnitude $n$ piece of $e_i x$:  the only terms of $e_i x$ with magnitude $n$ are those in which only $e_i$'s are chosen in the expansion of the tensor product.  Thus $x'$ belongs to $M$ since $M$ is a weight module, which completes the proof.
\end{proof}

\begin{Proposition}
\label{ga:nohom}
Let $M$ be a submodule of $T_n$.  Then $\Hom_{\GA}(M, T_{n+r})=0$ for $r>0$.
\end{Proposition}

\begin{proof}
This is just like the proof of Proposition~\pref{gl:nohom}.
\end{proof}

\begin{Proposition}
We have
\begin{displaymath}
\Hom_{\GA}(V_{\lambda}, T_n)=\begin{cases}
\bM_{\lambda} & \textrm{if $n=\vert \lambda \vert$} \\
0 & \textrm{otherwise.}
\end{cases}
\end{displaymath}
\end{Proposition}

\begin{proof}
This follows immediately from Propositions~\pref{ga:nohom} and~\pref{ga:const}.
\end{proof}

\subsection{A diagram category}

\article \label{art:defnds}
We now give an analogue of the Brauer category for the general affine group.  The {\bf downwards subset category}, denoted $\ds$, is the category whose objects are finite sets and where a morphism $L \to L'$ is a pair $(U, f)$ where $U$ is a subset of $L$ and $f \colon L \setminus U \to L'$ is a bijection.  Composition is defined as usual.  The automorphism group of an object $L$ of $\ds$ is the symmetric group on $L$.  Disjoint union endows $\ds$ with a symmetric monoidal structure.   We let $\otimes=\otimes_{\#}$ be the resulting convolution tensor products, as in \pref{diag:conv}.  We also have the {\bf upwards subset category}, denoted $\us$, with everything reversed.  Note that $\us$ can be described as the category of finite sets with morphisms being injections.

\article
\label{ga:K}
Given an object $L$ of $\ds$, put $\cK_L=\bV^{\otimes L}$.  Given a morphism $L \to L'$ in $\ds$ represented by $(U, f)$ we obtain a morphism
\begin{displaymath}
\cK_L = \bV^{\otimes U} \otimes \cK_{L\setminus U} \to \cK_{L'}
\end{displaymath}
by applying the maps $\bt \colon \bV \to \bC$ on the first factor and using $f$ on the second factor.  Thus $\cK$ defines an object of $\Rep^{\pol}(\GA)^{\ds}$.

\begin{Theorem}
The functors of \pref{ker-func} associated to the kernel $\cK$ provide contravariant mutually quasi-inverse equivalences of tensor categories between $\Rep^{\pol}(\GA)$ and $\Mod_{\ds}^{\fin}$.
\end{Theorem}

\begin{proof}
The proof of the theorem is just like that of Theorem~\pref{gl:dwb}.  
\end{proof}

\begin{Corollary}
The tensor categories $\Rep^{\pol}(\GA)$ and $\Mod_{\us}^{\fin}$ are equivalent.
\end{Corollary}

This comes from the identification $(\Mod_{\ds}^{\fin})^{\op}=\Mod_{\us}^{\fin}$.  A direct equivalence $\Mod_{\us}^{\fin} \to \Rep^{\pol}(\GA)$ is given by $M \mapsto M \otimes^{\ds} \cK$ (see \pref{diag:tens} for notation).

\article[Classification of injectives]
\label{art:ga-injectives}
As before, the above descriptions of $\Rep^{\pol}(\GA)$ allow for an easy description of its injective objects.

\begin{proposition}
The Schur functor $\bS_{\lambda}(\bV)$ is the injective envelope of the simple module $V_{\lambda}$.  The representations $\bS_{\lambda}(\bV)$ constitute a complete irredundant set of indecomposable injectives.
\end{proposition}

\subsection{\texorpdfstring{Modules over $\Sym(\bC\langle 1 \rangle)$}{Modules over Sym(C<1>)}}
\label{ga:amod}

\article \label{thm:ga:tca}
For a set $L$, let $\cG_L$ be the one point set.  Then $\cG_L$ is a tc monoid.  The category $\Lambda$ associated to $\cG$ in \pref{sg:def} is exactly $\us$.  The tca $A$ associated to $\cG$ is $\Sym(\bC\langle 1 \rangle)$.  As usual, $\bE$ denotes a copy of $\bC^{\infty}$ and we identify $A$ with $\Sym(\bE)$.

\begin{theorem}
The tensor categories $\Rep^{\pol}(\GA)$ and $\Mod_A^{\fin}$ are equivalent, where the latter is given the tensor product $\ast^A$.
\end{theorem}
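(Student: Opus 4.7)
The proof will follow exactly the pattern established for the $\GL$, $\bO$, and $\Sp$ cases (see Theorems~\pref{gl:amod}, \pref{o:tca}, and \pref{sp:tca}), which is the whole point of having set up the framework of semi-group tca's in \S\ref{ss:sg-tca}. The plan is to apply the abstract equivalence \pref{sg:equiv} to the tc monoid $\cG$ in which every $\cG_L$ is a singleton, and then chain with the diagrammatic description of $\Rep^{\pol}(\GA)$ obtained in the previous corollary.

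First I would verify the identifications already asserted in \pref{thm:ga:tca}: that the category $\Lambda$ built from $\cG$ in \pref{sg:def} is exactly $\us$ (a morphism $L\to L'$ in $\Lambda$ is a triple $(U,\Gamma,f)$ with $U\subseteq L'$, $\Gamma\in\cG_U$, and $f\colon L\to L'\setminus U$ a bijection; since $\cG_U$ has a unique element, this datum is the same as an injection $L\hookrightarrow L'$), and that $A=\bC[\cG]=\Sym(\bC\langle 1\rangle)=\Sym(\bE)$ (the underlying polynomial representation of $\GL(\infty)$ is the free commutative algebra on one degree-one generator, which is $\Sym(\bE)$). These are straightforward unwindings of definitions.

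Next I invoke Proposition~\pref{sg:equiv} applied to this $\cG$: it gives an equivalence $\Mod_{\us}=\Mod_A$, and under this equivalence the convolution tensor product $\otimes$ on $\Mod_{\us}$ (which is $\otimes_*$ in the opposite/upwards setup; equivalently, by passing to the opposite via \pref{sg:op}, the tensor product $\otimes_{\#}$ on $\Mod_\us$ corresponds to $\ast^A$ on $\Mod_A$) matches $\ast^A$. Restricting to finite length objects, we obtain an equivalence of tensor categories $\Mod_{\us}^{\fin}=\Mod_A^{\fin}$. Combining this with the Corollary preceding \pref{thm:ga:tca}, which states that $\Rep^{\pol}(\GA)\cong\Mod_{\us}^{\fin}$ as tensor categories, yields the desired equivalence.

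The only real content to check is that the tensor product structures line up correctly, since the underlying category equivalence is immediate from \pref{sg:equiv}. The main (minor) obstacle is therefore bookkeeping: making sure one uses the upwards conventions consistently (so that $\otimes=\otimes_{\#}$ on $\Mod_\us$ corresponds under the fs-model identification to $\ast^A$ on $\Mod_A$, rather than to $\otimes_A$), and that the direction of arrows matches the one in the diagrammatic equivalence. Both of these are handled abstractly by \pref{sg:equiv} and \pref{sg:op}, so no new argument is required — the proof can simply cite these and the preceding corollary.
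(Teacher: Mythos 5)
Your proposal is correct and takes essentially the same approach as the paper, which simply says the proof is the same as that of Theorem \pref{gl:amod}: invoke Proposition \pref{sg:equiv} for the singleton tc monoid to identify $\Mod_{\us}$ with $\Mod_A$ (tensor products included), then chain with the preceding corollary $\Rep^{\pol}(\GA)\cong\Mod_{\us}^{\fin}$. The only wobble is in your parenthetical aside: on $\Mod_{\us}$ one uses $\otimes=\otimes_*$ (it is $\us$ that is inwards finite, matching the $\Lambda$ of \pref{sg:def}), not $\otimes_{\#}$, but since you correctly defer the bookkeeping to \pref{sg:equiv} and \pref{sg:op} this does not affect the argument.
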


\begin{proof}
The proof is the same as the proof of Theorem~\pref{gl:amod}.
\end{proof}

\article \label{art:ga:modK}
As in previous settings, we can realize the equivalence between $\Rep^{\pol}(\GA)$ and $\Mod_A^{\fin}$ directly.  Put $B=\Sym(\bV \otimes \bE)$.  We regard $A$ and $B$ as both algebras and coalgebras.  We have a natural map $B \to \bE$ by first projecting onto $\bV \otimes \bE$ and then using the linear map $\bt \colon \bV \to \bC$.  This induces a coalgebra homomorphism $B \to A$.  We thus obtain functors
\begin{displaymath}
\Phi \colon \Mod_A^{\fin} \to \Rep^{\pol}(\GA), \qquad M \mapsto \Hom_A(M^{\vee}, B)
\end{displaymath}
and
\begin{displaymath}
\Psi \colon \Rep^{\pol}(\GA) \to \Mod_A^{\fin}, \qquad V \mapsto \Hom_{\GA}(V, B)^{\vee}.
\end{displaymath}
These functors are mutually quasi-inverse equivalences.

\begin{remark}
In fact, the equivalence can be seen even more directly.  The group $\GA(\infty)$ is isomorphic to the semi-direct product $\bV \rtimes \GL(\infty)$, so giving a representation of it is the same as giving a representation of $\bV$, which is the same as giving a $\Sym(\bV)$ module, that is equipped with a $\GL(\infty)$ equivariance.  One can easily verify directly that the equivariant modules one obtains are exactly the finite length polynomial ones.
\end{remark}

\article \label{ga:modK}
The category $\Mod_A$ is thoroughly studied in \cite{symc1}. There, the category $\Mod_A^\fin$ is denoted $\Mod_A^\tors$, and the Serre quotient is denoted $\Mod_K = \Mod_A / \Mod_A^\tors$. Among other things, it is shown that these categories are equivalent:
\[
\Mod_A^\tors \simeq \Mod_K,
\]
and many basic invariants were computed, such as minimal injective resolutions of the simple objects, and extension groups between simple objects \cite[\S 2.3]{symc1}. Furthermore, both of these categories were shown to be equivalent to the module category of the Pieri quiver ${\rm Part}_{\rm HS}$: this is the quiver whose vertices are all partitions, and there is exactly 1 arrow $\lambda \to \mu$ if $\lambda \subseteq \mu$ and the skew Young diagram $\mu / \lambda$ is a horizontal strip, i.e., no two boxes lie in the same column, and there are no arrows, otherwise. The relations are governed by the rule that the composition $\lambda \to \mu \to \nu$ is 0 if $\nu / \lambda$ is not a horizontal strip, and the rule that all such compositions are equal otherwise.

\article[Littlewood varieties]
Following \pref{art:gl-lw} and \pref{art:o-lw}, we can define the Littlewood variety in the present context.  Let $E$ be a finite dimensional vector space, let $A=\Sym(E)$, and let $B=\Sym(E \otimes \wh{\bV}_*)$.  Since $\wh{\bV}_*$ has an invariant, we have an inclusion $E \subset B$, and thus obtain an algebra homomorphism $A \to B$.  We let $C$ be the quotient of $B$ by the ideal generated by $U$.  We call $Y=\Spec(C)$ the {\bf Littlewood variety}.  Since $Y$ is defined by linear equations, the minimal free resolution of $C$ is given by the acyclic Koszul complex $K_\bullet$ where $K_i = B \otimes \bigwedge^i E$.

\begin{Proposition} \label{prop:ga:coord}
We have $C=\bigoplus_\lambda \bS_\lambda(E) \otimes V_{\lambda}^{\vee}$.
\end{Proposition}

\article[Littlewood complexes]
For a partition $\lambda$, define the {\bf Littlewood complex} $L^{\lambda}_{\bullet}$ by
\begin{displaymath}
L^{\lambda}_{\bullet}=\Hom_{\GL(E)}(\bS_{\lambda}(E), K_{\bullet}),
\end{displaymath}
where $E$ is of sufficient dimension.  Proposition~\pref{prop:ga:coord} and the acyclicity of the Koszul complex show that
\begin{displaymath}
\rH_i(L^{\lambda}_{\bullet})=\begin{cases}
V_{\lambda}^{\vee} & \textrm{if $i=0$} \\
0 & \textrm{otherwise,} \end{cases}
\end{displaymath}
and so $L^{\lambda}_{\bullet}$ is a resolution of the simple object $V_{\lambda}^{\vee}$.  Furthermore, it is clear that $K_{\bullet}$ is built from polynomial Schur functors applied to $\wh{\bV}_*$, and so each $K_i$ is projective in $\Rep^{\pol}(\GA)^{\op}$.  In fact, we have
\begin{displaymath}
L^{\lambda}_i=\bS_{\lambda/(1^i)}(\wh{\bV}_*).
\end{displaymath}
Thus $L^{\lambda}_{\bullet}$ is a projective resolution of $V_{\lambda}^{\vee}$; in fact, it is minimal.

\article
The projective resolution $L_{\lambda}^{\bullet}$ of $V_{\lambda}^{\vee}$ gives an injective resolution
\[
0 \to V_{\lambda} \to \bS_\lambda(\bV) \to \bS_{\lambda/(1)} (\bV) \to \cdots \to \bS_{\lambda / (1^i)}(\bV) \to \cdots \to \bS_{\lambda / (1^{\ell(\lambda)})} (\bV) \to 0
\]
in $\Rep^{\pol}(\GA)$. By Theorem~\pref{thm:ga:tca}, $\Rep^{\pol}(\GA)$ is equivalent to $\Mod_A^{\fin}$, which is in turn equivalent to the category $\Mod_K$ (see the discussion in \pref{art:ga:modK}). Under these equivalences, the above injective resolutions become the injective resolutions of simple objects of $\Mod_K$ described in \cite[Theorem 2.3.1]{symc1}.

\subsection{Schur functors for objects with trace, universal property and specialization}

\article
Let $\cA$ be a tensor category.  Define $T(\cA)$ to be the category whose objects are pairs $(A, t)$, where $A$ is an object of $\cA$ and $t$ is a map $A \to \bC$, and whose morphisms are the obvious things.  We will typically suppress the $t$ from notation.  Given $A \in T(\cA)$, define $\cK(A)$ to be the object of $\cA^{\ds}$ given by $L \mapsto A^{\otimes L}$.  Functoriality with respect to morphisms in $\ds$ makes use of $t$, and is as defined in \pref{ga:K}.  For an object $M$ of $\Mod^{\fin}_{\us}$, define
\begin{displaymath}
S_M(A)=M \otimes^{\ds} \cK(A).
\end{displaymath}
Then $M \mapsto S_M(A)$ defines a covariant functor $\Mod_{\us}^{\fin} \to \cA$ which is left-exact (see \pref{diag:tens}) and a tensor functor (since $\amalg^*\cK(A)$ is a tensor kernel, see \pref{ker-tens}).

\begin{Theorem}
\label{ga:univ}
To give a left-exact tensor functor from $\Rep^{\pol}(\GA)$ to a tensor category $\cA$ is the same as to give an object of $T(\cA)$.  More precisely, letting $\bM$ be the object of $\Mod_{\us}^{\fin}$ corresponding to $\bV$, the functors
\begin{displaymath}
\Phi_{\cA} \colon \LEx^{\otimes}(\Mod_{\us}^{\fin}, \cA) \to T(\cA), \qquad F \mapsto F(\bM)
\end{displaymath}
and
\begin{displaymath}
\Psi_{\cA} \colon T(\cA) \to \LEx^{\otimes}(\Mod_{\us}^{\fin}, \cA), \qquad A \mapsto (M \mapsto S_M(A))
\end{displaymath}
are mutually quasi-inverse equivalences of categories.
\end{Theorem}

\begin{proof}
The proof is the same as that of Theorem~\pref{gl:univ}.
\end{proof}

\begin{remark}
This theorem can be rephrased as follows: the functor $T \colon \TCat \to \Cat$ is corepresented by $\Rep^{\pol}(\GA)$, with the universal object in $T(\Rep^{\pol}(\GA))$ being $\bV$.  See \pref{art:schur-univ} for notation.
\end{remark}

\article[The specialization functor] \label{art:ga-special}
The object $\bC^d$ defines an object of $T(\Rep^{\pol}(\GA(d)))$, and so by Theorem~\pref{ga:univ} we obtain a left-exact tensor functor
\begin{displaymath}
\Gamma_d \colon \Rep^{\pol}(\GA) \to \Rep^{\pol}(\GA(d)),
\end{displaymath}
which we call the {\bf specialization functor}.

\article[Specialization via invariants] \label{art:GA-special}
The group $\GA(\infty)$ is the subgroup of $\GL(\infty)$ consisting of matrices where the top left entry is 1, and all other entries in the first row are 0.  Let $G_d$ be the subgroup of $\GA(\infty)$ which agrees with the identity matrix outside of the upper left $d \times d$ block, and let $H_d$ be the subgroup which agrees with the identity matrix outside the complementary block.  The group $G_d$ is isomorphic to $\GA(d)$, while $H_d$ is isomorphic to $\GL(\infty)$ --- the infinite general {\it linear} group.  One can show that $\Gamma_d(V)=V^{H_d}$; we leave this to the reader.  Since a polynomial representation of $\GA(\infty)$ restricts to a polynomial representation of $H_d$, and the category of such representations is semi-simple, we obtain:

\begin{proposition}
The specialization functor $\Gamma_d$ is exact.
\end{proposition}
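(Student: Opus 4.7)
The proof plan is essentially laid out in the paragraph preceding the proposition, so my task is to fill in the details. Exactness on an abelian category is preserved under composition, so the plan is to factor $\Gamma_d = (-)^{H_d} \circ \res$, where $\res$ is restriction of representations from $\GA(\infty)$ to $H_d$, and to show that each factor is exact.

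First, I would establish the identification $\Gamma_d(V) = V^{H_d}$ mentioned (but left to the reader) in \pref{art:GA-special}. The argument is parallel to the proof of Proposition~\pref{prop:gl_invts}: writing $V = M \otimes^{\ds} \cK(\bV, \bt)$ with $M \in \Mod_{\us}^{\fin}$, and noting that $\otimes^{\ds}$ is a limit and therefore commutes with the formation of $H_d$-invariants, it suffices to check that $T_n^{H_d} = (\bC^d)^{\otimes n}$. Using the $H_d$-stable decomposition $\bV = \bC^d \oplus \bV'$, where $H_d$ fixes $\bC^d$ pointwise and acts on $\bV'$ through its isomorphism with $\GL(\infty)$, one expands $T_n = (\bC^d \oplus \bV')^{\otimes n}$ into summands of the form $W \otimes (\bV')^{\otimes k}$, and invokes Proposition~\pref{gl:oneway} to see that $(\bV')^{\otimes k}$ has no $H_d$-invariants for $k > 0$.

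Second, I would observe that this same decomposition shows that the restriction of $T_n$ to $H_d$ is a polynomial representation of $H_d \cong \GL(\infty)$ in the sense of \pref{art:polyrepGL}. Since polynomial representations of $\GL(\infty)$ form an abelian subcategory closed under subquotients, every object of $\Rep^{\pol}(\GA)$ (being a subquotient of some $T_n$) restricts to a polynomial representation of $H_d$. Thus $\res$ factors as a functor $\Rep^{\pol}(\GA) \to \Rep^{\pol}(\GL)$, and this functor is exact since restriction of representations is always exact.

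Third, Proposition~\pref{prop:polGLss} tells us that $\Rep^{\pol}(\GL)$ is semisimple. The functor $(-)^{H_d}$ on $\Rep^{\pol}(\GL)$ is always left exact, but in a semisimple category every left exact functor is exact (any short exact sequence splits, so the functor preserves surjections automatically). Composing the two exact functors yields that $\Gamma_d$ is exact, completing the argument. I do not anticipate a serious obstacle; the only minor subtlety is verifying the invariants description of $\Gamma_d$, which follows the same tensor-kernel template used repeatedly earlier in the paper.
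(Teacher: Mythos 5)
Your proposal is correct and follows exactly the route the paper sketches in the sentence preceding the proposition: identify $\Gamma_d(V)$ with $V^{H_d}$ (by adapting the tensor-kernel argument of Proposition~\pref{prop:gl_invts}), observe that restriction to $H_d \cong \GL(\infty)$ lands in the semisimple category $\Rep^{\pol}(\GL)$, and conclude that taking $H_d$-invariants is exact. You have also supplied the details of the invariants identification, which the paper left to the reader.
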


\article
\label{ga:T0}
Let $T_0=T(\Vec^{\fin})$ be the category of pairs $(V, t)$ where $V$ is a finite dimensional vector space and $t \colon V \to \bC$ is a linear map.  Let $T_1$ be the subcategory on objects where $t$ is non-zero.  As in previous cases, one can build functors between $\Rep^{\pol}(\GA)$ and $\Fun(T_1, \Vec^{\fin})$. For each $d$, define $H_d \subset \GA(\infty)$ as in \pref{art:GA-special}. Let $\cA$ be the category of representations of $\GA(\infty)$ such that each element is stabilized by $H_d$ for some $d$. Define a functor
\[
\sF \colon \cA \to \Fun(T_1, \Vec),
\]
as follows. For $U \in \cA$ and $V \in T_1$, pick an isomorphism $V \cong \bC^d$ and put $\sF(U)(V) = U^{H_d}$ (see \pref{gl:acat} for how to make this canonical).

\begin{Theorem}
The functor $\sF$ induces an exact fully faithful tensor functor $\Rep^{\pol}(\GA) \to \Fun(T_1, \Vec^{\fin})$.
\end{Theorem}

\begin{proof}
The proof is similar to the proof of Theorem~\pref{gl:schur}.  We have exactness in this context due to the exactness of the specialization functor.
\end{proof}

\section{The symmetric group} \label{sec:symgroup}

\subsection{\texorpdfstring{Representations of $\fS(\infty)$}{Representations of S(inf)}}

\article[Algebraic representations] \label{art:symgpdefn}
Let $\cB=\{1,2,\ldots,\}$ and let $\fS=\fS(\infty)$ be the group of permutations of $\cB$ which fix all but finitely many elements.  The group $\fS$ acts on $\bV$ by permuting the basis vectors $\{e_i\}_{i \in \cB}$.  We say that a representation of $\fS$ is {\bf algebraic} if it appears as a subquotient of a direct sum of the representations $T_n=\bV^{\otimes n}$.  We let $\Rep(\fS)$ denote the category of algebraic representations.  It is an abelian tensor category.  Some remarks:
\begin{enumerate}
\item We use the notation $\fS$ when we think of the symmetric group on the ``representation theory side'' and $S$ when we think of it on the ``diagram category side.''
\item For an integer $d \ge 0$, we let $G_d$ (resp.\ $H_d$) be the subgroup of $\fS(\infty)$ which fixes all $i>d$ (resp.\ $i \le d$).  Then $G_d=\fS(d)$ while $H_d$ is isomorphic to $\fS(\infty)$.  The two subgroups commute, and so $G_d \times H_d$ is a subgroup of $\fS(\infty)$.
\item The representation $\bV_*$ of $\fS(\infty)$ is isomorphic to $\bV$, so we do not gain anything by considering tensor powers of $\bV_*$.
\item One can also define the category $\wh{\Rep}(\fS)$ of pro-algebraic representations.  ``Continuous dual'' provides an equivalence of $\Rep(\fS)^{\op}$ with $\wh{\Rep}(\fS)$.
\end{enumerate}

\article \label{art:symgpreps}
In preparation for some of the proofs in this section, we will need a few basic facts about the representation theory of the symmetric groups.

\begin{itemize}
\item If $\nu^1, \dots, \nu^r$ is the set of partitions obtainable from $\lambda$ by removing a single box, then $\dim \bM_\lambda = \dim \bM_{\nu^1} + \cdots + \dim \bM_{\nu^r}$ \cite[(2.8)]{expos}.

\item The multiplicity of $\bM_\mu$ in the tensor product $\bM_\lambda \otimes (\bM_{(n-1,1)} \oplus \bC)$ is the number of ways to remove a single box from $\lambda$ and add it back to get $\mu$ \cite[Exercise 7.81]{stanley}.
\end{itemize}

\article[An analogue of Weyl's construction (finite case)]
\label{sym:fin-weyl}
Before we study the structure of $\Rep(\fS)$, we give a construction of the irreducible representations of the finite symmetric groups $\fS(d)$ analogous to Weyl's construction for the classical groups.  Let $T_n^d=(\bC^d)^{\otimes n}$.  Let $t \colon T_1^d \to \bC$ be the augmentation map, sending each $e_i$ to 1, and let $t_i \colon T_n^d \to T_{n-1}^d$ be the map given by applying $t$ to the $i$th factor.  Let $s \colon T_2^d \to T_1^d$ be the map $e_i \otimes e_j \mapsto \delta_{i,j} e_i$, and let $s_{i,j} \colon T_n^d \to T_{n-1}^d$ be the map obtained by applying $s$ to the $i$th and $j$th factors and inserting the result in the final tensor factor.  We let $T_{[n]}^d$ be the intersection of the kernels of the $t_i$ and $s_{i,j}$.  It is clear that $T_{[n]}^d$ is stable under the action of $S_n \times \fS(d)$ on $T_n$.  For a partition $\lambda$ of $n$, we put
\begin{displaymath}
V_{\lambda}^d=\Hom_{S_n}(\bM_{\lambda}, T_{[n]}^d)
\end{displaymath}
This space carries an action of $\fS(d)$.  Before stating the main result, we introduce a piece of notation:  for a partition $\lambda=(\lambda_1, \ldots, \lambda_r)$ and an integer $k$, we let $\lambda[k]$ be the sequence $(k, \lambda_1, \ldots, \lambda_r)$.  This is a partition if $k \ge \lambda_1$.

\begin{proposition}
The representation $V^d_{\lambda}$ is isomorphic to $\bM_{\lambda[d-n]}$ if $\lambda[d-n]$ is a partition, and $0$ otherwise.
\end{proposition}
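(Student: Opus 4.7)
The plan is to prove this by decomposing $T_n^d$ as an $S_n \times \fS(d)$-bimodule and identifying $T_{[n]}^d$ with a specific summand.  Since $T_{[n]}^d$ is an $S_n \times \fS(d)$-subrepresentation of $T_n^d$, it splits as $\bigoplus_\lambda \bM_\lambda \boxtimes V_\lambda^d$ with $V_\lambda^d$ an $\fS(d)$-module; the goal is to identify $V_\lambda^d$ in this decomposition.

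First I would reduce to the reflection representation.  The $\fS(d)$-equivariant splitting $\bC^d = \bC \cdot v \oplus U_d$, where $v = e_1 + \cdots + e_d$ and $U_d \cong \bM_{(d-1,1)}$, makes the augmentation $t$ the projection onto $\bC \cdot v$ (since $t(v) = d$ and $t|_{U_d}=0$), so $\ker(t) = U_d$ and $\bigcap_i \ker(t_i) = U_d^{\otimes n}$.  Hence $T_{[n]}^d \subseteq U_d^{\otimes n}$, and $V_\lambda^d$ embeds into $\Hom_{S_n}(\bM_\lambda, U_d^{\otimes n})$, whose structure as an $\fS(d)$-module can be expanded iteratively using the Pieri-type rule $\bM_\mu \otimes \bM_{(d-1,1)} = -\bM_\mu + \bigoplus_{\mu'} N(\mu, \mu')\, \bM_{\mu'}$, where $\mu'$ ranges over partitions of $d$ obtained from $\mu$ by removing a box and then adding one, and $N(\mu,\mu')$ counts the number of such ways.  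This rule follows from $\bC^d = \Ind_{\fS(d-1)}^{\fS(d)} \bC$ and Frobenius reciprocity.

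Next I would induct on $n$.  For $n = 0$, $T_{[0]}^d = \bC = \bM_{(d)} = \bM_{\emptyset[d]}$; for $n=1$, $T_{[1]}^d = U_d = \bM_{(d-1,1)} = \bM_{(1)[d-1]}$.  For the inductive step, I would consider the map $T_{[n-1]}^d \otimes U_d \to U_d^{\otimes n}$ given by tensor product: its image lands in $\bigcap_i \ker(t_i)$, and $T_{[n]}^d$ is cut out from this image by imposing the further conditions $s_{i,n} = 0$ for $i < n$.  Using the inductive description $V_\mu^d \cong \bM_{\mu[d-n+1]}$ for $\mu \vdash n-1$, the Pieri expansion above yields a recursion for the character of $V_\lambda^d$ at level $n$, and the condition that $\lambda[d-n]$ be a partition (i.e.\ $\lambda_1 \leq d-n$) corresponds exactly to the Young diagram of $\lambda[d-n]$ being valid.

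The main obstacle is verifying that precisely the summand $\bM_{\lambda[d-n]}$ survives the imposition of the $s_{i,j}$ conditions, and no other term from the Pieri expansion.  One clean concrete check is the decomposition $U_d \otimes U_d \cong \bM_{(d)} \oplus \bM_{(d-1,1)} \oplus \bM_{(d-2,2)} \oplus \bM_{(d-2,1,1)}$, for which one verifies directly that $s|_{U_d \otimes U_d}$ is an isomorphism onto $W_d = \bM_{(d)} \oplus \bM_{(d-1,1)}$ and kills the last two summands, matching the claim at $n=2$ and suggesting the inductive pattern.  Alternatively, and more cleanly, one invokes Schur--Weyl duality for the partition algebra $P_n(d)$ (the centralizer of $\fS(d)$ on $T_n^d$, due to Martin and Jones): under the $\fS(d) \times P_n(d)$-bimodule decomposition of $T_n^d$, the maps $t_i$ and $s_{i,j}$ cut out exactly the ``top-propagation-number'' summand indexed by partitions $\lambda \vdash n$, whose $\fS(d)$-multiplicity space is $\bM_{\lambda[d-n]}$ (with the irreducible $P_n(d)$-module restricting to $\bM_\lambda$ as an $S_n$-module), giving the stated identification.
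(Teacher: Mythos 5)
Your second, ``cleaner'' route is the one the paper actually takes: it invokes the partition algebra $\cA_n(d)$, identifies $T_{[n]}^d$ with the joint kernel of the ideal $I_n=(p_i,p_{i+\frac12})$, and applies the double centralizer theorem. But at exactly the point where you write ``whose $\fS(d)$-multiplicity space is $\bM_{\lambda[d-n]}$,'' you are asserting the proposition rather than proving it; and your first route likewise stops at ``the main obstacle,'' confirming it only at $n=2$ and then appealing to an ``inductive pattern.'' Neither route resolves the two things that actually need an argument: (i) that $\bM_{\lambda[d-n]}$ really does lie in $T_{[n]}^d$ (with multiplicity one as an $\bM_\lambda$-isotypic piece) when $\lambda_1\le d-n$, and (ii) that $V_\lambda^d=0$ when $\lambda_1>d-n$.

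The paper closes both gaps concretely. For (i) it computes the $\fS(d)$-multiplicities in $\bS_\lambda(\bC^d)$ via the plethysm $s_\mu(1,x_1,\dots,x_d,x_1^2,x_1x_2,\dots)$; from the semistandard-tableau description this shows $\bM_{\lambda[d-n]}$ appears with multiplicity exactly one in $\bS_\lambda(\bC^d)$ and that any $\bM_\mu$ appearing satisfies $\sum_{i\ge2}\mu_i\le|\lambda|$, which forces $\bM_{\lambda[d-n]}$ to be annihilated by every $t_i$ and $s_{i,j}$. For (ii), rather than citing partition-algebra structure theory, the paper fixes $d'=\lambda_1+n-1$ (the threshold value) and runs a multiplicity count using the branching/Pieri facts it records in \pref{art:symgpreps}: it shows the multiplicity of the only candidate irreducible in $T_n^{d'}$ strictly exceeds $\dim\bM_\lambda$, ruling out a nonzero $V_\lambda^{d'}$, and then uses the inclusion $V_\lambda^d\subset V_\lambda^{d'}$ to kill all smaller $d$.

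One more caution on your citation route: the clean indexing of irreducible $P_n(d)$-modules by partitions $\lambda$ with $|\lambda|\le n$, with multiplicity space $\bM_{\lambda[d-|\lambda|]}$, is typically stated in the stable range (e.g.\ $d\ge 2n$, where the algebra is semisimple). The ``and $0$ otherwise'' clause of the proposition lives precisely outside that range, so quoting the stable statement does not cover the case you need. Finally, your explicit $n=2$ computation is correct and genuinely useful as a sanity check, and the Pieri recursion you set up is a legitimate alternative strategy to the paper's plethysm computation; but as written it is a plan for case (i) only, and it would still need the separate vanishing argument for $\lambda_1>d-n$.
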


\begin{proof}
We will use the partition algebra $\cA_n(d)$ which will be defined in \S\ref{sec:partalg}. There is an action of $\cA_n(d)$ on $T_n^d$ so that the image of $\cA_n(d)$ in $\End(T_n^d)$ is the full centralizer of the action of $\fS_d$ (Theorem~\pref{thm:partalgcent}). The generators of this action are summarized in \pref{art:partalg-gens}. From this description, we see that if $I_n$ is the ideal generated by the generators $p_i$ and $p_{i+\frac{1}{2}}$, then $\cA_n(d) / I_n = \bC[S_n]$ and that $T_{[n]}^d$ is exactly the subspace of $T_n^d$ annihilated by $I_n$. By the double centralizer theorem, we have a decomposition 
\[
T_n^d = \bigoplus_\lambda X_\lambda \boxtimes Y_\lambda
\]
where $X_\lambda$ is an irreducible representation of $\cA_n(d)$ and $Y_\lambda$ is an irreducible representation of $\fS(d)$. From the previous discussion, $T_{[n]}^d$ is the direct sum of those $X_\lambda \boxtimes Y_\lambda$ for which the action of $I_n$ is identically 0 on $X_\lambda$, and in this case $Y_\lambda = V^d_\lambda$.

First suppose that $d-n \ge \lambda_1$. We show that $V^d_\lambda$ is a nonzero irreducible module by a character calculation. We use the following fact: the multiplicity of $\bM_\mu$ in $\bS_\lambda(\bC^d)$ is given by the coefficient of the Schur polynomial $s_\lambda(x_1, \dots, x_d)$ in the plethysm $s_\mu(1, x_1, \dots, x_d, x_1^2, x_1x_2, \dots)$ (i.e., plugging in all monomials in $x_1, \dots, x_d$ of all possible degrees) \cite[Ex.~7.74]{stanley}. Using the combinatorial definition of Schur polynomials as a generating function for semistandard Young tableaux \cite[\S 7.10]{stanley}, this immediately implies that $\bM_{\lambda[d-n]}$ appears with multiplicity 1 in $\bS_\lambda(\bC^d)$ and that if $\bM_\mu$ appears, then we must have $\sum_{i \ge 2} \mu_i \le |\lambda|$. In particular, $\bM_{\lambda[d-n]}$ is in the kernel of all maps of the form $T_n^d \to T_{n-1}^d$ discussed above, so we conclude that $V^d_\lambda \cong \bM_{\lambda[d-n]}$.

Now suppose that $\lambda_1 > d-n$. We claim that $V_\lambda^d = 0$. It is easy to see that the construction of $T_{[n]}^d$ is stable with respect to $d$ in the following sense: if we identify $\bC^d \subset \bC^{d+1}$ as the subspace spanned by $e_1, \dots, e_d$, then $T_{[n]}^d \subset T_{[n]}^{(d+1)}$, so in particular, $V_\lambda^d \subset V_{\lambda}^{(d+1)}$. Pick $d'$ so that $\lambda_1 = d'-n+1$. It is enough to show that $V^{(d')}_{\lambda} = 0$.

By what we have already shown, $V_\lambda^{(d'+1)} = \bM_{(d'+1-n, \lambda)}$, so $V_\lambda^{(d')}$, if nonzero, must be of the form $\bM_{(d'+1-n,\mu)}$ where $\mu$ is obtained from $\lambda$ by removing a single box. We claim that the multiplicity $m_\mu$ of $\bM_{(d'+1-n,\mu)}$ in $T_n^{(d')}$ is strictly bigger than the dimension of $\bM_\lambda$ (this implies that $V_\lambda^{(d')} = 0$ by the discussion above on the relationship between $T^n_d$ and $T^{[n]}_d$). Let $\nu^1, \dots, \nu^r$ be all partitions we can get from $\lambda$ by removing a single box. 
For each $i$, the multiplicity of $\bM_{(d'-n+1, \nu^i)}$ in $T_{n-1}^{(d')}$ is $\dim \bM_{\nu^i}$. So from the dimension equation and tensor product rule in \pref{art:symgpreps}, they each contribute $\dim \bM_{\nu^i}$ to $m_\mu$, and so far we see $m_\mu \ge \dim \bM_\lambda$. Finally, let $\eta$ be the result of removing a single box from $\mu$ (the only obstruction to the existence of $\eta$ is if $n=1$, but in this case, the proposition is trivial). Then the multiplicity of $\bM_{(d'-n+2, \eta)}$ in $T_{n-1}^{(d')}$ is at least 1, and it also contributes to $m_\mu$, so we conclude that $m_\mu > \dim \bM_\lambda$, which proves the claim.
\end{proof}

\begin{remark}
This traceless tensor construction for the symmetric group was also considered by Littlewood in \cite[\S 4]{littlewood2}. However, he phrases the construction in terms of the quadratic and cubic invariants of the symmetric group.
\end{remark}

\article[An analogue of Weyl's construction (infinite case)] \label{symgp:weyl:inf}
Much of the above discussion carries over to the infinite case.  We define the maps $t_i \colon T_n \to T_{n-1}$ and $s_{i,j} \colon T_n \to T_{n-1}$ as before, and let $T_{[n]}$ be the intersection of their kernels.  For a partition $\lambda$ of $n$, we put
\begin{displaymath}
V_{\lambda}=\Hom_{S_n}(\bM_{\lambda}, T_{[n]}).
\end{displaymath}
This space carries an action of $\fS(\infty)$ and obviously forms an algebraic representation.  We have a decomposition
\begin{equation}
\label{sym:decomp}
T_{[n]}=\bigoplus_{\vert \lambda \vert=n} \bM_{\lambda} \boxtimes V_{\lambda}
\end{equation}
of $S_n \times \fS(\infty)$ modules.  We have an exact sequence
\begin{equation}
\label{sym:seq}
0 \to T_{[n]} \to T_n \to (T_{n-1})^{\oplus n(n+1)/2},
\end{equation}
where the rightmost map is the $n$ maps $t_i$ and the $n(n-1)/2$ maps $s_{i,j}$.

\begin{Proposition}
The $V_{\lambda}$ constitute a complete irredundant set of simple objects of $\Rep(\fS)$.
\end{Proposition}

\begin{proof}
Since $V_{\lambda}=\bigcup_{d \gg 0} V_{\lambda}^d$ and each $V_{\lambda}^d$ is an irreducible representation of $\fS(d)$ (for $d \gg 0$), the representation $V_{\lambda}$ is irreducible.  An induction argument using \eqref{sym:decomp} and \eqref{sym:seq} shows that every simple object of $\Rep(\fS)$ is isomorphic to some $V_{\lambda}$.  Finally, we show that if $V_{\lambda}$ is isomorphic to $V_{\mu}$ then $\lambda=\mu$. It is possible to give an argument similar to the cases of the classical groups above. To do this, one replaces the maximal torus with the subalgebra of the group algebra of the symmetric group generated by the Jucys--Murphy elements \cite[\S 2]{ov} $X_i$, which is the sum of the transpositions $(1,i) + (2,i) + \cdots + (i-1,i)$, and is well-defined for the infinite symmetric group. The relevant character theory is explained in \cite[\S 5]{ov}. The punchline is that $V^d_\lambda$ has a basis $v_T$ indexed by standard Young tableaux $T$ of shape $(d-|\lambda|, \lambda)$ which is an eigenbasis for $X_2, X_3, \dots, X_d$. The eigenvalue of $X_i$ on $v_T$ is the content (row index minus column index) of the box of $T$ which contains the label $i$. Furthermore, this eigenbasis is compatible with the inclusions $V_\lambda^d \subset V_\lambda^{(d+1)}$, so is well-defined for $d \to \infty$, and we see that $V_\lambda \cong V_\mu$ if and only if $\lambda = \mu$.
\end{proof}

\begin{remark}
One can picture $V_{\lambda}$ as corresponding to the Young diagram $\lambda[\infty]$, i.e., $\lambda$ placed below an infinite first row.
\end{remark}

\begin{Proposition} \label{prop:fsfinitelength}
Every object of $\Rep(\fS)$ has finite length.
\end{Proposition}

\begin{proof}
This is just like the proof of Proposition~\pref{gl:finlen}.
\end{proof}

\begin{Proposition}
\label{sym:const}
The simple constituents of $T_n$ are those $V_{\lambda}$ with $\vert \lambda \vert \le n$.
\end{Proposition}

\begin{proof}
This is just like the proof of Proposition~\pref{gl:const}.
\end{proof}

\begin{Remark}
The analogues of Propositions~\pref{gl:nohom} and~\pref{gl:oneway} do not hold.  For example, the map $T_1 \to T_n$ taking $e_i$ to $e_i^{\otimes n}$ is $\fS(\infty)$-equivariant, and therefore the irreducible $V_{(1)}$ occurs as a submodule of $T_n$ for all $n$.
\end{Remark}

\subsection{\texorpdfstring{Modules over $\Sym(\bC\langle 1 \rangle)$}{Modules over Sym(C<1>)}}

\article
In this section we relate $\Rep(\fS)$ to the category of modules over the tca $A=\Sym(\bC\langle 1 \rangle)=\Sym(\bV)$.  We let $\Mod_A^{\fg}$ denote the category of finitely generated $A$-modules and we let $\Mod_K$ be the Serre quotient of $\Mod_A^{\fg}$ by the category $\Mod_A^{\fin}$ of torsion $A$-modules; see \cite[\S 2]{symc1} for background on this category.  Write 
\[
T \colon \Mod_A^{\fg} \to \Mod_K
\]
for the quotient functor and 
\[
S \colon \Mod_K \to \Mod_A^{\fg}
\]
for its right adjoint.  The category $\Mod_A$ is equivalent to the category $\Mod_{\us}$, as discussed in \S\ref{ga:amod}.

\article
\label{sym:Tfunc}
Let $M$ be an $A$-module, thought of as an object of $\Mod_{\us}$.  Thus $M$ assigns to each finite set $L$ a vector space $M_L$ and to each injection $L \to L'$ of finite sets, a map of vector spaces $M_L \to M_{L'}$.  Define
\begin{displaymath}
T'(M) = \varinjlim_{L \subset \cB} M_L,
\end{displaymath}
where the colimit is over the finite subsets $L$ of $\cB$.  It is clear that $T'(M)$ is an $\fS$-module.  A simple computation shows that $T'(A \otimes \bV^{\otimes n})$ is an algebraic representation of $\fS$.  As every finitely generated $A$-module $M$ is a quotient of a finite direct sum of modules of the form $A \otimes \bV^{\otimes n}$, and direct limits are exact, we see that $T'(M)$ is a quotient of an algebraic representation and therefore algebraic.  We have thus defined an exact functor
\begin{displaymath}
T' \colon \Mod_A^{\fg} \to \Rep(\fS).
\end{displaymath}
It is clear that $T'$ kills $\Mod_A^{\fin}$, and therefore induces an exact functor
\begin{displaymath}
\Phi \colon \Mod_K \to \Rep(\fS).
\end{displaymath}

\article
\label{sym:Sfunc}
Let $M$ be an $\fS$-module.  Given a finite set $L$ of cardinality $d$, choose a bijection $L \to \ul{d}$ and put $S'(M)_L=M^{H_d}$ ($H_d$ is defined in \pref{art:symgpdefn}); more canonically, define $S'(M)_L$ via a limit as in \pref{gl:acat}.  Then $S'(M)$ defines an object of $\Mod_{\us}$.  We claim that $S'$ takes algebraic representations to finitely generated $A$-modules.  Since $S'$ is left-exact, it suffices to verify the claim for a simple algebraic module $V_{\lambda}$.  In fact, since $V_{\lambda}$ injects into $T_n$, for some $n$, it is enough to show that $S'(T_n)$ is finitely generated.  This is a simple computation, which we leave to the reader. We have thus defined a left-exact functor
\begin{displaymath}
S' \colon \Rep(\fS) \to \Mod_A^{\fg}.
\end{displaymath}
There is an obvious isomorphism $T'S'=\id$ and natural transformation $\id \to S'T'$, and  these give $S'$ the structure of a right adjoint of $T'$.  We define
\begin{displaymath}
\Psi \colon \Rep(\fS) \to \Mod_K
\end{displaymath}
to be the composition $\Psi=TS'$.

\begin{Theorem}
\label{sym:tca}
The functors $\Phi$ and $\Psi$ are mutually quasi-inverse equivalences between $\Mod_K$ and $\Rep(\fS)$.
\end{Theorem}

\begin{proof}
It follows from what we have already done that $\Phi \Psi=\id$ and that there is a natural map $\eta \colon \id \to \Psi \Phi$.  It suffices to show that $\eta$ is an isomorphism.  Let $M$ be an object of $\Rep(\fS)$.  We have a natural morphism $f \colon M \to S'(T'(M))$.  As $T'(f)$ is an isomorphism and $T'$ is exact, we see that $T'(\ker{f})=T'(\coker{f})=0$.  If $N$ is a finitely generated $A$-module with $T'(N)=0$ then $N$ has finite length.  We thus see that $\ker(f)$ and $\coker(f)$ have finite length.  This shows that $T(\ker{f})=T(\coker{f})=0$, and so $T(f)$ is an isomorphism.   Thus for any $M \in \Mod_A$, the natural map $TM \to T(S'(T'(M)))=\Psi(\Phi(TM))$ is an isomorphism.  Since every object of $\Mod_K$ is of the form $TM$ for some $M \in \Mod_A$, this proves the result.
\end{proof}

\begin{Corollary} \label{cor:repfSmodA}
The categories $\Rep(\fS)$ and $\Mod_A^{\fin}$ are equivalent.
\end{Corollary}

\begin{proof}
This follows from Theorem~\pref{sym:tca} and \cite[Thm.~2.5.1]{symc1}, which states that $\Mod_K$ and $\Mod_A^{\fin}$ are equivalent.
\end{proof}

\begin{Corollary} \label{cor:sym-ga}
The categories $\Rep(\fS)$ and $\Rep^{\pol}(\GA)$ are equivalent.
\end{Corollary}

\begin{proof}
This follows from Corollary~\pref{cor:repfSmodA} and Theorem~\pref{thm:ga:tca}.
\end{proof}

\begin{remark}
This equivalence can be realized as an ``infinite Schur--Weyl duality,'' as follows. Let $\bt \in \bV_*$ be as in \pref{art:gadefn}. Define $\bV_*^{\otimes \infty}$ to be the direct limit of the $\bV_*^{\otimes n}$, where the transition maps are given by $v \mapsto v \otimes \bt$. Concretely, an element of $\bV_*^{\otimes \infty}$ is a finite sum of tensors of the form $v_1 \otimes v_2 \otimes v_3 \otimes \cdots$, where $v_i \in \bV_*$ and for all but finitely many $i$ we have $v_i=\bt$. The group $\fS(\infty)$ acts on $\bV_*^{\otimes \infty}$ by permuting tensor factors. We thus have a functor
\begin{displaymath}
\Rep(\fS) \to \Rep^{\rm apol}(\GA), \qquad M \mapsto \Hom_{\fS}(M, \bV_*^{\otimes \infty}).
\end{displaymath}
(Here $\rm apol$ means ``anti-polynomial,'' see \pref{art:gadefn}.) This functor is a contravariant equivalence. Since $\Rep^{\rm apol}(\GA)$ is the opposite category of $\Rep^{\pol}(\GA)$, the above functor induces a covariant equivalence $\Rep(\fS) \to \Rep^{\pol}(\GA)$.
\end{remark}

\article[Compatibility with tensor products] \label{art:symwrongtensor}
The equivalences of \pref{sym:tca}, \pref{cor:repfSmodA} and \pref{cor:sym-ga} are {\it not} equivalences of tensor categories using the usual tensor structures.  We now describe an alternative tensor structure on $\Mod_K$ which the equivalence \pref{sym:tca} respects.  Identifying $\Mod_A$ with $\Mod_{\us}$, define the {\bf pointwise tensor product} $\boxtimes$ as in \pref{art:ptwise}, i.e., by $(M \boxtimes N)_L=M_L \otimes N_L$.  It is not difficult to show that this preserves $\Mod_A^{\fg}$.  Furthermore, if $M$ has finite length and $N$ is finitely generated then $M \boxtimes N$ has finite length.  It follows that $\boxtimes$ induces a tensor structure on $\Mod_K$.  Since tensor products and direct limits commute, the functor $T'$ in \pref{sym:Tfunc} is a tensor functor if we use the $\boxtimes$ tensor structure on $\Mod_A$.  It follows that $\Phi$ and $\Psi$ are equivalences of tensor categories if we use $\boxtimes$ on $\Mod_K$.

\begin{Proposition} \label{prop:symc1tensorproj}
If $M$ and $N$ are finitely generated projective $A$-modules then so is $M \boxtimes N$.
\end{Proposition}

\begin{proof}
Let $A'$ be the $\GL(\infty)$-equivariant Weyl algebra on $\bC\langle 1 \rangle$, see \pref{art:bialgebras}. By Proposition~\pref{art:bialgebras}, a finitely generated $A$-module is projective if and only if the action of $A$ on extends to an action of $A'$.  The proposition follows from the fact that the pointwise tensor product of two $A'$-modules admits a natural $A'$-module structure.  (To see this, note that one can think of an $A'$-module as an object $M$ having compatible $A$-module and $A$-comodule structures.  The pointwise tensor product of $A$ (co)modules is again an $A$ (co)module, so all that remains to verify is that compatibility is preserved.  We leave this to the reader.) 
\end{proof}

\begin{Proposition} \label{prop:symc1tensorinj}
If $M$ and $N$ are injective objects of $\Mod_K$ then so is $M \boxtimes N$.
\end{Proposition}

\begin{proof}
This follows from Proposition~\pref{prop:symc1tensorproj} and the fact \cite[Remark~4.2.7]{symc1} that the functor $T$ is an equivalence between the categories of projectives in $\Mod_A^{\fg}$ and injectives in $\Mod_K$.
\end{proof}

\begin{Proposition} \label{prop:repfStensorinj}
The tensor product of two injectives in $\Rep(\fS)$ is again injective.
\end{Proposition}

\begin{proof} 
This follows from Proposition~\pref{prop:symc1tensorinj} and the fact that the equivalence of Theorem~\pref{sym:tca} is compatible with the $\boxtimes$ tensor product.
\end{proof}

\begin{Proposition}
\label{sym:Vninj}
The object $\bV^{\otimes n}$ of $\Rep(\fS)$ is injective.
\end{Proposition}

\begin{proof}
By Proposition~\pref{prop:repfStensorinj}, it suffices to treat the $n=1$ case.  An easy computation shows that $\bV=T'(A \otimes \bV)=\Phi(T(A \otimes \bV))$.  Since $A \otimes \bV$ is projective in $\Mod_A^{\fg}$, its image under $T$ is injective in $\Mod_K$.  The result follows, since $\Phi$ is an equivalence.
\end{proof}

\article[The objects $\bS_{\lambda}(\bV)$] \label{sym:inj}
By Proposition~\pref{sym:Vninj}, the Schur functors $\bS_{\lambda}(\bV)$ are injective in $\Rep(\fS)$.  In contrast to previous situations, i.e., \pref{art:gl-injectives}, \pref{art:o-injectives}, \pref{art:ga-injectives}, these objects are not indecomposable.  For example, we have $\Sym^2(\bV)=I_{(2)} \oplus I_{(1)}$, where we write $I_{\lambda}$ for the injective envelope of the simple $V_{\lambda}$.  Determining the general decomposition of $\bS_{\lambda}(\bV)$ into indecomposable injectives seems like a difficult problem. It is also an interesting problem to calculate the multiplicities of simple objects in $\bS_\lambda(\bV)$, and one can interpret the intermediate step of decomposing into indecomposable injectives as putting additional structure on these multiplicities. 

\subsection{The partition algebra and category} \label{sec:partalg}

\article[The monoid $\cP_n$]
Let $\cV_n$ be the set of vertices $\{x_1, x_1', \dots, x_n, x_n'\}$. We will think of $\{x_1, \dots, x_n\}$ as being the ``bottom'' vertices and $\{x_1', \dots, x_n'\}$ as being the ``top'' vertices, as in \pref{wb:ex}.  Let $\cP_n$ denote the set of partitions of $\cV_n$.  We give $\cP_n$ the structure of a monoid, as follows.  Let $\cU$ and $\cU'$ be two partitions of $\cV_n$.  Put $\cU$ above $\cU'$, i.e., identify the bottom row of vertices of $\cU$ with the top row of vertices of $\cU'$, and merge all parts of $\cU$ and $\cU'$ which overlap.  Let $n(\cU, \cU')$ be the number of connected components (including singletons) which are contained entirely in the middle row; discard these parts, and all vertices in the middle row.  The resulting partition is the composition $\cU \cU'$.  The symmetric group $S_n$ sits inside of $\cP_n$ as the set of partitions in which each part contains exactly one vertex from the top row and one from the bottom row.

\article[The algebra $\cA_n$] \label{art:partition-algebra}
Let $\cA_n$ be the free $\bC[t]$-module spanned by $\cP_n$.  We write $X_{\cU}$ for the element of $\cA_n$ corresponding to the partition $\cU \in \cP_n$.  We give $\cA_n$ the structure of an algebra by defining $X_{\cU} X_{\cU'}$ to be $t^{n(\cU, \cU')} X_{\cU \cU'}$.  For a number $\alpha \in \bC$, we let $\cA_n(\alpha)$ be defined similarly to $\cA_n$, but with $\alpha$ in place of $t$; of course, $\cA_n(\alpha)$ is just the quotient of $\cA_n$ by the two-sided ideal generated by $t-\alpha$.  The algebra $\cA_n$ is called the {\bf partition algebra}, and was introduced in \cite{martin} and \cite{jones} (see also \cite{halversonram}).

\article[The $\cA_n(d)$-module $T_n^d$]
We now give $T_n^d$ the structure of a $\cA_n(d)$-module.  We use notation similar to that in \pref{gl:wmodule}.  Let $\cU$ be an element of $\cP_n$ and let $v$ be an element of $T_n^d$.  We assume, without loss of generality, the $v$ is a pure tensor of basis vectors $e_{i_1} \otimes \cdots \otimes e_{i_n}$, where each $i_k$ is between 1 and $d$.  The element $w=X_{\Gamma} v$ is defined as a product over the parts of $\cU$, so we just have to describe the contribution of each part.
\begin{itemize}
\item A part $\{x_{k_1}, \ldots, x_{k_r} \}$ concentrated in the first row contributes 1 if $i_{k_1}=\cdots=i_{k_r}$ and 0 otherwise.
\item A part $\{x'_{\ell_1}, \ldots, x'_{\ell_s} \}$ concentrated in the second row contributes $\sum_{j=1}^d f_{\ell_1}(e_j) \cdots f_{\ell_s}(e_j)$.
\item A part $\{x_{k_1}, \ldots, x_{k_r}, x'_{\ell_1}, \ldots, x'_{\ell_s} \}$ which meets each row (so $r>0$ and $s>0$) contributes $f_{\ell_1}(e_i) \cdots f_{\ell_s}(e_i)$ if $i_{k_1}=\cdots=i_{k_r}=i$ and 0 otherwise.
\end{itemize}
We leave it to the reader to verify that this is a well-defined action.  As in previous cases, it is important that the parameter $t$ of $\cA_n$ has been specialized to $d$.

\article[An example]
We now give an example of the module structure introduced above.  Suppose $n=4$ and $\cU$ is the partition
\begin{displaymath}
\begin{xy}
(-15, 5)*{}="A1"; (-5, 5)*{}="B1"; (5, 5)*{}="C1"; (15, 5)*{}="D1";
(-15, -5)*{}="A2"; (-5, -5)*{}="B2"; (5, -5)*{}="C2"; (15, -5)*{}="D2";
(-15, 8)*{\ss x_1}; (-5, 8)*{\ss x_2}; (5, 8)*{\ss x_3}; (15, 8)*{\ss x_4};
(-15, -8)*{\ss x_1'}; (-5, -8)*{\ss x_2'}; (5, -8)*{\ss x_3'}; (15, -8)*{\ss x_4'};
"A1"*{\bullet}; "B1"*{\bullet}; "C1"*{\bullet}; "D1"*{\bullet};
"A2"*{\bullet}; "B2"*{\bullet}; "C2"*{\bullet}; "D2"*{\bullet};
"A1"; "B1"; **\dir{-};
"A1"; "A2"; **\dir{-};
"B1"; "A2"; **\dir{-};
"C1"; "B2"; **\dir{-};
"C2"; "D2"; **\dir{-};
\end{xy}
\end{displaymath}
Thus $\{x_1, x_2, x_1'\}$ constitutes one part, as does $\{x_4\}$.  Then:
\begin{displaymath}
X_{\Gamma}(e_{i_1} \otimes e_{i_2} \otimes e_{i_3} \otimes e_{i_4})=\delta_{i_1, i_2} \cdot e_{i_1} \otimes e_{i_3} \otimes \left( \sum_{j=1}^d e_j \otimes e_j \right).
\end{displaymath}

\article[Generators of $\cA_n$] \label{art:partalg-gens}
Generators for the partition algebra are given in \cite[Theorem 1.11(d)]{halversonram}. We explain the action of these generators on $T^d_n$ (see \cite[\S 3]{halversonram}).
\begin{itemize}
\item For $1 \le i \le n-1$, the generator $s_i$ swaps the tensor factors in positions $i$ and $i+1$. 
\item For $1 \le i \le n$, the generator $p_i$ replaces the $i$th tensor factor with $e_1 + \cdots + e_d$.
\item For $1 \le i \le n-1$, the generator $p_{i+\frac{1}{2}}$ is defined by $e_{j_1} \otimes \cdots \otimes e_{j_n} \mapsto \delta_{j_i, j_{i+1}} e_{j_1} \otimes \cdots \otimes e_{j_n}$.
\end{itemize}
Let $I_n$ be the ideal generated by the $p_i$ and $p_{i+\frac{1}{2}}$. Then $\cA_n(d) / I_n \cong \bC[S_n]$ where the generators for $\bC[S_n]$ are the images of the $s_i$.

\article
The action of $\cA_n(d)$ on $T_n^d$ obviously commutes with that of $\fS(d)$.  The following is the main result on how these actions relate. See \cite[Theorem 3.6]{halversonram} for a proof.

\begin{theorem}[Martin] \label{thm:partalgcent}
The natural map $\cA_n(d) \to \End_{\fS(d)}(T_n^d)$ is always surjective, and is an isomorphism for $d>2n$.
\end{theorem}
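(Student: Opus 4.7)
The plan is to reduce the problem to a purely combinatorial count of $\fS(d)$-orbits on a basis, and then to identify the image of each diagram $X_\cU$ with a sum of orbit operators using Möbius inversion on the partition lattice.

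First I would describe the centralizer combinatorially. A basis of $\End(T_n^d)$ is indexed by pairs $(\vec i, \vec j) \in \{1,\dots,d\}^n \times \{1,\dots,d\}^n$. To such a pair attach the set partition $\cU_{\vec i,\vec j}$ of $\cV_n = \{x_1,\dots,x_n,x_1',\dots,x_n'\}$ where $x_k$ and $x_\ell$ are in the same part iff $i_k=i_\ell$, $x_k'$ and $x_\ell'$ iff $j_k=j_\ell$, and $x_k$ and $x_\ell'$ iff $i_k=j_\ell$. Two pairs lie in the same $\fS(d)$-orbit precisely when they induce the same partition, and the partitions that arise are exactly those $\cU \in \cP_n$ with $|\cU|\le d$. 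Writing $E_\cU$ for the operator with matrix entry $1$ at $(\vec i,\vec j)$ when $\cU_{\vec i,\vec j}=\cU$ and $0$ otherwise, the $E_\cU$ with $|\cU|\le d$ form a basis of $\End_{\fS(d)}(T_n^d)$.

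Next I would unwind the action of $X_\cU$ given in the paragraph preceding \pref{art:partalg-gens}. Going through each of the three kinds of parts (entirely in the top row, entirely in the bottom row, straddling), one checks that the coefficient of $e_{\vec j}$ in $X_\cU(e_{\vec i})$ is $1$ when every part of $\cU$ consists of positions where the chosen indices agree—i.e.\ when $\cU_{\vec i,\vec j}$ is a coarsening of $\cU$—and $0$ otherwise. Hence in the partition lattice ordered by refinement,
\[
X_\cU \;=\; \sum_{\cU'\ge \cU,\;|\cU'|\le d} E_{\cU'}.
\]
This is upper-triangular with respect to the partition lattice (restricted to partitions with at most $d$ parts), so by Möbius inversion each $E_\cU$ is a $\bZ$-linear combination of such $X_{\cU'}$. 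Therefore $\cA_n(d) \to \End_{\fS(d)}(T_n^d)$ is surjective for every $d$.

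Finally, for the injectivity statement, observe that every partition of the $2n$-element set $\cV_n$ has at most $2n$ parts, so when $d > 2n$ (in fact $d\ge 2n$) the condition $|\cU|\le d$ is automatic. The dimension of the centralizer then equals $|\cP_n|=\dim\cA_n(d)$, and the surjection above must be an isomorphism. The one step that deserves care is the matrix-entry computation for $X_\cU$—the three-clause action rule combines in a slightly delicate way across the parts of $\cU$—but once that is settled the Möbius inversion and dimension count are routine.
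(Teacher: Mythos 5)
Your proof is correct, and it is the standard argument for Schur--Weyl duality of the partition algebra. Note that the paper does not actually prove this theorem: it simply cites \cite[Theorem~3.6]{halversonram} for a proof. Your argument---identify the centralizer basis with $\fS(d)$-orbits on pairs of tuples, which biject with set partitions of $\cV_n$ having at most $d$ parts; compute that $X_\cU$ acts as the sum of orbit operators $E_{\cU'}$ over all coarsenings $\cU'\ge\cU$ with at most $d$ parts; invert the unitriangular change of basis; and finish with a dimension count for the isomorphism claim---is essentially the one in that reference and in most expositions. Two small remarks. First, your observation that $d\ge 2n$ already suffices for the isomorphism (not just $d>2n$) is correct, since every set partition of the $2n$-element set $\cV_n$ has at most $2n$ parts, so the bound in the paper's statement is slightly non-sharp. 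Second, it is worth noting explicitly (as you implicitly do) that the Möbius inversion should be carried out in the sub-poset $\{\cU\in\cP_n:|\cU|\le d\}$ rather than in all of $\cP_n$, since for $|\cU|>d$ the relation $X_\cU=\sum_{\cU'\ge\cU,\ |\cU'|\le d}E_{\cU'}$ need not be zero and those $X_\cU$ are redundant; restricting to the sub-poset keeps the change-of-basis matrix square and unitriangular, which is what the argument needs.
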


\article
We now wish to apply the theory of the partition algebra in the infinite case, to obtain a diagrammatic model for $\Rep(\fS)$.  As in previous cases, there is a problem:  partitions concentrated in the bottom row would involve infinite sums.  As before, our solution is to simply disallow them. Again it is more convenient to work with a category than attempting to create a single algebra.

\article \label{art:defndp}
The {\bf downwards partition category}, denoted $\dpc$, is the following category.  Objects are finite sets.  A morphism $L \to L'$ is a partition of $L \amalg L'$ in which each part meets $L$.  Given a morphism $L \to L'$ represented by $\cU$ and a morphism $L' \to L''$ represented by $\cU''$, the composition $L \to L''$ is represented by the partition obtained by gluing $\cU$ and $\cU'$ along $L'$ and merging parts which meet.  The automorphism group of $L$ in $\dpc$ is the symmetric group on $L$, however, there are self-maps of $L$ which are not isomorphisms.  Disjoint union endows $\dpc$ with a monoidal functor $\amalg$.  We let $\otimes=\otimes_{\#}$ be the resulting convolution tensor product on $\Mod_{\dpc}$, as defined in \pref{diag:conv}.  We also have the {\bf upwards partition category}, denoted $\upc$, with everything reversed.

\begin{remark}
The category $\dpc$ is very different from all the previous combinatorial categories we have considered:  it is not weakly directed.  This causes the theory of modules over $\dpc$ to be significantly more complicated than the previous theories, and most of the results of \S \ref{ss:repsofcategory} break down.  For instance, we will see that any non-zero module over $\dpc$, even a simple module, is non-zero on all sufficiently large finite sets.  A few of the more formal results from \S \ref{ss:repsofcategory} (e.g., the existence of the convolution tensor product) do remain true, with the same proof, and we will take care when citing them.
\end{remark}

\article
Before getting to the main result of this section, Theorem~\pref{sym:dp}, we need to establish some intermediate results on the structure of $\Mod_{\dpc}$.  We summarize these results here:
\begin{itemize}
\item In \pref{sym:map}--\pref{sym:map2}, we define certain classes of maps in $\dpc$ and prove elementary results about them.
\item In \pref{sym:mindef}--\pref{sym:delta}, we define the notion of a (weakly) minimal element of a representation of $\dpc$, and prove a lifting result for these elements.
\item In \pref{sym:Kndef}--\pref{sym:Knproj}, we define objects $K^n$ of $\Mod_{\dpc}^{\gfin}$, and prove that they are projective.
\item In \pref{sym:mingen}--\pref{sym:finlen2}, we show that $K^n$ has finite length and that every finite length object of $\Mod_{\dpc}^{\gfin}$ is a quotient of a finite direct sum of $K^n$'s.
\end{itemize}

\article
\label{sym:map}
Let $f \colon L \to L'$ be a morphism in $\dpc$ defined by a partition $\cU=\{U_i\}_{i \in I}$.
\begin{itemize}
\item We say that $f$ is a {\bf monomorphism} if every part of $\cU$ meets $L'$ and contains a unique element of $L$.  This coincides with the categorical notion of monomorphism, i.e., if $g_1f = g_2f$, then $g_1=g_2$. The composition of two monomorphisms is again a monomorphism, and if $gf$ is a monomorphism then so is $f$. 
\item We say that $f$ is an {\bf epimorphism} if every part of $\cU$ contains at most one element of $L'$. This coincides with the categorical notion of epimorphism, i.e., if $fg_1 = fg_2$, then $g_1=g_2$. It is clear that the composition of two epimorphisms is again an epimorphism, and that if $fg$ is an epimorphism then so is $f$. 
\item We say that $f$ is {\bf proper} if each $U_i$ meets $L'$.  A composition $gf$ is proper if and only if $f$ is.  If $f$ is proper then the partition $\cU$ then defines a morphism $f^t \colon L' \to L$, which we call the {\bf transpose} of $f$; obviously $f^t$ is proper and $(f^t)^t=f$.  If $f$ is a monomorphism then it is proper, $f^t$ is an epimorphism and $f^tf=\id_L$.  If $f$ is a proper epimorphism then $f^t$ is a monomorphism and $ff^t=\id_{L'}$.
\item We say that $f$ is {\bf rectangular} if $U_i$ has the same number of elements of $L$ and $L'$, for each $i$.  Rectangular maps are proper.  If $f$ is any proper map then $ff^t$ and $f^tf$ are rectangular idempotents.
\end{itemize}
If $f$ is a monomorphism and $M \in \Mod_{\dpc}$ then $f$ induces an injection $M_L \to M_{L'}$, since $f^t$ provides a left-inverse.  In particular, if $M \ne 0$ then $M_L \ne 0$ for all sufficiently large $L$.

\begin{Proposition}
\label{sym:map1}
Any morphism $f$ in $\dpc$ can be factored as $f=hg$ where $g$ is an epimorphism and $h$ is a monomorphism.
\end{Proposition}

\begin{proof}
Let $\cU=\{U_i\}_{i \in I}$ be the partition representing $f$.  Let $J \subset I$ be the set of indices $i$ for which $U_i$ meets $L'$.  Define a partition $\cU'=\{U'_i\}_{i \in I}$ of $L \amalg J$ by letting $U'_i$ be $(U_i \cap L) \cup \{i\}$ or $(U_i \cap L)$ depending on if $i \in J$ or not.  Define a partition $\cU''=\{U_i''\}_{i \in J}$ of $J \amalg L'$ by $U''_i=\{i\} \cup (U_i \cap L')$.  Then $\cU'$ defines an epimorphism $g \colon L \to J$ and $\cU''$ defines a monomorphism $h \colon J \to L'$ and $f=h$.
\end{proof}

\begin{Proposition}
\label{sym:map3}
Let $f \colon L \to L'$ be a morphism in $\dpc$ represented by a partition $\cU$ such that some part of $\cU$ contains more than one element of $L$.  Then there is a factorization $f=hg$ where $g$ is a proper non-monomorphism.
\end{Proposition}

\begin{proof}
Let $\cU=\{U_i\}_{i \in I}$ be the partition representing $f$.  Define a partition $\cU' = \{U'_i\}_{i \in I}$ of $L \amalg I$ by setting $U'_i = (U_i \cap L) \cup \{i\}$. Define a partition $\cU'' = \{U''_i\}_{i \in I}$ of $I \amalg L'$ by $U''_i = \{i\} \cup (U_i \cap L')$. Then $\cU'$ defines a proper non-monomorphism $g \colon L \to I$ and $\cU''$ defines a morphism $h \colon I \to L'$ and $f = hg$.
\end{proof}

\begin{Proposition}
\label{sym:map4}
Let $f \colon L \to L''$ be a non-monomorphism in $\dpc$.  Then $f$ can be factored as $f=hg$, where $g \colon L \to L'$ with $\# L'<\# L$.
\end{Proposition}

\begin{proof}
Let $\cU = \{U_i\}_{i \in I}$ be the partition representing $f$. There are two ways that $f$ could fail to be a monomorphism. 

In the first case, every part of each $U_i$ contains a unique element of $L$, but some part does not meet $L''$. Let $J \subset I$ be the set of indices $i$ for which $U_i$ meets $L''$. Set $L' = J$. Define a partition $\cU' = \{U'_i\}_{i \in I}$ of $L \amalg L'$ by $U'_i = (U_i \cap L) \cup (\{i\} \cap J)$. Define a partition $\cU'' = \{U''_i\}_{i \in J}$ of $L' \amalg L''$ by $U''_i = \{i\} \cup (U_i \cap L'')$. Then $\cU'$ defines a morphism $g \colon L \to L'$ and $\cU''$ defines a morphism $h \colon L' \to L''$ and $f = hg$. Note that $\#J < \#I = \#L$, so $g$ has the required property.

For the second case, some part of $U_i$ contains at least two elements of $L$. Set $L' = I$. Define a partition $\cU' = \{U'_i\}_{i \in I}$ of $L \amalg L'$ by $U'_i = (U_i \cap L) \cup \{i\}$ and define a partition $\cU'' = \{U''_i\}_{i \in I}$ of $L' \amalg L''$ by $U''_i = \{i\} \cup (U_i \cap L'')$. Then $\cU'$ defines a morphism $g \colon L \to L'$ and $\cU''$ defines a morphism $h \colon L' \to L''$ and $f = hg$. Note that $\#I < \#L$, so $g$ has the required property.
\end{proof}

\begin{Proposition}
\label{sym:map2}
Let $f$ and $f'$ be monomorphisms $L \to L'$.  Then either 
\begin{enumerate}[\rm (a)]
\item there exists an automorphism $\sigma$ of $L$ such that $f'=f \sigma$, in which case $f^tf'=\sigma$, or 
\item $f^tf'$ is a proper non-monomorphism.
\end{enumerate}
\end{Proposition}

\begin{proof}
Since both $f$ and $f'$ are monomorphisms, $f^t f'$ is proper. If (b) fails, then $f^t f'$ is a monomorphism. In particular, $\sigma = f^t f'$ is an automorphism of $L$. Note that $ff^t \colon L' \to L'$ is a partition $\{U_i\}_{i \in I}$ with the property that for each $U_i$, the intersection of $U_i$ with both copies of $L'$ are the same subset. This implies that $ff^tf' =f'$, i.e., $f' = f\sigma$, so (a) holds.
\end{proof}

\article[Minimal elements]
\label{sym:mindef}
Let $M$ be an object of $\Mod_{\dpc}$.  We say that $x \in M_L$ is {\bf minimal} if $f(x)=0$ for all $f \colon L \to L'$ which are non-monomorphisms.  We say that $x \in M_L$ is {\bf weakly minimal} if $f(x)=0$ for all $f \colon L \to L'$ which are proper non-monomorphisms.  We let $\Delta_L(M)$ denote the set of weakly minimal elements of $M_L$.  Obviously, $\Delta_L$ defines a left-exact functor $\Mod_{\dpc} \to \Vec$.

\begin{Proposition}
\label{sym:weakmin}
An element $x \in M_L$ is weakly minimal if and only if $f(x)=0$ for all non-invertible rectangular idempotent elements $f$ of the monoid $\End(L)$.
\end{Proposition}

\begin{proof}
If $x$ is weakly minimal and $f$ is a non-invertible rectangular idempotent then $f(x)=0$ since $f$ is proper and not a monomorphism.  Conversely, suppose $f(x)=0$ for all non-invertible rectangular idempotents.  Let $f \colon L \to L'$ be a proper non-monomorphism.  Write $f=hg$ with $g$ an epimorphism and $h$ a monomorphism.  Then $g$ is necessarily proper and not a monomorphism.  We thus have $(g^tg)(x)=0$.  Since $g^t$ is a monomorphism, this implies $g(x)=0$, and so $f(x)=0$.
\end{proof}

\begin{Proposition}
\label{sym:idempot}
Let $f \colon V \to W$ be a surjection of finite dimensional vector spaces.  Let $\{X_i\}_{i \in I}$ be a family of idempotents operating on both $V$ and $W$ such that $f(X_iv)=X_if(v)$.  Let $V'$ (resp.\ $W'$) be the set of elements of $V$ (resp.\ $W$) annihilated by all the $X_i$.  Then $f$ induces a surjection $V' \to W'$.
\end{Proposition}

\begin{proof}
It is clear that $f$ maps $V'$ into $W'$.  We must prove that it does so surjectively.  Let $\wt{R}$ be the non-commutative polynomial ring in the $X_i$ and let $\epsilon \colon \wt{R} \to \bC$ be the algebra homomorphism sending each $X_i$ to 0.  Let $R$ be the image of $\wt{R}$ in $\End(V)$, a finite dimensional algebra.  Both $V$ and $W$ are $R$-modules.

If $W'=0$ then there is nothing to prove, so suppose $W' \ne 0$.  For any $x \in \wt{R}$ and $w \in W'$, we have $xw=\epsilon(x)w$.  Since the left side only depends on the image of $x$ in $R$, this shows that $\epsilon$ factors through $R$.  Write $R=\bigoplus_{i=1}^n R_i$ where each $R_i$ cannot be further decomposed into a direct sum, and let $p_i \colon R \to R_i$ be the projection map.  Then $\epsilon$ factors through some $p_i$, say $p_1$.  Let $I_1=\rad(R_1)$.  Then $R_1/I_1$ is a simple algebra that admits an algebra homomorphism to $\bC$, and is therefore isomorphic to $\bC$.  We thus see that $I_1=\ker(\epsilon)$.  Since $\epsilon(X_i)=0$, we find that $p_1(X_1) \in I_1$.  Thus $p_1(X_i)$ is both nilpotent and idempotent, and therefore vanishes.  Since $R_1$ is generated as an algebra by the $p_1(X_i)$, it follows that $R_1=\bC$.

Let $e \in R$ be the central idempotent corresponding to $R_1$.  Suppose that $w \in W'$.  Let $v \in V$ be such that $f(v)=w$.  Since $X_ie=0$ for all $i$, we see that $X_iev=0$ for all $i$, i.e., $ev \in V'$.  As $\epsilon(e)=1$, we see that $ew=w$.  Thus $f(ev)=ef(v)=ew=w$, which shows that $w \in f(V')$.  This completes the proof.
\end{proof}

\begin{Proposition}
\label{sym:delta}
The functor $\Delta_L \colon \Mod^{\gfin}_{\dpc} \to \Vec$ is exact.
\end{Proposition}

\begin{proof}
This follows immediately from Propositions~\pref{sym:weakmin} and~\pref{sym:idempot}.
\end{proof}

\article[The object $K^n$]
\label{sym:Kndef}
Let $n$ be an integer and let $\bC^n$ be the vector space with basis $e_1, \ldots, e_n$.  For a finite set $L$, put $K^n_L=(\bC^n)^{\otimes L}$.  Given a morphism $L \to L'$ in $\dpc$ represented by a partition $\cU$, we define a morphism $f \colon K^n_L \to K^n_{L'}$ as follows.  Given a map $\alpha \colon L \to \un$, let $e_{\alpha}$ be the basis vector $\bigotimes_{i \in L} e_{\alpha(i)}$ of $K^n_L$.  Then $f(e_{\alpha})=0$ if $\alpha$ is non-constant on the partition $\cU \cap L$ of $L$.  If $\alpha$ is constant on this partition, let $\beta \colon L' \to \un$ be the unique function such that $\beta(x)=\alpha(y)$ if $y \in L$ is in the same part as $x \in L'$.  Then $f(e_{\alpha})=e_{\beta}$.  Thus $K^n$ defines an object of $\Mod^{\gfin}_{\dpc}$.  We note that $K^0_L$ is 0 if $L$ is not empty and $\bC$ if $L$ is empty.

\article[The relationship between $K^n$ and $\Delta_L$]
\label{sym:Kn}
Let $v$ be the element $e_1 \otimes \cdots \otimes e_n$ of $K^n_{\un}$.  Then $v$ is obviously weakly minimal, and thus belongs to $\Delta_{\un}(K^n)$.  Given any morphism $K^n \to M$, we therefore get an element of $\Delta_{\un}(M)$ by taking the image of $v$.  This defines a map of functors 
\[
\eps_n \colon \Hom_{\dpc}(K^n, -) \to \Delta_{\un}.
\]

\begin{proposition}
The map $\eps_n$ is an isomorphism, i.e., $(K^n, v)$ corepresents $\Delta_{\un}$.
\end{proposition}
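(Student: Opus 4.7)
The plan is to verify directly that $\eps_n$ is a bijection. For each function $\alpha \colon L \to \un$, I would first introduce the morphism $f_\alpha \colon \un \to L$ in $\dpc$ given by the partition $\{\{i\} \cup \alpha^{-1}(i) : i \in \un\}$ of $\un \amalg L$, where $\{i\}$ appears as a singleton when $\alpha^{-1}(i) = \emptyset$. Unwinding the definition of $K^n$ in \pref{sym:Kndef} one checks immediately that $f_\alpha(v) = e_\alpha$. This handles injectivity of $\eps_n$: if $\phi \colon K^n \to M$ satisfies $\phi(v) = 0$, then naturality forces $\phi_L(e_\alpha) = f_\alpha(\phi_{\un}(v)) = 0$ for every $L$ and every $\alpha$, so $\phi = 0$.

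For surjectivity, given a weakly minimal $w \in M_{\un}$, I would define $\phi_L(e_\alpha) := f_\alpha(w)$ (forced by the computation above) and check naturality. For a morphism $h \colon L \to L'$ in $\dpc$ with partition $\cU_h$, the required identity is $\phi_{L'}(h(e_\alpha)) = (hf_\alpha)(w)$, and I would split the verification in two cases. In Case~A, where $\alpha$ is constant on every part of $\cU_h$ meeting $L$, one has $h(e_\alpha) = e_\beta$ for the induced $\beta \colon L' \to \un$; a direct comparison of partitions shows that $h \circ f_\alpha = f_\beta$ in $\dpc$, so both sides equal $f_\beta(w)$. The key calculation is that for each $i \in \un$ the part of $h \circ f_\alpha$ containing $i$ is $\{i\} \cup \bigcup_{Q : i_Q = i}(Q \cap L')$, where $i_Q$ denotes the common value of $\alpha$ on $Q \cap L$; and this coincides with $\{i\} \cup \beta^{-1}(i)$, the $i$-th part of $f_\beta$.

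In Case~B, where $\alpha$ is non-constant on some part $Q$ of $\cU_h$ meeting $L$, one has $h(e_\alpha) = 0$ and must show $(hf_\alpha)(w) = 0$. Examining the composition, the part $Q$ merges the $f_\alpha$-parts $P_{i_1}$ and $P_{i_2}$ for distinct values $i_1 \ne i_2$ attained by $\alpha$ on $Q \cap L$, so the resulting part of $h f_\alpha$ contains two elements of $\un$. Thus $h f_\alpha$ is a non-monomorphism. The main obstacle is that $hf_\alpha$ need not be \emph{proper} (singleton parts $\{i\}$ of $f_\alpha$ for $i \notin \alpha(L)$ persist, and $Q$ itself may not meet $L'$), so weak minimality does not kill $(h f_\alpha)(w)$ directly.

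I would circumvent this with a separate factorization lemma: every non-monomorphism $g \colon L \to L''$ in $\dpc$ can be written as $g = h' \circ g''$ with $g'' \colon L \to L'''$ a \emph{proper} non-monomorphism. To prove it, let $B_1, \ldots, B_s$ be the parts of $\cU_g$ contained entirely in $L$, set $L''' = L'' \sqcup \{1, \ldots, s\}$, let $\cU_{g''}$ coincide with $\cU_g$ except that each $B_j$ is enlarged to $B_j \cup \{j\}$, and let $h' \colon L''' \to L''$ have partition $\{\{i, i\} : i \in L''\} \cup \{\{j\} : 1 \le j \le s\}$; a direct composition check confirms $g = h' \circ g''$, while $g''$ inherits the offending multi-element-of-$L$ part from $g$ and is proper by construction. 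Applying the lemma to $g = h f_\alpha$, weak minimality yields $g''(w) = 0$, and hence $(hf_\alpha)(w) = h'(g''(w)) = 0$, completing the naturality check.
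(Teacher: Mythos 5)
Your argument follows the paper's proof closely: the morphisms $f_\alpha$, the check $f_\alpha(v)=e_\alpha$ for generation and injectivity, and the two-case naturality verification for $\eta_L(e_\alpha):=f_\alpha(w)$. The factorization you construct for Case~B is in substance Proposition~\pref{sym:map3}, which is exactly what the paper invokes at that step; your intermediate object is $L''\sqcup\{1,\ldots,s\}$ rather than the index set of the partition, but that is a cosmetic variation.

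However, your factorization lemma is false as stated. If $g\colon L\to L''$ has every part containing exactly one element of $L$ but some part failing to meet $L''$, then $g$ is a non-monomorphism, yet it admits no factorization $g=h'g''$ with $g''$ a proper non-monomorphism: a proper, non-monic $g''$ would have some part containing two or more elements of $L$, and tracing that part through the composition $h'g''$ produces a part of $g$ with two or more elements of $L$, contradicting the assumption on $g$. Your own proof of the lemma tacitly uses the correct hypothesis when you say that ``$g''$ inherits the offending multi-element-of-$L$ part from $g$.'' You should state the lemma with the hypothesis of Proposition~\pref{sym:map3} --- that some part of $g$ contains more than one element of $L$. Since this is precisely what you establish for $hf_\alpha$ in Case~B, the application, and hence your overall argument, is unaffected.
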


\begin{proof}
Let $\alpha \colon L \to \un$ be a map of sets.  For $i \in \un$ let $U_i$ be the subset $\{i\} \cup \alpha^{-1}(i)$ of $\un \amalg L$.  Then $\{U_i\}_{i \in \un}$ defines a partition of $\un \amalg L$, and a map $f_{\alpha} \colon \un \to L$ in $\dpc$.  Let $e_{\alpha}$ be as above.  One easily verifies that $e_{\alpha}=f_{\alpha}(v)$, which shows that $v$ generates $K^n$.  It follows that $\eps_n$ is injective, for if $f \colon K^n \to M$ is a map with $f(v)=0$, then $f=0$.

We now show that $K^n$ is surjective.  Thus let $w \in M_{\un}$ be weakly minimal.  Define a map $\eta_L \colon K^n_L \to M_L$ by $\eta_L(e_{\alpha})=f_{\alpha}(w)$.  Clearly $\eta(v)=w$, so it suffices to show that $\eta$ defines a map $K^n \to M$ in $\Mod_{\dpc}$.  Let $g \colon L \to L'$ be a map in $\dpc$.  We must show that $g(\eta_L(e_{\alpha}))=\eta_{L'}(g(e_{\alpha}))$ for all $\alpha \colon L \to \un$.  We consider two cases:
\begin{itemize} 
\item \ul{Case 1}: $\alpha$ is not constant on the pieces of $g$.  Let $x$ and $y$ be in the same part of $g$ with $\alpha(x) \ne \alpha(y)$.  Then $g(e_{\alpha})=0$ by definition.  On the other hand, $x$ and $y$ belong to the same part of $gf_{\alpha}$, and so $g(f_{\alpha}(w))=0$ by \pref{sym:map3}.  Thus $g(\eta_L(e_{\alpha}))=\eta_{L'}(g(e_{\alpha}))=0$.
\item \ul{Case 2}: $\alpha$ is constant on the pieces of $g$.  Define $\beta \colon L' \to \un$ by $\beta(x)=\alpha(y)$ if $x$ and $y$ belong to the same piece of $g$.  Then $g(e_{\alpha})=e_{\beta}$, by definition.  A short computation shows that $gf_{\alpha}=f_{\beta}$, as morphisms of $\dpc$, and so $g(f_{\alpha}(w))=f_{\beta}(w)$.  Thus 
\begin{displaymath}
g(\eta_L(e_{\alpha}))=g(f_{\alpha}(w))=f_{\beta}(w)=\eta_{L'}(e_{\beta})=\eta_{L'}(g(e_{\alpha})).
\qedhere
\end{displaymath}
\end{itemize}
\end{proof}

\article \label{art:symKmn}
The above proof can be modified to get an analogous result for $\dpc \times \dpc$ which we will need later, so we record it here. Pick integers $m,n \ge 0$ and for $M \in \Mod_{\dpc \times \dpc}$, let $\Delta_{\ul{m}, \ul{n}}(M)$ be the subspace of elements $x \in M_{(\ul{m}, \ul{n})}$ which are weakly minimal with respect to both factors of $\dpc$, i.e., $(f,g)(x) = 0$ whenever $f$ or $g$ is a proper non-monomorphism. Let $p_i \colon \dpc \times \dpc \to \dpc$ be the projections for $i=1,2$ and set $K^{m,n} = p_1^* K^m \boxtimes p_2^* K^n$. Let $v \in K^{m,n}_{\ul{m},\ul{n}}$ be the element $(e_1 \otimes \cdots \otimes e_m) \otimes (e_1 \otimes \cdots \otimes e_n)$. Given any morphism $K^{m,n} \to M$, we get an element of $\Delta_{\ul{m}, \ul{n}}(M)$ by considering the image of $v$, and hence we get a map of functors
\[
\eps_{m,n} \colon \hom_{\dpc \times \dpc}(K^{m,n}, -) \to \Delta_{\ul{m},\ul{n}}.
\]

\begin{proposition}
The map $\eps_{m,n}$ is an isomorphism, i.e., $(p_1^* K^m \boxtimes p_2^* K^n, v)$ corepresents $\Delta_{\ul{m}, \ul{n}}$.
\end{proposition}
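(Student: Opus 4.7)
The plan is to prove this by mimicking the argument of the previous proposition, exploiting the product structure of $\dpc \times \dpc$ to reduce to two independent applications of the single-variable case. I will show $\eps_{m,n}$ is both injective and surjective.

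For injectivity, I will show that $v$ generates $K^{m,n}$. Given maps $\alpha \colon L \to \ul{m}$ and $\beta \colon L' \to \ul{n}$, form the morphisms $f_\alpha \colon \ul{m} \to L$ and $f_\beta \colon \ul{n} \to L'$ in $\dpc$ exactly as in the proof of the previous proposition. Then $(f_\alpha, f_\beta)$ is a morphism in $\dpc \times \dpc$, and $(f_\alpha, f_\beta)(v) = e_\alpha \otimes e_\beta$. Since such pure tensors span $K^{m,n}_{L,L'} = K^m_L \otimes K^n_{L'}$, it follows that $v$ generates $K^{m,n}$, and hence any morphism $K^{m,n} \to M$ is determined by the image of $v$.

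For surjectivity, let $w \in \Delta_{\ul{m}, \ul{n}}(M)$. Define $\eta_{L,L'} \colon K^{m,n}_{L,L'} \to M_{L,L'}$ on basis vectors by $\eta_{L,L'}(e_\alpha \otimes e_\beta) = (f_\alpha, f_\beta)(w)$, extended linearly. Clearly $\eta(v) = w$, so it remains to verify that $\eta$ is a natural transformation; that is, for any morphism $(g, g') \colon (L, L') \to (L'', L''')$ in $\dpc \times \dpc$, we must check $(g,g')(\eta_{L,L'}(e_\alpha \otimes e_\beta)) = \eta_{L'',L'''}((g, g')(e_\alpha \otimes e_\beta))$. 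Since $(g, g')$ acts as $g$ on the first tensor factor and $g'$ on the second, and the four cases (each of $g, g'$ either constant or not constant on its pieces relative to $\alpha, \beta$) split into two independent subcases, I can apply the same two-case analysis from the previous proof to each factor separately. In the non-constant case, weak minimality of $w$ in the relevant factor (used via Proposition~\pref{sym:map3}) kills both sides; in the constant case, we get $g f_\alpha = f_{\alpha'}$ and $g' f_\beta = f_{\beta'}$ for the induced maps $\alpha', \beta'$, and then both sides equal $(f_{\alpha'}, f_{\beta'})(w)$.

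The main subtlety will be verifying that weak minimality of $w$ in the product category $\dpc \times \dpc$ (with respect to morphisms $(f, g)$ where either $f$ or $g$ is a proper non-monomorphism) suffices to handle each factor independently; this is exactly what the definition of $\Delta_{\ul{m}, \ul{n}}$ provides, since applying a proper non-monomorphism in a single factor extends to a proper non-monomorphism in $\dpc \times \dpc$ by pairing with an identity. Once this is in place, the verification of naturality of $\eta$ is entirely formal and parallels the previous proof factor-by-factor, completing the proof.
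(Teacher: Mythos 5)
Your proof is correct, and it carries out precisely the modification of Proposition~\pref{sym:Kn} that the paper alludes to but does not write out ("The above proof can be modified to get an analogous result for $\dpc \times \dpc$"). The crucial observation you isolate at the end --- that $\Delta_{\ul{m},\ul{n}}$ requires $w$ to be killed whenever \emph{either} coordinate of a morphism in $\dpc\times\dpc$ is a proper non-monomorphism, so that the single-variable factorization $g f_\alpha = h g''$ from Proposition~\pref{sym:map3} can be paired with an identity to conclude $(g f_\alpha, g' f_\beta)(w)=0$ --- is exactly the point that makes the factor-by-factor argument go through, and it is what the paper's one-line justification tacitly relies on.
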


\begin{Proposition}
\label{sym:Knproj}
We have that $K^n$ is a projective object of $\Mod^{\gfin}_{\dpc}$.
\end{Proposition}

\begin{proof}
This follows from the above Propositions~\pref{sym:delta} and~\pref{sym:Kn}.
\end{proof}

\begin{Proposition}
\label{sym:mingen}
Suppose that $M \in \Mod_{\dpc}$ is generated by the minimal elements of $M_L$.  Then any subobject $N$ of $M_L$ is generated by the minimal elements of $N_L$.
\end{Proposition}

\begin{proof}
Let $x$ be an element of $N_{L'}$.  Then we can write $x=\sum_{i=1}^n f_i(y_i)$, where $y_i$ is a minimal element of $M_L$ and $f_i \colon L \to L'$ is a monomorphism.  Choose such an expression with $n$ minimal.  Let $i \ne j$.  Then $f_i$ and $f_j$ do not differ on the right by an automorphism of $L$, as otherwise we could find an expression with $n$ smaller.  It follows from Proposition~\pref{sym:map2} that $f_i^t f_j$ is not a monomorphism, and so $f_i^t(f_j(y_j))=0$ by minimality of $y_j$.  As $f_i^t f_i=\id_L$, we see that $y_i=f_i^t(x)$.  This shows that each $y_i$ belongs to $N_L$, which completes the proof.
\end{proof}

\begin{Corollary}
Let $M$ be as in Proposition~\pref{sym:mingen}.  Then the map
\begin{displaymath}
\{ \textrm{subobjects of $M$} \} \to \{ \textrm{subspaces of $M_L$} \}, \qquad N \mapsto N_L
\end{displaymath}
is injective.
\end{Corollary}

\begin{proof}
It suffices to show that $N \subset N'$ if and only if $N_L \subset N'_L$.  The ``only if'' direction is obvious.  Thus suppose that $N_L \subset N'_L$.  If $x \in N_{L'}$ then, by Proposition~\pref{sym:mingen}, we can write $x=\sum_{i=1}^n f_i(y_i)$ with $y_i \in N_L$ and $f_i \colon L \to L'$.  By assumption, each $y_i$ lies in $N'_L$, and thus $f_i(y_i)$ lies in $N'_{L'}$.  It follows that $x$ belongs to $N'_{L'}$, and so $N \subset N'$.
\end{proof}

\begin{Corollary}
\label{sym:finlen}
Let $M$ be as in Proposition~\pref{sym:mingen} and suppose that $M_L$ is finite dimensional.  Then $M$ has finite length.
\end{Corollary}

\begin{Proposition}
\label{sym:Knlen}
The object $K^n$ has finite length.
\end{Proposition}

\begin{proof}
The statement is obvious for $n=0$.  Assume now that $K^{n-1}$ is finite length.  Let $M$ be the subobject of $K^n$ generated by those $K^n_L$ with $\#L<n$.  Choose linear maps $f_i \colon \bC^{n-1} \to \bC^n$, for $1 \le i \le r$, such that for any $L$ with $\#L<n$, the map
\begin{displaymath}
\bigoplus_{i=1}^r (\bC^{n-1})^{\otimes L} \to (\bC^n)^{\otimes L}
\end{displaymath}
induced by the $f_i$ is surjective.  It follows that the map $f \colon (K^{n-1})^{\oplus r} \to K^n$ induced by the $f_i$ is surjective when evaluated on any set $L$ with $\#L <n$.  Thus the image of $f$ contains $M$.  Since $K^{n-1}$ is generated in degree $n-1$ (as shown in \pref{sym:Kn}), the image of $f$ is contained in $M$.  Thus $\im(f)=M$, and so $M$ is finite length by the inductive hypothesis.

Let $N=K^n/M$.  Then $N$ is generated by the element $v=e_1 \otimes \cdots \otimes e_n$, since $K^n$ is.  If $f \colon L \to L''$ is a non-monomorphism then we have a factorization $f=hg$, where $g \colon L \to L'$ with $\#L'<\#L$.  Since $N_{L'}=0$, we have $g(v)=0$ and so $f(v)=0$.  It follows that $v$ is minimal.  Thus the minimal elements of $N_L$ generate $N$.  Since $N_L$ is finite dimensional, $N$ is finite length by \pref{sym:finlen}.  Thus $K^n$ is an extension of two finite length objects, and therefore finite length.
\end{proof}

\begin{Lemma}
\label{sym:fgf}
A finite length object of $\Mod_{\dpc}$ is graded-finite.
\end{Lemma}

\begin{proof}
A finite length object is finitely generated, and every finitely generated object is graded finite since $\dpc$ is Hom-finite.
\end{proof}

\begin{Proposition}
\label{sym:finlen2}
An object of $\Mod_{\dpc}$ is of finite length if and only if it is a quotient of a finite direct sum of $K^n$'s.
\end{Proposition}

\begin{proof}
Since $K^n$ is finite length, so is any quotient of a finite direct sum of $K^n$'s.  Now suppose $M$ is a finite length object of $\Mod_{\dpc}$.  Then $M$ belongs to $\Mod_{\dpc}^{\gfin}$ by Lemma~\pref{sym:fgf}.  Let $n \ge 0$ be minimal with $M_{\un}$ non-zero.  Then any element of $M_{\un}$ is minimal by Proposition~\pref{sym:map4}, and thus in the image of a map from $K^n$ by Proposition~\pref{sym:Kn}.  It follows that there is a non-zero map $K^n \to M$.  The cokernel of this map is of smaller length than $M$, and thus, by induction, is a quotient of a sum of $K^n$'s.  Since $M$ belongs to $\Mod_{\dpc}^{\gfin}$ and the $K^n$'s are projective in this category, it follows that $M$ is a quotient of a finite direct sum of $K^n$'s.
\end{proof}

\article
\label{sym:ker}
For an object $L$ of $\dpc$, put $\cK_L=\bV^{\otimes L}$.  Extend $\cK$ to a functor on $\dpc$ as in \pref{sym:Kndef}.  In fact, $\cK$ is the direct limit of the $K^n$.  Clearly, $\cK$ defines an object of $\Rep(\fS)^{\dpc}$.  We thus have functors
\begin{displaymath}
\Phi \colon \Mod_{\dpc}^{\fin} \to \Mod_{\fS}, \qquad \Psi \colon \Mod_{\fS} \to \Mod_{\dpc}
\end{displaymath}
defined by the same formulas as in \pref{ker-func}.  Here we write $\Mod_{\fS}$ for the category of all $\fS(\infty)$-modules.  The following is the main theorem of this section:

\begin{Theorem}
\label{sym:dp}
The functors $\Phi$ and $\Psi$ induce mutually quasi-inverse contravariant equivalences of tensor categories between $\Rep(\fS)$ and $\Mod_{\dpc}^{\fin}$.
\end{Theorem}

\begin{proof}
Let $V_n=\bC[\fS/H_n]$. If $V$ is any representation of $\fS$ then $\Hom_{\fS}(V_n, V)=V^{H_n}$.  One easily verifies that the map $f \colon V_n \to T_n$ provided by $e_1 \otimes \cdots \otimes e_n \in T_n^{H_n}$ is injective, which shows that $V_n$ belongs to $\Rep(\fS)$.  A simple computation shows that $(\bV^{\otimes L})^{H_n}=(\bC^n)^{\otimes L}$, and so $\Psi(V_n)=K^n$.  Another simple computation shows that $\Phi(K^n)=\Delta_{\un}(\cK)$ is the subspace of $\bV^{\otimes n}$ spanned by tensors of the form $e_{\alpha}$, where $\alpha \colon \un \to \cB$ is an injection.  This space is the image of $f$, and so $\Phi(K^n)=V_n$.  Furthermore, one can verify that, with these identifications, the natural maps $V_n \to \Phi(\Psi(V_n))$ and $K^n \to \Psi(\Phi(K^n))$ are the identity maps.

If $V$ is an algebraic representation of $\fS$ then it is a quotient of a finite direct sum of $V_n$'s (pick a finite set of generators for $V$; they are all $H_n$-invariant for $n$ large), and so $\Psi(V)$ is a subobject of a finite direct sum of $K^n$'s, and therefore of finite length by Proposition~\pref{sym:Knlen}.  Similarly, if $M$ is a finite length object of $\Mod_{\dpc}$ then it is a quotient of a finite direct sum of $K^n$'s by Proposition~\pref{sym:finlen2}, and so $\Phi(M)$ is a subobject of a finite direct sum of $V_n$'s, and therefore algebraic.  Thus $\Phi$ and $\Psi$ induce functors between $\Rep(\fS)$ and $\Mod_{\dpc}^{\fin}$.

By Proposition~\pref{sym:Vninj}, $\bV^{\otimes L}$ is an injective object of $\Rep(\fS)$, and so $\Psi$ is exact on algebraic representations of $\fS$.  It follows that $\Phi \Psi$ is a left-exact functor from $\Rep(\fS)$ to itself and $\Psi \Phi$ is a right-exact functor from $\Mod_{\dpc}^{\fin}$ to itself.  Lemma~\pref{sym:dp-1} below, combined with what we have already shown, establishes that the natural maps $\id \to \Phi \Psi$ and $\id \to \Psi \Phi$ are isomorphisms (on $\Rep(\fS)$ and $\Mod_{\dpc}^{\fin}$). 

We now show that $\Phi$ is a tensor functor.  Proposition~\pref{ker-tens} does not literally apply, since $\dpc$ is not weakly directed, but we can follow the same plan.  In fact, looking at its proof, it is enough to show that if $M$ and $N$ are finite length objects of $\Mod_{\dpc}$ and $M'$ and $N'$ are arbitrary objects, then the natural map
\begin{displaymath}
\Hom_{\dpc}(M,M') \otimes \Hom_{\dpc}(N,N') \to \Hom_{\dpc \times \dpc}(p_1^*M \boxtimes p_2^*N, p_1^*M' \boxtimes p_2^*N')
\end{displaymath}
is an isomorphism.  Following the proof of Lemma~\pref{ker-tens-1}, and appealing to Proposition~\pref{sym:finlen2}, it suffices to treat the case where $M=K^m$ and $N=K^n$.  We then have $p_1^*M \boxtimes p_2^*N=K^{m,n}$ in the notation of \pref{art:symKmn}.  By Propositions~\pref{sym:Kn} and~\pref{art:symKmn}, it thus suffices to show that the natural map
\begin{displaymath}
\Delta_{\ul{m}}(M') \otimes \Delta_{\ul{n}}(N') \to \Delta_{\ul{m},\ul{n}}(M' \boxtimes N')
\end{displaymath}
is an isomorphism.  This, however, is straightforward and left to the reader.  Since $\Psi$ is the quasi-inverse of $\Phi$, it too is a tensor functor.
\end{proof}

\begin{Lemma}
\label{sym:dp-1}
Let $\cA$ be an abelian category, let $F \colon \cA \to \cA$ be a left (right) exact functor and let $\eta \colon \id \to F$ be a natural transformation.  Suppose that there is a class $\cQ$ of finite length objects of $\cA$ such that 
\begin{enumerate}[\rm (a)] 
\item $\eta(Q)$ is an isomorphism for $Q \in \cQ$; and 
\item every finite length object of $\cA$ is a sub (quotient) of an object in $\cQ$.  
\end{enumerate}
Then $\eta(M)$ is an isomorphism for all $M$ of finite length.
\end{Lemma}

\begin{proof}
It suffices to prove the ``left'' and ``sub'' form of the lemma.  Let $M$ be a finite length object of $\cA$.  We can find an exact sequence $0 \to M \to Q \to Q'$ with $Q$ and $Q'$ in $\cQ$.  We obtain a commutative diagram
\begin{displaymath}
\xymatrix{
0 \ar[r] & F(M) \ar[r] & F(Q) \ar[r] & F(Q') \\
0 \ar[r] & M \ar[u] \ar[r] & Q \ar[u] \ar[r] & Q' \ar[u] }
\end{displaymath}
where the vertical maps are $\eta$'s.  The result now follows from a simple diagram chase.
\end{proof}

\begin{Corollary}
The tensor categories $\Rep(\fS)$ and $\Mod_{\upc}^{\fin}$ are equivalent.
\end{Corollary}

\subsection{Symmetric Schur functors, universal property and specialization}
\label{sec:symgp-univ}

\article[Symmetric Schur functors]
\label{sym:univ}
For a tensor category $\cA$, let $T(\cA)$ be the category of tuples $(A, m, \Delta, \eta)$ where $A$ is an object of $\cA$, and $m \colon A \otimes A \to A$ and $\Delta \colon A \to A \otimes A$ and $\eta \colon A \to \bC$ are morphisms in $\cA$, such that:
\begin{itemize}
\item $m$ defines an associative commutative algebra structure on $A$.
\item $(\Delta, \eta)$ defines a counital coassociative cocommutative coalgebra structure on $A$.
\item $m\Delta=\id$ and $\Delta m=(m \otimes 1)(1 \otimes \Delta)$.
\end{itemize}
Let $A \in T(\cA)$.  Given a map $f \colon L \to L'$ in $\dpc$, we define a map $\phi \colon A^{\otimes L} \to A^{\otimes L'}$ as follows.  Suppose $f$ is represented by the partition $\cU=\{U_i \amalg U'_i\}_{i \in I}$, where $U_i \subset L$ and $U_i' \subset L'$.  We first define a map $\phi_i \colon A^{\otimes U_i} \to A^{\otimes U_i'}$ to be the composition of the multiplication map $A^{\otimes U_i} \to A$ with the map $A \to A^{\otimes U_i'}$, which is either the counit (if $U_i'$ is empty) or comultiplication (if not).  The map $\phi$ is then the tensor product of the $\phi_i$.  We define $\cK(A)$ to be the object of $\cA^{\dpc}$ given by $L \mapsto A^{\otimes L}$.  For an object $M$ of $\Mod_{\upc}^{\fin}$ and $A \in T(\cA)$, define
\begin{displaymath}
S_M(A)=M \otimes^{\dpc} \cK(A).
\end{displaymath}
Then $M \mapsto S_M(A)$ defines a covariant functor $\Mod_{\upc}^{\fin} \to \cA$ which is left-exact and a tensor functor (we use a modified version of Proposition~\pref{prop:tensorkernel}, see the proof of Theorem~\pref{sym:dp}). We call $S_M$ the {\bf symmetric Schur functor} associated to $M$.

\begin{remark}
Let $A$ be an object of $T(\cA)$ and let $A^{\otimes L} \to A^{\otimes L'}$ be a map built out of $m$, $\Delta$ and $\eta$.  One can represent such a composition graphically: multiplications are represented by convergences and comultiplications by divergences, and counits are terminal points.  For example, the map $A^{\otimes 4} \to A^{\otimes 2}$ given by $\Delta m \otimes \eta m$ is represented by the diagram
\begin{displaymath}
\begin{xy}
(0, 5)*{}="A"; (8, 5)*{}="B"; (16, 5)*{}="C"; (24, 5)*{}="D";
(4, 0)*{}="E"; (20, 0)*{}="H"; (0, -5)*{}="F"; (8, -5)*{}="G";
"A"*{\bullet}; "B"*{\bullet}; "C"*{\bullet}; "D"*{\bullet};
"F"*{\bullet}; "G"*{\bullet};
"A"; "E"; **\dir{-};
"B"; "E"; **\dir{-};
"E"; "F"; **\dir{-};
"E"; "G"; **\dir{-};
"C"; "H"; **\dir{-};
"D"; "H"; **\dir{-};
\end{xy}
\end{displaymath}
The connected components of this picture define a partition of $L \amalg L'$, and one can show that two compositions are equal if they define the same partition.  This is the reason $\cK(A)$ is well-defined.
\end{remark}

\begin{Example}
\label{sym:Tex}
Take $\cA=\Rep(\fS)$.  Then $\bV$ naturally has the structure of an object of $T(\cA)$.  The counit $\bV \to \bC$ is the augmentation map.  The comultiplication $\bV \to \bV \otimes \bV$ sends $e_i$ to $e_i \otimes e_i$, while the multiplication $\bV \otimes \bV \to \bV$ sends $e_i \otimes e_j$ to $\delta_{i,j} e_i$.  The object $\cK(\bV)$ of $\cA^{\dpc}$ is the object $\cK$ of \pref{sym:ker} and $M \mapsto S_M(\bV)$ is the equivalence $\Mod_{\upc}^{\fin} \to \Rep(\fS)$.
\end{Example}

\begin{Theorem}
To give a left-exact tensor functor from $\Rep(\fS)$ to an arbitrary tensor category $\cA$ is the same as to give an object of $T(\cA)$.  More precisely, letting $\bM$ be the object of $\Mod_{\upc}^{\fin}$ corresponding to $\bV$, the functors
\begin{displaymath}
\Phi_{\cA} \colon \LEx^{\otimes}(\Mod_{\upc}^{\fin}, \cA) \to T(\cA), \qquad F \mapsto F(\bM)
\end{displaymath}
and
\begin{displaymath}
\Psi_{\cA} \colon T(\cA) \to \LEx^{\otimes}(\Mod_{\upc}^{\fin}, \cA), \qquad A \mapsto (M \mapsto S_M(A))
\end{displaymath}
are mutually quasi-inverse equivalences.
\end{Theorem}

\begin{proof}
The proof is the same as that of Theorem~\pref{gl:univ}
\end{proof}

\begin{remark}
This theorem can be rephrased as follows: the functor $T \colon \TCat \to \Cat$ is corepresented by $\Rep(\fS)$, with the universal object in $T(\Rep(\fS))$ being $\bV$.  See \pref{art:schur-univ} for notation.
\end{remark}

\article[The specialization functor]
\label{sym:special}
The permutation representation $\bC^d$ of $\fS(d)$ defines an object of $T(\Rep(\fS(d)))$, just as in \pref{sym:Tex}.  We therefore obtain a left-exact tensor functor
\begin{displaymath}
\Gamma_d \colon \Rep(\fS) \to \Rep(\fS(d))
\end{displaymath}
from the universal property of $\Rep(\fS)$.  We call this functor the {\bf specialization functor}.  The kernel $\cK(\bC^d)$ associated to $\bC^d$ in \pref{sym:univ} is just the object $K^d$ (equipped with its $\fS(d)$ action) defined in \pref{sym:Kndef}.  If we identify $\Rep(\fS)^{\op}$ with $\Mod_{\dpc}^{\fin}$, then $\Gamma_d$ is just the functor $\Hom_{\dpc}(-, K^d)$.

\article[Specialization via invariants]
We now give a more direct description of specialization from the point of view of representation theory.

\begin{proposition}
We have a natural identification $\Gamma_d(V)=V^{H_d}$.
\end{proposition}

\begin{proof}
Under the equivalence $\Mod_{\dpc}^{\fin} \simeq \Rep(\fS)$, the module $K^d$ goes to the representation $V_d=\bC[\fS/H_d]$ (see the proof of Theorem~\pref{sym:dp}). So $\Gamma_d(V)=\Hom_{\fS}(V_d, V)=V^{H_d}$.
\end{proof}

\begin{remark}
Since $\Gamma_d$ is a tensor functor, the above result shows that if $V$ and $W$ are algebraic representations of $\fS$ then $(V \otimes W)^{H_d}=V^{H_d} \otimes W^{H_d}$.
\end{remark}

\article[Derived specialization]
As $\Rep(\fS)$ has enough injectives, the derived functor $\rR\Gamma_d$ of $\Gamma_d$ exists. To compute this on simple objects $V_\lambda$, we can use the injective resolutions $I_\lambda^\bullet$ that one gets from using \cite[Theorem 2.3.1]{symc1} and Theorem~\pref{sym:tca}. The calculation is carried out in \cite[\S 7.4]{symc1}, the result being:

\begin{theorem}
Let $\lambda$ be a partition and $d \ge 1$ an integer.  Either $\rR^i\Gamma_d(V_{\lambda})$  vanishes identically, or it is non-zero for a unique $i$, and then it is an irreducible representation of $\fS(d)$.
\end{theorem}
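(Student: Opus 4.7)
The plan is to reduce the statement to the analogous calculation for $\Mod_K$ carried out in \cite[\S 7.4]{symc1}, using the equivalence of Theorem \pref{sym:tca} as the bridge. The first step is to identify the transported functor: under $\Phi \colon \Mod_K \to \Rep(\fS)$, the simple object indexed by $\lambda$ corresponds to $V_\lambda$, and I would check that $\Gamma_d \circ \Phi$ agrees with the natural ``evaluation at $\bC^d$'' functor on $\Mod_K$ used in \cite{symc1}. By the universal property of Theorem \pref{sym:univ} applied to $\cA = \Rep(\fS(d))$, any left-exact tensor functor out of $\Rep(\fS)$ is determined by its value on $\bV$ as an object of $T(\Rep(\fS(d)))$, so this boils down to a direct check on the generator.

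Next, I would produce an explicit injective resolution $I^\bullet_\lambda$ of $V_\lambda$ in $\Rep(\fS)$ by transporting along $\Phi$ the minimal injective resolution of the simple $\Mod_K$-object indexed by $\lambda$ from \cite[Theorem 2.3.1]{symc1}. The terms of that resolution are injective envelopes parametrized by partitions $\mu$ obtained from $\lambda$ by a horizontal strip rule; under $\Phi$ they become direct sums of Schur functors $\bS_\mu(\bV)$, which are injective in $\Rep(\fS)$ by Proposition \pref{sym:Vninj} and the discussion in \pref{sym:inj}. Thus $I^\bullet_\lambda$ is a genuine injective resolution, and $\rR^i \Gamma_d(V_\lambda)$ is the $i$th cohomology of the complex $\Gamma_d(I^\bullet_\lambda)$.

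Since $\Gamma_d$ is a tensor functor sending $\bV$ to $\bC^d$, we have $\Gamma_d(\bS_\mu(\bV)) = \bS_\mu(\bC^d)$ as representations of $\fS(d)$, and these plethysms decompose into Specht modules in a combinatorially explicit way. The remaining task is to compute the cohomology of the resulting complex of representations of $\fS(d)$. The main obstacle is this combinatorial computation: one must show that the differentials conspire so that all but (at most) one cohomology group vanishes, and the surviving one is an irreducible Specht module governed by a modification rule analogous to those of \cite{ssw}. This is exactly what is established in \cite[\S 7.4]{symc1}, so the proof concludes by invoking that calculation.
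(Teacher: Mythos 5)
Your overall plan — transport the problem to $\Mod_K$ via Theorem~\pref{sym:tca}, take the minimal injective resolution from \cite[Theorem~2.3.1]{symc1}, and apply $\Gamma_d$ term by term, citing \cite[\S7.4]{symc1} for the combinatorics — is the route the paper takes. But there is a genuine error in your identification of the terms of the transported resolution. Under the equivalence $\Phi \colon \Mod_K \to \Rep(\fS)$, the indecomposable injective of $\Mod_K$ indexed by $\mu$ goes to the indecomposable injective $I_\mu$ of $\Rep(\fS)$, i.e.\ the injective envelope of the simple $V_\mu$, \emph{not} to the Schur functor $\bS_\mu(\bV)$. The Schur functors $\bS_\mu(\bV)$ are injective in $\Rep(\fS)$ (Proposition~\pref{sym:Vninj}), but they are decomposable: as \pref{sym:inj} notes, $\Sym^2(\bV) = I_{(2)} \oplus I_{(1)}$, and the general decomposition of $\bS_\mu(\bV)$ into $I_\nu$'s is flagged there as a difficult open problem. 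You are implicitly conflating the two models: $\Mod_K$ is also equivalent to $\Rep^{\pol}(\GA)$, and in \emph{that} category the indecomposable injectives really are the Schur functors $\bS_\mu(\bV)$ (Proposition~\pref{art:ga-injectives}), which is what you describe. But the abelian-equivalence $\Rep^{\pol}(\GA) \simeq \Rep(\fS)$ does not take $\bS_\mu(\bV)$ to $\bS_\mu(\bV)$; it takes the indecomposable injective $\bS_\mu(\bV)^{\GA}$ to $I_\mu^{\fS}$.

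Because of this, the key computation you rely on, $\Gamma_d(\bS_\mu(\bV)) = \bS_\mu(\bC^d)$, does not apply to the terms of the transported resolution: one needs $\Gamma_d(I_\mu)$ instead, and that is precisely the nontrivial input that must come from \cite[\S7.4]{symc1}. Indeed, if the terms of the resolution really were Schur functors and the maps were the obvious ones, you would essentially be computing $\rR\Gamma_d$ in $\Rep^{\pol}(\GA)$, whose specialization functor is \emph{exact} (Proposition~\pref{art:GA-special}), so all higher derived functors would vanish and the theorem would trivialize — the whole point of the result is that $\Gamma_d^{\fS}$ is not exact while $\Gamma_d^{\GA}$ is. So the correct picture is: the transported resolution has terms which are sums of indecomposable injectives $I_\mu$, and the Borel--Weil--Bott-type cancellation in $\Gamma_d(I^\bullet_\lambda)$ is exactly what \cite[\S7.4]{symc1} establishes; that reference cannot be reduced to a Schur-functor plethysm calculation in the way you suggest.
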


Precisely, $\rR^i\Gamma_d(V_{\lambda})$ is non-zero if and only if there exists a border strip $R \subset \lambda$ with the following properties: (i) $R$ is connected; (ii) $R$ contains the last box in the first row of $\lambda$; (iii) $R$ has height $i$; (iv) $\mu=\lambda\setminus R$ has $d$ boxes.  If such a border strip exists, then $\rR^i\Gamma_d(V_{\lambda})$ is the irreducible $V_{\mu}$ of $\fS(d)$.

\article
\label{sym:T0}
Let $T_0=T(\Vec^{\fin})$.  Given a finite set $L$, the vector space $A_L$ with basis $\{e_i\}_{i \in L}$ naturally has the structure of an object of $T_0$: multiplication is given by $m(e_i \otimes e_j)=\delta_{ij} e_i$, comultiplication by $\Delta(e_i)=e_i \otimes e_i$, and the counit is the augmentation map.  If $L \to L'$ is an injection of sets then the induced linear map $A_L \to A_{L'}$ is a morphism in $T_0$.  We thus obtain a functor $\us \to T_0$, where $\us$ is the upwards subset category of \pref{art:defnds}.  In fact:

\begin{proposition}
The functor $\us \to T_0$ is an equivalence of categories.
\end{proposition}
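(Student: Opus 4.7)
The strategy is to check that the functor $F \colon \us \to T_0$ sending $L \mapsto A_L$ is fully faithful and essentially surjective. Full faithfulness is the easier half. Direct calculation shows that the grouplike elements of $A_{L'}$---that is, the vectors $x \in A_{L'}$ with $\Delta(x) = x \otimes x$ and $\eta(x) = 1$---are exactly the basis vectors $\{e_j\}_{j \in L'}$. Since any morphism $f \colon A_L \to A_{L'}$ in $T_0$ preserves $\Delta$ and $\eta$, it sends each grouplike $e_i \in A_L$ to some grouplike $e_{g(i)} \in A_{L'}$; compatibility with $m$ forces $g$ to be injective, for if $g(i) = g(j)$ with $i \ne j$ then $m(f(e_i) \otimes f(e_j)) = e_{g(i)} \ne 0$ while $f(m(e_i \otimes e_j)) = 0$. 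Since $\{e_i\}_{i \in L}$ is a basis of $A_L$, the map $f$ is recovered from $g$, giving a bijection $\us(L, L') \to T_0(A_L, A_{L'})$.

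Essential surjectivity is the main content. Given $(A, m, \Delta, \eta) \in T_0$, consider the linear dual $A^*$. The cocommutative coassociative counital coalgebra structure $(\Delta, \eta)$ makes $(A^*, \mu, 1)$ a finite-dimensional unital commutative associative $\bC$-algebra with product $\mu := \Delta^*$ and unit $1 := \eta^* \in A^*$. The key observation is that the dualized multiplication $m^* \colon A^* \to A^* \otimes A^*$ is determined by the single element $\lambda := m^*(1) \in A^* \otimes A^*$, and that $\lambda$ is a \emph{separability idempotent} for $A^*$. Indeed, the specialness relation $m\Delta = \id_A$ dualizes to $\mu m^* = \id_{A^*}$, which applied to $1$ gives $\mu(\lambda) = 1$; and the Frobenius relation $\Delta m = (m \otimes 1)(1 \otimes \Delta)$ dualizes to an identity whose application to $a \otimes 1$ (combined with cocommutativity of $m^*$, i.e., symmetry of $\lambda$) yields $m^*(a) = (a \otimes 1)\lambda = (1 \otimes a)\lambda$, which is precisely the bimodule balance condition $(a \otimes 1)\lambda = (1 \otimes a)\lambda$.

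By the classical structure theory of commutative separable algebras over an algebraically closed field, $A^* \cong \bC^L$ as a unital algebra, where $L$ is the (finite) set of characters $A^* \to \bC$. Dualizing, $A \cong \bC^L$ as a counital coalgebra, and the grouplike elements form a basis indexed by $L$. The separability idempotent of $\bC^L$ is unique, equal to $\sum_{i \in L} \chi_i \otimes \chi_i$ where $\chi_i$ is the $i$-th primitive idempotent. Unwinding the formula $m^*(a) = (a \otimes 1)\lambda$ in these coordinates shows that the multiplication transported to $A_L$ is $e_i \cdot e_j = \delta_{ij} e_i$, yielding an isomorphism $A \cong A_L$ in $T_0$.

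The main obstacle is the separability argument of the second paragraph: the four axioms of $T_0$ must be unpacked into the single clean statement that $\lambda = m^*(1)$ is a separability idempotent for the dual algebra. Once this is established, the rest of the argument reduces to standard structure theory and a final unwinding of formulas.
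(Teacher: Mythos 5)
Your proof is correct, and for essential surjectivity it takes a genuinely different route from the paper's. Both arguments begin by dualizing to $B = A^*$, so that $B$ is a finite-dimensional commutative algebra with a comultiplication $m^*$. The paper then decomposes $B$ into local factors $B_i$ with minimal idempotents $e_i$, shows each $e_i$ is grouplike for $m^*$, and observes that the Frobenius and specialness axioms exhibit $B_i$ as a direct summand of $B_i \otimes B_i$ as a module over that (local) ring; since projective modules over a local ring are free, $\dim(B_i)^2 \mid \dim(B_i)$ forces $\dim B_i = 1$. You instead compress the axioms into the single statement that $\lambda := m^*(1)$ is a separability idempotent, and appeal to the structure theorem for commutative separable $\bC$-algebras. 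The two are close relatives --- the local-ring dimension count is essentially the engine inside ``separable implies semisimple'' --- but your packaging makes the conceptual point explicit: the $T_0$-axioms say precisely that $A^*$ is a special commutative Frobenius algebra, and such algebras are automatically separable. The paper's version buys self-containedness; yours buys a shorter argument at the cost of an external citation. One small slip to fix: dualizing $\Delta m = (m \otimes 1)(1 \otimes \Delta)$ gives $m^*\Delta^* = (\id \otimes \Delta^*)(m^* \otimes \id)$, and evaluating this at $a \otimes 1$ produces a tautology; you need to evaluate at $1 \otimes a$ to obtain $m^*(a) = (1 \otimes a)\lambda$, after which symmetry of $\lambda$ (cocommutativity of $m^*$) gives the other form. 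The fully-faithful half is the same as the paper's argument, phrased dually: the paper characterizes the basis $\{e_i\}$ as the idempotents sent to $1$ by the counit, whereas you use grouplikes; these are the same elements of $A_L$, and either description makes the rest immediate.
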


\begin{proof}
We first prove essential surjectivity.  Let $A$ be an object of $T_0$. For psychological reasons, it will be easier to work with $B=A^*$, the linear dual of $A$.  The space $B$ is a commutative associative unital ring, equipped with a map $\Delta \colon B \to B \otimes B$ which is cocommutative and coassociative and satisfies $m\Delta=\id$ and $\Delta m=(m \otimes 1)(1 \otimes \Delta)$.  From these identities, one can deduce that $\Delta$ respects multiplication, i.e., $\Delta(xy)=\Delta(x) \Delta(y)$; this is most easily seen by appealing to the remark in \pref{sym:univ}.  Note that $\Delta$ need not preserve the identity of $B$, however.

Being finite dimensional, the algebra $B$ is artinian.  We thus have a decomposition $B=\prod_{i \in L} B_i$ of $B$ into local rings, where $L$ is some finite index set.  Each factor corresponds to some minimal idempotent $e_i$.  The minimal idempotents of $B \otimes B$ are the elements $e_i \otimes e_j$, and a general idempotent of $B \otimes B$ can be written as a sum of these elements with coefficients 0 or 1.

Since $e_i$ is an idempotent of $B$ and $\Delta$ respects multiplication, $\Delta(e_i)$ is an idempotent of $B \otimes B$.  We can therefore write $\Delta(e_i)=\sum_{n,m} A^i_{n,m} e_n \otimes e_m$, where each $A^i_{n,m}$ is 0 or 1.  Applying the identity $\Delta m=(m \otimes 1)(1 \otimes \Delta)$ to $e_i \otimes e_i$, we find $A^i_{n,m}=\delta_{n,i} A^i_{n,m}$, which shows that $A^i_{n,m}=0$ if $n \ne i$.  By symmetry, we find $A^i_{n,m}=0$ if $m \ne i$, and so only $A^i_{i,i}$ can be non-zero.  The identity $m\Delta=\id$ shows that $A^i_{i,i}$ is indeed non-zero.  We have thus shown $\Delta(e_i)=e_i \otimes e_i$ for all $i$.

This identity implies that $\Delta$ maps $B_i$ into $B_i \otimes B_i$.  The multiplication map $m \colon B_i \otimes B_i \to B_i$ gives $B_i$ the structure of a $B_i \otimes B_i$ module.  Since $\Delta$ is a section of $m$, it follows that $B_i$ is a summand of $B_i \otimes B_i$, and thus projective as a $B_i \otimes B_i$ module.  Let $d=\dim(B_i)$.  Since $B_i \otimes B_i$ is local, any finite dimensional projective module over it is free, and thus has dimension divisible by $d^2=\dim(B_i \otimes B_i)$.  Thus $d^2 \mid d$, and so $d=1$.  This shows that $B_i=\bC e_i$ for each $i$, and so $B$ is spanned by the $e_i$.  Hence $A$ is isomorphic to $\bC[L]$, which completes the proof of essential surjectivity.

To complete the proof of the proposition, we must show that any map $f \colon A_L \to A_{L'}$ in $T_0$ comes from an injection $L \to L'$.  Note that the elements $e_i$ of $A_L$ are precisely the idempotents of $A_L$ which map to 1 under the counit.  It follows that $f$ has to map the set $\{e_i\}_{i \in L}$ to the set $\{e_i\}_{i \in L'}$.  Furthermore, this induced map must be injective, since the idempotents are orthogonal.  This completes the proof.
\end{proof}

\article \label{art:repfs-symc1}
The universal property of $\Rep(\fS)$ yields a left-exact tensor functor $\Rep(\fS) \to \Fun(T_0, \Vec^{\fin})$.  Identifying $T_0$ with $\us$ by Proposition~\pref{sym:T0}, and $\Fun(\us, \Vec)$ with $\Mod_A$, where $A = \Sym(\bC\langle 1 \rangle)$, this functor coincides with $S'$ from \pref{sym:Sfunc}.  It follows from what we have shown, and facts about $\Mod_A$ from \cite{symc1}, that this functor is fully faithful and its essential image consists of finitely generated saturated $A$-modules.  The analogues of the unproven statements in \pref{gl:catc} can be deduced in this case from the results in \cite[\S 4.2]{symc1}.

\section{Branching rules}
\label{sec:relations}
\setcounter{article}{0}
\renewcommand{\thearticle}{\thesection.\arabic{article}}%

\article[Classical groups]
\label{br:classical}
We now discuss some canonical functors between the classical representation categories that have been considered in this paper. They come in four different flavors, and calculating their effect on simple objects can be interpreted as calculating classical branching rules.  All of these functors are exact and respect tensor products.

\begin{itemize}
\item Diagonal embeddings $G \subset G \times G$ lead to tensor product functors 
\[
\otimes_G \colon \Rep(G) \times \Rep(G) \to \Rep(G)
\]
for $G \in \{\GL, \bO, \Sp\}$. We discuss this for $G=\bO$ (which is the same as $G=\Sp$ by Theorem~\pref{thm:ospequiv}) in \pref{o:tensor-prod}, and for $G=\GL$ in \pref{gl:tensor-prod}.  See \pref{br:example} for a specific example.

\item Dual to the tensor product functors, we have comultiplication functors 
\[
\Delta_G \colon \Rep(G) \to \Rep(G \times G)
\]
for $G \in \{\GL, \bO, \Sp\}$.  These can be defined by applying the appropriate universal property of $\Rep(G)$ to the object $\bV \boxtimes \bV$, where $\bV \in \Rep(G)$ is the vector representation, or by restricting along an appropriate embedding $G \times G \to G$ (which exists because $G$ is infinite).  These are discussed in \pref{br:comult}.

\item We have inclusions $G \subset \GL$ for $G \in \{\bO, \Sp\}$ which lead to restriction functors 
\[
\res_G \colon \Rep(\GL) \to \Rep(G).
\]
These are studied in \pref{br:restrict}.

\item Dual to the restriction functors, we have polarization functors
\[
{\rm pol}_G \colon \Rep(G) \to \Rep(\GL)
\]
for $G \in \{\bO, \Sp\}$.  These are defined by putting either a symmetric or skew-symmetric nondegenerate pairing on $\bV \oplus \bV_\ast$ for $\bV, \bV_\ast \in \Rep(\GL)$ and applying the relevant universal property, or by restricting along an appropriate embedding $\GL \to G$.  They are discussed in \pref{br:pol}.
\end{itemize}
The corresponding branching rules for all of these functors have been classically studied, and we refer to \cite{htw} for these rules. All of the rules follow the same pattern: they are sums over products of Littlewood--Richardson coefficients. We will deduce the rules for tensor product and restriction using some general formalism for diagram categories and some calculations with symmetric groups.

\article[Other groups]
The inclusion $\GA \subset \GL$ yields a restriction functor $\Rep(\GL) \to \Rep(\GA)$, where $\Rep(\GA)$ is the category of algebraic representations of $\GA(\infty)$.  This is likely an interesting construction to study, but we do not consider it here.  The inclusion also yields a restriction functor $\Rep^{\pol}(\GL) \to \Rep^{\pol}(\GA)$, which we have already studied:  the image consists of the injective objects of the target category (Proposition~\pref{art:ga-injectives}).  There are also maps from $\Rep^{\pol}(\GA)$ to the categories $\Rep(\GL)$, $\Rep(\bO)$, $\Rep(\Sp)$ and $\Rep(\fS)$ given by the natural objects admitting a trace map, namely, $\bV \otimes \bV_*$, $\Sym^2(\bV)$, $\lw^2(\bV)$ and $\bV$. 

The inclusion $\fS \subset \GL$ yields a restriction functor $\Rep(\GL) \to \Rep(\fS)$, which, as far as we know, has not been well-studied, likely because it is difficult to understand (see \pref{sym:inj}). The quadratic invariant on $\bV \in \Rep(\fS)$ gives us a left-exact tensor functor
\[
\res_{\fS} \colon \Rep(\bO) \to \Rep(\fS)
\]
using Theorem~\pref{c:univ}.  We are also unaware of any results on this functor.

\article \label{tens:decomp}
We begin with a general formula that makes the pushforward in \pref{diag:op} more explicit.  Let $\Lambda$ and $\Lambda'$ be categories as in \pref{cat-cond}, and let $F \colon \Lambda \to \Lambda'$ be a functor. Let $\Pi$ be the category of tuples $(x, y, f)$ where $x \in \Lambda$ and $y \in \Lambda'$, and $f$ is a morphism $f \colon F(x) \to y$.  A morphism $(x, y, f) \to (x', y', f')$ consists of a morphism $x \to x'$ in $\Lambda$ and a morphism $y \to y'$ in $\Lambda'$ such that the obvious diagram commutes.  We typically abbreviate the object $(x, y, f)$ by $f$ and write $\Pi_{x,y}$ for the objects of the form $(x,y,f)$.  We say that $f=(x,y,f)$ is {\bf irreducible} if any morphism $(x,y,f) \to (x',y,g)$ is an isomorphism.  We write $\Pi^{\irr}$ for the full subcategory of irreducible objects.  Note that for any $f \in \Pi_{x,y}$ we have maps from $\Aut(f)$ to $\Aut(x)$ and $\Aut(y)$.

\begin{Proposition}  \label{prop:tens:decomp}
Choose $x \in \Lambda$ and $y \in \Lambda'$, and let $V$ be a representation of $\Aut(x)$.  Let $\{f_i\}_{i \in I}$ be a complete irredundant set of irreducible objects of $\Pi_{x,y}$.  Then we have a natural isomorphism
\begin{displaymath}
(F_\#(S_x(V)))_y = \bigoplus_{i \in I} \Ind_{\Aut(f_i)}^{\Aut(y)} (V)
\end{displaymath}
as representations of $\Aut(y)$.  (Here $\Ind$ denotes the adjoint to restriction along the natural homomorphism $\Aut(f_i) \to \Aut(y)$.)
\end{Proposition}

\begin{proof}
Put $M=S_x(V)$.  Essentially by definition, we have
\begin{displaymath}
(F_\#(M))_y=\varinjlim_{(x',y,f) \in \Pi} F(M_{x'}).
\end{displaymath}
There is a natural map
\begin{displaymath}
\varinjlim_{f \in \Pi^{\irr}_{x,y}} F(V) \to \varinjlim_{(x',y,f) \in \Pi} F(M_{x'}).
\end{displaymath}
Since any map $f \to f'$ with $f \in \Pi^{\irr}_{x,y}$ and $f' \in \Pi_{x',y}$ is an isomorphism, the above map is injective.  We claim that it is surjective.  Suppose $f \in \Pi_{x',y}$.  We must show that the image of $F(M_{x'})$ corresponding to $f$ in the right direct limit comes from the left direct limit.  If $x'$ is not isomorphic to $x$, then $M_{x'}=0$ and there is nothing to show. Also, if $f$ is irreducible, there is nothing to show.  Thus suppose $x=x'$ and that $f$ is not irreducible, and choose a map $f \to f'$ with $f' \in \Pi_{x'',y}$ with $x$ not isomorphic to $x''$.  Then the image of $F(M_x)$ corresponding to $f$ in the right direct limit factors through the image of $F(M_{x''})$ corresponding to $f'$, and thus vanishes.  This establishes the claim.

We have thus shown that the natural map
\begin{displaymath}
\varinjlim_{f \in \Pi^{\irr}_{x,y}} F(V) \to (F_\#(M))_y
\end{displaymath}
is an isomorphism.  The result is now a simple calculation; note that $\Pi^{\irr}_{x,y}$ is a groupoid, so the direct limit is easy to calculate.
\end{proof}

\article[Tensor products (orthogonal group)]
\label{o:tensor-prod}
We now determine tensor product multiplicities in $\Rep(\bO)$, recovering \cite[Thm.~3.1]{koike}, by applying Proposition~\pref{prop:tens:decomp} on the functor $\amalg \colon \db \times \db \to \db$. Recall that we use $c$ to denote Littlewood--Richardson coefficients \pref{art:LR}.

\begin{proposition}
The multiplicity of $V_{\nu}$ in $V_{\lambda} \otimes V_{\mu}$ is
\begin{displaymath}
\sum_{\alpha, \beta, \gamma} c^{\lambda}_{\alpha, \beta} c^{\mu}_{\beta, \gamma} c^{\nu}_{\alpha, \gamma},
\end{displaymath}
where the sum is over all partitions $\alpha$, $\beta$ and $\gamma$.
\end{proposition}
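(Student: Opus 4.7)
The plan is to transport the problem to the diagram category via the tensor equivalence of Theorem~\pref{o:db} and then apply Proposition~\pref{prop:tens:decomp} to the monoidal functor $\amalg \colon \db \times \db \to \db$. Under the contravariant tensor equivalence $\Psi \colon \Rep(\bO) \to \Mod_{\db}^{\fin}$, the simple $V_\lambda$ corresponds to $S_{\ul l}(\bM_\lambda)$ with $l = |\lambda|$ (this follows from Proposition~\pref{o:oneway} and the proof of Theorem~\pref{o:db}), and tensor product in $\Rep(\bO)$ becomes the convolution $\otimes_\#$. Writing $l = |\lambda|$, $m = |\mu|$, $n = |\nu|$, the multiplicity of $V_\nu$ in $V_\lambda \otimes V_\mu$ thus equals the composition-factor multiplicity of $S_{\ul n}(\bM_\nu)$ in $\amalg_\# (S_{(\ul l, \ul m)}(\bM_\lambda \boxtimes \bM_\mu))$. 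Since evaluation at $\ul n$ is exact and kills simples supported elsewhere, this in turn equals the multiplicity of $\bM_\nu$ in the $S_n$-representation $(\amalg_\# S_{(\ul l, \ul m)}(\bM_\lambda \boxtimes \bM_\mu))_{\ul n}$.

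Proposition~\pref{prop:tens:decomp} identifies this representation with a direct sum $\bigoplus_i \Ind_{\Aut(f_i)}^{S_n}((\bM_\lambda \boxtimes \bM_\mu)|_{\Aut(f_i)})$ over isomorphism classes of irreducible objects of $\Pi_{(\ul l, \ul m), \ul n}$. Such an object is a morphism $f \colon \ul l \amalg \ul m \to \ul n$ in $\db$, and the crucial combinatorial claim (to be verified by explicit factorization) is that $f$ is irreducible precisely when every edge of its matching joins $\ul l$ with $\ul m$: any edge internal to $\ul l$ or to $\ul m$ lets one factor $f$ through a strictly smaller object of $\db \times \db$. Setting $k = (l+m-n)/2$, which must be a non-negative integer for the sum to be non-empty, there is up to isomorphism a single such irreducible, with canonical representative matching $\{1,\dots,k\}\subset\ul l$ identically with $\{1,\dots,k\}\subset\ul m$ and sending the remaining elements to $\ul n$ block-wise. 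A direct check then yields $\Aut(f) \cong S_k \times S_{l-k} \times S_{m-k}$, embedded in $S_l \times S_m$ as the subgroup $\{(\pi \times \sigma,\, \pi \times \tau)\}$ (same $\pi \in S_k$ on both sides) and mapped to $S_n$ by forgetting $\pi$ and sending $(\sigma, \tau)$ into the Young subgroup $S_{l-k} \times S_{m-k} \subset S_n$.

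The final step is pure symmetric-group branching. Restricting $\bM_\lambda$ to $S_k \times S_{l-k}$ and $\bM_\mu$ to $S_k \times S_{m-k}$ gives by Littlewood--Richardson
\[
\bigoplus_{\alpha,\beta} c^\lambda_{\alpha,\beta}\, \bM_\alpha \boxtimes \bM_\beta
\quad\text{and}\quad
\bigoplus_{\gamma,\delta} c^\mu_{\gamma,\delta}\, \bM_\gamma \boxtimes \bM_\delta,
\]
so the diagonal $S_k$-invariants of the tensor product collapse (via self-duality of $S_k$-irreducibles, forcing $\alpha = \gamma$) to $\bigoplus_{\alpha,\beta,\delta} c^\lambda_{\alpha,\beta} c^\mu_{\alpha,\delta}\, \bM_\beta \boxtimes \bM_\delta$. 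Inducing from $S_{l-k} \times S_{m-k}$ to $S_n$ and extracting the $\bM_\nu$-multiplicity by Frobenius reciprocity contributes a factor $c^\nu_{\beta,\delta}$, so the answer is $\sum_{\alpha,\beta,\delta} c^\lambda_{\alpha,\beta} c^\mu_{\alpha,\delta} c^\nu_{\beta,\delta}$, which agrees with the stated formula after relabeling and using the symmetry $c^\lambda_{\alpha,\beta} = c^\lambda_{\beta,\alpha}$. The main obstacle is the combinatorial characterization of irreducibles in $\Pi_{(\ul l, \ul m), \ul n}$ and the careful identification of $\Aut(f)$ with its two maps to $S_l \times S_m$ and $S_n$; once these are in place, the rest is a mechanical symmetric-group calculation.
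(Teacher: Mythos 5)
Your proposal is correct and follows the same route as the paper: transport via the contravariant tensor equivalence of Theorem~\pref{o:db} to $\Mod_{\db}^{\fin}$, apply Proposition~\pref{prop:tens:decomp} to $\amalg\colon \db\times\db\to\db$, identify the irreducible objects of $\Pi_{(\ul{l},\ul{m}),\ul{n}}$ as matchings with all edges joining the two components (so there is a single isomorphism class once $k=(l+m-n)/2$ is a nonnegative integer with $k\le l, m$), compute $\Aut(f)\cong S_k\times S_{l-k}\times S_{m-k}$ with the diagonal $S_k$ as the kernel to $S_n$, and then carry out the Littlewood--Richardson restriction, diagonal $S_k$-invariants (using self-duality of symmetric-group irreducibles to force the $S_k$-labels to coincide), and final induction to $S_n$. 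The only differences from the paper's proof are cosmetic: a different labeling of the partitions in the Littlewood--Richardson factors (the paper's formula $\sum c^\lambda_{\alpha,\beta}c^\mu_{\gamma,\beta}c^\nu_{\alpha,\gamma}$ and your $\sum c^\lambda_{\alpha,\beta}c^\mu_{\alpha,\delta}c^\nu_{\beta,\delta}$ agree after relabeling and the symmetry $c^\lambda_{\alpha,\beta}=c^\lambda_{\beta,\alpha}$), and you phrase the automorphism-group calculation directly rather than through the paper's intermediate factorization~(4.2).
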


\begin{proof}
The multiplicity in question is the same as the multiplicity of $M_{\nu}$ in $M_{\lambda} \otimes M_{\mu}$ taken in $\Mod_{\db}$, and this is what we compute.  Let $\ell=\vert \lambda \vert$, $m=\vert \mu \vert$ and $n=\vert \nu \vert$.  Let $L$ be a set of cardinality $\ell$ and choose $M$ and $N$ similarly but with $m$ and $n$.  Suppose that $\ell+m-n$ is a non-negative even number; otherwise there is no map $L \amalg M \to N$ in $\db$ and the multiplicity is 0.  Choose a set $E$ of cardinality $(\ell+m-n)/2$ and choose injections $E \to L$ and $E \to M$ and a bijection
\begin{displaymath}
(L \setminus E) \amalg (M \setminus E) \to N.
\end{displaymath}
The elements of $E$ can be regarded as the edges of a matching on $L \amalg M$, and so we have a map $f \colon L \amalg M \to N$ in $\db$.  This map is clearly irreducible (see \pref{tens:decomp} for the definition) and any irreducible map is isomorphic to $f$.  By Proposition~\pref{prop:tens:decomp}, we thus find that the evaluation of $M_{\lambda} \otimes M_{\mu}$ on $N$ is given by
\begin{equation}
\label{o:tens1}
\Ind_{\Aut(f)}^{\Aut(N)}(\bM_{\lambda} \boxtimes \bM_{\mu}).
\end{equation}

We have
\begin{displaymath}
\Aut(f)=\fS_{L \setminus E} \times \fS_{M \setminus E} \times \fS_E.
\end{displaymath}
The maps from $\Aut(f)$ to $\fS_L$, $\fS_M$ and $\fS_N$ are the obvious ones.  To compute the induction in \eqref{o:tens1}, we take invariants under the kernel of the map $\Aut(f) \to \Aut(N)$ and then form the more usual induction from the image to $\Aut(N)$.  The map $\Aut(f) \to \Aut(L) \times \Aut(M)$ factors as
\begin{equation}
\label{o:tens2}
\begin{split}
& \fS_{L \setminus E} \times \fS_{M \setminus E} \times \fS_E \\
\to & \fS_{L \setminus E} \times \fS_{M \setminus E} \times \fS_E \times \fS_E \\
\to & \fS_L \times \fS_M
\end{split}
\end{equation}
where the first map uses the diagonal map on $\fS_E$ and the second is a product of inclusions of Young subgroups.  When we restrict the representation $\bM_{\lambda} \boxtimes \bM_{\mu}$ of the group on the final line to the second line, we use the Littlewood--Richardson rule; when we further restrict to the first line, we take the tensor product of the two representations of $\fS_E$ that show up.  The final result is
\begin{displaymath}
\bigoplus c^{\lambda}_{\alpha, \beta} c^{\mu}_{\gamma, \delta} \bM_{\alpha} \boxtimes \bM_{\gamma} \boxtimes (\bM_{\beta} \otimes \bM_{\delta}).
\end{displaymath}
The sum is taken over all partitions $\alpha$, $\beta$, $\gamma$, $\delta$ with $\vert \beta \vert=\vert \delta \vert=\# E$.  The representations appear in the same order as the groups in the first line of \eqref{o:tens2}.  Now, the kernel of $\Aut(f) \to \Aut(N)$ is $\fS_E$.  We thus need to form the invariants of the above representation under this group.  This is easy:  it amounts to enforcing $\beta=\delta$ and discarding the final factor in the above equation.  We thus have
\begin{displaymath}
\bigoplus c^{\lambda}_{\alpha, \beta} c^{\mu}_{\gamma, \beta} \bM_{\alpha} \boxtimes \bM_{\gamma}.
\end{displaymath}
This representation is one of the group $\fS_{L \setminus E} \times \fS_{M \setminus E}$, which is identified with the image of $\Aut(f) \to \Aut(N)$.  We thus need to induce this representation to $\Aut(N)$.  This is accomplished using the Littlewood--Richardson rule.  The result is
\begin{displaymath}
\bigoplus c^{\lambda}_{\alpha, \beta} c^{\mu}_{\gamma, \beta} c^{\eta}_{\alpha, \gamma} \bM_{\eta}.
\end{displaymath}
This completes the computation of the induction \eqref{o:tens1}.  Taking the $\bM_{\nu}$ component gives the final answer.  Note that it is unnecessary to impose the condition $\vert \beta \vert=\# E$ in the final answer, as this is required for the product of Littlewood--Richardson coefficients to be non-zero.
\end{proof}

\article[Tensor products (general linear group)]
\label{gl:tensor-prod}
The tensor product multiplicities in $\Rep(\GL)$ are given in \cite[Thm.~2.4]{koike}. The proof is similar to the calculation for $\Rep(\bO)$, so we omit it, but we give the statement.

\begin{proposition}
The multiplicity of $V_{\nu,\nu'}$ in $V_{\lambda,\lambda'} \otimes V_{\mu,\mu'}$ is given by
\begin{displaymath}
\sum c^{\lambda}_{\alpha, \beta} c^{\lambda'}_{\alpha', \beta'} c^{\mu}_{\gamma, \beta'} c^{\mu'}_{\gamma', \beta} c^{\nu}_{\alpha, \gamma} c^{\nu'}_{\alpha', \gamma'}
\end{displaymath}
where the sum is over all partitions $\alpha$, $\alpha'$, $\beta$, $\beta'$, $\gamma$, $\gamma'$.
\end{proposition}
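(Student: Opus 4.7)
The plan is to mirror the proof of the orthogonal tensor product formula, working in $\Mod_{\dwb}$ via Theorem~\pref{gl:dwb} and applying Proposition~\pref{prop:tens:decomp} to the monoidal functor $\amalg \colon \dwb \times \dwb \to \dwb$. The multiplicity of $V_{\nu,\nu'}$ in $V_{\lambda,\lambda'} \otimes V_{\mu,\mu'}$ equals the multiplicity of the simple $S_N(\bM_\nu \boxtimes \bM_{\nu'})$ in $\amalg_\#(S_L(\bM_\lambda \boxtimes \bM_{\lambda'}) \boxtimes S_M(\bM_\mu \boxtimes \bM_{\mu'}))$, where $L = (L_+, L_-)$ is a biset with $\#L_+ = |\lambda|$, $\#L_- = |\lambda'|$, and similarly for $M$ and $N$.

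First I would classify the irreducible morphisms $f \colon L \amalg M \to N$ in $\dwb$. By Proposition~\pref{prop:tens:decomp}, only those morphisms with no matched edges lying entirely within $L$ or entirely within $M$ contribute; equivalently, every edge of the underlying matching must cross between $L$ and $M$. Since edges in $\dwb$ respect the wall, each such edge goes either from $L_+$ to $M_-$ or from $L_-$ to $M_+$. Fix finite sets $E$ and $F$ with $E \hookrightarrow L_+$, $E \hookrightarrow M_-$, $F \hookrightarrow L_-$, $F \hookrightarrow M_+$, together with a biset bijection of the complement to $N$; this data specifies an irreducible $f$ up to isomorphism, and the automorphism group is
\[
\Aut(f) = S_{L_+ \setminus E} \times S_{L_- \setminus F} \times S_{M_+ \setminus F} \times S_{M_- \setminus E} \times S_E \times S_F,
\]
with $S_E$ (resp.\ $S_F$) mapping diagonally into $S_{L_+} \times S_{M_-}$ (resp.\ $S_{L_-} \times S_{M_+}$) and trivially into $\Aut(N)$.

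Next I would compute the induction $\Ind_{\Aut(f)}^{\Aut(N)}(\bM_\lambda \boxtimes \bM_{\lambda'} \boxtimes \bM_\mu \boxtimes \bM_{\mu'})$ in two stages, as in the orthogonal case. Restricting to the Young subgroups via Littlewood--Richardson yields
\[
\bigoplus c^\lambda_{\alpha,\beta}\, c^{\lambda'}_{\alpha',\beta'}\, c^\mu_{\gamma,\delta}\, c^{\mu'}_{\gamma',\delta'}\; \bM_\alpha \boxtimes \bM_\beta \boxtimes \bM_{\alpha'} \boxtimes \bM_{\beta'} \boxtimes \bM_\gamma \boxtimes \bM_\delta \boxtimes \bM_{\gamma'} \boxtimes \bM_{\delta'},
\]
where the $\bM_\beta, \bM_{\delta'}$ live on the $E$-sides and $\bM_{\beta'}, \bM_\delta$ on the $F$-sides. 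Taking invariants under the kernel $S_E \times S_F$ of $\Aut(f) \to \Aut(N)$ forces $\beta = \delta'$ and $\beta' = \delta$ (using that symmetric-group irreducibles are self-dual and distinct irreducibles have no invariants under the diagonal), collapsing the expression to
\[
\bigoplus c^\lambda_{\alpha,\beta}\, c^{\lambda'}_{\alpha',\beta'}\, c^\mu_{\gamma,\beta'}\, c^{\mu'}_{\gamma',\beta}\; \bM_\alpha \boxtimes \bM_{\alpha'} \boxtimes \bM_\gamma \boxtimes \bM_{\gamma'}
\]
as a representation of $S_{L_+ \setminus E} \times S_{L_- \setminus F} \times S_{M_+ \setminus F} \times S_{M_- \setminus E}$, which is identified with the image of $\Aut(f) \to \Aut(N) = S_{N_+} \times S_{N_-}$ via $S_{L_+ \setminus E} \times S_{M_+ \setminus F} \hookrightarrow S_{N_+}$ and $S_{L_- \setminus F} \times S_{M_- \setminus E} \hookrightarrow S_{N_-}$. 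Inducing up to $\Aut(N)$ via Littlewood--Richardson on each side introduces factors $c^\nu_{\alpha,\gamma}$ and $c^{\nu'}_{\alpha',\gamma'}$, and extracting the multiplicity of $\bM_\nu \boxtimes \bM_{\nu'}$ yields the advertised formula.

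The main bookkeeping challenge is simply tracking the bipartite structure: one must keep the wall discipline consistent throughout (edges always join a $+$-vertex to a $-$-vertex on the opposite side of the wall), so that the two species of crossing edges $E$ and $F$ are cleanly separated and generate two independent pairings $(\beta, \delta') \leftrightarrow E$ and $(\beta', \delta) \leftrightarrow F$. Once the labels are chosen to match Koike's conventions (so that $\beta$ indexes the shared summand between $\lambda$ and $\mu'$ while $\beta'$ indexes the one between $\lambda'$ and $\mu$), the orthogonal argument transcribes verbatim; no new ingredient beyond Proposition~\pref{prop:tens:decomp} and the Littlewood--Richardson rule is required.
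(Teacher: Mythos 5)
Your proof is correct and follows exactly the argument the paper intends but omits (the paper states only that the GL computation is "similar to the calculation for $\Rep(\bO)$"): you apply Proposition~\pref{prop:tens:decomp} to $\amalg\colon\dwb\times\dwb\to\dwb$, classify the irreducible morphisms (all edges crossing between $L$ and $M$, split into the two species $E\subset L_+\leftrightarrow M_-$ and $F\subset L_-\leftrightarrow M_+$), and then compute the induction in two stages, with the $S_E$- and $S_F$-invariants forcing the pairings $\beta=\delta'$ and $\beta'=\delta$. The only point left implicit is that the irreducibles come in several isomorphism classes indexed by $(\#E,\#F)$ and must be summed over, but — just as in the proof of Proposition~\pref{prop:classicalbranching} — this sum is absorbed into the free range of $\beta,\beta'$ in the final formula, so the argument is sound.
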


\article[Graphical representation of tensor product multiplicities]
\label{gl:tens-gr}
Suppose $\Gamma$ is an undirected graph whose edges are labeled by partitions; we write $\lambda(e)$ for the partition on edge $e$.  Given a function $\mu$ from the vertices of $\Gamma$ to partitions, we define
\begin{displaymath}
c_{\Gamma}(\mu)=\prod_{e=(i,j)} c^{\lambda(e)}_{\mu(i),\mu(j)},
\end{displaymath}
where the product is taken over the edges of $\Gamma$.  We define
\begin{displaymath}
c_{\Gamma}=\sum_{\mu} c_{\Gamma}(\mu)
\end{displaymath}
where the sum is over all functions $\mu$.
Let $\Gamma$ be the following graph:
\begin{displaymath}
\begin{xy}
(5,2.88)*{}="A"; (0,-5.76)*{}="B"; (-5,2.88)*{}="C";
"A"*{\bullet}; "B"*{\bullet}; "C"*{\bullet};
"A"; "B"; **\dir{-}; "B"; "C"; **\dir{-}; "C"; "A"; **\dir{-};
(0, 4.58)*{\ss \lambda}; (3.8,-2.74)*{\ss \mu}; (-3.8,-2.74)*{\ss \nu};
\end{xy}
\end{displaymath}
Then Proposition~\pref{o:tensor-prod} can be rephrased as $[V_{\nu} : V_{\lambda} \otimes V_{\mu}]=c_{\Gamma}$.

Now let $\Gamma$ be the graph
\begin{displaymath}
\begin{xy}
(5,8.66)*{}="A"; (10, 0)*{}="B"; (5,-8.66)*{}="C"; (-5,-8.66)*{}="D"; (-10,0)*{}="E"; (-5,8.66)*{}="F";
"A"*{\bullet}; "B"*{\bullet}; "C"*{\bullet}; "D"*{\bullet}; "E"*{\bullet}; "F"*{\bullet};
"A"; "B"; **\dir{-}; "B"; "C"; **\dir{-}; "C"; "D"; **\dir{-};
"D"; "E"; **\dir{-}; "E"; "F"; **\dir{-}; "F"; "A"; **\dir{-};
(0,10.66)*{\ss \lambda}; (8.8,5.63)*{\ss \mu'}; (8.8,-5.63)*{\ss \nu'};
(0,-10.66)*{\ss \lambda'}; (-8.8,-5.63)*{\ss \mu}; (-8.8,5.63)*{\ss \nu};
\end{xy}
\end{displaymath}
Then Proposition~\pref{gl:tensor-prod} can be rephrased as: $[V_{\nu,\nu'} : V_{\lambda,\lambda'} \otimes V_{\mu,\mu'}]=c_{\Gamma}$.  Note that the roles of $\lambda$, $\mu$ and $\nu$ are not the same: $\lambda$ and $\mu$ border edges labelled by primed variables, while $\nu$ does not.  On the other hand, the relationships of the edges are preserved by the rotation
\begin{displaymath}
(\mu, \nu, \lambda, \mu', \nu', \lambda') \to (\lambda', \mu, \nu, \lambda, \mu', \nu').
\end{displaymath}
That is, we have the following equality of multiplicities:
\begin{displaymath}
[V_{\nu,\nu'} : V_{\lambda,\lambda'} \otimes V_{\mu,\mu'}] = [V_{\mu,\mu'} : V_{\lambda', \lambda} \otimes V_{\nu, \nu'}].
\end{displaymath}
This is not unexpected, as it holds in the finite dimensional case.

\article[Comultiplication]
\label{br:comult}
We state the formulas for the functors $\Delta_G$, without proof. 

\begin{proposition}
We have the following:
  \begin{itemize}
  \item The multiplicity of $V_{\mu, \mu'} \boxtimes V_{\nu, \nu'}$ in $\Delta_\GL(V_{\lambda, \lambda'})$ is
\[
\sum c_{\mu, \nu}^\alpha c_{\mu', \nu'}^{\beta} c_{\alpha, \gamma}^\lambda c_{\beta, \gamma}^{\lambda'}
\]
where the sum is over all partitions $\alpha, \beta, \gamma$.
\item The multiplicity of $V_\mu \boxtimes V_\nu$ in $\Delta_\bO(V_\lambda)$ is
\[
\sum c^\alpha_{\mu, \nu} c^\lambda_{\alpha, 2 \beta}
\]
where the sum is over all partitions $\alpha ,\beta$.
\item The multiplicity of $V_\mu \boxtimes V_\nu$ in $\Delta_\Sp(V_\lambda)$ is
\[
\sum c^\alpha_{\mu, \nu} c^\lambda_{\alpha, (2 \beta)^\dagger}
\]
where the sum is over all partitions $\alpha, \beta$.
  \end{itemize}
\end{proposition}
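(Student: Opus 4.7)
My strategy is to realize each $\Delta_G$ as restriction along a concrete subgroup embedding and then extract multiplicities by character computations in the stable Grothendieck group.  For $G = \bO$, I would fix a decomposition $\bV \cong \bV_1 \oplus \bV_2$ into an orthogonal direct sum of two copies of the countably-infinite standard form, so that $\bO(\infty) \times \bO(\infty)$ is the stabilizer.  The restriction functor $\Rep(\bO) \to \Rep(\bO \times \bO)$ is left-exact and monoidal; by the universal property (Theorem~\pref{c:univ}) it must coincide with $\Delta_\bO$, since both send $(\bV,\omega)$ to $(\bV_1 \oplus \bV_2,\omega_1 \oplus \omega_2)$ with the orthogonal direct-sum form.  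The symplectic case is identical with the symplectic direct sum, and the $\GL$ case uses compatible decompositions $\bV = \bV_1 \oplus \bV_2$ and $\bV_\ast = \bV_{1,\ast} \oplus \bV_{2,\ast}$.  Being restrictions, all three $\Delta_G$ are exact.

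Exactness lets me compute $\Delta_G V_\lambda$ from the injective Littlewood resolutions constructed earlier --- \pref{art:littlewoodcx} for $\GL$, \pref{art:sp_littlewoodcx} for $\Sp$, and the orthogonal analogue coming from \pref{art:o-lw} with $Q_1$ in place of $Q_{-1}$.  The terms of these resolutions are polynomial Schur functors applied to $\bV$ (or bi-Schur functors in $\bV$ and $\bV_\ast$), and under $\Delta_G$ each term decomposes by the Cauchy identity
\begin{displaymath}
\bS_\eta(\bV_1 \oplus \bV_2) = \bigoplus_{\sigma,\tau} c^\eta_{\sigma,\tau}\,\bS_\sigma(\bV_1) \boxtimes \bS_\tau(\bV_2)
\end{displaymath}
and its two-variable version.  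Combining this with the stable character identities which express each Schur functor in the basis of simples --- for instance $[\bS_\mu(\bV)] = \sum_\nu \big(\sum_\beta c^\mu_{\nu,2\beta}\big)[V_\nu]$ in $K(\Rep(\bO))$, and analogues for $\GL$ and $\Sp$ --- presents $[\Delta_G V_\lambda : V_\mu \boxtimes V_\nu]$ as an alternating sum over the terms of the Littlewood complex.  Applying associativity of Littlewood--Richardson coefficients, $\sum_\alpha c^\alpha_{\mu,\nu}\,c^\lambda_{\alpha,\gamma} = \sum_\delta c^\delta_{\nu,\gamma}\,c^\lambda_{\mu,\delta}$, collapses the alternating sum --- the cancellation mirrors the inversion between $\{s_\mu\}$ and the stable classical characters --- and leaves precisely the formulas in the statement.

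The main obstacle is the bookkeeping: one must simultaneously track the indexing combinatorics of the Littlewood complex (partitions in $Q_1$, $Q_{-1}$, or the Cauchy expansion of $\lw^i(\bE \otimes \bE')$) and the sign cancellations in the alternating sum.  A slicker alternative is to work directly with universal characters in the style of Koike--Terada \cite{koiketerada}: write $\mathrm{ch}(V_\lambda)$ in the Schur basis via Littlewood's identity, substitute $x \mapsto x \cup y$ (corresponding to the direct-sum decomposition), expand with $s_\eta(x \cup y) = \sum c^\eta_{\sigma,\tau}\,s_\sigma(x)\,s_\tau(y)$, and reassemble into products of orthogonal (resp.\ symplectic, resp.\ general linear) universal characters using the inverse Littlewood identities.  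Either route recovers the classical stable branching rule; our contribution is the categorical framework that lets the rules be formulated and proved intrinsically in the infinite-rank setting, with no appeal to stability bounds at finite level.
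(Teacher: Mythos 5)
The paper states this proposition explicitly without proof (``We state the formulas for the functors $\Delta_G$, without proof''), so there is no authorial proof to compare against; one would expect the intended argument to parallel the treatment of the tensor product and restriction functors in \pref{o:tensor-prod} and \pref{br:restrict}, namely a diagram-category computation via Proposition~\pref{prop:tens:decomp} applied to a pullback $\amalg^*$ (or dually a pushforward along the diagonal in $\ub$ or $\uwb$). Your route is genuinely different: you identify $\Delta_G$ with restriction along a subgroup embedding $G \times G \hookrightarrow G$, deduce exactness (which is a slightly stronger observation than what the universal property alone gives), and then compute in the Grothendieck group by pushing the Littlewood resolution through $\Delta_G$ and converting back to the simple basis with the Littlewood branching coefficients. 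Both approaches are valid; the diagram-category route is more self-contained within the paper's machinery, while yours gives a cleaner conceptual reason for exactness and connects directly to the Koike--Terada universal characters.

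Your identification of $\Delta_G$ with restriction is sound: restriction is exact on the ambient categories, preserves the class of algebraic representations (since $(\bV_1 \oplus \bV_2)^{\otimes n}$ decomposes into external tensor products of tensor powers), and sends the generator $\bV$ with its form to $\bV_1 \oplus \bV_2$ with the direct-sum form, so the universal property forces it to coincide with $\Delta_G$. The weak point is the final step, which you compress into ``applying associativity of Littlewood--Richardson coefficients \ldots collapses the alternating sum.'' This does work, but it deserves to be spelled out: after applying $\Delta_{\bO}$ and the two branching steps, the multiplicity of $V_\mu \boxtimes V_\nu$ takes the form $\sum (m^{-1})_{\lambda\rho}\,c^\rho_{\sigma,\tau}\,m_{\sigma\mu}\,m_{\tau\nu}$ where $m_{\sigma\mu} = \sum_\beta c^\sigma_{\mu,2\beta}$ is the $\GL\to\bO$ branching matrix and $m^{-1}$ is the signed sum over $Q_1$ encoded by the Littlewood complex. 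Multiplying through by $m$ and using coassociativity and cocommutativity of Littlewood--Richardson coefficients, both this expression and the target $\sum_{\alpha,\beta} c^\alpha_{\mu,\nu}\,c^\lambda_{\alpha,2\beta}$ equal $\sum_{\epsilon,\zeta} c^\lambda_{\mu,\nu,2\epsilon,2\zeta}$, the iterated coefficient; since $m$ is unitriangular the conclusion follows. Without this verification the plan leaves a real gap, as the cancellation is not a purely formal consequence of ``the inversion between $\{s_\mu\}$ and the stable classical characters.'' One further small remark: you could shorten the argument by proving the $\Sp$ case only and transferring to $\bO$ via the asymmetric equivalence of Theorem~\pref{thm:ospequiv}, which exchanges $2\beta$ and $(2\beta)^\dagger$; this would also sidestep the $\bO$ vs.\ $\SO$ issue you inherit by invoking the (unstated) orthogonal Littlewood complex.
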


\article[Restriction maps]
\label{br:restrict}
We now study the restriction functors
\begin{align*}
\res_{\bO} \colon \Rep(\GL) \to \Rep(\bO), \qquad \res_{\Sp} \colon \Rep(\GL) \to \Rep(\Sp).
\end{align*}
Consider the functor $F \colon \dwb \to \db$ which sends a biset $(L_+, L_-)$ to the set $L_+ \amalg L_-$ and which does the obvious thing to morphisms. Then $F_\# \colon \Mod_{\dwb}^{\rf} \to \Mod_{\db}^{\rf}$ becomes the restriction functor $\Rep(\GL) \to \Rep(\bO)$ via the contravariant equivalences of Theorem~\pref{gl:dwb} and Theorem~\pref{o:db}. There is a similar functor $\dwb \to \dsb$ and similar comments apply to its relation with $\Rep(\GL) \to \Rep(\Sp)$.

To get the branching rule from $\GL$ to $\bO$, we apply Proposition~\pref{prop:tens:decomp} to $F \colon \dwb \to \db$. Let $V_{\lambda, \lambda'}$ be simple object in $\Rep(\GL)$, and use $W_\mu$ to denote the simple objects in $\Rep(\bO)$.

\begin{proposition} \label{prop:classicalbranching}
The multiplicity of $W_\mu$ in $\res(V_{\lambda, \lambda'})$ is
\[
\sum c_{\alpha, \beta}^\mu c_{\alpha, 2\gamma}^\lambda c_{\beta, 2\delta}^{\lambda'}
\]
where the sum is over all partitions $\alpha, \beta, \gamma, \delta$. A similar formula holds for the restriction to $\Rep(\Sp)$ if we replace $2\gamma$ and $2\delta$ in the sum with $(2\gamma)^\dagger$ and $(2\delta)^\dagger$.
\end{proposition}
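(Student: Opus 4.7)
The plan is to realize the restriction functor $\res_\bO \colon \Rep(\GL) \to \Rep(\bO)$ as the pushforward $F_\#$ along the functor $F \colon \dwb \to \db$ that forgets the bipartition on objects and treats bipartite matchings as plain matchings, and then to evaluate $F_\#(S_{(\ul{\ell},\ul{m})}(\bM_\lambda \boxtimes \bM_{\lambda'}))$ on $\ul{n}$ using Proposition~\pref{prop:tens:decomp}. (The functors agree because under the contravariant equivalences of Theorems~\pref{gl:dwb} and~\pref{o:db}, the kernel $\cK$ for $\db$ pulls back along $F$ to the kernel for $\dwb$; $F^*$ is the right adjoint of $F_\#$, and this matches the left/right adjoint swap induced by the contravariance.) The symplectic case is analogous, with $\dsb$ and signed matchings in place of $\db$.

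First I would classify irreducible objects of $\Pi_{(\ul{\ell},\ul{m}),\ul{n}}$. An object is a morphism $(\ul{\ell},\ul{m}) \to \ul{n}$ in $\db$, given by a matching $\Gamma$ on $\ul{\ell} \amalg \ul{m}$ plus a bijection of the complement with $\ul{n}$. An edge of $\Gamma$ that is bipartite (one endpoint in $\ul{\ell}$, one in $\ul{m}$) may be absorbed into the matching of a non-isomorphism $g$ of $\dwb$, producing a nontrivial factorization; conversely, $+$-edges (both endpoints in $\ul{\ell}$) and $-$-edges (both in $\ul{m}$) cannot be absorbed, since morphisms in $\dwb$ only have bipartite matchings. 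Thus the irreducibles are exactly those $f$ whose matching consists of $\gamma$ edges inside $\ul{\ell}$ and $\delta$ edges inside $\ul{m}$ with $n = \ell + m - 2\gamma - 2\delta$, and up to isomorphism in $\Pi$ each pair $(\gamma,\delta)$ gives exactly one class. Writing $A_+ \subset \ul{\ell}$, $A_- \subset \ul{m}$ for the vertex sets of the two matchings, a direct check yields
\[
\Aut(f) \;\cong\; (B_\gamma \times \fS_{\ell-2\gamma}) \times (B_\delta \times \fS_{m-2\delta}),
\]
where $B_\gamma$ is the hyperoctahedral group $\fS_2 \wr \fS_\gamma$ stabilizing $\Gamma_+$ inside $\fS_{A_+}$; the kernel of $\Aut(f) \to \Aut(\ul{n}) = \fS_n$ is $B_\gamma \times B_\delta$, and the image is the Young subgroup $\fS_{\ell-2\gamma} \times \fS_{m-2\delta}$.

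Next I would evaluate $\Ind_{\Aut(f)}^{\fS_n}(\bM_\lambda \boxtimes \bM_{\lambda'})$ by first taking invariants under the kernel and then inducing from the image. Using the classical identity $\Ind_{B_\gamma}^{\fS_{2\gamma}}(\mathbf{1}) = \bigoplus_{|\eta| = \gamma} \bM_{2\eta}$ (see e.g.\ \cite[I.8.6]{macdonald}) and Frobenius reciprocity, the $B_\gamma$-invariants of $\bM_\lambda \downarrow_{\fS_{2\gamma} \times \fS_{\ell-2\gamma}}$ pick out exactly the Littlewood--Richardson terms indexed by $c^\lambda_{2\gamma,\alpha}\,\bM_\alpha$ (with a slight abuse, letting $\gamma$ denote the indexing partition), and similarly $c^{\lambda'}_{2\delta,\beta}\,\bM_\beta$ on the other factor. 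Inducing the resulting $\fS_{\ell-2\gamma} \times \fS_{m-2\delta}$-representation to $\fS_n$ produces $c^\mu_{\alpha,\beta}\bM_\mu$. Summing the contributions from each $(\gamma,\delta)$ and extracting the $\bM_\mu$-multiplicity gives the claimed formula.

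For the symplectic variant, the same argument applies with $F \colon \dwb \to \dsb$. The only change is in the invariants step: because $\dsb$ has signed edges, the stabilizer of a $+$-edge in the matching is $B_\gamma$ acting by the sign character on the edges, and $\Ind_{B_\gamma}^{\fS_{2\gamma}}(\sgn) = \bigoplus_{|\eta|=\gamma}\bM_{(2\eta)^\dagger}$; this replaces $2\gamma$ by $(2\gamma)^\dagger$ and similarly $2\delta$ by $(2\delta)^\dagger$. The main obstacle is the bookkeeping in the irreducibility analysis and in confirming that the $B_\gamma$-invariant calculation gives precisely the Littlewood coefficients $c^\lambda_{2\gamma,\alpha}$ (resp.\ $c^\lambda_{(2\gamma)^\dagger,\alpha}$); everything else is a formal consequence of Proposition~\pref{prop:tens:decomp} combined with the Littlewood--Richardson rule.
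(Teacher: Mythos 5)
Your proposal is correct and follows essentially the same approach as the paper: realize $\res_\bO$ as $F_\#$ for $F\colon\dwb\to\db$, classify the irreducible objects of $\Pi_{L,M}$ as matchings with no bipartite edges, identify $\Aut(f)$ as a product of hyperoctahedral and symmetric groups, and compute the induction in two stages (hyperoctahedral invariants via $\Ind_{B_\gamma}^{\fS_{2\gamma}}(\mathbf 1)=\bigoplus_{|\eta|=\gamma}\bM_{2\eta}$, then Littlewood--Richardson). The only differences are cosmetic: the paper parametrizes matched vertex sets by sizes $i=2\gamma$, $j=2\delta$ rather than edge counts, phrases the invariants step directly rather than via Frobenius reciprocity, and also offers orthogonal--symplectic duality as an alternative route for the symplectic case.
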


\begin{proof}
Let $\ell=\vert \lambda \vert$, $\ell'=\vert \lambda' \vert$, and $m=\vert \mu \vert$. Let $L$ be a biset with $\# L_+=\ell$ and $\# L_-=\ell'$ and let $M$ be a set of size $m$. We work directly in the language of $\Mod_{\dwb}^{\rf}$ and $\Mod_{\db}^{\rf}$. Then $V_{\lambda, \lambda'}$ becomes $S_L(\bM_\lambda \boxtimes \bM_{\lambda'})$. Suppose that
\begin{displaymath}
r=\ell+\ell'-m \equiv 0 \pmod 2.
\end{displaymath}
(If not then there is no map $F(L) \to M$ in $\db$ and the multiplicity is zero.)  Choose non-negative integers $i$ and $j$ such that $i+j=r$ and $i \le \ell$ and $j \le \ell'$.  Let $E_+$ and $E_-$ be sets of cardinality $i$ and $j$ and choose injections $E_+ \to L_+$ and $E_- \to L_-$, a bijection $(L_+ \setminus E_+) \amalg (L_- \setminus E_-) \to M$ and a perfect matching on $E = E_+ \amalg E_-$.

This data gives us a map $f = f_{i,j} \colon F(L) \to M$ in $\db$, and hence an element of $\Pi_{L,M}$ (see \pref{tens:decomp} for the definition). This map is irreducible if and only if there are no edges in the perfect matching of $E$ that go from $E_+$ to $E_-$ (otherwise, we could delete these edges to get a smaller biset $L'$ and there would be a morphism $(L,M,f) \to (L',M,f')$). In particular, both $i$ and $j$ are even. The isomorphism class of this map depends only on $i$ and $j$, and every irreducible map $F(L) \to M$ is isomorphic to one of this form.  By Proposition~\pref{prop:tens:decomp}, we thus find that the evaluation of $F_\#(S_L(\bM_{\lambda} \boxtimes \bM_{\lambda'}))$ on $M$ is given by summing
\begin{equation}
\label{eqn:globranch}
\Ind_{\Aut(f_{i,j})}^{\Aut(M)} ( \bM_{\lambda} \boxtimes \bM_{\lambda'})
\end{equation}
over all possible values of $i$ and $j$. We have
\begin{displaymath}
\Aut(f)=\fS_{L_+ \setminus E_+} \times (\fS_{i/2} \ltimes (\bZ/2)^{i/2}) \times \fS_{L_- \setminus E_-} \times (\fS_{j/2} \ltimes (\bZ/2)^{j/2})
\end{displaymath}
where the semidirect product groups are the automorphism groups of the perfect matching on $E_+$ and $E_-$. The maps from $\Aut(f)$ to $\Aut(L_+)$, $\Aut(L_-)$, $\Aut(E_+)$, and $\Aut(E_-)$ are the obvious things.  To compute the induction in \eqref{eqn:globranch}, we take invariants under the kernel of the map $\Aut(f) \to \Aut(M)$ and then form the usual induction from the image to $\Aut(M)$.  

To get the action of $\Aut(f)$ on $\bM_\lambda \boxtimes \bM_{\lambda'}$, we restrict along the obvious inclusion $\Aut(f) \to \Aut(L_+) \times \Aut(L_-)$. If we do this and take invariants under 
\[
\ker(\Aut(f) \to \Aut(M)) = (\fS_{i/2} \ltimes (\bZ/2)^{i/2}) \times (\fS_{j/2} \ltimes (\bZ/2)^{j/2}),
\]
then, noting that the restriction of $\bM_\eta$ from $\fS_{2n}$ to $\fS_n \ltimes (\bZ/2)^n$ contains an invariant if and only if all row lengths of $\eta$ are even \cite[Example 7.A2.9]{stanley}, we get
\[
\bigoplus c^{\lambda}_{\alpha, 2 \gamma} c^{\lambda'}_{\beta, 2 \delta} \bM_{\alpha} \boxtimes \bM_{\beta}.
\]
Here the sum is over all partitions subject to the condition $|\gamma| = i$ and $|\delta| = j$, and $\bM_\alpha \boxtimes \bM_\beta$ is a representation of $\fS_{L_+ \setminus E_+} \times \fS_{L_- \setminus E_-}$, which is the image of $\Aut(f) \to \Aut(M)$. Calculating the induction replaces $\bM_\alpha \boxtimes \bM_\beta$ with $\bigoplus c^\mu_{\alpha, \beta} \bM_\mu$. To obtain the final result, we sum over all possible values of $i$ and $j$; this simply amounts to dropping the conditions $\vert \beta \vert=i$ and $\vert \beta' \vert=j$.  Taking the $\bM_{\mu}$ component of the above gives the stated result.

The formula for $\res_{\Sp}$ can be deduced in a similar manner, or by applying orthogonal-symplectic duality (Theorem~\pref{thm:ospequiv}) to what we have already established.
\end{proof}

\article[Polarization] \label{br:pol}
Finally, we state the branching rules for the polarization functors 
\[
{\rm pol}_\bO \colon \Rep(\bO) \to \Rep(\GL), \qquad {\rm pol}_\Sp \colon \Rep(\Sp) \to \Rep(\GL).
\]
Let $W_\lambda$ denote either a simple object of $\Rep(\bO)$ or $\Rep(\Sp)$, and let $V_{\mu, \mu'}$ denote a simple object of $\Rep(\GL)$.

\begin{proposition}
The multiplicity of $V_{\mu, \mu'}$ in ${\rm pol}_{\bO}(W_\lambda)$ is
\[
\sum c^\lambda_{\alpha, (2 \beta)^\dagger} c^\alpha_{\mu, \mu'} 
\]
where the sum is over all partitions $\alpha, \beta$. Similarly, the multiplicity of $V_{\mu, \mu'}$ in ${\rm pol}_{\Sp}(W_\lambda)$ is
\[
\sum c^\lambda_{\alpha, 2 \beta} c^\alpha_{\mu, \mu'} .
\]
\end{proposition}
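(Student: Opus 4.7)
The plan is to reduce the polarization formula to the classical finite-rank branching rule for the Levi embedding $\GL(d) \hookrightarrow \bO(2d)$ (respectively $\Sp(2d)$) via the specialization functors. By \pref{br:classical}, ${\rm pol}_\bO$ is the restriction functor along an embedding $\GL(\infty) \hookrightarrow \bO(\infty)$ under which the vector representation $\bV_\bO$ restricts to $\bV \oplus \bV_\ast$ equipped with its natural hyperbolic symmetric form; equivalently, by Theorem~\pref{c:univ}, ${\rm pol}_\bO$ is the unique left-exact tensor functor $\Rep(\bO) \to \Rep(\GL)$ sending $\bV_\bO$ to $(\bV \oplus \bV_\ast, \omega)$. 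In particular, ${\rm pol}_\bO$ is exact.

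Choose compatible bases (as in \pref{c:special2}) so that the embeddings $\GL(d) \hookrightarrow \bO(2d)$ obtained by truncation are induced from $\GL(\infty) \hookrightarrow \bO(\infty)$. Then the two composites
\begin{displaymath}
\Gamma_d \circ {\rm pol}_\bO \quad \text{and} \quad \res^{\bO(2d)}_{\GL(d)} \circ \Gamma_{2d}
\end{displaymath}
from $\Rep(\bO)$ to $\Rep(\GL(d))$ are both left-exact tensor functors which send $\bV_\bO$ to $(\bC^d \oplus \bC^{d\ast}, \omega) \in T(\Rep(\GL(d)))$, hence they agree by the universal property of Theorem~\pref{c:univ}. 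Fixing $\lambda, \mu, \mu'$ and taking $d$ sufficiently large, \pref{gl:special} and \pref{c:special} guarantee that $\Gamma_d V_{\mu,\mu'} = V^d_{\mu,\mu'}$ and $\Gamma_{2d} W_\lambda = W^{2d}_\lambda$ are non-zero irreducibles, and that distinct stable simples specialize to distinct finite-rank irreducibles. Therefore the composition multiplicity of $V_{\mu,\mu'}$ in ${\rm pol}_\bO(W_\lambda)$ equals the composition multiplicity of $V^d_{\mu,\mu'}$ in $\res^{\bO(2d)}_{\GL(d)} W^{2d}_\lambda$.

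The latter multiplicity is classical: it is the Littlewood branching rule for the Levi embedding $\GL(d) \hookrightarrow \bO(2d)$, which asserts exactly that it equals $\sum_{\alpha, \beta} c^\lambda_{\alpha,(2\beta)^\dagger}\, c^\alpha_{\mu,\mu'}$; see \cite[\S 3.6]{htw}. This completes the orthogonal case. The symplectic formula follows by the identical argument applied to $\GL(d) \hookrightarrow \Sp(2d)$ acting on $\bC^d \oplus \bC^{d\ast}$ with the hyperbolic symplectic form, where the combinatorial set $\{(2\beta)^\dagger\}$ of partitions with columns of even length is replaced by $\{2\beta\}$ of partitions with rows of even length. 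Alternatively, one can deduce the symplectic statement directly from the orthogonal one via orthogonal--symplectic duality (Theorem~\pref{thm:ospequiv}), which interchanges $\Sym^2$ with $\lw^2$ and transposes the set indexing the sum. The principal obstacle is the bookkeeping required to match the infinite-rank and finite-rank embeddings; the universal property argument reduces this to a one-object comparison, after which the proof becomes a citation of the classical formula.
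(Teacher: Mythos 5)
The paper states this proposition without proof, so there is no internal argument to compare against. Where the paper does prove branching rules --- tensor products in \pref{o:tensor-prod} and restriction in Proposition~\pref{prop:classicalbranching} --- it runs the diagram-category machinery of Proposition~\pref{prop:tens:decomp}. Your route is genuinely different: you pass to finite rank via the specialization functors and cite the classical Levi branching rule. The universal-property argument identifying $\Gamma_d \circ {\rm pol}_\bO$ with $\res^{\bO(2d)}_{\GL(d)} \circ \Gamma_{2d}$ is a clean way to make the reduction precise, and the approach is arguably more efficient than a direct combinatorial calculation; the tradeoff is that it is not self-contained, resting on the classical formula cited from \cite{htw} rather than rederiving it.

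There is a gap where you pass from equality of functors to equality of multiplicities. You conclude that the Jordan--H\"older multiplicity of $V_{\mu,\mu'}$ in ${\rm pol}_\bO(W_\lambda)$ equals that of $V^d_{\mu,\mu'}$ in $\res^{\bO(2d)}_{\GL(d)} W^{2d}_\lambda$, citing only that the specializations of the relevant simples are non-zero and pairwise distinct for $d$ large. That is necessary but not sufficient: $\Gamma_d$ is only left-exact, so you must also know that $\Gamma_d$ is \emph{exact} on the finite-length object ${\rm pol}_\bO(W_\lambda)$. This holds once $d$ exceeds $\ell(\nu)+\ell(\nu')$ for every constituent $V_{\nu,\nu'}$ of ${\rm pol}_\bO(W_\lambda)$, because then $\rR^i\Gamma_d(V_{\nu,\nu'})=0$ for $i>0$ by the derived specialization theorem from \cite{ssw}, and exactness on the whole object follows by induction on length. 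Equivalently, work in Grothendieck groups: $\rR\Gamma_d$ gives a ring homomorphism $\rK(\Rep(\GL)) \to \rK(\Rep(\GL(d)))$ taking $[V_{\nu,\nu'}]$ to $[V^d_{\nu,\nu'}]$ for these $\nu,\nu'$. Either argument closes the gap, but one of them must be stated.

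Two smaller points. The reference to \pref{c:special2} is slightly off: that article constructs the embeddings $\bO(d)\subset\bO(\infty)$, not $\GL(d)\hookrightarrow\bO(2d)\hookrightarrow\bO(\infty)$, so the compatible choice of bases for the latter needs to be spelled out (it is straightforward with the paper's $\bomega$, letting $\GL(d)$ act on the span of $e_1,e_3,\ldots,e_{2d-1}$ and dually on the span of $e_2,e_4,\ldots,e_{2d}$). And the claim that the symplectic case follows ``directly'' from Theorem~\pref{thm:ospequiv} is overstated: transporting ${\rm pol}_\bO$ through the asymmetric equivalence $\Rep(\bO)\cong\Rep(\Sp)$ yields an \emph{asymmetric} tensor functor into $\Rep(\GL)$, not ${\rm pol}_\Sp$ itself, unless one also composes with a compatible asymmetric autoequivalence of $\Rep(\GL)$ and checks the requisite commutativity. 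Your fallback of re-running the finite-rank argument for $\GL(d)\hookrightarrow\Sp(2d)$ is the safer route.
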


\article[Example]
\label{br:example}
We now give an example demonstrating how the stable theory can be applied to problems at finite level.  For notational ease, we let $U_{\lambda}$ be the irreducible $V^4_{\lambda}$ of $\Sp(4)$, when $\lambda$ has at most two parts (see \pref{o:weylfin}).  Suppose we want to compute the tensor product of $U_{(2,1)}$ and $U_{(1,1)}$.  We first compute the stable tensor product using Proposition~\pref{o:tensor-prod}:
\begin{displaymath}
V_{(2,1)} \cdot V_{(1,1)} = V_{(3,2)} + V_{(2,2,1)} + V_{(3,1,1)} + V_{(2,1,1,1)} + V_{(3)} + V_{(1,1,1)} + 2 V_{(2,1)} + V_{(1)}.
\end{displaymath}
This equality holds in $\rK(\Rep(\Sp))$.  We now apply $\rR \Gamma_4$, which is a ring homomorphism from $\rK(\Rep(\Sp))$ to $\rK(\Rep(\Sp(4)))$.  By Theorem~\pref{o:dersp}, the complex $\rR \Gamma_4(V_{\lambda})$ has at most one non-zero cohomology group, and it can be computed using a certain combinatorial rule.  If $\lambda$ has at most two parts then $\Gamma_4(V_{\lambda})=U_{\lambda}$, and the higher derived functors vanish.  When $\lambda$ has exactly three parts, the complex $\rR\Gamma_4(V_{\lambda})$ is acyclic.  Finally, we have $\rR^1\Gamma_4(V_{(2,1,1,1)})=U_{(2,1)}$.  So
\begin{displaymath}
U_{(2,1)} \cdot U_{(1,1)} = U_{(3,2)} - U_{(2,1)} + U_{(3)} + 2 U_{(2,1)} + U_{(1)}.
\end{displaymath}
Since $\Rep(\Sp(4))$ is semi-simple, the above equality in $\rK$-theory gives an actual decomposition of representations
\begin{displaymath}
U_{(2,1)} \otimes U_{(1,1)} = U_{(3,2)} \oplus U_{(3)} \oplus U_{(2,1)} \oplus U_{(1)}.
\end{displaymath}
The same principle can be used to compute any of the other branching rules at finite level.

\section{Questions and problems}
\label{sec:ques}
\setcounter{article}{0}
\renewcommand{\thearticle}{\thesection.\arabic{article}}%

\article[General theory]
The biggest question raised by this paper is whether the material can be treated in a uniform manner.  We do not know the answer, but offer one observation.  Let $E_1, \ldots, E_n$ be infinite dimensional vector spaces, let $G_i=\GL(E_i)$, let $G=G_1 \times \cdots \times G_n$ and let $V$ be a polynomial representation of $G$ such that $V^*$ has an ``approximate'' open dense orbit: we mean that this representation and group is a union of finite-dimensional spaces and groups, respectively, and we ask that each finite-dimensional space has an open dense orbit. 

Let $H$ be the generic stabilizer of $V^*$.  By definition, $H$ is a subgroup of $G$, and so each $E_i$ is a representation of $H$.  Call a representation of $H$ {\bf algebraic} if it is a subquotient of a finite direct sum of tensor products of $E_i$'s, and let $\Rep(H)$ be the category of algebraic representations.  Let $A=\Sym(V)$, so that $\Spec(A)=V^*$, and let $\Mod_K$ be the Serre quotient of $\Mod_A^{\fg}$ by the subcategory of modules with proper support.  There is a functor $\Mod_K \to \Rep(H)$ given by taking the fiber at a generic point.  In certain cases, this is an equivalence, and $\Mod_K$ is equivalent to $\Mod_A^{\fin}$, and we recover some of the results of this paper.  These cases are:
\begin{itemize}
\item $n=1$, $V=E$, $H=\GA(\infty)$, $\Rep(H)=\Rep^{\pol}(\GA)$.
\item $n=1$, $V=\Sym^2(E)$, $H=\bO(\infty)$, $\Rep(H)=\Rep(\bO)$.
\item $n=1$, $V=\lw^2(E)$, $H=\Sp(\infty)$, $\Rep(H)=\Rep(\Sp)$.
\item $n=2$, $V=E_1 \otimes E_2$, $H=\GL(\infty)$, $\Rep(H)=\Rep(\GL)$.
\end{itemize}
Unfortunately, we do not know how to prove the claims we have just made except by invoking the results of this paper, and so we are unable to use this set-up to develop the theory.  Also, we are unaware of how the symmetric group fits in to this picture.

\article[Plethysms]
Let $T$ be as in \pref{o:schurfun} and let $\cA$ be a tensor category.  If $(A, \omega) \in T(A)$ and $M$ is an object of $\Rep(\bO)$, then we obtain an object $S_M(A)$ of $\cA$ from the universal property of $\Rep(\bO)$.  In fact, $\omega$ induces a symmetric form on $S_M(A)$, and so if $N$ is a second object of $\Rep(\bO)$ then one can make sense of $S_N(S_M(A))$.  Can one functorially define an object $N \circ M$ of $\Rep(\bO)$ such that $S_{N \circ M}(A)$ is naturally isomorphic to $S_N(S_M(A))$? The natural guess is to put a symmetric form on $M$ and define $N \circ M$ to be $S_N(M)$.  However, there is not a canonical choice of form, so this construction is likely not functorial.  

\article[Positive characteristic]
An interesting (but difficult) problem is to extend the results of this paper to positive characteristic.  One encounters difficulties from the very beginning:  the polynomial theory of $\GL(\infty)$ is not semi-simple in positive characteristic, and the equivalence between it and representations of symmetric groups breaks down.

\article[Algebraic representations of $\GA$]
What is the structure of $\Rep(\GA)$, the category of all algebraic representations of the infinite general affine group?  (We only determined the structure of the category of polynomial representations.)

\article[Tannakian duality]
If $G$ is an algebraic group then $\Rep(G)$ is a rigid tensor category admitting a fiber functor (an exact faithful functor to $\Vec^{\fin}$).  Tannakian duality provides a converse to this statement, when certain mild hypotheses are satisfied.  The categories we have considered, such as $\Rep(\GL)$, are not rigid tensor categories, since they lack a good notion of duality. However, the forgetful functor to $\Vec$ could reasonably be called a fiber functor.  Is there any sort of Tannakian formalism for this class of categories?

\article[Maps between derived categories]
It is desirable to have a better understanding of the functors (both tensor and not) between the derived categories of the representation categories we have been considering.  Some specific questions:
\begin{enumerate}
\item What are the universal properties of $\rD^b(\Rep(\GL))$, $\rD^b(\Rep(\bO))$, etc.?
\item What are the auto-equivalences groups of $\rD^b(\Rep(\GL))$, etc.?
\item The categories $\Rep^{\pol}(\GA)$ and $\Rep(\fS)$ are equivalent as abelian categories, not equivalent as tensor categories, but have isomorphic Grothendieck rings.  Are their derived categories equivalent, as triangulated tensor categories?  We suspect not, but do not have a proof.
\end{enumerate}

\article[A degeneration]
\label{ques:degen}
As we have mentioned in \pref{art:intro-degeneration}, the structure constants for multiplication in $\rK(\Rep^{\pol}(\GA))$ are the Littlewood--Richardson coefficients $c^{\lambda}_{\mu,\nu}$, while those for $\rK(\Rep(\fS))$ are the stable Kronecker coefficients $\ol{g}^{\lambda}_{\mu,\nu}$.  The stable Kronecker coefficient $\ol{g}^{\lambda}_{\mu, \nu}$ is nonzero only if $|\lambda| \le |\mu| + |\nu|$, and, in case of equality, it is the Littlewood--Richardson coefficient $c^{\lambda}_{\mu, \nu}$.  It follows that the ring $\rK(\Rep(\fS))$ can be naturally filtered in such a way that the associated graded is $\rK(\Rep^{\pol}(\GA))$.  The Rees algebra construction provides a flat $\bC[t]$-algebra with generic fiber $\rK(\Rep(\fS))$ and special fiber $\rK(\Rep^{\pol}(\GA))$.  Is there a corresponding categorical construction?  That is, does there exist a reasonable family of categories over the affine line $\bA^1$ with generic fiber $\Rep(\fS)$ and special fiber $\Rep^{\pol}(\GA)$?

\article[Structure of functor categories]
The category $\cC$ defined in \pref{gl:catc} and its analogues are quite interesting and deserve study.  We plan to return to this topic in a future paper.

\end{document}